\newtheorem{theorem}{Theorem}[chapter]
\newtheorem{lemma}[theorem]{Lemma}
\newtheorem{prop}[theorem]{Proposition}
\newtheorem{coro}[theorem]{Corollary}
\newtheorem{corollary}[theorem]{Corollary}
\theoremstyle{definition}
\newtheorem{definition}[theorem]{Definition}
\newtheorem{remark}[theorem]{Remark}
\newcommand{\PSH}{{\rm PSH}}
\newcommand{\vol}{{\rm Vol}}
\newcommand{\Ec}{\mathcal{E}}
\newcommand{\setdef}{\ \vert \ }
\newcommand{\vep}{\varepsilon}
\newcommand{\psh}{{\rm PSH}}
\newcommand{\capa}{{\rm Cap}}
\newcommand{\AM}{{\rm I}}
\newcommand{\AMO}{{\rm I}_{\phi}}
\newcommand{\Amp}{{\rm Amp}}
\newcommand{\id}{{\bf 1}}
\numberwithin{equation}{chapter}
\numberwithin{theorem}{chapter}
\title{\textbf{\LARGE{Relative pluripotential theory on compact K\"ahler manifolds}}}
\author{\Large{Tam\'as Darvas (University of Maryland)} \vspace{0.1cm} \\ \Large{Eleonora Di Nezza (Sorbonne Université)} \vspace{0.1cm}\\ \Large{Chinh H. Lu (Université d'Angers)\vspace{0.5cm}}}
\date{\emph{\large{To the memory of Jean--Pierre Demailly (1957-2022)}}}
\begin{document}
\maketitle 

\chapter*{\centering \begin{normalsize}Abstract\end{normalsize}}
\thispagestyle{empty}
\begin{quotation}
\noindent 
Given a compact K\"ahler manifold, we survey the study of complex Monge--Amp\`ere type equations with prescribed singularity type, developed by the authors in a series of papers. In addition, we give a general answer to a question of Guedj--Zeriahi about the finite energy range of the complex Monge--Amp\`ere operator. 
\end{quotation}

\clearpage

\tableofcontents 
\setcounter{page}{1}

\chapter{Main results}

Let $(X,\omega)$ be a compact K\"ahler manifold of complex dimension $n$, and let $\theta$ be another smooth K\"ahler form. We consider the complex Monge--Amp\`ere equation
\begin{equation}\label{eq: CMAE_intr}
(\theta + dd^c u)^n = f \omega^n.
\end{equation}
When $f$ is smooth and positive, Yau \cite{Yau78}, proved that this equation admits a unique smooth solution, resolving the famous Calabi conjecture. This landmark result is one of the cornerstones for studying canonical K\"ahler metrics.

One can consider conical/edge behaviour along a divisor for solutions of \eqref{eq: CMAE_intr}. This is relevant in birational geometry and in the study of K-stability. It has been explored in many works, including \cite{Don11, JMR16, GP16}.  \\
\indent When $f \geq 0$, $f \in L^p(\omega^n), \ p > 1,$  and $u$ is allowed to be bounded $\theta$-plurisubharmo\-nic, a solution to \eqref{eq: CMAE_intr} exists by deep estimates of Ko{\l}odziej \cite{Kol98,Kol03IUMJ}. Furthermore, if one replaces $f \omega^n$ by a suitable Radon measure, it was shown in \cite{GZ07} that solutions to \eqref{eq: CMAE_intr}  exist in a suitable finite energy space, mirroring results of Cegrell \cite{Ceg98} in the local case.

When the cohomology class $\{\theta\}$ is allowed to be degenerate, solutions to 
 \eqref{eq: CMAE_intr} and related equations have been explored in \cite{EGZ09,  BEGZ10,BBGZ13}.

There have been numerous other extensions and generalizations in recent years. In this paper, we survey recent work done in \cite{DDL2,DDL4}, where we consider solutions of \eqref{eq: CMAE_intr} that have a \emph{prescribed singularity profile}. We also give a general answer to a question raised by Guedj--Zeriahi \cite{GZ07}.

To fix notation and terminology, we assume that $\theta$ is a closed smooth real $(1,1)$-form such that $\{\theta\}$ is big, and let  $\phi \in \textup{PSH}(X,\theta)$. Let $f \geq 0$, $f \in L^p(\omega^n), \ p > 1$.

We are looking for solutions $u \in \textup{PSH}(X,\theta)$ of \eqref{eq: CMAE_intr} satisfying
\begin{equation}\label{eq: min_sing_type}
\phi - C \leq u \leq \phi+C, \textup{ for some }C \in \mathbb R.
\end{equation}
If $\varphi$ and $\varphi'$ are two $\theta$-plurisubharmo\-nic functions on $X$, then $\varphi'$ is said to be \emph{less singular} than $\varphi$, i.e. $\varphi \preceq \varphi'$, if they satisfy $\varphi\le\varphi'+C$ for some $C\in \mathbb{R}$.
We say that $\varphi$ has the same singularity as $\varphi'$, i.e. $\varphi \simeq \varphi'$, if $\varphi \preceq \varphi'$ and $\varphi' \preceq \varphi$. The latter condition is easily seen to yield an equivalence relation, whose equivalence classes are denoted by $[\varphi]$, $\varphi \in \psh(X, \theta)$. Using this terminology \eqref{eq: min_sing_type} simply says that $[u]=[\phi]$.

Typically $\phi$ is unbounded, hence so is $u$, and the left-hand-side of \eqref{eq: CMAE_intr} has to be interpreted as the 
\emph{non-pluripolar complex Monge-Amp{\`e}re measure} of $u$, considered in \cite{BEGZ10} (see \eqref{eq: BEGZ10_def} below):
$$
\theta_u^n:=\langle(\theta+dd^c u)^n\rangle.
$$

As it turns out, this problem is well posed only for potentials $\phi$ having ``model'' singularity type $[\phi]$, that includes the case of analytic singularities, treated in \cite{PS14}.

We need to consider the following notion of envelope, only dependent on the singularity type $[\phi]$:
$$P_\theta[\phi] = \sup\{v \in \textup{PSH}(X,\theta) \textup{ s.t. } v \leq 0, \ [\phi] = [v]\}.$$
Since $\phi - \sup_X \phi \leq P_\theta[\phi]$, we have that $[\phi] \leq [P_\theta[\phi]]$ and typically equality does not happen. When $[\phi] = [P_\theta[\phi]]$, we say that $\phi$ has \emph{model singularity type}.

We state our first main result, initially proved in \cite{DDL4}:

\begin{theorem}\label{thm4} Suppose $\phi \in \textup{PSH}(X,\theta)$  and $[\phi] = [P_\theta[\phi]]$. Let  $f \in L^p(\omega^n)$, $p > 1$ such that $f \geq 0$  and $\int_X f \omega^n=\int_X \theta_{\phi}^n > 0$. Then the following hold:\\
(i) There exists  $u\in \textup{PSH}(X,\theta)$, unique up to a constant, such that $[u]=[\phi]$ and 
\begin{equation}\label{eq: main_thm_eq1}
\theta_u^n=f \omega^n.
\end{equation}
(ii) For any $\lambda >0$  there exists a unique $v\in \textup{PSH}(X,\theta)$,  such that $[v]=[\phi]$ and
\begin{equation}\label{eq: main_thm_eq2}
\theta_v^n=e^{\lambda v}f \omega^n.
\end{equation}
\end{theorem}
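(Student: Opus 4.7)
The approach I would take is variational, extending the Berman--Boucksom--Guedj--Zeriahi strategy to the prescribed-singularity setting. For part (i), I would work in the relative finite energy class $\Ec^1(X,\theta,\phi) \subset \PSH(X,\theta)$ consisting of potentials $u \preceq \phi$ with finite relative Aubin--Mabuchi energy
$$E_\phi(u) := \frac{1}{n+1} \sum_{k=0}^n \int_X (u-\phi)\, \theta_u^k \wedge \theta_\phi^{n-k},$$
normalized by $\sup_X(u-\phi)=0$. Setting $\mu := f\omega^n$, I then try to maximize
$$F(u) := E_\phi(u) - \int_X (u-\phi)\, d\mu.$$
Upper semicontinuity of $F$ follows from concavity of $E_\phi$ along affine segments, and coercivity is obtained via Ko\l{}odziej-type $L^p$ estimates bounding $\int_X (\phi-u)\, d\mu$ in terms of $|E_\phi(u)|^{1/2}$; a normalized maximizing sequence is then precompact and converges to some candidate $u^\ast$.

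The key step is deriving the Euler--Lagrange equation at $u^\ast$. Computing the one-sided derivative of $F$ along perturbations $P_\theta[\phi](u^\ast + t\chi)$ with $\chi$ bounded produces $\theta_{u^\ast}^n = f\omega^n$. The hypothesis $[\phi]=[P_\theta[\phi]]$ enters critically here: it ensures that the envelope operator $P_\theta[\phi](\cdot)$ preserves the class $[\phi]$ and, crucially, conserves the non-pluripolar Monge--Amp\`ere mass, so admissible variations remain inside $\Ec^1(X,\theta,\phi)$. A posteriori, the a priori bound $u^\ast \preceq \phi$ must be upgraded to $[u^\ast]=[\phi]$; this follows from the mass identity $\int_X \theta_{u^\ast}^n = \int_X \mu = \int_X \theta_\phi^n$ combined with monotonicity of non-pluripolar mass along the envelope $P_\theta[\phi](u^\ast) \geq u^\ast$. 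Uniqueness in (i) comes from a relative domination principle: if $u,v$ both solve the equation with $[u]=[v]=[\phi]$, then applying the comparison principle to $\max(u,v+c)$ for a suitably chosen constant $c$ forces $u - v$ to be constant.

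For part (ii) the situation is cleaner because $\lambda>0$ breaks the constant ambiguity. Existence may be obtained either by iterating part (i) — defining $u_{k+1}$ as the solution of $\theta_{u_{k+1}}^n = e^{\lambda u_k} f\omega^n$ (after rescaling to match masses) and passing to the limit — or by directly maximizing the strictly concave functional $u \mapsto E_\phi(u) - \lambda^{-1}\int_X e^{\lambda u} f\omega^n$. Uniqueness is immediate: if $v,w$ are two solutions, then on $\{v>w\}$ one has $e^{\lambda v}>e^{\lambda w}$ pointwise, so $\theta_v^n > \theta_w^n$ there, contradicting the relative comparison inequality $\int_{\{v>w\}} \theta_v^n \leq \int_{\{v>w\}} \theta_w^n$. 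The principal obstacle throughout is the singularity-matching step: without the model assumption, the variational maximizer can genuinely lose mass and end up strictly more singular than $\phi$. It is precisely the identity $[\phi]=[P_\theta[\phi]]$, together with the envelope-mass conservation theorems of \cite{DDL2,DDL4}, that supplies the tool needed to confine the solution to the prescribed class $[\phi]$.
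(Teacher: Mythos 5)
Your overall variational framework is broadly the one used in the paper (and in \cite{DDL2,DDL4}), with one organizational difference: the paper maximizes $I_\phi(u)-\int_Xe^{u}d\mu$ to solve the case $\lambda>0$ first and then obtains (i) by letting $\lambda\to0^+$, precisely to avoid the coercivity/capacity analysis that your direct maximization of $E_\phi(u)-\int_X(u-\phi)\,d\mu$ would require. That is a legitimate alternative, but it is not where the problem lies.

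The genuine gap is in your singularity-matching step. You assert that the a priori information $u^\ast\preceq\phi$ is ``upgraded to $[u^\ast]=[\phi]$'' by combining the mass identity $\int_X\theta_{u^\ast}^n=\int_X\theta_\phi^n$ with monotonicity of mass along envelopes. This is false: the mass identity only says $u^\ast\in\mathcal E(X,\theta,\phi)$, and relative full mass does \emph{not} imply equality of singularity types. Already in the K\"ahler case with $\phi=0$, the class $\mathcal E(X,\omega)$ contains many unbounded potentials, so full mass is compatible with $u^\ast$ being strictly more singular than $\phi$. The conclusion $[u]=[\phi]$ is exactly where the hypothesis $f\in L^p(\omega^n)$, $p>1$, must be used, and it requires a genuine relative $L^\infty$ a priori estimate of Ko\l odziej type: in the paper this is Theorem \ref{thm: min sing of solution}, proved by the Guedj--Lu envelope technique (a carefully chosen convex weight $\chi$, the inequality $\theta_{P_\theta(-\chi(\phi-u)+\phi)}^n\le(\chi'(\phi-u))^n\theta_u^n$ from Lemma \ref{ineq: forms_MA}, and a bootstrapping/approximation step to first reduce to the case where $u-\phi$ is bounded). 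Without some version of this estimate your argument only produces a solution in $\mathcal E(X,\theta,\phi)$, not one with $[u]=[\phi]$. A secondary but related weakness: uniqueness in (i) does not follow from ``the comparison principle applied to $\max(u,v+c)$''; it is a Dinew-type theorem (Theorem \ref{thm: uniqueness} in the paper) whose proof needs the envelopes $P_\theta(bu-(b-1)v)$ and the mixed Monge--Amp\`ere inequalities, not just comparison.
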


As it turns out, the condition $[\phi] = [P[\phi]]$ is very natural and necessary for well posedness in this context. Due to Theorem \ref{thm: naturality_of_model}, if \eqref{eq: main_thm_eq1} has a unique solution for any choice of $f \in L^\infty$, then the condition $[\phi] = [P[\phi]]$ must hold.

To prove this theorem, one needs to build up the variational approach of \cite{BEGZ10} \emph{relative to} the model singularity type $[\phi]$, motivating the title of this survey.

When $K_X>0$ and $\lambda =1$ (or $K_X$ trivial and $\lambda=0$) solutions of \eqref{eq: main_thm_eq2} can be interpreted as K\"ahler--Einstein metrics with prescribed singularity.

\begin{remark}\label{rem: examples}  Potentials $\phi$ with \emph{analytic singularity type} can be locally written as 
$$\phi = \frac{c}{2} \log \big(\sum_j |f_j|^2\big) + g,$$ 
where $f_j$ are holomorphic, $c \in \Bbb Q_+$ and $g$ is bounded. Such potentials are always of model type (\cite[Remark 4.6]{RWN14}, \cite{RS05}, see also Proposition \ref{prop: analytic example}). In particular,  discrete logarithmic singularity types are of model type, making connection with pluricomplex  Green currents \cite{CG09,PS14,RS05}. 

Our reader may wonder if there are other interesting enough potentials with model singularity type. We believe this to be the case:\vspace{0.1cm}

$\bullet$ By Theorem \ref{thm: ceiling coincide envelope non collapsing} below, $P_\theta[\psi]=P_\theta[P_\theta[\psi]]$ for any $\psi \in \textup{PSH}(X,\theta)$ with $\int_X \theta_\psi^n >0$. In particular, $P_\theta[\psi]$ is a model potential, giving an abundance of potentials with model type singularity.\vspace{0.1cm}

$\bullet$ By Proposition \ref{prop: Lp example} below, if $\psi \in \textup{PSH}(X,\theta)$ has small unbounded locus, and $\theta_\psi^n/ \omega^n  \in L^p(\omega^n), \ p > 1$ with $\int_X  \theta_\psi^n >0$, then $\psi$ has model type singularity.\vspace{0.1cm}

$\bullet$ Due to \cite{RWN14,DDL3}, potentials with model type singularity naturally arise as degenerations along geodesic rays and in particular along test configurations.
\end{remark}

As an application to Theorem \ref{thm4} we prove the log-concavity property for non-pluripolar products, initially conjectured in \cite{BEGZ10}, proved in \cite{DDL4}.

\begin{theorem}\label{thm5}
	Let $T_1,...,T_n$ be positive closed $(1,1)$-currents  on a compact K\"ahler manifold $X$.  Then 
	\[
	\int_X \langle T_1 \wedge \ldots \wedge T_n\rangle \geq \left(\int_X \langle T_1^n\rangle \right)^{\frac{1}{n}} \ldots \left(\int_X \langle T_n^n\rangle \right)^{\frac{1}{n}}.
	\] 
In particular, $T \to  \log \int_X \langle T^n \rangle$ is concave function on the space of positive $(1,1)$-currents.

\end{theorem}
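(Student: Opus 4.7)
The plan is to use Theorem \ref{thm4} to reduce the inequality to a pointwise Alexandrov-type inequality for mixed discriminants of positive semi-definite Hermitian matrices. Write $T_i = \theta_i + dd^c \phi_i$ where $\theta_i$ is a closed smooth real $(1,1)$-form representing $\{T_i\}$ and $\phi_i \in \psh(X,\theta_i)$. If $\int_X \langle T_i^n \rangle = 0$ for some $i$, the right-hand side vanishes, so I may assume $\int_X \langle T_i^n \rangle > 0$ for all $i$. Since $T_i + \varepsilon \omega = (\theta_i + \varepsilon \omega) + dd^c \phi_i$ is a K\"ahler current, the class $\{\theta_i + \varepsilon \omega\}$ is big for every $\varepsilon > 0$; it thus suffices to prove the inequality for these perturbed currents and let $\varepsilon \to 0^+$, using continuity of the non-pluripolar mixed mass under perturbation of the class.

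To apply Theorem \ref{thm4}, I would next replace each $\phi_i$ by its envelope $\psi_i := P_{\theta_i}[\phi_i]$, which by Theorem \ref{thm: ceiling coincide envelope non collapsing} is of model singularity type and satisfies $\int_X \theta_{i, \psi_i}^n = \int_X \theta_{i, \phi_i}^n =: c_i$. Crucially, the full mixed non-pluripolar mass is preserved under this operation:
$$\int_X \langle \theta_{1, \phi_1} \wedge \cdots \wedge \theta_{n, \phi_n} \rangle = \int_X \langle \theta_{1, \psi_1} \wedge \cdots \wedge \theta_{n, \psi_n} \rangle.$$
This multilinear monotonicity under envelopes is one of the central outputs of the relative pluripotential theory developed in the survey. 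Now fix any strictly positive $f \in L^p(\omega^n)$ with $\int_X f \omega^n = 1$, and apply Theorem \ref{thm4}(i) in each big class to obtain $u_i \in \psh(X, \theta_i)$ with $[u_i] = [\psi_i]$ and $\theta_{i, u_i}^n = c_i f \omega^n$.

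Outside the common pluripolar locus of the $u_i$, each $\theta_i + dd^c u_i$ is represented by a pointwise positive semi-definite Hermitian $(1,1)$-form, and the classical Alexandrov inequality for mixed discriminants of positive Hermitian matrices gives the pointwise bound
$$\langle \theta_{1, u_1} \wedge \cdots \wedge \theta_{n, u_n} \rangle \geq (c_1 \cdots c_n)^{1/n} f \omega^n.$$
Integrating over $X$ using $\int_X f \omega^n = 1$, and transporting the mixed mass back from the $u_i$ to the $\phi_i$ via the envelope-invariance noted above (since $[u_i] = [\psi_i] = [P_{\theta_i}[\phi_i]]$), yields exactly $\int_X \langle T_1 \wedge \cdots \wedge T_n \rangle \geq \prod_i c_i^{1/n}$. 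The concavity statement in the last sentence follows from the main inequality by the standard Brunn--Minkowski manipulation: applied to $k$ copies of $S$ and $n-k$ copies of $R$, it gives the pointwise Brunn--Minkowski identity $(\int_X \langle (\alpha S + (1-\alpha)R)^n\rangle)^{1/n} \geq \alpha(\int_X \langle S^n\rangle)^{1/n} + (1-\alpha)(\int_X \langle R^n\rangle)^{1/n}$, which in turn implies log-concavity.

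The main obstacle, and the genuinely new ingredient beyond the classical Kähler case, is the multilinear invariance/monotonicity of the mixed non-pluripolar mass under the envelope operation $\phi_i \mapsto P_{\theta_i}[\phi_i]$ and under replacement by equal-singularity solutions of \eqref{eq: main_thm_eq1}. This is precisely where the relative variational machinery underlying Theorem \ref{thm4}---monotonicity under comparison of singularities, convergence of envelopes, and uniqueness/existence of solutions with prescribed singularity type---becomes indispensable; the classical approach via regularization breaks down because Demailly regularizations do not preserve singularity types, and arbitrary positive closed currents may fail to admit smooth approximations with controlled mixed masses.
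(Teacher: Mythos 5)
Your proposal is correct and follows essentially the same route as the paper: both reduce to solving a Monge--Amp\`ere equation with prescribed (model) singularity type against a common reference measure in each class, use the invariance of mixed non-pluripolar masses under the envelope operation (Remark \ref{rem: increasing implies capacity} together with Corollary \ref{cor: convexity}), and conclude with the mixed Monge--Amp\`ere inequalities. The only cosmetic differences are that the paper invokes Theorem \ref{thm: existence lambda =0} with $\mu=c_j\omega^n$ rather than Theorem \ref{thm4}(i) with an $L^p$ density, and that your ``pointwise Alexandrov inequality for Hermitian matrices'' should be stated as the measure-theoretic mixed Monge--Amp\`ere inequality of \cite[Proposition 1.11]{BEGZ10}, since the currents $\theta_i+dd^cu_i$ are not pointwise-defined forms.
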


Next we consider the situation when the right hand side of \eqref{eq: CMAE_intr} is  a non-pluripolar measure (not necessarily of the type $f\omega^n$), and answer a question of Guedj--Zeriahi in our relative context.

For this we need to introduce relative finite energy classes. Let $\phi \in \textup{PSH}(X,\theta)$ such that $\phi = P[\phi]$ and $\int_X \theta_\phi^n>0$.

By a foundational result of Witt Nystr\"om \cite{WN19}, if $[u] \leq [\phi]$ then $\int_X \theta_u^n \leq \int_X \theta_\phi^n$ (see Theorem \ref{thm: BEGZ_monotonicity_full} below). We say that $u$ has relative full mass with respect to $\phi$ (notation: $u \in \mathcal E(X,\theta,\phi)$) if this inequality is extremized, i.e., $[u] \leq [\phi]$ and $\int_X \theta_u^n = \int_X \theta_\phi^n$.

It is argued in Theorem \ref{thm: existence lambda =0} that for any Radon measure $\mu$ not charging pluripolar sets such that $\mu(X)=\int_X \theta_\phi^n $, there exists a unique solution $u \in \mathcal E(X,\theta,\phi))$ to the equation
\begin{equation}\label{eq: CMAE_intr_Borel}
\theta_u^n = \mu.
\end{equation}

As pointed out in \cite{GZ07}, for potential theoretic reasons, it is natural to consider weighted subspaces of $\mathcal E(X,\theta,\phi)$.
  A weight is a continuous increasing function  $\chi: [0,\infty) \rightarrow [0,\infty)$ such that $\chi(0)=0$ and $\lim_{t \to \infty}\chi(t)=\infty$. 
  
Let us assume that the weight $\chi$ satisfies the following condition 
 \begin{equation}
 	\label{eq: growth condition_intr}
 	\forall t \geq 0, \; \forall \lambda \geq 1, \; \chi(\lambda t) \leq \lambda^M \chi(t),
 \end{equation}
 where $M\geq 1$ is a fixed constant. 
Let $\mathcal{E}_{\chi}(X,\theta,\phi)$ denote the set of all $u\in \mathcal{E}(X,\theta,\phi)$ such that 
\[
E_{\chi}(u,\phi):=\int_X \chi(|u-\phi|) \theta_u^n <\infty. 
\]
When $\phi=V_{\theta}$, we denote $\mathcal{E}(X,\theta)=\mathcal{E}(X,\theta,V_{\theta})$, $\mathcal{E}_{\chi}(X,\theta)=\mathcal{E}_{\chi}(X,\theta,V_{\theta}$) and $E_{\chi}(u)=E_{\chi}(u,V_{\theta})$. Compared to \cite{GZ07},  we have changed the sign of the weight, but the weighted classes are the same.  Also, notice that both low and high energy classes of \cite{GZ07} satisfy the condition \eqref{eq: growth condition_intr}, allowing our treatment to be a bit more universal.

One may ask, under what condition is the solution $u \in \mathcal E(X,\theta,\phi)$ of \eqref{eq: CMAE_intr_Borel} an element of $\mathcal E_\chi(X,\theta,\phi)$. The theorem below provides precise answers to this question, containing perhaps the only novel result of this work:

\begin{theorem}
	\label{thm: E chi characterization_intro} Le $\mu$ be a  Radon measure with $\mu(\{\phi = -\infty\})=0$ and $\int_X \theta_{\phi}^n =\mu(X)>0$. For $\chi$ satisfying \eqref{eq: growth condition_intr} the following conditions are equivalent:
	\begin{enumerate}
		\item[(i)] There exists a constant $C>0$ such that, for all $u \in \mathcal{E}_{\chi}(X,\theta,\phi)$ with $\sup_X u=0$, we have 
		\[
		\int_X \chi(\phi-u) d\mu \leq C E_{\chi}(u,\phi)^{M/(M+1)} +C.  
		\]
	\item[(ii)] $\chi(|\phi-u|) \in L^1(\mu)$, for all $u\in \mathcal{E}_{\chi}(X,\theta,\phi)$.
	\item[(iii)] $\mu= \theta_\varphi^n$, for some $\varphi \in \mathcal{E}_{\chi}(X,\theta,\phi)$, with $\sup_X \varphi=0$. 
	\end{enumerate} 
\end{theorem}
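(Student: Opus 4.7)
The plan is to prove the cyclic chain $(iii)\Rightarrow(i)\Rightarrow(ii)\Rightarrow(iii)$, mimicking the Guedj--Zeriahi strategy in the absolute case $\phi=V_\theta$ but executed inside the relative class $\mathcal{E}(X,\theta,\phi)$. Throughout, the model assumption $[\phi]=[P_\theta[\phi]]$ legitimizes the canonical truncation $u_k:=\max(u,\phi-k)$, which lies in $\mathcal{E}_\chi$ because its singularity equals $[\phi]$, and whose non-pluripolar mass splits (via the comparison principle) as
\[
\theta_{u_k}^n=\ind_{\{u>\phi-k\}}\theta_u^n+\ind_{\{u\le\phi-k\}}\theta_\phi^n.
\]
The implication $(i)\Rightarrow(ii)$ is the easiest step: for $u\in\mathcal{E}_\chi$ with arbitrary normalization, set $\tilde u=u-\sup_X u$, apply (i) to $\tilde u$, and reassemble $\chi(|\phi-u|)$ using the doubling $\chi(a+b)\le 2^M(\chi(a)+\chi(b))$ implied by \eqref{eq: growth condition_intr}, together with $u\le\phi+C_u$ (coming from $[u]\le[\phi]$) to bound the positive part $(u-\phi)_+$ by a constant.

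The heart of the theorem is the implication $(iii)\Rightarrow(i)$, which I would reduce to a mixed-energy H\"older-type inequality. Fix $\varphi\in\mathcal{E}_\chi$ with $\sup_X\varphi=0$ and $\theta_\varphi^n=\mu$, and $u\in\mathcal{E}_\chi$ with $\sup_X u=0$. Replacing $u$ by $u_k$, the target estimate is
\[
\int_X \chi(\phi-u_k)\,\theta_\varphi^n\le A\,E_\chi(u_k,\phi)^{M/(M+1)}\,E_\chi(\varphi,\phi)^{1/(M+1)}+B,
\]
after which (i) follows by monotone convergence, the fixed factor $E_\chi(\varphi,\phi)^{1/(M+1)}$ being absorbed into the constant $C$. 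To establish the estimate, I would expand $\theta_\varphi^n=(\theta_\phi+dd^c(\varphi-\phi))^n$ binomially and integrate each mixed term by parts relative to $\phi$, transferring the factors $dd^c(\varphi-\phi)$ onto $\chi(\phi-u_k)$. The resulting integrands, of the shape $\chi'(\phi-u_k)\,d(u_k-\phi)\wedge d^c(\varphi-\phi)$ wedged against mixed Monge--Amp\`ere currents, are controlled by Cauchy--Schwarz and the growth condition \eqref{eq: growth condition_intr}, and the exponent $M/(M+1)$ emerges from an optimally calibrated Young inequality with conjugate exponents $(M+1)/M$ and $M+1$ applied after the full $n$-fold expansion.

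For $(ii)\Rightarrow(iii)$, I would start from the unique $u\in\mathcal{E}(X,\theta,\phi)$ solving $\theta_u^n=\mu$ with $\sup_X u=0$ provided by Theorem \ref{thm: existence lambda =0}, and upgrade to $u\in\mathcal{E}_\chi$ by a variational/approximation argument. Truncate $\mu$ to $\mu_k:=\ind_{\{u>\phi-k\}}\mu$ completed by a rescaled piece of $\theta_\phi^n$ to restore total mass $\int_X\theta_\phi^n$; Theorem \ref{thm: existence lambda =0} then produces solutions $w_k$ with $[w_k]\le[\phi]$ and $\theta_{w_k}^n=\mu_k$, which are in fact bounded modulo $\phi$ (because $\mu_k$ vanishes where $u<\phi-k$), so $E_\chi(w_k,\phi)<\infty$ automatically. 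Variational characterization of $w_k$ as maximizers of $F_{\mu_k}(w):=I_\phi(w)-\int_X(w-\phi)\,d\mu_k$, combined with assumption (ii) applied to $w_k\in\mathcal{E}_\chi$, provides a uniform bound $E_\chi(w_k,\phi)\le K$. Weak compactness in $\mathcal{E}_\chi$ extracts a limit $w_\infty\in\mathcal{E}_\chi$ with $\theta_{w_\infty}^n=\mu$, which must coincide with $u$ by uniqueness.

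\emph{Main obstacle.} The hard part is the integration by parts underlying step $(iii)\Rightarrow(i)$: all three potentials $\varphi,u,\phi$ are unbounded, so the formal manipulation only makes sense after the cut-off $u_k=\max(u,\phi-k)$, and the boundary contributions supported on the contact set $\{u=\phi-k\}$ and on the pluripolar set $\{\phi=-\infty\}$ must be shown to vanish or cancel uniformly in $k$. This is precisely where the monotonicity theorem of Witt Nystr\"om (Theorem \ref{thm: BEGZ_monotonicity_full}) together with the model assumption on $\phi$ are decisive, and getting the exponent to land exactly on $M/(M+1)$ requires calibrating the Young inequality to the growth rate $M$ of $\chi$ after precisely the $n$-fold binomial expansion.
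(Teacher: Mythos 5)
Your architecture (the cycle $(i)\Rightarrow(ii)\Rightarrow(iii)\Rightarrow(i)$, with the easy step being $(i)\Rightarrow(ii)$) matches the paper's, but both hard implications are only sketched and each sketch has a genuine gap. For $(iii)\Rightarrow(i)$, your plan to expand $\theta_\varphi^n=(\theta_\phi+dd^c(\varphi-\phi))^n$, integrate by parts, and control terms of the form $\chi'(\phi-u_k)\,d(u_k-\phi)\wedge d^c(\varphi-\phi)\wedge T$ by Cauchy--Schwarz and Young runs into the problem that $\chi$ is only assumed continuous and increasing with the growth condition \eqref{eq: growth condition_intr}: it need not be differentiable, convex or concave, so $\chi'$ and the gradient-term estimates are not available, and it is far from clear that the exponent $M/(M+1)$ would emerge from such a computation. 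The paper's proof of this step (Lemma \ref{lem: DV21 relative}) avoids integration by parts entirely: it writes $\int_X\chi(\phi-v)\,\theta_u^n$ via sublevel sets $\{v<\phi-\chi^{-1}(t)\}$ and the comparison principle (Lemma \ref{lem: mixed energy 1}), applied after rescaling $(\theta,\phi)\mapsto(t\theta,t\phi)$ and regarding $v+(t-1)\phi$, $tu$ as elements of $\mathcal{E}_\chi(X,t\theta,t\phi)$; dividing by $t^n$ yields $\int_X\chi(\phi-v)\,\theta_u^n\le C_1+Ct^{-1}E_\chi(v,\phi)+t^ME_\chi(u,\phi)$, and the choice $t=(1+E_\chi(v,\phi))^{1/(M+1)}$ produces the exponent $M/(M+1)$. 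You would need either to adopt this scaling argument or to supply a genuinely new estimate valid for non-smooth $\chi$.

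For $(ii)\Rightarrow(iii)$ the crux, which your proposal elides, is passing from the qualitative hypothesis ``$\chi(|\phi-u|)\in L^1(\mu)$ for every $u\in\mathcal{E}_\chi$'' to a \emph{uniform quantitative} bound. You assert that the variational characterization of $w_k$ together with (ii) ``provides a uniform bound $E_\chi(w_k,\phi)\le K$,'' but (ii) applied to each $w_k$ only gives finiteness of $\int_X\chi(\phi-w_k)\,d\mu$ for each fixed $k$, with no uniformity; closing the loop requires a linear-growth estimate $\int_X\chi(\phi-w)\,d\mu\le C(E_\chi(w,\phi)+1)$ with an eventually absorbable constant, which is exactly the content of Lemmas \ref{lem: integrability implies uniform} and \ref{lem: linear growth} (proved by a nontrivial contradiction argument using the envelope of the series $\sum_j2^{-j}u_j+(1-2^{-j})\phi$) and the rescaling $\mu\mapsto(4C)^{-1}\mu+b\,\omega^n$ in Theorem \ref{thm: E chi GZ07}. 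Separately, your claim that the solutions $w_k$ of $\theta_{w_k}^n=\mu_k+(\text{completion})$ are ``bounded modulo $\phi$'' because $\mu_k$ is supported on $\{u>\phi-k\}$ is unjustified: a non-pluripolar measure carried by that plurifine open set can still produce a solution with worse singularities than $\phi$ (relative boundedness requires an $L^p$-density hypothesis as in Theorem \ref{thm: min sing of solution}); at best one gets finite $\chi$-energy via an exponential integrability argument as in Proposition \ref{prop: exponential}, which you would need to carry out.
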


For slightly less general $\chi$, the equivalence between (i) and (iii) was obtained in \cite{DV21}. The condition $\mu(\{\phi = -\infty\})=0$ is necessary. Without it, the $\mu$-integrability of $\chi(\phi-u)$ can not be discussed, as the values of this function are not defined on the set $\{\phi = -\infty\}$. Of course this condition is vacuous if $\phi =0$.

In the particular case when $\theta$ is K\"ahler and $\phi=0$, the equivalence between (ii) and (iii) answers a question asked by Guedj--Zeriahi in the comments following \cite[Theorem 4.1]{GZ07}. In \cite[Theorem C]{GZ07} the authors obtain the above result for $\chi(t) = t^p, \ p \geq 1$.

\paragraph{Prerequisites.}
 An effort has been made to keep prerequisites at a minimum. However
due to size constraints, such requirements on part of the reader are inevitable. We assume
that our reader is familiar with the basics of Bedfor-Taylor theory \cite{BT76,BT82}, and finite energy pluripotential theory in the big case, as elaborated in \cite{BEGZ10}. The recent book \cite{GZbook} is a comprehensive source that can initiate novice readers into this subject.

\paragraph{Organization.} In Chapter 2 we recall terminology of finite energy pluripotential theory from \cite{BEGZ10} and prove some preliminary results, touching \cite{DDL1, DNT21} in the process. In Chapter 3 we prove the monotonicity theorem due to Witt Nystr\"om \cite{WN19} and the authors \cite{DDL2} for non-pluripolar product masses. Here we also introduce and study the basic concepts of relative pluripotential theory.

In Chapter 4 we prove a result about comparison of capacities and prove an integrating by parts formula, making contact with \cite{Xia19, Lu21,Vu21}. 

In Chapter 5, we finally solve the complex Monge--Amp\`ere equations \eqref{eq: main_thm_eq1} and \eqref{eq: main_thm_eq2} using the variational method of \cite{BBGZ13} as in \cite{DDL2}. Due to the availability of integration parts, the technical assumption of small unbounded locus from \cite{DDL2} can be avoided, making our treatment here more transparent, compared to the original arguments in \cite{DDL2,DDL4}. 

The proof of Theorem \ref{thm: E chi characterization_intro} is given in Chapter 6, containing the novel results of this survey.

\paragraph{Relation to other works.} Since \cite{DDL1,DDL2,DDL3,DDL4} appeared, a number of works have taken up the topics of this survey, and developed it further. Due to size constraints we can not treat these here, but let us mention a few exciting directions.

The work \cite{DDL5} introduced a pseudo-metric on the space of singularity types and studied the stability of solutions to \eqref{eq: main_thm_eq1}, as the singularity type is varied. More precise results have been recently obtained in \cite{DoVu22}. Connections with Lelong numbers have been studied in \cite{Vu20}.

The works \cite{DX22, DZ22, DXZ23} used relative pluripotential theory to study partial Bergman kernels, K-stability and the Ross--Witt Nystr\"om correspondence.

The works \cite{Xia19b,Trus19,Gup22} have explored the metric geometry of the relative finite energy classes surveyed in this work.

Finally,  A. Trusiani started an elaborate study of Kahler--Einstein metrics with prescribed singularity type in the Fano case \cite{Tru1,Tru2}.

\paragraph{Acknowledgments.} The first named author was partially supported by an Alfred P. Sloan Fellowship and National Science Foundation grant DMS--1846942. The second author is supported by the project SiGMA ANR-22-ERCS-0004-02. The third named author is partially supported by Centre Henri Lebesgue ANR-11-LABX-0020-01. The second and third named authors are partially supported by the project PARAPLUI  ANR-20-CE40-0019.

\chapter{Preliminaries}

We recall results from pluripotential theory  of big cohomology classes, particularly results about non-pluripolar complex Monge--Ampere measures. We borrow notation and terminology from \cite{BEGZ10}, and we refer to this work for further details. 

Let $(X,\omega)$ be a compact K\"ahler manifold of dimension $n$. Let $\theta$ be a smooth closed $(1,1)$-form on $X$ such that $\{\theta\}$ is \emph{big}, i.e., there exists $\psi \in {\rm PSH}(X,\theta)$ such that $\theta +dd^c \psi \geq \vep \omega$ for some small constant $\vep>0$. Here, $d$ and $d^c$ are real differential operators defined as $d:=\partial +\bar{\partial},\,  d^c:=\frac{i}{2\pi}\left(\bar{\partial}-\partial \right).$ A function $\varphi: X\rightarrow \mathbb{R}\cup\{-\infty\}$ is quasi-plurisubharmonic (qpsh)  if it can be locally written as the sum of a plurisubharmonic function and a smooth function. $\varphi$ is called $\theta$-plurisubharmonic  ($\theta$\emph{-psh}) if it is qpsh and $\theta+dd^c \varphi\geq 0$ in the sense of currents. We let $\psh(X,\theta)$ denote the set of $\theta$-psh functions that are not identically $-\infty$.

A $\theta$-psh function $\varphi$ is said to have \emph{analytic singularity type} if there exists a constant $c>0$ such that locally on $X$,
$$
\varphi=\frac{c}{2}\log\sum_{j=1}^{N}|f_j|^2+g,
$$
where $g$ is bounded and $f_1,\dots,f_N$ are local holomorphic functions. The \emph{ample locus} $\Amp(\{\theta\})$ of $\{ \theta\}$ is the set of points $x\in X$ such that there exists a K\"ahler current $T\in \{ \theta \}$ with analytic singularity type and smooth in a neighbourhood of $x$. The ample locus $\Amp(\{\theta\})$ is a Zariski open subset, and it is nonempty \cite{Bou04}.

Let $x \in X$. Fixing a holomorphic chart $x \in U \subset X$, the \emph{Lelong number} $\nu(\varphi,x)$ of $\varphi \in \textup{PSH}(X,\theta)$ is defined as follows:
\begin{equation}\label{eq: Lelongdef}
\nu(\varphi,x) = \sup\{ \gamma \geq 0 \textup{ s.t. } \varphi(z) \leq \gamma \log \|z-x \| + O(1) \textup{ on } U\}.
\end{equation}
One can also associate to $\varphi$ a \emph{multiplier ideal sheaf} $\mathcal I (\varphi)$ whose germs are holomorphic functions $f$ for which $|f|^2e^{-\varphi}$ is integrable.

If $\varphi$ and $\varphi'$ are two $\theta$-psh functions on $X$, then $\varphi'$ is said to be \emph{less singular} than $\varphi$, i.e. $\varphi \preceq \varphi'$, if they satisfy $\varphi\le\varphi'+C$ for some $C\in \mathbb{R}$.
We say that $\varphi$ has the same singularity as $\varphi'$, i.e. $\varphi \simeq \varphi'$, if $\varphi \preceq \varphi'$ and $\varphi' \preceq \varphi$. The latter condition is easily seen to yield an equivalence relation, whose equivalence classes are denoted by $[\varphi]$, $\varphi \in \psh(X, \theta)$.

A $\theta$-psh function $\varphi$ is said to have \emph{minimal singularity type} if it is less singular than any other $\theta$-psh function.  Such $\theta$-psh functions with minimal singularity type always exist, one can consider for example
\begin{equation*}
V_\theta:=\sup\left\{ \varphi\,\,\theta\text{-psh}, \varphi\le 0\text{ on } X \right \}. 
\end{equation*}
Trivially, a $\theta$-psh function with minimal singularity type is locally bounded  in $\Amp(\{\theta\})$. It follows from \cite[Theorem 1.1]{DNT23} that $V_{\theta}$ is $C^{1, \bar{1}}$ in the ample locus ${\rm Amp}(\{\theta\})$.

Given $\theta^1+dd^c\varphi_1,..., $ $ \theta^p+dd^c \varphi_p$   positive $(1,1)$-currents, where $\theta^j$ are closed smooth real $(1,1)$-forms, following the construction of Bedford-Taylor \cite{BT87} in the local setting, it has been shown in \cite{BEGZ10} that the sequence of currents 
\[
{\bf 1}_{\bigcap_j\{\varphi_j>V_{\theta_j}-k\}}(\theta^1+dd^c \max(\varphi_1, V_{\theta_1}-k))\wedge...\wedge (\theta^p+dd^c\max(\varphi_p, V_{\theta_p}-k))
\] 
is non-decreasing in $k$ and converges weakly to the so called \emph{non-pluripolar product} 
\begin{equation}\label{eq: BEGZ10_def}
\langle \theta^1_{\varphi_1 } \wedge\ldots\wedge\theta^p_{\varphi_p}\rangle .
\end{equation}
In the following, with a slight abuse of notation, we will denote the non-pluripolar product simply by $\theta^1_{\varphi_1 } \wedge\ldots\wedge\theta^p_{\varphi_p}$.
When $p =n$, the resulting positive $(n,n)$-current is a Borel measure that does not charge pluripolar sets. Pluripolar sets are Borel measurable sets that are contained in some set $\{\psi = -\infty\}$, where $\psi \in \textup{PSH}(X,\theta)$.

For a $\theta$-psh function $\varphi$, the \emph{non-pluripolar complex Monge-Amp{\`e}re measure} of $\varphi$ is
$$
\theta_\varphi^n:=\langle(\theta+dd^c\varphi)^n\rangle.
$$

The volume of a big class $\{ \theta\}$  is defined by 
\[
{\rm Vol}(\{\theta\}):= \int_{{\rm Amp}(\{\theta\})} \theta_{V_\theta}^n.
\]

Alternatively, by \cite[Theorem 1.16]{BEGZ10}, in the above expression one can replace $V_\theta$ with any $\theta$-psh function with minimal singularity type. A $\theta$-psh function $\varphi$ is said to have \emph{full Monge--Amp\`ere mass} if
\[
\int_X \theta_\varphi^n={\rm Vol}(\{\theta\}),
\]
and we then write $\varphi\in \mathcal{E}(X,\theta)$.

An important property of the non-pluripolar product is that it is local with respect to the plurifine topology (see  \cite[Corollary 4.3]{BT87},\cite[Section 1.2]{BEGZ10}).  
This topology is the coarsest such that all qpsh functions with values in $\mathbb{R}$ are continuous. For convenience we record the following version of this result for later use. 
\begin{lemma} \label{lem: plurifine}
Fix closed smooth big $(1,1)$-forms $\theta^1,...,\theta^n$.  Assume that $\varphi_j,\psi_j,j=1,...,n$ are $\theta^j$-psh functions such that $\varphi_j =\psi_j$ on $U$ an open set in the plurifine topology. Then 
$$
\id_{U} \theta^1_{\varphi_1} \wedge ... \wedge \theta^n_{\varphi_n} = \id_{U} \theta^1_{\psi_1} \wedge ... \wedge \theta^n_{\psi_n}.
$$
\end{lemma}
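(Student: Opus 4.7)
The plan is to combine the BEGZ definition of the non-pluripolar product with the classical plurifine locality of Bedford--Taylor for locally bounded psh functions. Write $\varphi_j^k := \max(\varphi_j, V_{\theta^j}-k)$ and $\psi_j^k := \max(\psi_j, V_{\theta^j}-k)$ for $k\in\mathbb{N}$, and set $\Omega := \bigcap_{j=1}^n \Amp(\{\theta^j\})$. Since $V_{\theta^j}$ is locally bounded on $\Amp(\{\theta^j\})$, the canonical approximants $\varphi_j^k$ and $\psi_j^k$ are locally bounded on $\Omega$. The complement $X\setminus\Omega$ is contained in a proper analytic subset and is therefore pluripolar, so it carries no mass for either of the non-pluripolar measures appearing in the statement.

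Next, introduce the plurifine open set
$$W_k := U \cap \bigcap_{j=1}^n \{\varphi_j > V_{\theta^j}-k\} \cap \bigcap_{j=1}^n \{\psi_j > V_{\theta^j}-k\}.$$
By construction $\varphi_j^k = \varphi_j$ and $\psi_j^k = \psi_j$ on $W_k$, and the hypothesis $\varphi_j = \psi_j$ on $U$ yields $\varphi_j^k = \psi_j^k$ throughout $W_k$ for every $j$. Applying the classical plurifine locality (\cite[Corollary 4.3]{BT87}) to the locally bounded families $\varphi_j^k$ and $\psi_j^k$ on the open set $\Omega$ produces
$$\id_{W_k\cap\Omega} \bigwedge_{j=1}^n (\theta^j+dd^c\varphi_j^k) = \id_{W_k\cap\Omega} \bigwedge_{j=1}^n (\theta^j+dd^c\psi_j^k).$$

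The final step is to let $k\to\infty$. The sets $W_k$ increase to $W_\infty:= U\cap\bigcap_j\{\varphi_j>-\infty\}\cap\bigcap_j\{\psi_j>-\infty\}$, whose complement in $U$ is pluripolar. Using the definition of the non-pluripolar product recalled in \eqref{eq: BEGZ10_def} together with a further application of Bedford--Taylor locality (to see that on $W_k$ the Monge--Amp\`ere product with truncation at level $k$ coincides with the one at any higher level), the measures $\id_{W_k\cap\Omega}\bigwedge_j(\theta^j+dd^c\varphi_j^k)$ increase monotonically to $\id_U\,\theta^1_{\varphi_1}\wedge\cdots\wedge\theta^n_{\varphi_n}$, since this non-pluripolar measure assigns no mass to the pluripolar set $U\setminus(W_\infty\cap\Omega)$. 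The analogous statement holds for $\psi$, and combining the two with the displayed equality above gives the lemma. The main technical point is the bookkeeping of this last limit: one must verify that the restriction $\id_{W_k\cap\Omega}$ is compatible with the BEGZ monotone limit simultaneously on both sides, which is exactly where the non-charging property on pluripolar sets is used.
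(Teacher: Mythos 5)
Your argument is correct. The paper does not actually prove Lemma \ref{lem: plurifine} — it records it as a known consequence of \cite[Corollary 4.3]{BT87} and \cite[Section 1.2]{BEGZ10} — and your proof is exactly the standard derivation those citations point to: note that on $U$ one has $\{\varphi_j>V_{\theta^j}-k\}=\{\psi_j>V_{\theta^j}-k\}$, so your $W_k$ equals $U$ intersected with the canonical truncation sets from the BEGZ definition, apply Bedford--Taylor locality to the bounded approximants on the ample locus, and pass to the monotone limit using that neither non-pluripolar product charges the pluripolar complement.
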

Lemma \ref{lem: plurifine} will be referred to as the \emph{plurifine locality property}. We will often work with sets of the form $\{u<v\}$, where $u,v$ are quasi-psh functions. These are always open in the plurifine topology. 

\paragraph{Convergence theorems.} The Monge--Amp\`ere capacity of a Borel set $E\subset X$ is defined as 
\[
{\rm Cap}_{\omega}(E):= \sup \left\{\int_E (\omega+dd^c u)^n\; :\; u\in {\rm PSH}(X,\omega), \; -1\leq u\leq 0\right\}. 
\]
A function $u$ is called quasi-continuous if for each $\varepsilon>0$, there exists an open set $U$ such that ${\rm Cap}_{\omega}(U)<\varepsilon$ and the restriction of $u$ on $X\setminus U$ is continuous.

A sequence of functions $u_j$ converges in capacity to $u$ if, for any $\delta>0$,
\[
\lim_{j\to \infty} \capa_{\omega}(\{x\in X\; :\;
|u_j(x)-u(x)|>\delta\}) =0.
 \]

We recall a classical convergence theorem from Bedford-Taylor theory. We refer to \cite[Theorem 4.26]{GZbook} for a proof of this result, which is a slight generalization of \cite[Theorem 1]{X96}.  

\begin{prop}\label{prop: xing_conv} Let $U \subset \mathbb C^n$ be an open set. Suppose $\{f_j\}_j$ are uniformly bounded quasi-continuous
functions which converge in capacity to another quasi-continuous function $f$ on $U$. Let $\{ u^j_1\}_j, \{ u^j_2\}_j,\ldots, \{ u^j_n\}_j$ be uniformly bounded plurisubharmonic functions on $\Omega$, converging in capacity to $u_1, u_2, \ldots, u_n$
respectively. Then we have the following weak convergence of measures:
$$f_j i\partial \bar \partial u_1^j \wedge i\partial \bar \partial u_2^j \wedge \ldots \wedge i\partial \bar \partial u_n^j \to f i\partial \bar \partial u_1 \wedge i\partial \bar \partial u_2 \wedge \ldots \wedge i\partial \bar \partial u_n.$$
\end{prop}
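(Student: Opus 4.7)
The plan is to prove the result by induction on $n$, the number of $dd^c$-factors, carrying the full statement (with the quasi-continuous weight $f_j$ in front) through the induction. The base case $n=0$ amounts to showing $f_j \to f$ in $L^1_{\loc}(U)$: given $\delta > 0$, split the integral over $\{|f_j - f| \leq \delta\}$ and over $\{|f_j - f| > \delta\}$; the first set gives an $O(\delta)$ contribution using the uniform bound, and the second has capacity $\to 0$, so by absolute continuity of Lebesgue measure with respect to capacity its Lebesgue measure tends to zero. Letting $j \to \infty$ and then $\delta \to 0$ handles it.

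For the inductive step I first prove the unweighted statement $dd^c u_1^j \wedge \cdots \wedge dd^c u_n^j \to dd^c u_1 \wedge \cdots \wedge dd^c u_n$ weakly. Pick a smooth test function $\chi$ with compact support in $U$ and integrate by parts to get
$$\int \chi\, dd^c u_1^j \wedge T^j = \int u_1^j\, dd^c \chi \wedge T^j, \qquad T^j := dd^c u_2^j \wedge \cdots \wedge dd^c u_n^j.$$
Since bounded psh functions are quasi-continuous (a foundational Bedford-Taylor fact) and $u_1^j$ are uniformly bounded, converging in capacity to $u_1$, the \emph{inductive hypothesis}, applied with weight $u_1^j$ and the $n-1$ potentials $u_2^j,\dots,u_n^j$, gives convergence of the right-hand side to $\int u_1\, dd^c\chi \wedge T$, which by another integration by parts equals $\int \chi\, dd^c u_1 \wedge \cdots \wedge dd^c u_n$.

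With the unweighted convergence in hand, I incorporate $f_j$ via the splitting
$$\int \chi f_j\, \mu_j - \int \chi f\, \mu = \int \chi(f_j - f)\, \mu_j + \int \chi f\, (\mu_j - \mu),$$
where $\mu_j$ and $\mu$ denote the product Monge-Amp\`ere currents of the $u_k^j$'s and the $u_k$'s. For the first term, fix $\delta > 0$: on $\{|f_j-f|\le \delta\}$ the contribution is $O(\delta)$ using that $\mu_j(K)$ is uniformly bounded on compact sets $K \Subset U$ (Chern-Levine-Nirenberg), while on $\{|f_j-f|> \delta\}$, a set of capacity tending to zero, the $\mu_j$-mass is also tending to zero by the standard domination $\mu_j(E) \le C\, \capa(E)$ for uniformly bounded psh potentials. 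For the second term, I use quasi-continuity of $f$: approximate $f$ uniformly off a set of capacity at most $\varepsilon$ by a continuous function $g$; then $\int \chi g\,(\mu_j - \mu) \to 0$ by the unweighted step, and the remainder $\int \chi(f-g)\,(\mu_j-\mu)$ is controlled by the same capacity estimate applied to both $\mu_j$ and $\mu$.

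The main obstacle is packaging the two technical ingredients \emph{uniformly}: the Chern-Levine-Nirenberg domination of $\mu_j$ by capacity with a constant independent of $j$, and the quasi-continuity approximation of $f_j$ and $f$ on a \emph{common} exceptional set of small capacity. Both are available in the bounded setting on $U \subset \mathbb C^n$. A subtle point to keep in mind is that the induction must be stated with the weight $f_j$ from the outset, since the integration by parts step in the inductive argument naturally produces a quasi-continuous weight ($u_1^j$) and the inductive hypothesis must absorb it; trying to prove the unweighted version first by a separate induction and then bolt on the weight would force one to redo the same analysis.
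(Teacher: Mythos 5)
The paper does not actually prove this proposition: it records it as a known result, citing \cite[Theorem 4.26]{GZbook} and \cite[Theorem 1]{X96}. Your argument is, in substance, the standard proof from those references — induction on the number of factors, integration by parts to move $dd^c$ from $u_1^j$ onto the test function (which is precisely what forces the quasi-continuous weight into the inductive statement), uniform Chern--Levine--Nirenberg mass bounds, and the uniform domination of the mixed Monge--Amp\`ere measures of uniformly bounded psh functions by capacity, used to absorb both the weight and the exceptional sets — and it is correct. One point you should make explicit when writing it up: after integrating by parts, the quantity $\int u_1^j\, dd^c\chi\wedge dd^c u_2^j\wedge\cdots\wedge dd^c u_n^j$ pairs an $(n-1,n-1)$-current against the smooth $(1,1)$-form $dd^c\chi$, so the inductive statement cannot literally read ``weak convergence of measures'' at every level. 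It should be: for all smooth, compactly supported $(n-k,n-k)$-forms $\Phi$,
$$\int_U g_j\,\Phi\wedge dd^c v_1^j\wedge\cdots\wedge dd^c v_k^j\ \longrightarrow\ \int_U g\,\Phi\wedge dd^c v_1\wedge\cdots\wedge dd^c v_k,$$
i.e.\ weak convergence of the weighted currents. With that formulation, the appeal to the inductive hypothesis with weight $u_1^j$ and test form $dd^c\chi$ closes without circularity, and there is no need to decompose $dd^c\chi$ into positive pieces (which would reintroduce an $n$-fold product and make the induction appear circular). This is bookkeeping rather than a gap; the rest of the splitting argument for the weight $f_j$ is exactly as in the cited proofs.
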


\begin{definition}
A Borel set $E$ is called quasi-open if for each $\varepsilon>0$, there exists an open set $U$ such that 
\[
{\rm Cap}_{\omega}((U\setminus E) \cup (E\setminus U)) \leq \varepsilon. 
\]
Quasi-closed sets are defined similarly. 
\end{definition}

As mentioned above, we will often work with sets of the form $\{u<v\}$ where $u$ and $v$ are quasi-psh functions. In general these sets are not open but merely quasi-open.

If a sequence of positive measures $\mu_j$ converges weakly in $U$ to a positive measure $\mu$ then a elementary argument shows that if $E$ is open and $V$ is closed then 
$$
\liminf_{j\to \infty} \mu_j(E) \geq \mu(E), \qquad \limsup_{j\to \infty} \mu_{j}(V) \leq \mu(V).
$$ 
Using the above facts one can easily argue the following result:

\begin{lemma}\label{lem: cv MA quasi-open}
    Assume $u_j$ is a sequence of uniformly bounded $\theta$-psh functions in $U\subset X$ converging in capacity to a $\theta$-psh function $u$. Suppose $\{f_j\}_j$ are uniformly bounded non-negative quasi-continuous
functions which converge in capacity to another quasi-continuous function $f \geq 0$ on $U$.\\
If $E\subset U$ is a quasi-open set then 
    \[
   \liminf_{j\to \infty} \int_E f_j(\theta +dd^c u_j)^n \geq \int_E f(\theta+dd^c u)^n. 
    \]
If $V\subset U$ is a quasi-closed set then 
    \[
   \limsup_{j\to \infty} \int_V f_j(\theta +dd^c u_j)^n \leq \int_V f(\theta+dd^c u)^n. 
    \]
\end{lemma}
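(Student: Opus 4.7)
The plan is to combine Proposition~\ref{prop: xing_conv} with a standard capacity approximation argument, leveraging the two measure-theoretic facts recalled in the paragraph immediately preceding the lemma. First, I apply Proposition~\ref{prop: xing_conv} to the sequences (taking $u_1^j = \cdots = u_n^j = u_j$) and to $f_j$ to obtain weak convergence of positive measures on $U$,
\[
\mu_j := f_j(\theta + dd^c u_j)^n \longrightarrow \mu := f(\theta + dd^c u)^n.
\]
Together with the elementary weak-convergence inequalities for open/closed sets, this already settles the statement whenever $E$ is open or $V$ is closed (after intersecting with $U$, which keeps these properties).

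To upgrade from open/closed to quasi-open/quasi-closed sets, I would invoke the standard fact that, for families of uniformly bounded $\theta$-psh functions on $U$, the associated non-pluripolar Monge--Amp\`ere measures are uniformly absolutely continuous with respect to $\capa_\omega$: there exists a function $F:[0,\infty) \to [0,\infty)$ with $F(t) \to 0$ as $t \to 0^+$ such that, for all $j$ and all Borel $A \subset U$,
\[
\mu_j(A) + \mu(A) \leq F(\capa_\omega(A)).
\]
This follows from Chern--Levine--Nirenberg type inequalities applied to the uniformly bounded $u_j, u$, combined with the uniform upper bound on $f_j, f$.

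Now let $E \subset U$ be quasi-open and fix $\varepsilon > 0$. By definition, there is an open $G \subset X$ with $\capa_\omega(E \triangle G) \leq \varepsilon$; replacing $G$ by $G \cap U$ preserves both openness and the bound on the capacity of the symmetric difference. Writing
\[
\mu_j(E) \;\geq\; \mu_j(G) - \mu_j(G \setminus E) \;\geq\; \mu_j(G) - F(\varepsilon),
\qquad
\mu(E) \;\leq\; \mu(G) + F(\varepsilon),
\]
and taking $\liminf_j$ while using the open-set inequality $\liminf_j \mu_j(G) \geq \mu(G)$, we get
\[
\liminf_j \mu_j(E) \;\geq\; \mu(G) - F(\varepsilon) \;\geq\; \mu(E) - 2F(\varepsilon).
\]
Letting $\varepsilon \to 0$ yields the first inequality of the lemma. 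The quasi-closed case is entirely analogous, approximating $V$ by a closed set contained in (the closure of) $U$ and using the closed-set $\limsup$ inequality.

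The only non-trivial ingredient is the uniform absolute continuity of $\mu_j$ with respect to $\capa_\omega$; everything else is a clean measure-theoretic combination of weak convergence and the symmetric-difference approximation built into the definition of quasi-open sets. I expect this uniform absolute continuity to be the single point deserving verification, but it is a well-established consequence of Bedford--Taylor theory in the present uniformly bounded setting.
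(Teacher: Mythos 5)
Your proof is correct and follows exactly the route the paper intends (the paper leaves this lemma as an "easy" consequence of the preceding facts): weak convergence from Proposition~\ref{prop: xing_conv}, the open/closed $\liminf$/$\limsup$ inequalities, and an upgrade to quasi-open/quasi-closed sets via the symmetric-difference approximation together with the uniform bound $\mu_j(A)\leq C\,\capa_\omega(A)$, which for uniformly bounded potentials is precisely the Chern--Levine--Nirenberg-type estimate the paper itself uses in the proof of Theorem~\ref{thm: envelope contact}. No gaps.
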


We will also need the following basic convergence result:
\begin{lemma}
        \label{lem: convergence}
        Assume that $\mu_j$ is a sequence of positive Borel measures converging weakly to $\mu$. Assume that there exists a continuous function $f: [0,\infty) \rightarrow [0,\infty)$ with $f(0)=0$ such that, for any Borel set $E$,
        \[
        \mu_j (E) + \mu(E) \leq f \left(\capa_{\omega} (E)\right).
        \]
        Let $u_j$ be a sequence of uniformly bounded quasi-continuous functions which converges in capacity to a bounded quasi-continuous function $u$. Then $u_j \mu_j \rightarrow u \mu$ in the sense of Radon measures on $X$. 
\end{lemma}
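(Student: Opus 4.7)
The plan is to test weak convergence against an arbitrary continuous function $\varphi \in C(X)$ and show that $\int_X \varphi u_j \, d\mu_j \to \int_X \varphi u \, d\mu$. Set $M := \sup_j \|u_j\|_\infty + \|u\|_\infty < \infty$, note that $\mu_j(X) \leq f(\capa_\omega(X))$ so the total masses are uniformly bounded, and write
\[
\int_X \varphi u_j \, d\mu_j - \int_X \varphi u \, d\mu = \underbrace{\int_X \varphi(u_j-u) \, d\mu_j}_{A_j} + \underbrace{\Big(\int_X \varphi u \, d\mu_j - \int_X \varphi u \, d\mu\Big)}_{B_j}.
\]

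For $A_j$, I would use convergence in capacity directly. Fix $\delta > 0$, set $E_j^\delta := \{|u_j - u| > \delta\}$, and split the integral over $E_j^\delta$ and its complement. On the complement $|u_j-u| \leq \delta$, while on $E_j^\delta$ we bound $|u_j - u|$ by $M$, so
\[
|A_j| \leq \delta \|\varphi\|_\infty \mu_j(X) + M\|\varphi\|_\infty \mu_j(E_j^\delta) \leq \delta \|\varphi\|_\infty f(\capa_\omega(X)) + M\|\varphi\|_\infty f(\capa_\omega(E_j^\delta)).
\]
Since $\capa_\omega(E_j^\delta) \to 0$ by convergence in capacity and $f$ is continuous with $f(0) = 0$, letting $j \to \infty$ and then $\delta \to 0$ gives $A_j \to 0$.

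For $B_j$, the obstacle is that $u$ is only quasi-continuous, so $\varphi u$ need not be continuous and we cannot apply weak convergence directly. To get around this, fix $\varepsilon > 0$ and use quasi-continuity of $u$ to find an open set $U \subset X$ with $\capa_\omega(U) < \varepsilon$ such that $u|_{X \setminus U}$ is continuous. By Tietze's extension theorem, there exists $\tilde u \in C(X)$ agreeing with $u$ on $X \setminus U$ and satisfying $\|\tilde u\|_\infty \leq \|u\|_\infty$. Then $\varphi \tilde u \in C(X)$, and since $u - \tilde u$ is supported in $U$ with $|u - \tilde u| \leq 2\|u\|_\infty$,
\[
|B_j| \leq \Big|\int_X \varphi \tilde u \, d(\mu_j - \mu)\Big| + 2\|\varphi\|_\infty \|u\|_\infty \bigl(\mu_j(U) + \mu(U)\bigr).
\]
The first term tends to $0$ as $j \to \infty$ by weak convergence of $\mu_j$ to $\mu$, while the second is bounded by $2\|\varphi\|_\infty \|u\|_\infty f(\varepsilon)$ using the hypothesis on $\mu_j+\mu$. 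Hence $\limsup_j |B_j| \leq 2\|\varphi\|_\infty \|u\|_\infty f(\varepsilon)$, and letting $\varepsilon \to 0$ finishes the proof.

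The main obstacle is the second step: bridging the gap between weak convergence (which requires continuous test functions) and the merely quasi-continuous limit $u$. The uniform capacity-type domination $\mu_j(E) + \mu(E) \leq f(\capa_\omega(E))$ is exactly what is needed to control the Tietze-replacement error uniformly in $j$; without it, quasi-continuity alone would not suffice.
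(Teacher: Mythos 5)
Your proof is correct and follows essentially the same route as the paper's: both replace $u$ by a continuous function agreeing with it off a set of small capacity (controlling the error uniformly in $j$ via the domination $\mu_j(E)+\mu(E)\leq f(\capa_\omega(E))$), and both handle the $u_j-u$ term by splitting on $\{|u_j-u|>\delta\}$ and using convergence in capacity. The only cosmetic difference is that the paper fixes $\varepsilon$ and takes $j$ large so that $\capa_\omega(\{|u_j-u|>\delta\})<\varepsilon$, whereas you let $j\to\infty$ first; both arguments are sound.
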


\begin{proof}
        Fixing $\varepsilon>0$ there exists a  continuous function $v$ on $X$ such that
        \[
        \capa_{\omega}(\{x\in X\; :
        u(x)\neq v(x)\}) < \varepsilon.
        \]
        Let $A>0$ be a constant such that $|u_j|+|u|+|v|\leq A$ on $X$. Fix $\delta>0$. For $j>N$ large enough we have, by the assumption that $u_j$ converges in capacity to $u$, that
        \[
        \capa_{\omega} (\{x\in X\; : \; |u_j(x) -u(x)|>\delta ) <
        \varepsilon.
        \]
        Fixing a continuous function $\chi$ and $j > N$, it follows from the above that
        \begin{align*}
                &|\int_X (\chi u_j \mu_j - \chi u d\mu) | \leq \int_X |\chi
                (u_j-u) | \mu_j + |\int_X \chi u (\mu_j -\mu) | \\
                & \leq \delta \int_X |\chi| \mu_j + A\sup_X |\chi| \mu_j(\{|u_j-u|>\delta \})
                       \\
                       & + |\int_X \chi (u-v) (\mu_j-\mu)| + |\int_X \chi v
                        (\mu_j-\mu)|\\
                &\leq \delta \int_X |\chi| \mu_j + A\sup_X |\chi|f(\varepsilon)
                + |\int_X \chi (u-v) (\mu_j-\mu)| + |\int_X \chi v
                (\mu_j-\mu)|\\
                &\leq \delta \int_X |\chi| \mu_j + 2A\sup_X |\chi|
                        f(\varepsilon) + |\int_X \chi v (\mu_j-\mu)|.
        \end{align*}
        Since $v$ is continuous on $X$ the last term converges to $0$ as $j\to \infty$. This completes the proof.
\end{proof}

The following lower-semicontinuity property of non-pluripolar products from \cite{DDL2} will be key in the sequel: 
\begin{theorem}
	\label{thm: lsc of MA measures}
	Let $\theta^j, j \in \{1,\ldots,n\}$ be smooth closed real $(1,1)$-forms on $X$ whose cohomology classes are big. Suppose that for all $j \in \{1,\ldots,n\}$  we have $u_j,u_j^k\in \textup{PSH}(X,\theta^j)$ such that  $u^k_j \to u_j$ in capacity as $k \to \infty$, and let $\chi_k,\chi \geq 0$ be quasi-continuous and uniformly bounded such that $\chi_k \to \chi$ in capacity. Then 
\begin{equation}\label{eq: liminf_lim}
	\liminf_{k\to \infty} \int_X \chi_k \theta^1_{u^k_1} \wedge \ldots \wedge \theta^n_{u^k_n}  \geq  \int_X \chi  \theta^1_{u_1} \wedge \ldots \wedge \theta^n_{u_n}. 
\end{equation}
If additionally,  
	\begin{equation}\label{eq: global_mass_semi_cont}
	\int_ X \theta^1_{u_1} \wedge \ldots \wedge \theta^n_{u_n} \geq \limsup_{k\rightarrow \infty} \int_X \theta^1_{u^k_1} \wedge \ldots \wedge \theta^n_{u^k_n},
	\end{equation}
then $\chi_k \theta^1_{u^k_1} \wedge \ldots \wedge \theta^n_{u^k_n}  \to  \chi \theta^1_{u_1} \wedge \ldots \wedge \theta^n_{u_n}$ in the weak sense of measures on $X$. 
\end{theorem}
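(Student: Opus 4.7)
The plan is to reduce the problem to the bounded case via truncation, apply the quasi-open set convergence from Lemma~\ref{lem: cv MA quasi-open} to uniformly bounded potentials, and then let the truncation parameter tend to infinity using the very definition of the non-pluripolar product. For each $M > 0$ set $u_j^{k,M} := \max(u_j^k, V_{\theta^j} - M)$ and $u_j^M := \max(u_j, V_{\theta^j} - M)$. On compact subsets of $\Amp(\{\theta^j\})$ these potentials are uniformly bounded, and since taking $\max$ with a fixed locally bounded function is stable under convergence in capacity, $u_j^{k,M} \to u_j^M$ in capacity as $k \to \infty$.

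Fix $\varepsilon > 0$ and consider the fixed plurifine-open set $U_M := \bigcap_j \{u_j > V_{\theta^j} - M + \varepsilon\}$. By the definition \eqref{eq: BEGZ10_def} of the non-pluripolar product,
\[
\int_X \chi_k \, \theta^1_{u_1^k} \wedge \ldots \wedge \theta^n_{u_n^k} \geq \int_{\bigcap_j \{u_j^k > V_{\theta^j} - M\}} \chi_k \, \theta^1_{u_1^{k,M}} \wedge \ldots \wedge \theta^n_{u_n^{k,M}}.
\]
Since $u_j^k \to u_j$ in capacity, for $k$ large $U_M$ is contained in $\bigcap_j \{u_j^k > V_{\theta^j} - M\}$ up to an exceptional set of arbitrarily small capacity; Chern--Levine--Nirenberg estimates for bounded potentials show that the truncated Monge--Amp\`ere mass of this exceptional set is negligible. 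Applying Lemma~\ref{lem: cv MA quasi-open} to the fixed quasi-open set $U_M$ with the bounded potentials $u_j^{k,M} \to u_j^M$ and $\chi_k \to \chi$, and using plurifine locality (Lemma~\ref{lem: plurifine}) to identify $\theta^j_{u_j^M} = \theta^j_{u_j}$ on $U_M$, yields
\[
\liminf_{k \to \infty} \int_X \chi_k \, \theta^1_{u_1^k} \wedge \ldots \wedge \theta^n_{u_n^k} \geq \int_{U_M} \chi \, \theta^1_{u_1} \wedge \ldots \wedge \theta^n_{u_n}.
\]
Letting $M \to \infty$ (and $\varepsilon$ shrinking as needed), the sets $U_M$ exhaust $\bigcap_j \{u_j > -\infty\}$, whose complement is pluripolar and therefore negligible for the non-pluripolar product; monotone convergence on the right gives \eqref{eq: liminf_lim}.

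For the weak convergence assertion under \eqref{eq: global_mass_semi_cont}, first apply \eqref{eq: liminf_lim} with $\chi_k \equiv 1$ and combine with the mass hypothesis to deduce $\int_X \theta^1_{u_1^k} \wedge \ldots \to \int_X \theta^1_{u_1} \wedge \ldots$. Then, for any non-negative continuous $\psi$ and a uniform bound $B$ of $\chi_k$, applying \eqref{eq: liminf_lim} separately to $\psi \chi_k \to \psi \chi$ and to $\psi(B - \chi_k) \to \psi(B - \chi)$ yields matching $\liminf$- and $\limsup$-inequalities for $\int_X \psi \chi_k \, \theta^1_{u_1^k} \wedge \ldots$; decomposing a general continuous test function into its positive and negative parts delivers the weak convergence. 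The main obstacle throughout is the replacement of the $k$-dependent set $\bigcap_j \{u_j^k > V_{\theta^j} - M\}$ by the fixed quasi-open $U_M$: this step hinges on a Chern--Levine--Nirenberg bound controlling truncated Monge--Amp\`ere masses by $\capa_\omega$, valid precisely because the potentials $u_j^{k,M}$ are uniformly bounded on compact subsets of $\Amp(\{\theta^j\})$.
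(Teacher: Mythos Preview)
Your argument is essentially correct and follows the same overall architecture as the paper: truncate the potentials to reduce to the bounded case, pass to the limit there, then let the truncation level go to infinity. The paper, however, handles the key step differently. Rather than replacing the $k$-dependent set $\bigcap_j\{u_j^k>V_{\theta^j}-M\}$ by a fixed quasi-open set $U_M$ and bounding the mismatch via a Chern--Levine--Nirenberg capacity estimate, the paper multiplies by the continuous $k$-dependent cutoffs
\[
f_j^{k,C,\varepsilon}=\frac{\max(u_j^k-V_{\theta^j}+C,0)}{\max(u_j^k-V_{\theta^j}+C,0)+\varepsilon},
\]
which vanish on $\{u_j^k\le V_{\theta^j}-C\}$ and themselves converge in capacity as $k\to\infty$. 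This allows a direct application of Proposition~\ref{prop: xing_conv} on a relatively compact open $U\Subset\bigcap_j\Amp(\{\theta^j\})$, with no error term to control; one then lets $\varepsilon\to0$, $C\to\infty$, and $U\nearrow\Omega$. Your route trades this clean trick for a separate capacity bound on the exceptional set, which works but is more laborious.

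A point you should make explicit: both Lemma~\ref{lem: cv MA quasi-open} and the CLN bound you invoke require uniformly bounded potentials on the ambient set, so the whole argument must first be run on a fixed relatively compact open $G\Subset\bigcap_j\Amp(\{\theta^j\})$ (where each $V_{\theta^j}$ is bounded) and only afterwards exhausted. You acknowledge this in your final paragraph but never actually introduce $G$ in the body of the proof; as written, the appeal to Lemma~\ref{lem: cv MA quasi-open} on the global quasi-open set $U_M$ is not directly justified. Your treatment of the second statement (testing against $\psi\chi_k$ and $\psi(B-\chi_k)$, then splitting $\psi=\psi^+-\psi^-$) is correct and in fact spells out what the paper leaves implicit.
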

\begin{proof}
	Set $\Omega:=\bigcap_{j=1}^n {\rm Amp}({\theta^j})$ and fix an open relatively compact subset $U$ of $\Omega$.  Then  the functions $V_{\theta^j}$ are bounded on $U$. We now use a classical idea in pluripotential theory. Fix $C>0,\varepsilon>0$ and consider
	\[
	f_j^{k,C,\varepsilon}:= \frac{\max(u_j^{k}-V_{\theta^j}+C,0)}{\max(u_j^{k}-V_{\theta^j}+C,0)+\varepsilon}, \ j=1,...,n, \ k\in \mathbb{N}^*,
	\]
	and 
	\[
	u_{j}^{k,C}:= \max(u_j^k, V_{\theta^j}-C). 
	\]
	Observe that for $C,j$ fixed, the functions $u_{j}^{k,C}\geq V_{\theta^j}-C$ are uniformly bounded in $U$ (since $V_{\theta^j}$ is bounded in $U$) and converge in capacity to $u_j^{C}$ as $k\to \infty$. Moreover, $f_{j}^{k,C,\varepsilon}=0$ if $u_j^{k}\leq V_{\theta^j}-C$. By locality of the non-pluripolar product we can write 
	\[
	f^{k,C,\varepsilon}  \chi_k \theta^1_{u^k_1} \wedge \ldots \wedge \theta^n_{u^k_n} =f^{k,C,\varepsilon}  \chi_k \theta^1_{u^{k,C}_1} \wedge \ldots \wedge \theta^n_{u^{k,C}_n},
	\]
	where $f^{k,C,\varepsilon}=f_1^{k,C,\varepsilon}\cdots f_n^{k,C,\varepsilon}$. 
For each $C,\varepsilon$ fixed the functions $f^{k,C,\varepsilon}$ are quasi-continuous,  uniformly bounded (with values in $[0,1]$) and converge in capacity to $f^{C,\vep}:=f_1^{C,\vep}\cdots f_n^{C,\vep}$, where $f_j^{C,\vep}$  is defined by 
	\[
	f_j^{C,\vep}  := \frac{\max(u_j-V_{\theta^j}+C,0)}{\max(u_j-V_{\theta^j}+C,0)+\vep}. 
	\]
With the information above we can apply Proposition \ref{prop: xing_conv}  to get that
	\[
	f^{k,C,\vep}  \chi_k \theta^1_{u^{k,C}_1} \wedge \ldots \wedge \theta^n_{u^{k,C}_n}  \rightarrow f^{C,\vep}  \chi \theta^1_{u^{C}_1} \wedge \ldots \wedge \theta^n_{u^{C}_n} \ \textrm{as}\ k\to \infty,
	\]
	in the weak sense of measures on $U$. In particular since $0\leq f^{k,C,\vep}\leq 1$ we have that 
	\begin{eqnarray*}
		\liminf_{k\to\infty} \int_X \chi_k \theta^1_{u^{k}_1} \wedge \ldots \wedge \theta^n_{u^{k}_n} & \geq &  \liminf_{k\to\infty} \int_U f^{k,C,\vep}  \chi \theta^1_{u^{k,C}_1} \wedge \ldots \wedge \theta^n_{u^{k,C}_n}\\
		&\geq & \int_U f^{C,\vep}  \chi \theta^1_{u^{C}_1} \wedge \ldots \wedge \theta^n_{u^{C}_n}.
	\end{eqnarray*}
	Now, letting $\vep\to 0$ and then $C\to \infty$, by definition of the non-pluripolar product we obtain 
	\[
		\liminf_{k\to\infty} \int_X \chi_k \theta^1_{u^{k}_1} \wedge \ldots \wedge \theta^n_{u^{k}_n}   
		\geq  \int_U  \chi \theta^1_{u_1} \wedge \ldots \wedge \theta^n_{u_n}.
	\]
	Finally, letting $U$ increase to $\Omega$ and noting that the complement of $\Omega$ is pluripolar, we conclude the proof of the first statement of the theorem.  
	
To prove the last statement, we first notice that we actually have equality in \eqref{eq: global_mass_semi_cont} and the limsup is a lim, as one can just plug $\chi =1$ in \eqref{eq: liminf_lim}.

Now let $B \in \mathbb R$ such that $\chi,\chi_k \leq B$. By \eqref{eq: liminf_lim} we get that

	\[
	\liminf_{k\to \infty} \int_X (B - \chi_k) \theta^1_{u^k_1} \wedge \ldots \wedge \theta^n_{u^k_n}  \geq  \int_X (B - \chi)  \theta^1_{u_1} \wedge \ldots \wedge \theta^n_{u_n}. 
	\]
Flipping the signs and using (equality in) \eqref{eq: global_mass_semi_cont}, we conclude the following inequality, finishing the proof:
	\[
	\limsup_{k\to \infty} \int_X \chi_k \theta^1_{u^k_1} \wedge \ldots \wedge \theta^n_{u^k_n}  \leq  \int_X \chi  \theta^1_{u_1} \wedge \ldots \wedge \theta^n_{u_n}. 
	\]
\end{proof}

\paragraph{Envelopes.} 
If $f$ is a function on $X$, we define the  envelope of $f$ in the class ${\rm PSH}(X,\theta)$ by
\[
P_{\theta}(f) := \left(\sup \{u\in {\rm PSH}(X,\theta) \; : \;  u\leq f\}\right)^*,
\]
with the convention that $\sup\emptyset =-\infty$. Observe that $P_{\theta}(f)\in {\rm PSH}(X,\theta)$ if and only if there exists some $u\in {\rm PSH}(X,\theta)$ lying below $f$. Note also that $V_{\theta}=P_{\theta}(0)$, and that $P_{\theta}(f+C)= P_{\theta}(f)+C$ for any constant $C$. 

\noindent In the particular case $f=\min(\psi,\phi)$, we denote the envelope as
$ P_\theta(\psi,\phi):=P_\theta(\min(\psi,\phi))$.
We observe that $ P_\theta(\psi,\phi)=  P_\theta(P_\theta(\psi),P_\theta(\phi))$, so w.l.o.g. we can assume  $\psi,\phi$ are two $\theta$-psh functions.

In our first technical result about envelopes, we show that the mass $\theta_{P_\theta(f)}^n$ is concentrated on the contact set $\{P_{\theta}(f)=f\}$:

\begin{theorem}\label{thm: envelope contact}
    Assume $f$ is quasi-continuous on $X$ and $P_{\theta}(f)\in {\rm PSH}(X,\theta)$. Then 
    \[
    \int_{\{P_{\theta}(f)<f\}} (\theta+dd^c P_{\theta}(f))^n =0. 
    \]
\end{theorem}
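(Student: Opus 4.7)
I would argue by contradiction using a Bedford--Taylor balayage. Setting $u := P_\theta(f)$, suppose $\theta_u^n(\{u<f\})>0$; the plan is to construct $\tilde u \in \textup{PSH}(X,\theta)$ with $\tilde u \geq u$, $\tilde u \leq f$, and $\tilde u > u$ on a set of positive Lebesgue measure, contradicting the maximality of $u = P_\theta(f)$.

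The first step is to reduce to data whose Monge--Amp\`ere mass is absolutely continuous with respect to $\capa_\omega$. Put $u_k := \max(u, V_\theta - k)$, which is locally bounded on $\Amp(\{\theta\})$. By plurifine locality (Lemma~\ref{lem: plurifine}), $\theta_u^n$ and $(\theta + dd^c u_k)^n$ agree on the plurifine open set $\{u > V_\theta - k\}$, and the non-pluripolar product is the monotone limit
\[
\theta_u^n = \lim_{k\to \infty} \id_{\{u > V_\theta - k\}}(\theta+dd^c u_k)^n.
\]
Since the complement of $\Amp(\{\theta\})$ is pluripolar (hence $\theta_u^n$-null), for some large $k$ the restriction of $(\theta + dd^c u_k)^n$ to $\{u<f\}\cap\{u > V_\theta - k\}\cap \Amp(\{\theta\})$ is a bounded, nonzero measure. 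Bounded Monge--Amp\`ere measures are absolutely continuous with respect to $\capa_\omega$; combining this with the quasi-continuity of $f$ and of $u$, together with inner regularity of Radon measures, I extract $\delta>0$ and a compact $F \subset \Amp(\{\theta\}) \cap \{u+\delta<f\}$ on which both $u$ and $f$ are continuous and $\theta_u^n(F)>0$.

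Next, pick a small Euclidean ball $B \subset \Amp(\{\theta\})$ meeting $F$, with $\theta = dd^c \rho$ for some smooth $\rho$ on $B$, and with $f > u + \delta/2$ throughout $B$ (by continuity). Solve the local Dirichlet problem for $(dd^c\cdot)^n$ on $B$ with continuous boundary data $(u+\rho)|_{\partial B}$, obtaining the Perron envelope $H \in \textup{PSH}(B)\cap C(\bar B)$ with $(dd^c H)^n = 0$ on $B$ and $H \geq u+\rho$. The positivity $(dd^c(u+\rho))^n(B \cap F) = \theta_u^n(B\cap F)>0$ forces $H \not\equiv u+\rho$ on $B$, and continuity then gives $H > u+\rho$ on a nonempty open subset. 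Shrinking $B$ further, continuous dependence of the Perron envelope on boundary data ensures $H \leq u + \rho + \delta/4$ on $B$. Defining $\tilde u := H - \rho$ on $B$ and $\tilde u := u$ on $X\setminus B$ yields $\tilde u \in \textup{PSH}(X,\theta)$ satisfying $\tilde u \leq u + \delta/4 < f$ on $B$, $\tilde u = u \leq f$ outside, and $\tilde u > u$ on an open subset of $B$ --- contradicting $u = P_\theta(f)$.

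The main obstacle is the balayage step: simultaneously arranging $\tilde u > u$ on a non-negligible set (forced by the positive Monge--Amp\`ere mass of $u$ near $F$) and $\tilde u \leq f$ globally (requiring that the Perron envelope $H$ stay close to its boundary data $u+\rho$, controlled by shrinking $B$ and exploiting the strict gap $f - u > \delta/2$). The quasi-continuity of $f$ (rather than continuity) is absorbed into the preliminary extraction of $F$; this step crucially relies on the truncation via $u_k$ to access the classical Bedford--Taylor theory for bounded potentials.
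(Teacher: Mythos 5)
Your overall strategy (contradiction via a local balayage/lifting) is natural, but it breaks down at exactly the point where the quasi-continuity of $f$ has to be confronted, and the proposal does not resolve it. Quasi-continuity gives, for each $\varepsilon>0$, an open set $U$ with $\capa_\omega(U)<\varepsilon$ such that $f$ is continuous on $X\setminus U$; the set $X\setminus U$ is closed but in general has empty interior, so no Euclidean ball avoids $U$. Consequently your claim that one can choose a ball $B$ meeting $F$ with ``$f>u+\delta/2$ throughout $B$ (by continuity)'' is unjustified: $f$ and $u$ are continuous only on the compact set $F$, not on a neighborhood, and the set $\{u+\delta/2<f\}$ is merely quasi-open. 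The function $f$ may equal $u$ on a dense subset of $B\setminus F$, in which case any competitor $\tilde u$ with $\tilde u>u$ on a nonempty open subset of $B$ violates $\tilde u\le f$ there, and the contradiction never materializes. The same lack of continuity defeats the step ``continuous dependence of the Perron envelope on boundary data ensures $H\le u+\rho+\delta/4$'': the lifting $H$ is controlled by $\sup_{\partial B}(u+\rho)$, so keeping $H-(u+\rho)$ small requires a modulus of continuity for $u$ on $\bar B$, and $u=P_\theta(f)$ is only upper semicontinuous, so $\osc_{\bar B}u$ need not shrink with $B$.

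This is precisely the difficulty the paper's proof is built to handle. There, $f$ is approximated \emph{from above} by lower semicontinuous functions $g_j=\sup_{k\ge j}f_k$, where each $f_k$ is a Tietze extension of $f|_{X\setminus U_k}$ with $\capa_\omega(U_k)\le 2^{-k-1}$; the balayage statement for lsc obstacles \cite[Corollary 9.2]{BT82} applies directly to $g_j$ (no open set where $g_j-P_\theta(g_j)$ has a definite gap is needed), and one then passes to the limit using the capacity bound $\id_{G\cap E}(\theta+dd^cP_\theta(g_j))^n\le B^n\capa_\omega(E)$ together with Proposition~\ref{prop: xing_conv}, plus a final truncation step for unbounded $f$. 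If you want to keep a contradiction-style argument you would still need some version of this lsc approximation; as written, the ball construction cannot be carried out.
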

\begin{proof}
We can assume that $\theta\leq \omega$. 
    Since $P_{\theta}(f)\leq C$ is bounded from above, by replacing $f$ by $\min(f,C)$ we can assume that $f$ is bounded from above. Shifting $f$ by a constant we can also assume $f\leq 0$. 

    {\bf Step 1.} We assume that $f$ is bounded from below $f\geq -C_0$. For each $j\geq 1$, there is an open set $U_j\subset X$ such that ${\rm Cap}_{\omega}(U_j)\leq 2^{-j-1}$ and the restriction of $f$ on $X\setminus U_j$ is continuous. By taking $\cup_{k\geq j} U_k$ we can assume that the sequence $U_j$ is decreasing. By the Tietze extension theorem, there is a function $f_j$ continuous on $X$ such that $f_j=f$ on $D_j:=X\setminus U_j$, moreover $-C_0\leq f_j\leq 0$. For each $j$ we define 
\[
g_j:= \sup_{k\geq j} f_k.
\]
We observe that $g_j$ is lower-semicontinuous on $X$, $g_j=f$ on $D_j$, and $g_j$ decreases to some function $g$ on $X$.   Since the sequence $(D_j)$ is increasing, it follows that $g_j=f$ on $D_k$ for all $k\leq j$. Thus letting $j\to\infty$ gives $g=f$ on $D_k$ for all $k$. This implies $g=f$ in $X$ except for a set of capacity equal to zero. Hence $g=f$ quasi-everywhere in $X$, and $P_{\theta}(f)=P_{\theta}(g)$.  Since $g_j$ is lower-semicontinuous in $X$, by the balayage method, \cite[Corollary 9.2]{BT82}, we have
\[
\int_{\Omega} (1-e^{P_{\theta}(g_j)-g_j})(\theta +dd^c P_{\theta}(g_j))^n=0, 
\]
where $\Omega$ is the ample locus of $\{\theta\}$.  Fix an open set $G\Subset \Omega$. Since $f_j$ is uniformly bounded on $X$, we infer that $P_{\theta}(g_j)-V_{\theta}$ is uniformly bounded, hence there is a constant $B>0$ such that $-B\leq P_{\theta}(g_j) \leq 0$ in $G$. It follows from the plurifine locality that, for all Borel set $E$, 
\[
\id_{G\cap E} (\theta+dd^c P_{\theta}(g_j))^n \leq  \id_{G\cap E} (\omega+dd^c \max(P_{\theta}(g_j),-B))^n\leq B^n {\rm Cap}_{\omega}(E).
\]
It follows from all the above that
\begin{flalign*}
\int_{G}  &|1-e^{P_{\theta}(g_j)-f}|(\theta+dd^c P_{\theta}(g_j))^n \\ 
&= \int_{D_j \cap G} |1-e^{P_{\theta}(g_j)-f}|(\theta+dd^c P_{\theta}(g_j))^n
+ \int_{U_j\cap G}  |1-e^{P_{\theta}(g_j)-f}|(\theta+dd^c P_{\theta}(g_j))^n\\
& =\int_{D_j\cap G} (1-e^{P_{\theta}(g_j)-g_j})(\theta+dd^c P_{\theta}(g_j))^n
+ \int_{U_j\cap G}  |1-e^{P_{\theta}(g_j)-f}|(\theta+dd^c P_{\theta}(g_j))^n\\
&\leq B^n \sup_X |1-e^{P_{\theta}(g_j)-f}|  {\rm Cap}_{\omega}(U_j)\leq C2^{-j-1}. 
\end{flalign*}
The functions $|1-e^{P_{\theta}(g_j)-f}|$ are uniformly bounded and (by construction) converge in capacity to the quasi-continuous function $|1-e^{P_\theta(f)-f}|$. It thus follows from Proposition \ref{prop: xing_conv} that 
\[
|1-e^{P_\theta(g_j)-f}|(\theta+dd^c P_{\theta}(g_j))^n\; \text{ weakly converges to } \; |1-e^{P_\theta(f)-f}|(\theta+dd^c P_{\theta}(f))^n,
\] 
hence 
\begin{flalign*}
\liminf_{j\to \infty} (C2^{-j-1}) &\geq \liminf_{j\to \infty} \int_{G} |1-e^{P(g_j)-f}|(\theta+dd^c P_{\theta}(g_j))^n\\
&\geq \int_{G} |1-e^{P_{\theta}(f)-f}|(\theta+dd^c P_{\theta}(f))^n \geq 0. 
\end{flalign*}
We then infer that 
$$ \int_{G} |1-e^{P_{\theta}(f)-f}|(\theta+dd^c P_{\theta}(f))^n = 0. $$
Letting $G$ increase to $\Omega$, we can conclude that $(\theta+dd^c P_{\theta}(f))^n$ is concentrated on the contact set $\{P_{\theta}(f)=f\}$. 
\smallskip

{\bf Step 2.} For the general case we approximate $f$ by $f_j:=\max(f,-j)$. By the first step,  we have 
\[
\int_{\{P_{\theta}(f_j)<f_j\}} (\theta+dd^c P_{\theta}(f_j))^n=0. 
\]
Fix $C>0$ and an open set $G\Subset \Omega$. Consider the quasi-open set $U:= G \cap \{P_{\theta}(f) > V_{\theta}-C\}$.  Since $P_{\theta}(f_j) \geq P_{\theta}(f)$, thanks to the plurifine property we have  
\[
{\bf 1}_U (\theta+dd^c \max(P_{\theta}(f_j),V_{\theta}-C))^n = {\bf 1}_{U} (\theta+dd^c P_{\theta}(f_j))^n,
\]
hence 
\[
\int_U (1-e^{P_{\theta}(f_j) -f_j)}) (\theta+dd^c \max(P_{\theta}(f_j),V_{\theta}-C))^n=0.  
\]
Observe that $P_\theta(f_j)$ decreases to $P_\theta(f)$, hence it converges in capacity. Letting $j\to \infty$ and using Lemma \ref{lem: cv MA quasi-open} we obtain 
\[
\int_U (1-e^{P_{\theta}(f) -f)}) (\theta+dd^c \max(P_{\theta}(f),V_{\theta}-C))^n=0,  
\]
hence
\[
\int_U (1-e^{P_{\theta}(f) -f)}) (\theta+dd^c P_{\theta}(f))^n=0. 
\]
From this, letting $C\to \infty$ and $U\nearrow \Omega$ we obtain the result. 
\end{proof}

In order to prove a regularity result for $\theta_{V_{\theta}}^n$, we will need two preliminaries lemmas.
We recall that $C^{1, \overline{1}}(X)$ denotes the space of continuous function with bounded distributional Laplacian w.r.t. $\omega$.
\begin{lemma}\label{min}
If $f_1, f_2\in C^{1,\bar{1}}(X)$, then $P_{\omega}(f_1,f_2)\in C^{1,\bar{1}}(X)$ and  for $i= 1,2$  the functions $f_i$ and   $P_{\omega}(f_1,f_2)$ are equal up to second order at almost every point  on the set  $\{ P_{\omega}(f_1,f_2)=f_i  \}$.  In particular, the measures 
$$ {\bf 1}_{ \{ P_{\omega}(f_1,f_2)  = f_1 \} }  \omega_{f_1}^n, \quad  {\bf 1}_{ \{ P_{\omega}(f_1,f_2)  = f_2 \} }  \omega_{f_2}^n, \quad (j=1,2),$$
are positive and 
\begin{eqnarray}\label{eq: measure_pointwise}
{\omega}_{P_{\omega}(f_1,f_2)}^n &=& {\bf 1}_{ \{ P_{\omega}(f_1,f_2)  = f_1 \} }  \omega_{f_1}^n+  {\bf 1}_{ \{ P_{\omega}(f_1,f_2)  = f_2 \} }  \omega_{f_2}^n -  {\bf 1}_{ \{ P_{\omega}(f_1,f_2)  = f_1 = f_2 \} } \omega_{f_1}^n  \\ 
&=& {\bf 1}_{ \{ P_{\omega}(f_1,f_2)  = f_1 \} }  \omega_{f_1}^n +  {\bf 1}_{ \{ P_{\omega}(f_1,f_2)  = f_2 \} }  \omega_{f_2}^n -  {\bf 1}_{ \{ P_{\omega}(f_1,f_2)  = f_1 = f_2 \} } \omega_{f_2}^n.\nonumber
\end{eqnarray}
\end{lemma}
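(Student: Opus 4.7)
The plan is to establish two structural facts and then assemble the identity from them: (i) $P_\omega(f_1, f_2) \in C^{1, \bar{1}}(X)$, and (ii) on each contact set $\{P_\omega(f_1, f_2) = f_i\}$, the functions $P_\omega(f_1, f_2)$ and $f_i$ share first- and second-order jets at almost every point.

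For (i), since $P_\omega(f_1, f_2) = P_\omega(\min(f_1, f_2))$ and the obstacle $\min(f_1, f_2)$ is only Lipschitz, I would approximate it from above by a smooth family such as $m_\vep := -\vep \log\bigl(e^{-f_1/\vep} + e^{-f_2/\vep}\bigr)$, apply to each $P_\omega(m_\vep)$ the $C^{1, \bar{1}}$ regularity for envelopes of smooth (or $C^{1, \bar{1}}$) obstacles, in the spirit of \cite{DNT23} and the references therein, and pass to the limit $\vep \downarrow 0$. The crux is a uniform $C^{1, \bar{1}}$ estimate in $\vep$; this cannot come from $\Delta m_\vep$ directly, which blows up as $\vep \to 0$, but one expects a bound in terms of $\max(\|f_1\|_{C^{1, \bar{1}}}, \|f_2\|_{C^{1, \bar{1}}})$ since on the contact set the envelope is pinned by one of the two genuinely regular obstacles rather than by the singular feature of $m_\vep$.

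For (ii), I would invoke the following standard touching lemma: if $u, v \in C^{1, \bar{1}}(X)$ satisfy $u \leq v$ and $u = v$ on a Borel set $E$, then at a.e.\ density point of $E$ where both are twice differentiable in the sense of Aleksandrov, one has $\nabla u = \nabla v$ and $D^2 u = D^2 v$. Indeed, choosing a sequence $x_k \in E$ with $x_k \to x$ and comparing the second-order Taylor expansions of $u$ and $v$ at $x$ forces the quadratic form $(D^2 v - D^2 u)(x)$ to vanish in almost every direction, hence to vanish. Applied to each pair $(P_\omega(f_1, f_2), f_i)$, this gives Hessian matching a.e.\ on the contact sets.

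To assemble \eqref{eq: measure_pointwise}: by Theorem \ref{thm: envelope contact}, $\omega_{P_\omega(f_1, f_2)}^n$ is concentrated on $\{P_\omega(f_1, f_2) = \min(f_1, f_2)\} = \{P_\omega(f_1,f_2) = f_1\} \cup \{P_\omega(f_1,f_2) = f_2\}$; by (ii), on each piece the Hessians agree a.e., so $\mathbf{1}_{\{P_\omega(f_1,f_2) = f_i\}} \omega_{P_\omega(f_1, f_2)}^n = \mathbf{1}_{\{P_\omega(f_1,f_2) = f_i\}} \omega_{f_i}^n$, which in particular shows the restricted measures $\mathbf{1}_{\{P_\omega(f_1,f_2) = f_i\}} \omega_{f_i}^n$ are positive. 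A straightforward inclusion--exclusion then yields the stated identity, and the two forms of the correction term coincide because on the triple-coincidence set $\{P_\omega(f_1,f_2) = f_1 = f_2\}$ applying (ii) to both pairs forces $\omega_{f_1}^n = \omega_{f_2}^n$ a.e. The main technical obstacle is the uniform $C^{1, \bar{1}}$ estimate underlying (i); once that is in hand, part (ii) and the final bookkeeping are essentially routine real-analytic consequences.
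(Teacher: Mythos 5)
Your parts (ii) and (iii) are sound and essentially reproduce the route the paper takes: the a.e.\ second-order matching at density points of the contact set is exactly the argument of \cite[Proposition 2.2]{Dar17AJM} that the paper invokes (the quadratic form of $f_i - P_\omega(f_1,f_2)$ is nonnegative, vanishes at a density point together with its gradient, and must then vanish on a dense set of directions, hence identically), and once the Hessians agree a.e.\ on each contact set the concentration statement of Theorem \ref{thm: envelope contact} plus inclusion--exclusion gives \eqref{eq: measure_pointwise}, with the two forms of the correction term coinciding because on the triple-contact set both pairs of Hessians match. The positivity of $\id_{\{P_\omega(f_1,f_2)=f_i\}}\omega_{f_i}^n$ also follows correctly, since there $\omega+dd^cf_i$ agrees a.e.\ with the positive current $\omega+dd^cP_\omega(f_1,f_2)$.

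The genuine gap is in part (i). The paper does not prove the $C^{1,\bar 1}$ regularity of the rooftop envelope; it cites \cite[Theorem 2.5(ii)]{DR16} (and \cite{Ber19}, \cite{DNT23}, \cite{Tos18} for the single-obstacle case). Your proposed substitute --- smoothing $\min(f_1,f_2)$ by $m_\vep=-\vep\log(e^{-f_1/\vep}+e^{-f_2/\vep})$ and applying the single-obstacle estimate to $P_\omega(m_\vep)$ --- founders exactly where you say it does: every available single-obstacle bound controls $\Delta P_\omega(f)$ in terms of $\sup_X\Delta f$ (or $\|f\|_{C^{1,1}}$), and $\Delta m_\vep$ blows up like $\vep^{-1}$ near $\{f_1=f_2\}$. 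The heuristic that ``the envelope is pinned by one of the two genuinely regular obstacles'' is not a proof: the contact set of $P_\omega(m_\vep)$ can meet the transition region $\{|f_1-f_2|\lesssim\vep\}$, and the known Laplacian estimates for envelopes are obtained by global barrier/maximum-principle arguments that see the obstacle's second derivatives wherever the contact occurs, not merely ``which obstacle is active''. Obtaining a bound depending only on $\max_i\|f_i\|_{C^{1,\bar 1}}$ is precisely the nontrivial content of the Darvas--Rubinstein theorem, whose proof proceeds by a different mechanism rather than by regularizing the minimum. As written, your argument for (i) is therefore an unproved expectation at its crux; either supply that uniform estimate or, as the paper does, quote \cite[Theorem 2.5(ii)]{DR16}.
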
    
The $C^{1,\bar{1}}$ regularity of the envelope $P_\omega(f_1)$ is due to Berman \cite{Ber19}. For the $C^{1,\bar{1}}$ regularity of the envelope $P_\theta(f_1)$ in the case $\{\theta\}$ is big we refer to \cite{DNT23}.  The $C^{1,\bar{1}}$ regularity of the rooftop envelope $P_\omega(f_1,f_2)$ was proved in \cite[Theorem 2.5(ii)]{DR16}. For a detailed presentation of these results, we  refer to \cite[Appendix A.1]{Dar18survey}. For Hessian estimates which give the optimal $C^{1,1}$ regularity, see \cite[Theorem 3.1]{Tos18}. Using the $C^{1,\bar{1}}$ estimates, one can reason the same way as in the short argument of \cite[Proposition 2.2]{Dar17AJM} to conclude \eqref{eq: measure_pointwise}.

\begin{lemma}\label{lem: concentration max} Let $\varphi,\psi\in {\rm PSH}(X,\theta)$. Then
\begin{equation}\label{eq: Demailly_est}
\theta_{\max(\varphi, \psi)}^n \geq {\bf 1}_{\{\psi \leq \varphi\}}\theta_\varphi^n + {\bf 1}_{\{\varphi <\psi\}}\theta_\psi^n.
\end{equation}
In particular, if $\varphi \leq \psi$ then ${\bf 1}_{\{\varphi=\psi\}} \theta_\varphi^n \leq {\bf 1}_{\{\varphi=\psi\}} \theta_\psi^n.$
\end{lemma}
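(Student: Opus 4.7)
My plan is to reduce to the classical local Demailly inequality for bounded $\theta$-psh functions via a truncation against $V_\theta - k$, and then transfer the resulting inequality to the unbounded setting using plurifine locality.

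First I would introduce the truncations $\varphi_k := \max(\varphi, V_\theta - k)$ and $\psi_k := \max(\psi, V_\theta - k)$ for $k \in \mathbb{N}$. These are $\theta$-psh functions of minimal singularity type, hence locally bounded on the ample locus $\Omega := \Amp(\{\theta\})$, whose complement is pluripolar in the big class setting. On the plurifine-open set
$$U_k := \{\varphi > V_\theta - k\} \cap \{\psi > V_\theta - k\},$$
the truncations coincide with $\varphi, \psi$, and $\max(\varphi_k, \psi_k) = \max(\varphi, \psi)$. By Lemma~\ref{lem: plurifine}, the non-pluripolar measures $\theta_{\varphi_k}^n, \theta_{\psi_k}^n, \theta_{\max(\varphi_k, \psi_k)}^n$ agree with $\theta_\varphi^n, \theta_\psi^n, \theta_{\max(\varphi, \psi)}^n$ respectively on $U_k$.

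Second, for the locally bounded potentials $\varphi_k, \psi_k$ on $\Omega$, I would invoke the classical Bedford--Taylor/Demailly inequality
$$\theta_{\max(\varphi_k, \psi_k)}^n \geq {\bf 1}_{\{\psi_k \leq \varphi_k\}} \theta_{\varphi_k}^n + {\bf 1}_{\{\varphi_k < \psi_k\}} \theta_{\psi_k}^n \textup{ on } \Omega.$$
This follows from plurifine locality on the open sets $\{\varphi_k > \psi_k\}$, $\{\psi_k > \varphi_k\}$ combined with a shift approximation that handles the contact set: the bound $\theta_{\max(\varphi_k + \epsilon, \psi_k)}^n \geq {\bf 1}_{\{\varphi_k + \epsilon > \psi_k\}} \theta_{\varphi_k}^n$ is immediate from plurifine locality, and letting $\epsilon \searrow 0$ while using Bedford--Taylor continuity of the Monge--Amp\`ere operator under decreasing limits of bounded psh functions upgrades the open strict-inequality set to the closed set $\{\varphi_k \geq \psi_k\}$. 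Multiplying the resulting inequality by ${\bf 1}_{U_k}$ and applying plurifine locality once more to rewrite the truncations as the original potentials yields
$${\bf 1}_{U_k}\, \theta_{\max(\varphi, \psi)}^n \geq {\bf 1}_{U_k \cap \{\psi \leq \varphi\}}\, \theta_\varphi^n + {\bf 1}_{U_k \cap \{\varphi < \psi\}}\, \theta_\psi^n.$$

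Finally, passing to the limit $k \to \infty$, the sets $U_k$ increase to $\{\varphi > -\infty\} \cap \{\psi > -\infty\}$, whose complement is pluripolar and thus negligible for all non-pluripolar measures involved. Monotone convergence on the indicators then produces the announced inequality $\theta_{\max(\varphi, \psi)}^n \geq {\bf 1}_{\{\psi \leq \varphi\}} \theta_\varphi^n + {\bf 1}_{\{\varphi < \psi\}} \theta_\psi^n$. The ``in particular'' clause follows immediately: if $\varphi \leq \psi$ then $\max(\varphi, \psi) = \psi$, so $\theta_{\max(\varphi, \psi)}^n = \theta_\psi^n$, and restricting the main inequality to the contact set $\{\varphi = \psi\}$ kills the ${\bf 1}_{\{\varphi < \psi\}}$ term, giving ${\bf 1}_{\{\varphi = \psi\}} \theta_\psi^n \geq {\bf 1}_{\{\varphi = \psi\}} \theta_\varphi^n$. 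The main obstacle is the contact set $\{\varphi = \psi\}$, which is neither plurifine-open nor directly accessible by locality; this difficulty is absorbed into the bounded-case inequality via the shift trick, while truncation ensures the argument transfers cleanly to the unbounded setting without needing any delicate semicontinuity of total masses.
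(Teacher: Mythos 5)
Your proof is correct and follows essentially the same route as the paper's: truncation against $V_\theta-k$, plurifine locality on the strict-inequality sets, the $\varepsilon$-shift of $\varphi_k$ combined with Bedford--Taylor convergence to capture the contact set, and finally multiplication by ${\bf 1}_{\{\min(\varphi,\psi)>V_\theta-k\}}$ and passage to the limit $k\to\infty$. No gaps.
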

\begin{proof} Let $\psi_k := \max(\psi, V_\theta -k)$ and $\varphi_k := \max(\varphi, V_\theta -k)$.

By the locality of the Monge--Amp\`ere measure with respect to the plurifine topology it follows that
    \[
   {\bf 1}_{\{\psi_k> \varphi_k\}} \theta_{\max(\psi_k,\varphi_k)}^n = {\bf 1}_{\{\psi_k>\varphi_k\}} \theta_{\psi_k}^n. 
   \]
   and 
     \[
   {\bf 1}_{\{\psi_k< \varphi_k\}} \theta_{\max(\psi_k,\varphi_k)}^n = {\bf 1}_{\{\psi_k<\varphi_k\}} \theta_{\varphi_k}^n
   \]
   holds in the ample locus of $\{\theta\}$ where all the functions above are locally bounded. As the non-pluripolar products are extended trivially over $X$, we see that the above inequality holds over $X$ in the sense of measures. 
 Considering $\max(\psi_k,\varphi_k+t)$ and letting $t\searrow 0$ we obtain 
$$\theta_{\max(\varphi_k, \psi_k)}^n \geq {\bf 1}_{\{\psi_k \leq \varphi_k\}}\theta_{\varphi_k}^n + {\bf 1}_{\{\varphi_k <\psi_k\}}\theta_{\psi_k}^n.$$

Multiplying with ${\bf 1}_{\{\min(\varphi,\psi) > V_\theta -k \}}$, and using plurifine locality  we arrive at
\[
{\bf 1}_{\{\min(\varphi,\psi) > V_\theta -k \}}\theta_{\max(\varphi, \psi)}^n \geq {\bf 1}_{\{\min(\varphi,\psi) > V_\theta -k \} \cap \{\psi \leq \varphi\}}\theta_{\varphi}^n + {\bf 1}_{\{\min(\varphi,\psi) > V_\theta -k \}\cap \{\varphi <\psi\}}\theta_{\psi}^n.
\]
Letting $k \to \infty$, \eqref{eq: Demailly_est} follows. 
\end{proof}

We are now ready to prove a regularity result about $\theta_{V_{\theta}}^n$, following \cite{DNT21}. 

\begin{theorem}\label{thm: GLZ_prop5.2}
    If $\phi \in {\rm PSH}(X,\theta)$ and $\phi\leq 0$, then 
    \[
    {\bf 1}_{\{\phi=0\}} (\theta +dd^c \phi)^n = {\bf 1}_{\{\phi=0\}} \theta^n.
    \]
    In particular, one has 
    \[
  (\theta+dd^c V_{\theta})^n = {\bf 1}_{\{V_{\theta}=0\}}\theta^n. 
    \]
\end{theorem}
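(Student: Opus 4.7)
The plan is to first deduce the ``in particular'' statement from the general one, and then prove the general statement by reduction to the case of minimal singularities, followed by a smoothing argument. For the deduction, applying Theorem~\ref{thm: envelope contact} to the continuous function $f = 0$ (noting $P_\theta(0) = V_\theta$) yields that $\theta_{V_\theta}^n$ is concentrated on $\{V_\theta = 0\}$; combined with the general statement applied to $\phi = V_\theta$, this gives
$(\theta + dd^c V_\theta)^n = {\bf 1}_{\{V_\theta = 0\}} (\theta + dd^c V_\theta)^n = {\bf 1}_{\{V_\theta = 0\}} \theta^n$.

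For the general statement, the first step is to reduce to the case where $\phi$ has minimal singularities. Given $\phi \in {\rm PSH}(X, \theta)$ with $\phi \leq 0$, set $\phi_M := \max(\phi, V_\theta - M)$ for $M > 0$. Then $\phi_M$ has minimal singularities, $\phi_M \leq 0$, and since $V_\theta - M < 0$ one has $\{\phi_M = 0\} = \{\phi = 0\}$. On the plurifine open set $\{\phi > V_\theta - M\}$, which contains $\{\phi = 0\}$, $\phi_M$ coincides with $\phi$, so Lemma~\ref{lem: plurifine} gives $\theta_{\phi_M}^n = \theta_\phi^n$ there. Thus the statement for $\phi$ reduces to the statement for $\phi_M$, and we may assume $\phi$ has minimal singularities.

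The key calculation is in the regular case. Assuming further that $\phi \in C^{1, \bar 1}(X)$, I would apply the $\theta$-big analogue of Lemma~\ref{min} (whose $C^{1, \bar 1}$ regularity input comes from \cite{DNT23}) with $f_1 = \phi$ and $f_2 = 0$. Since $\phi \leq 0$ is $\theta$-psh, $P_\theta(\phi, 0) = P_\theta(\min(\phi, 0)) = P_\theta(\phi) = \phi$, and the identity \eqref{eq: measure_pointwise} reads
\[
\theta_\phi^n = \theta_\phi^n + {\bf 1}_{\{\phi = 0\}} \theta^n - {\bf 1}_{\{\phi = 0\}} \theta_\phi^n,
\]
yielding ${\bf 1}_{\{\phi = 0\}} \theta_\phi^n = {\bf 1}_{\{\phi = 0\}} \theta^n$ after cancellation.

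To treat a general $\phi$ with minimal singularities (not necessarily $C^{1, \bar 1}$), I would approximate by a decreasing sequence $\phi_k \searrow \phi$ of $C^{1, \bar 1}$, $\theta$-psh functions with $\phi_k \leq 0$, obtained via Demailly-type regularization with a suitable renormalization. Applying the previous step to each $\phi_k$ gives ${\bf 1}_{\{\phi_k = 0\}} \theta_{\phi_k}^n = {\bf 1}_{\{\phi_k = 0\}} \theta^n$. Noting that $\{\phi_k = 0\} \searrow \{\phi = 0\}$ and $\theta_{\phi_k}^n \to \theta_\phi^n$ via Lemma~\ref{lem: cv MA quasi-open} and Theorem~\ref{thm: lsc of MA measures}, one passes to the limit. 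The main obstacle is precisely this last step: one must carefully justify convergence of ${\bf 1}_{\{\phi_k = 0\}} \theta_{\phi_k}^n$, where both the indicator on a non-open contact set and the underlying measure are varying with $k$.
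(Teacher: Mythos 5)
Your reduction to minimal singularities and your treatment of the $C^{1,\bar 1}$ case are fine (the degenerate application of \eqref{eq: measure_pointwise} with $f_1=\phi$, $f_2=0$ does give ${\bf 1}_{\{\phi=0\}}\theta_\phi^n={\bf 1}_{\{\phi=0\}}\theta^n$ when $\phi$ is globally $C^{1,\bar 1}$), but the approximation step on which the whole argument rests does not exist in the generality you need. When $\{\theta\}$ is big but not nef there are in general \emph{no} $C^{1,\bar 1}(X)$ (let alone smooth) $\theta$-psh functions at all: a minimal-singularity potential such as $V_\theta$ is only locally bounded and $C^{1,\bar 1}$ on ${\rm Amp}(\{\theta\})$, and Demailly-type regularization produces $(\theta+\varepsilon\omega)$-psh functions with analytic singularities, not a decreasing sequence of globally $C^{1,\bar 1}$ $\theta$-psh approximants of $\phi$. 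This is precisely why the paper does not regularize in the class $\{\theta\}$ directly: it first proves the identity for the K\"ahler form $\omega\geq\theta$ (where smooth decreasing $\omega$-psh approximants $f_j\searrow\phi$ are available and one works with the envelopes $P_\omega(f_j,0)$), and then transfers the statement from $\theta+t\omega$, $t$ large, down to $t=0$ by observing that $Q(t)=\int_{\{\phi=0\}}g\,(\theta+t\omega+dd^c\phi)^n-\int_{\{\phi=0\}}g\,(\theta+t\omega)^n$ is a polynomial in $t$ vanishing for all large $t$. Without this (or some substitute) your plan does not reach the big case.

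The second issue is the limit passage you flag as ``the main obstacle''; it is resolvable, but not by trying to prove two-sided convergence of ${\bf 1}_{\{\phi_k=0\}}\theta_{\phi_k}^n$. The trick is that one inequality between measures is free: since $\phi\leq 0$, Lemma \ref{lem: concentration max} already gives ${\bf 1}_{\{\phi=0\}}\theta_\phi^n\leq{\bf 1}_{\{\phi=0\}}\theta^n$, so it suffices to prove the single scalar inequality $\int_{\{\phi=0\}}\theta^n\leq\int_{\{\phi=0\}}\theta_\phi^n$. For that, use $\{\phi=0\}\subseteq\{\phi_k=0\}$ together with the identity for $\phi_k$ to get $\int_{\{\phi=0\}}\theta^n\leq\int_{\{\phi_k=0\}}\theta_{\phi_k}^n$, and then the fact that $\{\phi=0\}$ is closed and the truncated potentials converge weakly to bound $\limsup_k\int_{\{\phi=0\}}\theta_{\phi_k}^n$ from above by $\int_{\{\phi=0\}}\theta_\phi^n$ (after passing through $\max(\cdot,-1)$ and plurifine locality, as in \eqref{eq: DDL5_prop}). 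Equality of total masses on $\{\phi=0\}$ plus the one-sided domination then forces equality of measures. If you incorporate this and replace your regularization step by the paper's K\"ahler-then-polynomial scheme, your outline becomes a complete proof.
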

 
\begin{proof}
It suffices to prove the first statement as the second follows from this and Theorem \ref{thm: envelope contact}. Without loss of generality, we can assume $\theta \leq \omega$. Note that $\phi$ is a $\omega$-psh function as well. We proceed in two steps. 

\noindent {\bf Step 1}.   We want to prove in this step that 
\[
{\bf 1}_{\{\phi=0\}}(\omega+dd^c \phi)^n={\bf 1}_{\{\phi=0\}}\omega^n. 
\]
Let $f_j$ be a sequence of smooth $\omega$-psh functions on $X$ decreasing to $\phi$. Set $\phi_j=P_\omega(f_j, 0)$ and observe that $\phi \leq \phi_j$.
Using
\begin{equation}\label{envelop}
\{ \phi = 0 \} \subseteq \{ \phi_j= 0 \}.
 \end{equation}and  Lemma \ref{min} we then get 
\begin{equation}\label{ineq1}
\omega_{\phi_j}^n =    {\bf 1}_{ \{ \phi_j  = 0\} } \omega^n \geq   {\bf 1}_{ \{ \phi = 0 \} }\omega^n.
\end{equation}

The functions $\max(\phi_j,-1)$ are uniformly bounded and decrease to $\max(\phi,-1)$, hence 
\[
(\omega+dd^c \max(\phi_j,-1))^n \to (\omega+dd^c \max(\phi,-1))^n
\]
weakly. Since $\{\phi=0\}=X \setminus \{\phi<0\}$ is a closed subset of $X$, we get 
\[
\limsup_{j\to \infty} \int_{\{\phi=0\}}(\omega+dd^c \max(\phi_j,-1))^n\leq \int_{\{\phi=0\}} (\omega+dd^c \max(\phi,-1))^n. 
\]

Observe also that $\{\phi=0\} \subset \{\phi_j>-1\}$ and $\{\phi=0\}\subset \{\phi>-1\}$.  By plurifine locality we thus have
\begin{equation}\label{eq: DDL5_prop}
\limsup_{j\to \infty} \int_{\{\phi=0\}}(\omega+dd^c \phi_j)^n\leq \int_{\{\phi=0\}} (\omega+dd^c \phi)^n. 
\end{equation}
By \eqref{ineq1} we then get
\[
\int_{\{\phi=0\}} \omega^n \leq \int_{\{\phi=0\}} (\omega+dd^c \phi)^n. 
\]
Then, by Lemma \ref{lem: concentration max} we get the identity
\begin{equation}\label{eqma1}
{\bf 1}_{ \{\phi = 0 \}}  \omega_{\phi}^n   = {\bf 1}_{ \{\phi = 0 \}}\omega^n .  
 \end{equation}

\noindent {\bf Step 2.}  Now, fix $A > 0$ such that $\theta + A \omega$ is a K\"ahler form. Since $\phi $ is $\theta$-psh function,  it follows that $\phi$ is  $(\theta + t \omega)$-psh, for $t \geq 0.$
Let $g\in C^0(X, \mathbb{R})$ and consider the function 
 \[ Q(t) := \int_{ \{ \phi = 0 \} } g ( \theta+ t \omega + dd^c \phi )^n - \int_{ \{ \phi = 0\} } g ( \theta + t \omega )^n \] 
 defined for $t \geq 0.$ Then by multilinearity of the non-pluripolar product and the multilinearity of the product of forms, it is clear that  $Q(t)$ is a polynomial in $t$.
 Thanks to (\ref{eqma1}) we can infer that for any $t>A$
$$ {\bf 1}_{ \{ \phi = 0 \}}  (\theta+t\omega+dd^c \phi)^n   = {\bf 1}_{ \{ \phi = 0 \}}(\theta+t\omega)^n.   $$
This implies that the polynomial  $Q(t)$ is identically zero for $t > A,$ hence $Q(t) \equiv 0$. It then follows that $Q(0) = 0.$ Since $g\in C^0(X, \mathbb{R})$ is arbitrary we have the desired equality between measures.
\end{proof}

As it turns out,  \eqref{eq: DDL5_prop}  follows from a more general result. This was proved in \cite[Proposition 4.6]{DDL5}, and the proof we give below fixes an imprecision in the original argument.

\begin{lemma}Suppose that $u_j,u \in \textup{PSH}(X,\theta)$ with $u_j,u \leq$0. If $\| u_j - u \|_{L^1} \to 0$ then
$$\limsup_{j\rightarrow +\infty } \int_{\{u_j = 0\}} \theta_{u_j}^n \leq \int_{\{u = 0\}} \theta_{u}^n.$$
\end{lemma}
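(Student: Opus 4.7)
The plan is to reduce the statement to a purely measure-theoretic inequality about the smooth reference measure $\theta^n$, exploiting Theorem \ref{thm: GLZ_prop5.2}. Since $u_j, u \in \textup{PSH}(X,\theta)$ with $u_j, u \leq 0$, that theorem gives the pointwise identities ${\bf 1}_{\{u_j = 0\}} \theta_{u_j}^n = {\bf 1}_{\{u_j = 0\}} \theta^n$ and ${\bf 1}_{\{u = 0\}} \theta_u^n = {\bf 1}_{\{u = 0\}} \theta^n$. Integrating, the lemma becomes equivalent to
$$\limsup_{j\to\infty} \theta^n(\{u_j = 0\}) \leq \theta^n(\{u = 0\}).$$

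For this, I would fix $\delta > 0$ and work with the upper semicontinuous representatives of $u_j, u$. Splitting according to whether $u \geq -\delta$ or $u < -\delta$, and using that on the latter part $u_j - u = -u > \delta$, yields the inclusion
$$\{u_j = 0\} \subset \{u \geq -\delta\} \cup \{u_j - u > \delta\}.$$
Since $\theta^n$ has bounded density with respect to $\omega^n$, the hypothesis $\|u_j - u\|_{L^1} \to 0$ upgrades to $\|u_j - u\|_{L^1(\theta^n)} \to 0$, and Chebyshev's inequality gives $\theta^n(\{u_j - u > \delta\}) \to 0$. Hence $\limsup_j \theta^n(\{u_j = 0\}) \leq \theta^n(\{u \geq -\delta\})$. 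By upper semicontinuity of $u$, the sets $\{u \geq -\delta\}$ are closed and decrease to $\{u \geq 0\} = \{u = 0\}$ as $\delta \to 0^+$; continuity from above of the finite measure $\theta^n$ then finishes the proof.

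The main conceptual step is the reduction via Theorem \ref{thm: GLZ_prop5.2}: it eliminates the varying non-pluripolar Monge--Amp\`ere measures in favor of a single smooth measure, reducing the problem to a soft exercise in measure theory. The likely source of imprecision in the original argument is the temptation to work directly with some weak upper semicontinuity property of $\theta_{u_j}^n$ on the closed sets $\{u_j = 0\}$, but $L^1$-convergence of potentials is far too weak to control these non-pluripolar measures, whereas the same hypothesis trivially controls $\theta^n$-measures of sublevel sets via Chebyshev.
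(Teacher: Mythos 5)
Your argument is correct and takes a genuinely different route from the paper's. The paper does not pass through Theorem \ref{thm: GLZ_prop5.2} at all: it sets $v_k=\textup{usc}(\sup_{j\geq k}u_j)$, uses $u_k\leq v_k\leq 0$ and hence $\{u_k=0\}\subset\{v_k=0\}$ together with Lemma \ref{lem: concentration max} to bound $\int_{\{u_j=0\}}\theta_{u_j}^n$ by $\int_X e^{bv_j}\theta_{\max(v_j,V_\theta-c)}^n$, and then invokes Theorem \ref{thm: lsc of MA measures} along the decreasing sequence $v_j\searrow u$ before letting $b\to\infty$ and $c\to\infty$. Your reduction via Theorem \ref{thm: GLZ_prop5.2} replaces that machinery by elementary measure theory, which is cleaner; the price is that it leans entirely on the identity ${\bf 1}_{\{\phi=0\}}\theta_\phi^n={\bf 1}_{\{\phi=0\}}\theta^n$, whose proof (via $C^{1,\bar 1}$ envelopes and the polynomial trick) is the deep input, and the method is rigidly tied to the contact set with $0$, whereas the paper's soft argument adapts to other thresholds. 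There is no circularity: Theorem \ref{thm: GLZ_prop5.2} is established before, and independently of, this lemma.

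One imprecision to fix: $\{\theta\}$ is only assumed big, so $\theta^n=f\omega^n$ is in general a \emph{signed} measure, and Chebyshev's inequality and continuity from above do not apply to it as stated. The repair is immediate: Theorem \ref{thm: GLZ_prop5.2} tells you that ${\bf 1}_{\{u_j=0\}}\theta^n$ is a positive measure, hence equals ${\bf 1}_{\{u_j=0\}}f_+\omega^n$ with $f_+:=\max(f,0)$, and likewise for $u$. Running your argument with the positive finite measure $\nu:=f_+\omega^n$, which has bounded density with respect to $\omega^n$ and dominates all the relevant restrictions, gives $\limsup_j\nu(\{u_j=0\})\le\nu(\{u\ge-\delta\})\to\nu(\{u=0\})=\int_{\{u=0\}}\theta_u^n$, exactly as you intended.
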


\begin{proof} Let $v_k = \textup{usc}( \sup_{j \geq k} u_j)$. It is well known that $v_k \searrow u$. Also $u_k \leq v_k \leq 0$, so $\{u_k 
 = 0\} \subset \{v_k = 0\}$. As a result, due to Lemma \ref{lem: concentration max}, we have ${\bf 1}_{\{u_k=0\}} \theta_{u_k}^n \leq {\bf 1}_{\{u_k=0\}} \theta_{v_k}^n.$ 

Let $b,c > 0$. Using plurifine locality and the above  we get that
$$
 \int_{\{u_j=0\}} \theta_{u_j}^n \leq \int_{\{u_j=0\}} \theta_{v_j}^n \leq \int_{\{v_j=0\}} \theta_{v_j}^n =\int_{\{v_j=0\}} \theta_{\max(v_j,V_\theta - c)}^n \leq \int_X e^{b v_j} \theta_{\max(v_j,V_\theta - c)}^n.
$$
Since $e^{b v_j}, e^{b u}$ are uniformly bounded and non-negative function, taking the limit Theorem \ref{thm: lsc of MA measures} gives that
$$ \limsup_{j\rightarrow +\infty } \int_{\{u_j=0\}} \theta_{u_j}^n  \leq \int_X e^{b u} \theta_{\max(u,V_\theta - c)}^n. 
$$
Now letting $b \to \infty$, and subsequently $c \to \infty$, the conclusion follows.
\end{proof}

In our investigation of relative pluripotential theory, the following  envelope construction will be essential: given $\phi \in \textup{PSH}(X,\theta)$ we consider
$$\textup{PSH}(X,\theta) \ni \psi \to  \ P_\theta[\psi](\phi) \in \textup{PSH}(X,\theta).$$
This was introduced by Ross and Witt Nystr\"om \cite{RWN14} in their construction of geodesic rays, building on ideas of  Rashkovskii and Sigurdsson \cite{RS05} in the local setting. 
Starting from the  ``rooftop envelope'' $ P_\theta(\psi,\phi)$ we introduce
$$P_\theta[\psi](\phi) := \Big(\lim_{C \to \infty}P_\theta(\psi+C,\phi)\Big)^*.$$
It is easy to see that $P_\theta[\psi](\phi)$ only depends on the singularity type of $\psi$. When $\phi = V_\theta$, we will simply write $P_\theta[\psi]:=P_\theta[\psi](V_\theta)$, and we refer to this potential as the \emph{envelope of the singularity type} $[\psi]$. We note the following simple concavity result about the operator $P_\theta[\cdot](\phi)$. 

\begin{lemma}
\label{lem: concavity of P}
The operator $P_{\theta}[\cdot](\phi)$ is concave: if $u,v,\phi \in {\rm PSH}(X,\theta)$ and $t\in (0,1)$ then
 \[
P_{\theta}[tu+(1-t)v](\phi) \geq tP_{\theta}[u](\phi) + (1-t)P_{\theta}[v](\phi). 
\]
\end{lemma}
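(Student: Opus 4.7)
The plan is to unwind the definition of $P_{\theta}[\cdot](\phi)$ and exploit the elementary fact that a convex combination of two $\theta$-psh functions is again $\theta$-psh. Given $C \geq 0$, the functions $P_{\theta}(u+C,\phi)$ and $P_{\theta}(v+C,\phi)$ are $\theta$-psh and satisfy $P_{\theta}(u+C,\phi) \leq \min(u+C,\phi)$ and $P_{\theta}(v+C,\phi) \leq \min(v+C,\phi)$. So I would first set
$$
w_C := tP_{\theta}(u+C,\phi)+(1-t)P_{\theta}(v+C,\phi),
$$
which is $\theta$-psh and satisfies simultaneously $w_C \leq t(u+C)+(1-t)(v+C) = (tu+(1-t)v)+C$ and $w_C \leq t\phi+(1-t)\phi = \phi$. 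Hence $w_C \leq \min\bigl((tu+(1-t)v)+C,\phi\bigr)$, so by the maximality of the envelope
$$
tP_{\theta}(u+C,\phi)+(1-t)P_{\theta}(v+C,\phi) \ \leq\ P_{\theta}\bigl((tu+(1-t)v)+C,\phi\bigr).
$$

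The second step is simply to let $C \to \infty$. Since both sides are monotone non-decreasing in $C$ (larger $C$ relaxes the constraint), the pointwise limits exist, and passing to the upper semicontinuous regularization on each side yields
$$
tP_{\theta}[u](\phi)+(1-t)P_{\theta}[v](\phi) \ \leq\ P_{\theta}[tu+(1-t)v](\phi),
$$
which is the desired concavity. The usc regularization does not affect the left-hand side because $P_{\theta}[u](\phi)$ and $P_{\theta}[v](\phi)$ are already $\theta$-psh (hence usc), and their convex combination is usc as well.

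I do not expect any serious obstacle here: the only subtle point is being careful that upper semicontinuous regularization commutes with the convex combination on the left (which it does, as noted) and that the inequality survives passing to the limit together with the usc-regularization on the right (which is automatic since taking the usc envelope only increases the function).
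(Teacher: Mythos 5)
Your proof is correct and follows essentially the same route as the paper: observe that $tP_{\theta}(u+C,\phi)+(1-t)P_{\theta}(v+C,\phi)$ is a $\theta$-psh candidate lying below $\min(tu+(1-t)v+C,\phi)$, hence below the corresponding envelope, and then let $C\to\infty$. The extra care you take with the usc regularization in the limit is fine (the a.e.\ inequality between quasi-psh functions upgrades to an everywhere inequality), so there is nothing to add.
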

\begin{proof}
Assume $u,v \in {\rm PSH}(X,\theta)$ and $t\in (0,1)$. 
Then, for all $C>0$, 
\[
P_{\theta}(\min(tu+(1-t)v+C,\phi))\geq tP_{\theta}(\min(u+C,\phi))+ (1-t) P_{\theta}(\min(v+C,\phi)),
\]
because the right-hand side is a $\theta$-psh function lying below $\min(tu+(1-t)v+C,0)$. 
Letting $C\nearrow \infty$ we arrive at the result.
\end{proof}

\chapter{The basics of relative pluripotential theory}

\section{Monotonicity of non-pluripolar product masses}\label{sec 2}

 In what follows, unless otherwise stated, we work with $\theta$ a smooth closed real $(1,1)$-form whose cohomology class is big. We start with the following result, saying that potentials with the same singularity type have also the same global mass.

\begin{lemma} \label{lem: monotonicity new}
        Let $u,v\in\PSH(X,\theta)$. If $u$ and $v$ have the same singularity type, then $\int_{X}\theta_{u}^{n}=\int_{X}\theta_{v}^{n}$. 
\end{lemma}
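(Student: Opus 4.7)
The strategy is to reduce to the minimal-singularity (bounded) case via truncation against $V_\theta$, and then show that the resulting ``lost mass'' residuals agree for $u$ and $v$ in the limit, using plurifine locality together with the fact that $\theta_{V_\theta}^n$ charges no pluripolar set.

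First, since $\theta_v^n$ is unchanged by shifting $v$ by a constant, I may assume $u\le v\le u+C$. Define the bounded truncations
$$u_k := \max(u,V_\theta-k), \qquad v_k := \max(v,V_\theta-k).$$
Both $u_k$ and $v_k$ have the same singularity type as $V_\theta$ (they coincide with $V_\theta - k$ where $u$, respectively $v$, is very negative, and differ from $V_\theta$ by a bounded amount elsewhere on the ample locus). Consequently they are $\theta$-psh with minimal singularities, and a standard Stokes-type argument (valid in the bounded, minimal-singularity setting, cf.\ \cite[Thm.~1.16]{BEGZ10}) gives
$\int_X \theta_{u_k}^n = \int_X \theta_{v_k}^n = \vol(\{\theta\})$ for every $k$.

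Next, by the very definition of the non-pluripolar product as an increasing limit,
$$\int_X \theta_u^n \;=\; \vol(\{\theta\}) - \lim_{k\to\infty} R_k(u), \qquad R_k(u):=\int_{\{u\le V_\theta-k\}} \theta_{u_k}^n,$$
and similarly for $v$. Since $u\le v$, we have $\{v\le V_\theta-k\}\subseteq\{u\le V_\theta-k\}$, and on the smaller set both $u_k$ and $v_k$ equal $V_\theta-k$, so plurifine locality (Lemma \ref{lem: plurifine}) gives $\theta_{u_k}^n = \theta_{v_k}^n$ there. Therefore
$$R_k(u) - R_k(v) \;=\; \int_{\{u\le V_\theta-k < v\}} \theta_{u_k}^n.$$
The integration domain is contained in the ``tube'' $\{V_\theta-k-C \le u \le V_\theta-k\}$, since $v\le u+C$. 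On the plurifine open part $\{u<V_\theta-k\}$ of this tube, $u_k = V_\theta - k$ and plurifine locality again yields $\theta_{u_k}^n = \theta_{V_\theta}^n$; the $\theta_{V_\theta}^n$-measure of the tube tends to $0$ as $k\to\infty$, because the tube decreases to a subset of $\{u=-\infty\}$, which is pluripolar and hence $\theta_{V_\theta}^n$-null.

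\noindent\textbf{Main obstacle.} The residual contribution along the boundary $\{u = V_\theta - k\}$ is the delicate point: this set is not plurifine open, and individually $\int_{\{u=V_\theta-k\}}\theta_{u_k}^n$ need not vanish as $k\to\infty$ (in fact, for singular $u$ one has $\int_{\{u=V_\theta-k\}}\theta_{u_k}^n \to \vol(\{\theta\})-\int_X \theta_u^n$, accounting for the entire lost mass, as one checks on $X=\CPP^1$ with a logarithmic pole). I would handle this either by upgrading from the integer parameter $k$ to a continuous parameter $t$ and selecting a sequence $t_j\to\infty$ along which $\{u=V_\theta-t_j\}$ is $\theta_{u_{t_j}}^n$-null (generic slicing via Fubini, using that distinct level sets are disjoint and the total masses are uniformly finite), or by rewriting the difference $R_k(u)-R_k(v)$ directly as an integral over $\{u<V_\theta-k<v\}$ whose only measure-theoretic content lives in the bulk of the vanishing tube. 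Once this boundary issue is dealt with, $\lim_k R_k(u) = \lim_k R_k(v)$ and the conclusion $\int_X \theta_u^n = \int_X \theta_v^n$ follows.
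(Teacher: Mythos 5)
Your reduction to the truncated residuals $R_k$ is fine, but the argument breaks exactly at the point you flag as the ``main obstacle,'' and neither of your proposed repairs resolves it — this boundary issue is where the entire content of the lemma lives. First, the cancellation $\int_{\{v\le V_\theta-k\}}\theta_{u_k}^n=\int_{\{v\le V_\theta-k\}}\theta_{v_k}^n$ does not follow from Lemma \ref{lem: plurifine}: that lemma applies to plurifine \emph{open} sets, whereas $\{v\le V_\theta-k\}$ is plurifine closed, and its boundary $\{v=V_\theta-k\}$ is precisely where $\theta_{v_k}^n$ concentrates the lost mass. Concretely, on $X=\mathbb{P}^1$ take $v$ with a single logarithmic pole of Lelong number $c>0$ and $u=v-5$; then $u_k=\max(v,V_\theta-k+5)-5$, so $\theta_{u_k}$ carries mass $\approx c$ on the level set $\{v=V_\theta-k+5\}$, which lies \emph{outside} $\{v\le V_\theta-k\}$, while $\theta_{v_k}$ carries mass $\approx c$ on $\{v=V_\theta-k\}$, which lies inside it: the two integrals differ by $\approx c$. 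In the same example your identity $R_k(u)-R_k(v)=\int_{\{u\le V_\theta-k<v\}}\theta_{u_k}^n$ fails — the left side tends to $0$ while the right side tends to $c$, because the circle $\{u=V_\theta-k\}$ sits inside the tube and carries the lost mass. Your first proposed fix (generic slicing in the level $t$) is impossible for the reason you yourself record: $\int_{\{u=V_\theta-t\}}\theta_{u_t}^n$ converges to $\vol(\{\theta\})-\int_X\theta_u^n>0$, so it is bounded away from zero for \emph{every} large level; disjointness of the level sets is useless here because the measure $\theta_{u_t}^n$ changes with $t$. The second fix is not carried out, and it is exactly the hard step: one must prove that the masses concentrated on the two \emph{different} level sets $\{u=V_\theta-k\}$ and $\{v=V_\theta-k\}$ agree in the limit.

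The paper's proof avoids comparing boundary concentrations altogether by first deforming one potential. Replacing $v$ by $v_a:=av$ with $a<1$ and $u$ by $u_a:=\max(u,v_a)$ makes the two functions \emph{literally equal} on an honest open set containing all sufficiently deep sublevel sets; there the truncated Monge--Amp\`ere measures coincide as measures, boundary concentrations included, and the equality of total masses follows by decomposing $\int_X\theta_{\max(u_a,-t)}^n=\int_X\theta_{\max(v_a,-t)}^n$ along $\{\,\cdot>-t\}$ and letting $t\to\infty$. The limit $a\to1$ is then handled by multilinearity of the non-pluripolar product on the $v_a$ side and by the semicontinuity Theorem \ref{thm: lsc of MA measures} on the $u_a$ side, and the big case is reduced to the K\"ahler one by polynomial identification in $\theta+s\omega$. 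Some device of this kind — forcing the two potentials to genuinely agree near the singularity \emph{before} truncating — is the idea missing from your argument.
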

The above result is due to Witt Nystr\"om \cite{WN19}. A different proof has been  given in \cite{LN22} using the monotonicity of the energy functional. We give below a direct proof using a standard approximation process. Another different proof has been recently given in \cite{Vu21}, where generalized non-pluripolar products of positive currents are studied.

\begin{proof}
        {\bf Step 1.}  Assume that $\theta$ is a Kähler form.
        \medskip
        
We first prove the following claim: if there exists a constant $C>0$ such that $u=v$ on the open set $U:=\{\min(u,v)<-C\}$ then $\int_X \theta_u^n = \int_X\theta_v^n$.
        
Fix $t>C$. Since $u=v$ on $U$, we have $\max(u,-t)=\max(v,-t)$ on $U$. Since $U$ is open, we have  
        \[
        {\bf 1}_U (\theta +dd^c \max(u,-t))^n={\bf 1}_U (\theta +dd^c \max(v,-t))^n.
        \]
We also have $\{u\leq -t\}=\{v\leq -t\}\subset U$. Indeed,  if $u(x) \leq -t$ then $x\in U$ because $-t< -C$. But on $U$ we have $u=v$, hence $v(x)=u(x) \leq -t$.  
 
By plurifine locality we thus have 
\begin{flalign*}
\theta_{\max(u,-t)}^n&={\bf 1}_{\{u>-t\}}\theta_{\max(u,-t)}^n+{\bf 1}_{\{u\leq-t\}}\theta_{\max(u,-t)}^n\\
        &={\bf 1}_{\{u>-t\}}\theta_u^n+{\bf 1}_{\{v\leq-t\}}\theta_{\max(v,-t)}^n,
\end{flalign*}
and 
\[\theta_{\max(v,-t)}^n={\bf 1}_{\{v>-t\}}\theta_v^n+{\bf 1}_{\{v\leq-t\}}\theta_{\max(v,-t)}^n.\]
Integrating  over $X$, since $\int_X \theta_{\max(u,-t)}^n=\int_X \theta_{\max(v,-t)}^n=\vol({\theta})$ (recall that $\max(u,-t)$ and $\max(u,-t)$ are bounded functions), we get
        \[
        \int_{\{u>-t\}}\theta_{u}^{n}=\int_{\{v>-t\}}\theta_{v}^{n}.
        \]
Letting $t\to\infty$, the claim follows.
        
Now we prove the general case when $\theta$ is K\"ahler. Since $u$ and $v$ have the same singularity type, we can assume that $v\leq u\leq v+B\leq 0$, for some positive constant $B$.  For each $a\in(0,1)$ we set $v_{a}:=av$, $u_a:=\max(u,v_a)$ and $C:= Ba(1-a)^{-1}$. To use the claim above we need to check that $u_a=v_a$ on the open set $U_a:= \{\min(u_a,v_a)<-C\}$. Observe that $\min(u_a,v_a)=v_a$ because $v_a\leq u_a$. If $x\in U_a$ then $av(x) < -C$ hence $(1-a)v(x) <-B$,   which implies (recall that $v+B\geq u$)
        \[
        av(x) \geq v(x)+B \geq u(x).
        \]
We infer that $v_a(x) \geq u(x)$, hence $u_a(x)=v_a(x)$. We can thus apply the claim above to get  $\int_{X}\theta_{u_{a}}^{n}=\int_{X}\theta_{v_{a}}^{n}$. Since non-pluripolar products are multilinear, see \cite[Proposition 1.4]{BEGZ10}, we have that 
        \[
        \int_{X}\theta_{v_{a}}^{n}=a^n\int_X \theta_v^n +\sum_{k=0}^{n-1} a^k (1-a)^{n-k} \int_X \theta_v^k \wedge \theta^{n-k} \to\int_{X}\theta_{v}^{n}
        \] 
as $a\nearrow 1$. Since $u_a\searrow u$ as $a\nearrow1$, by Theorem \ref{thm: lsc of MA measures} we have 
        \[
        \liminf_{a\to1^{-}}\int_{X}\theta_{u_{a}}^{n}\geq\int_{X}\theta_u^n.
        \]
We thus have $\int_{X}\theta_{u}^{n}\leq\int_{X}\theta_{v}^{n}$. By symmetry we get equality,  finishing the proof of Step 1. 
        
        \medskip
        
\noindent {\bf Step 2.} We treat the general case when $\{\theta\}$ is merely big. We use an idea from \cite{DNT21}. Fix $s>0$ so large that $\theta+s\omega$ is Kähler. For $t>s$ we have, by the first step, 
        \[
        \int_{X}(\theta+t\omega+dd^{c}u)^{n}=\int_{X}(\theta+t\omega+dd^{c}v)^n.
        \]
        Since non-pluripolar products are multilinear (\cite[Proposition 1.4]{BEGZ10}) we have for all $t>s$, 
        \[
        \sum_{k=0}^{n}\binom{n}{k}\int_{X}\theta_{u}^{k}\wedge\omega^{n-k}t^{n-k}=\sum_{k=0}^{n}\binom{n}{k}\int_{X}\theta_{v}^{k}\wedge\omega^{n-k}t^{n-k}.
        \]
        We thus obtain an equality between two polynomials for all $t \geq 0$, and identifying the coefficients we infer the desired equality.
\end{proof}

We continue with the following immediate generalization of the above result:
\begin{prop}
	\label{prop: comparison generalization}
	Let $\theta^j, j \in \{1,\ldots,n\}$ be smooth closed real $(1,1)$-forms on $X$ whose cohomology classes are pseudoeffective. Let $u_j,v_j \in \textup{PSH}(X,\theta^j)$ such that $u_j$ has the same singularity type as $v_j, \ j \in \{1,\ldots,n\}$. Then 
	\[
	\int_X \theta^1_{u_1} \wedge \ldots \wedge \theta^n_{u_n}  = \int_X \theta^1_{v_1} \wedge \ldots \wedge \theta^n_{v_n}. 
	\]
\end{prop}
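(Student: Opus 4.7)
My plan is to reduce to the single-form case (Lemma~\ref{lem: monotonicity new}) via a positive linear combination of the data, followed by the polynomial identification trick already employed in Step~2 of that proof. Concretely, for $\lambda = (\lambda_1, \ldots, \lambda_n) \in \mathbb{R}_{>0}^n$, set
\[
\theta_\lambda := \sum_{j=1}^n \lambda_j \theta^j, \qquad u_\lambda := \sum_{j=1}^n \lambda_j u_j, \qquad v_\lambda := \sum_{j=1}^n \lambda_j v_j.
\]
The class $\{\theta_\lambda\}$ is pseudoeffective as a positive combination of pseudoeffective classes; the functions $u_\lambda, v_\lambda$ are $\theta_\lambda$-psh (because $\theta_\lambda + dd^c u_\lambda = \sum_j \lambda_j (\theta^j + dd^c u_j) \geq 0$, and likewise for $v_\lambda$) and not identically $-\infty$ (each $\{u_j = -\infty\}$ is pluripolar); and the hypothesis $u_j \simeq v_j$ forces $u_\lambda \simeq v_\lambda$. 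Applying Lemma~\ref{lem: monotonicity new} to $\theta_\lambda, u_\lambda, v_\lambda$—whose proof works verbatim for pseudoeffective classes, since Step~2 only uses that $\theta_\lambda + s \omega$ is K\"ahler for large $s$—gives
\[
\int_X \langle (\theta_\lambda + dd^c u_\lambda)^n \rangle = \int_X \langle (\theta_\lambda + dd^c v_\lambda)^n \rangle.
\]

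By multilinearity of the non-pluripolar product \cite[Prop.~1.4]{BEGZ10}, each side expands as
\[
\int_X \langle (\theta_\lambda + dd^c u_\lambda)^n \rangle = \sum_{i_1, \ldots, i_n = 1}^n \lambda_{i_1} \cdots \lambda_{i_n} \int_X \theta^{i_1}_{u_{i_1}} \wedge \cdots \wedge \theta^{i_n}_{u_{i_n}},
\]
and analogously for the $v_j$'s. Both expressions are polynomials in $\lambda_1, \ldots, \lambda_n$; since they coincide on the open set $\mathbb{R}_{>0}^n$, they agree as polynomials, so their coefficients match term by term.

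Matching the coefficient of the monomial $\lambda_1 \lambda_2 \cdots \lambda_n$ closes the argument: summing over the $n!$ permutations $(i_1, \ldots, i_n)$ of $(1, \ldots, n)$ and using the symmetry of the non-pluripolar wedge product, this coefficient equals $n! \int_X \theta^1_{u_1} \wedge \cdots \wedge \theta^n_{u_n}$ on the left and $n! \int_X \theta^1_{v_1} \wedge \cdots \wedge \theta^n_{v_n}$ on the right. The only delicate point I anticipate is verifying that Lemma~\ref{lem: monotonicity new} really does extend to pseudoeffective (not merely big) classes, but this is essentially a bookkeeping observation on its existing proof; the substantive idea of the whole argument is the polynomial encoding in $\lambda$, which converts a single-form scalar identity into all mixed-mass identities simultaneously.
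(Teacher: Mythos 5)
Your proof is correct and follows essentially the same route as the paper's: form the positive linear combinations $\theta_\lambda$, $u_\lambda$, $v_\lambda$, apply Lemma~\ref{lem: monotonicity new}, and identify coefficients in the resulting polynomial identity in $\lambda$. The only (minor) difference is the handling of pseudoeffectivity: the paper first replaces each $\theta^j$ by $\theta^j+\varepsilon\omega$ to reduce to big classes, where Lemma~\ref{lem: monotonicity new} is stated, and lets $\varepsilon\to 0$ at the end via multilinearity, whereas you observe---correctly---that Step~2 of that lemma's proof already goes through verbatim for pseudoeffective classes with nonempty $\textup{PSH}(X,\theta)$.
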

The proof of this result uses the ideas from \cite[Corollary 2.15]{BEGZ10}.
\begin{proof} 
First we note that we can assume that the classes $\{\theta^j\}$ are in fact big. Indeed, if this is not the case we can just replace each $\theta^j$ with $\theta^j + \varepsilon \omega$, and using the multi-linearity of the non-pluripolar product (\cite[Proposition 1.4]{BEGZ10}) we can let $\varepsilon \to 0$ at the end of our argument to conclude the statement for pseudoeffective classes.

For each $t\in\Delta= \{t=(t_1,...,t_n) \in \mathbb{R}^n \setdef t_j > 0\}$ consider $u_t:=\sum_{j} t_ju_{j}$, $v_t:=\sum_{j} t_jv_{j}$ and $\theta^t:=\sum_{j} t_j\theta^j$.  Clearly,  $\{\theta^t\}$ is big, and $u_t$ has the same singularity type as $v_t$. Hence it follows from Lemma  \ref{lem: monotonicity new} that $\int_X (\theta^t_{u_t})^n =\int_X (\theta^t_{v_t})^n$ for all $t\in \Delta$. On the other hand, using multi-linearity of the non-pluripolar product again (\cite[Proposition 1.4]{BEGZ10}), we see that both $t \to \int_X (\theta^t_{u_t})^n$  and $t \to \int_X (\theta^t_{v_t})^n$ are homogeneous polynomials of degree $n$. Our last identity  forces all the coefficients of these polynomials to be equal, giving the statement of our result. 
\end{proof}

Using the above result we argue the monotonicity of non-pluripolar products, conjectured in \cite{BEGZ10} and proved in \cite{DDL2} (see \cite[Theorem 1.1]{Vu21} for a more general result):

\begin{theorem}\label{thm: BEGZ_monotonicity_full}
	Let $\theta^j, j \in \{1,\ldots,n\}$ be smooth closed real $(1,1)$-forms on $X$ whose cohomology classes are pseudoeffective. Let $u_j,v_j \in \textup{PSH}(X,\theta^j)$ be such that $u_j$ is less singular than $v_j$ for all $j \in \{1,\ldots,n\}$. Then 
	\[
	\int_X \theta^1_{u_1} \wedge \ldots \wedge \theta^n_{u_n}  \geq  \int_X \theta^1_{v_1} \wedge \ldots \wedge \theta^n_{v_n}. 
	\]
\end{theorem}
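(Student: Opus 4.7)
The plan is to reduce to the equal-singularity case settled by Proposition~\ref{prop: comparison generalization} via a bottom truncation of $v_j$ by $u_j - C$, then pass $C \to \infty$ using the lower semicontinuity of non-pluripolar masses (Theorem~\ref{thm: lsc of MA measures}). As a preliminary I would replace $\theta^j$ by $\theta^j + \varepsilon \omega$ for small $\varepsilon > 0$ to ensure all the classes are big (so that Theorem~\ref{thm: lsc of MA measures} applies); by multilinearity of the non-pluripolar product both sides of the target inequality are polynomials in $\varepsilon$, so continuity at $\varepsilon = 0$ transfers the big case back to the pseudoeffective setting.

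Since $u_j$ is less singular than $v_j$, pick constants $C_j \geq 0$ with $v_j \leq u_j + C_j$. For every $C > 0$ set
$$w_j^C := \max(v_j,\, u_j - C) \in \textup{PSH}(X,\theta^j).$$
The chain $u_j - C \leq w_j^C \leq u_j + C_j$ shows that $w_j^C$ has the same singularity type as $u_j$. Applying Proposition~\ref{prop: comparison generalization} to the $n$-tuples $(w_1^C,\ldots,w_n^C)$ and $(u_1,\ldots,u_n)$ therefore yields, for every $C > 0$,
$$\int_X \theta^1_{w_1^C} \wedge \cdots \wedge \theta^n_{w_n^C} \;=\; \int_X \theta^1_{u_1} \wedge \cdots \wedge \theta^n_{u_n}.$$

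As $C \nearrow \infty$, $w_j^C \searrow v_j$ pointwise on all of $X$ (on $\{v_j = -\infty\}$ the sequence decreases to $-\infty$ as well). A decreasing sequence of $\theta^j$-psh functions whose limit is $\theta^j$-psh converges in capacity, so Theorem~\ref{thm: lsc of MA measures} applied with $\chi_k \equiv \chi \equiv 1$ gives
$$\liminf_{C \to \infty} \int_X \theta^1_{w_1^C} \wedge \cdots \wedge \theta^n_{w_n^C} \;\geq\; \int_X \theta^1_{v_1} \wedge \cdots \wedge \theta^n_{v_n}.$$
Combining the two displays proves the desired monotonicity.

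\textbf{Where the work really sits.} The nontrivial input is the equality of masses in the equal-singularity case, which has already been settled in Lemma~\ref{lem: monotonicity new} and extended to mixed products in Proposition~\ref{prop: comparison generalization}; once those are in hand, this proof is a routine truncation-plus-limit argument. The only subtlety is the choice of truncation level $u_j - C$ rather than $V_{\theta^j} - C$: truncating against $u_j$ is precisely what preserves the singularity type $[u_j]$ and licenses the use of Proposition~\ref{prop: comparison generalization}. So I do not anticipate any serious obstacle beyond this bookkeeping.
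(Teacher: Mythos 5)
Your proof is correct and coincides with the paper's argument: the paper uses exactly the same truncation $v_j^t := \max(u_j - t, v_j)$, invokes Proposition~\ref{prop: comparison generalization} on the resulting equal-singularity tuples, and passes to the limit via convergence in capacity of decreasing sequences and Theorem~\ref{thm: lsc of MA measures}, after the same reduction from pseudoeffective to big classes. No differences worth noting.
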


\begin{proof} By the same reason as in  Proposition \ref{prop: comparison generalization}, we can assume that the classes $\{\theta^j\}$ are in fact big. 
For each $t>0$ we set $v_{j}^{t}:= \max(u_{j}-t,v_j)$ for $j=1,...,n$. Observe that the $v_j^t$ converge decreasingly  to $v_j$ as $t \to \infty$. In particular, by \cite[Proposition 3.7]{GZ05} the convergence holds in capacity.  As $v_{j}^{t}$ and $u_j$ have the same singularity type, it follows from Proposition \ref{prop: comparison generalization} that 
\[
\int_X \theta^1_{u_1} \wedge \ldots \wedge  \theta^n_{u_n}=\int_X \theta^1_{v_1^t} \wedge \ldots \wedge  \theta^n_{v_n^t}.
\]
Letting $t \to \infty$, the first part of Theorem \ref{thm: lsc of MA measures} allows to conclude the argument.
\end{proof}

\begin{remark}\label{rem: increasing implies capacity} Condition \eqref{eq: global_mass_semi_cont} in Theorem \ref{thm: lsc of MA measures} is automatically satisfied if $u^k_j \nearrow u_j$ a.e. as $k \to \infty$. Indeed, in this case $u_j^k\to u_j$ in capacity (see \cite[Proposition 4.25]{GZbook}), and by Theorem \ref{thm: BEGZ_monotonicity_full} we have
$\int_ X \theta^1_{u_1} \wedge \ldots \wedge \theta^n_{u_n} \geq \limsup_k \int_X \theta^1_{u^k_1} \wedge \ldots \wedge \theta^n_{u^k_n}$. 

\noindent Since for all $u\in \PSH(X,\theta)$ we have  $P(u, V_\theta + C) \nearrow P_\theta[u]$ as $C \to \infty$, we conclude that
\[
\int_X \theta_u^n = \int_X \theta_{P_{\theta}[u]}^n. 
\]
Similarly, for $u_j \in \textup{PSH}(X,\theta^j)$ the same ideas allow to  conclude that 
\[
\int_X \theta^1_{u_1} \wedge \ldots \wedge \theta^n_{u_n} = \int_X \theta^1_{P_\theta[u_1]} \wedge \ldots \wedge \theta^n_{P_\theta[u_n]}. 
\]
\end{remark}

\section{Model potentials and relative full mass classes}\label{sec 3}

\paragraph{Rooftop envelopes revisited.} We survey some results from \cite{DDL2}. We first start with the following simple observation. 
\begin{lemma}\label{lem: model sup over X}
	Assume $u, v \in \PSH(X,\theta)$ and $u$ is more singular than $v$. Then 
	\[
	\sup_X(u-P_{\theta}[v]) = \sup_X u. 
	\]
\end{lemma}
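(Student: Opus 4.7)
The inequality $\sup_X(u-P_\theta[v])\geq \sup_X u$ is essentially free: since $V_\theta\leq 0$ and $P_\theta[v]\leq V_\theta$ (each member $P_\theta(v+C,V_\theta)$ of the defining increasing family is bounded above by $V_\theta$), we have $-P_\theta[v]\geq 0$, which immediately yields the bound $u-P_\theta[v]\geq u$ pointwise.

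For the reverse inequality I would work with the normalized potential $w:=u-\sup_X u$. Then $w\in\PSH(X,\theta)$ with $w\leq 0$, so by the very definition of $V_\theta$ as the upper envelope of nonpositive $\theta$-psh functions, we get $w\leq V_\theta$. On the other hand, the assumption that $u$ is more singular than $v$ provides a constant $C\in\mathbb{R}$ with $u\leq v+C$, hence $w\leq v+C'$ with $C':=C-\sup_X u$. Combining these two bounds gives $w\leq \min(v+C',V_\theta)$, and since $w$ is itself $\theta$-psh, the definition of the rooftop envelope yields
\[
w\leq P_\theta(v+C',V_\theta)\leq P_\theta[v],
\]
the last step because $P_\theta[v]$ is defined as the (usc regularization of the) increasing limit of $P_\theta(v+C,V_\theta)$ as $C\to\infty$. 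Rearranging gives $u-P_\theta[v]\leq \sup_X u$ on $X$, completing the proof.

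There is essentially no obstacle here: the statement is purely a structural consequence of how $V_\theta$ and $P_\theta[v]$ are defined, together with the singularity comparison $u\leq v+C$. The only subtle point worth flagging is that one must compare $w$ with both $V_\theta$ and a vertical translate of $v$ simultaneously in order to invoke the rooftop envelope $P_\theta(v+C',V_\theta)$, which is exactly what allows one to pass to $P_\theta[v]$.
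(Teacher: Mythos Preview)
Your proof is correct and follows essentially the same approach as the paper's: both normalize to $w=u-\sup_X u$, observe that $w$ is $\theta$-psh, bounded above by $0$ (hence by $V_\theta$) and by $v+C'$, and then use this to bound $w$ by the rooftop envelope and hence by $P_\theta[v]$. The only cosmetic difference is that the paper writes $w\leq\min(v+C,0)$ rather than $\min(v+C',V_\theta)$, but since any nonpositive $\theta$-psh function automatically lies below $V_\theta$, these give the same envelope.
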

\begin{proof}
	Since $P_{\theta}[v]\leq 0$, we have $\sup_X(u-P_{\theta}[v]) \geq \sup_X u$. By the assumption that $u$ is more singular than $v$, we have  $u-\sup_X u \leq \min(v+C,0)$ for some constant $C$, hence $u-\sup_X u\leq P_{\theta}[v]$. This proves the other inequality, finishing the proof. 
\end{proof}

We next prove the following result about the complex Monge--Amp\`ere measure of $P_{\theta}(u,v)$ from \cite[Lemma 4.1]{GLZ19} (see \cite{Dar17AJM} for a particular result in the K\"ahler case). 
\begin{theorem}\label{thm: MA of env sing type}
    Suppose $\varphi,\psi, P_{\theta}(\varphi,\psi)\in {\rm PSH}(X,\theta)$. Then 
    $$\theta_{P_{\theta}(\varphi,\psi)}^n \leq  {\bf 1}_{\{P_{\theta}(\varphi,\psi)=\varphi\}}\theta_\varphi^n + {\bf 1}_{\{P_{\theta}(\varphi,\psi)=\psi\}}\theta_\psi^n.$$
    In particular, 
    \[
    \theta_{P_{\theta}[\psi](\varphi)}^n \leq {\bf 1}_{\{P_{\theta}[\psi](\varphi)=\varphi\}} \theta_{\varphi}^n \quad  \text{and}\quad \theta_{P_{\theta}[\psi]}^n \leq {\bf 1}_{\{P_{\theta}[\psi]=0\}} \theta^n. 
 \]
\end{theorem}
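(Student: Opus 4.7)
For the main inequality, set $u := P_\theta(\varphi,\psi)$. The function $\min(\varphi,\psi)$ is quasi-continuous (being the minimum of two quasi-continuous qpsh functions), so Theorem \ref{thm: envelope contact} applies and gives $\theta_u^n(\{u < \min(\varphi,\psi)\}) = 0$. Hence $\theta_u^n$ is concentrated on the contact set $\{u=\varphi\}\cup\{u=\psi\}$, and subadditivity of indicator functions yields
\[
\theta_u^n \leq {\bf 1}_{\{u=\varphi\}}\theta_u^n + {\bf 1}_{\{u=\psi\}}\theta_u^n.
\]
Since $u \leq \varphi$ and $u\leq\psi$, Lemma \ref{lem: concentration max} bounds each piece by ${\bf 1}_{\{u=\varphi\}}\theta_\varphi^n$ and ${\bf 1}_{\{u=\psi\}}\theta_\psi^n$ respectively, giving the first statement.

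For the first ``in particular'' assertion, apply the main inequality to $u_C := P_\theta(\psi+C,\varphi)$ and use $\theta_{\psi+C}^n=\theta_\psi^n$:
\[
\theta_{u_C}^n \;\leq\; {\bf 1}_{\{u_C=\varphi\}}\theta_\varphi^n + {\bf 1}_{\{u_C=\psi+C\}}\theta_\psi^n.
\]
Since $u_C\nearrow u := P_\theta[\psi](\varphi)$ almost everywhere as $C\nearrow\infty$, Remark \ref{rem: increasing implies capacity} gives weak convergence $\theta_{u_C}^n \to \theta_u^n$. On the right, if $u_C(x)=\varphi(x)$ and $C'\geq C$ then $\varphi(x)=u_C(x)\leq u_{C'}(x)\leq\varphi(x)$, so the sets $\{u_C=\varphi\}$ are nondecreasing in $C$ and contained in $\{u=\varphi\}$; for any non-negative continuous $\chi$, monotone convergence gives
\[
\lim_{C\to\infty}\int_X\chi\,{\bf 1}_{\{u_C=\varphi\}}\theta_\varphi^n \;\leq\; \int_X\chi\,{\bf 1}_{\{u=\varphi\}}\theta_\varphi^n.
\]
For the remaining term, if $x\in\{\psi>-\infty\}\cap\{\varphi>-\infty\}$, then $u_C(x)=\psi(x)+C$ forces $C\leq\varphi(x)-\psi(x)$, which fails for $C$ large. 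Hence ${\bf 1}_{\{u_C=\psi+C\}}\to 0$ pointwise off a pluripolar set, and since $\theta_\psi^n$ does not charge pluripolar sets, dominated convergence gives that this term tends to $0$. Combining these estimates yields $\theta_u^n \leq {\bf 1}_{\{u=\varphi\}}\theta_\varphi^n$.

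The last inequality is obtained by specialising to $\varphi=V_\theta$: Theorem \ref{thm: GLZ_prop5.2} identifies $\theta_{V_\theta}^n={\bf 1}_{\{V_\theta=0\}}\theta^n$, and the identity $\{P_\theta[\psi]=V_\theta\}\cap\{V_\theta=0\}=\{P_\theta[\psi]=0\}$ (valid since $V_\theta\leq 0$ and $P_\theta[\psi]\leq V_\theta\leq 0$) finishes the proof.

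\textbf{Expected main obstacle.} The only subtle point is the passage to the limit in $C$, where one must argue that the auxiliary mass ${\bf 1}_{\{u_C=\psi+C\}}\theta_\psi^n$ disappears in the limit. This rests crucially on the interplay between the growing translation $C$ and the finiteness of $\psi$ on a set of full non-pluripolar measure; it is precisely the fact that $\theta_\psi^n$ ignores pluripolar sets that makes the dominated convergence step go through.
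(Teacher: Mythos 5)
Your proof is correct and follows essentially the same route as the paper: Theorem \ref{thm: envelope contact} combined with Lemma \ref{lem: concentration max} for the main inequality, then passage to the limit along $P_\theta(\psi+C,\varphi)\nearrow P_\theta[\psi](\varphi)$ with the $\psi+C$ contribution vanishing because $\theta_\psi^n$ does not charge $\{\psi=-\infty\}$, and finally Theorem \ref{thm: GLZ_prop5.2} for the case $\varphi=V_\theta$. The only cosmetic difference is that you handle the contact sets $\{u_C=\varphi\}$ via monotone convergence, whereas the paper simply uses the inclusion $\{u_C=\varphi\}\subset\{P_\theta[\psi](\varphi)=\varphi\}$; both are fine.
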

\begin{proof}
   Since the function $\min(\varphi,\psi)$ is quasi-continuous on $X$, by Theorem \ref{thm: envelope contact}, the Monge--Amp\`ere measure $\theta_{P_{\theta}(\varphi,\psi)}^n$ is concentrated on the contact set $\{P_{\theta}(\varphi,\psi)=\min(\varphi,\psi)\}$. By Lemma \ref{lem: concentration max} we have 
   \[
   {\bf 1}_{\{P_{\theta}(\varphi,\psi)=\varphi\}} \theta_{P_{\theta}(\varphi,\psi)}^n \leq {\bf 1}_{\{P_{\theta}(\varphi,\psi)=\varphi\}} \theta_{\varphi}^n\quad \text{and}\quad {\bf 1}_{\{P_{\theta}(\varphi,\psi)=\psi\}} \theta_{P_{\theta}(\varphi,\psi)}^n \leq {\bf 1}_{\{P_{\theta}(\varphi,\psi)=\psi\}} \theta_{\psi}^n.
   \]
   From this the first inequality follows. Using this, for each $t>0$ we have 
   \begin{flalign*}
      \theta_{P_{\theta}(\psi+t,\varphi)}^n &\leq  {\bf 1}_{\{P_{\theta}(\psi+t,\varphi)=\psi+t\}}\theta_{\psi}^n + {\bf 1}_{\{P_{\theta}(\psi+t,\varphi)=\varphi\}}\theta_{\varphi}^n\\
      & \leq {\bf 1}_{\{\psi+t\leq \varphi\}}\theta_{\psi}^n + {\bf 1}_{\{P_{\theta}(\psi+t,\varphi)=\varphi\}}\theta_{\varphi}^n. 
   \end{flalign*}
  
Since $\theta_{\psi}^n$ vanishes on the pluripolar set $\{\psi=-\infty\}$, we have $\lim_{t\to \infty}{\bf 1}_{\{\psi\leq \varphi-t\}}\theta_{\psi}^n=0$. Observe also that $P_{\theta}(\psi+t,\varphi)\nearrow P_{\theta}[\psi](\varphi)$ as $t\nearrow \infty$, hence Theorem  \ref{thm: lsc of MA measures} and Remark \ref{rem: increasing implies capacity} ensure that $\theta_{P_{\theta}(\psi+t,\varphi)}^n$ converges weakly to $\theta_{P_{\theta}[\psi](\varphi)}^n$. We also have that $\{P_{\theta}(\psi+t,\varphi)=\varphi\} \subset \{P_{\theta}[\psi](\varphi)=\varphi\}$. Letting $t\to \infty$ thus gives 
\[
\theta_{P_{\theta}[\psi](\varphi)}^n \leq {\bf 1}_{\{P_{\theta}[\psi](\varphi)=\varphi\}} \theta_{\varphi}^n. 
\]
   In particular, when $\varphi=V_{\theta}$, this yields  
   \[
\theta_{P_{\theta}[\psi]}^n =\theta_{P_{\theta}[\psi](V_{\theta})}^n \leq {\bf 1}_{\{P_{\theta}[\psi](V_{\theta})=V_{\theta}\}} \theta_{V_{\theta}}^n\leq {\bf 1}_{\{P_{\theta}[\psi]=0\}} \theta^n,
\]
where the last inequality follows from Theorem \ref{thm: GLZ_prop5.2} since $\{P_{\theta}[\psi]=0\} \subset \{V_\theta=0\}$.
\end{proof}

\paragraph{The domination principle and uniqueness.}

\begin{definition}
Given a potential $\phi \in \PSH(X, \theta)$, the relative full mass class $\mathcal{E}(X,\theta,\phi)$ is the set of all $\theta$-psh functions $u$ such that $u$ is more singular than $\phi$ and $\int_X \theta_u^n=\int_X \theta_{\phi}^n$. 
\end{definition}

\begin{remark}
	If $u, v\in \mathcal{E}(X,\theta,\phi)$ then $\max(u,v)\in \mathcal{E}(X,\theta,\phi)$. Indeed, since both $u$ and $v$ are more singular than $\phi$, $\max(u,v)$ is also more singular than $\phi$. By Theorem \ref{thm: BEGZ_monotonicity_full}, 
	\[
	\int_X \theta_u^n \leq \int_X \theta_{\max(u,v)}^n \leq \int_X \theta_{\phi}^n= \int_X \theta_{u}^n,
	\]
	hence all the inequalities become equalities. 
\end{remark}

\begin{definition}
    A model potential is a $\theta$-psh function $\phi$ such that $P_{\theta}[\phi]=\phi$ and $\int_X \theta_{\phi}^n>0$.  
\end{definition} 

Let us prove an important technical result from \cite{DDL5}.

\begin{theorem}\label{thm: subextension b b-1}
Assume that $u,v\in {\rm PSH}(X,\theta)$, $u \leq v$, $\int_X \theta_u^n>0$ and $b>1$ is such that 
\begin{equation}\label{eq: non-collapsing n}
b^n\int_X \theta_{u}^n>(b^n-1)\int_X \theta_{v}^n.
\end{equation}
Then $P_\theta(bu+(1-b)v) \in {\rm PSH}(X,\theta)$. 
\end{theorem}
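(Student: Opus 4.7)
The plan is to exhibit an explicit $\theta$-psh function below $f := bu + (1-b)v$, since $P_\theta(f) \in \textup{PSH}(X,\theta)$ as soon as any such function exists. After shifting by a constant I may assume $\sup_X v = 0$, and I set the truncations
\[
u_k := \max(u, v - k) \in \textup{PSH}(X,\theta),
\]
which satisfy $v - k \le u_k \le v$, decrease to $u$ as $k \to \infty$, and share the singularity type of $v$. Lemma \ref{lem: monotonicity new} then gives $\int_X \theta_{u_k}^n = \int_X \theta_v^n =: N$.

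Set $f_k := bu_k + (1-b)v$. The bound $u_k \ge v - k$ yields $f_k \ge v - bk \in \textup{PSH}(X,\theta)$, so the envelopes $\phi_k := P_\theta(f_k)$ are genuine $\theta$-psh functions with $v - bk \le \phi_k \le v$; in particular each $\phi_k$ has the singularity type of $v$, hence $\int_X \theta_{\phi_k}^n = N$. Since $f_k \searrow f$, the $\phi_k$ decrease in $k$ to a limit $\phi_\infty$, and the theorem reduces to ruling out $\phi_\infty \equiv -\infty$, equivalently, to a uniform lower bound on $\sup_X \phi_k$.

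For this I study the $\theta$-psh convex combination $\tilde \phi_k := \tfrac{1}{b}\phi_k + \bigl(1 - \tfrac{1}{b}\bigr)v$. Rearranging $\phi_k \le f_k$ gives $\tilde \phi_k \le u_k$, so by Witt Nystr\"om monotonicity (Theorem \ref{thm: BEGZ_monotonicity_full}) one has $\int_X \theta_{\tilde \phi_k}^n \le N$; conversely, Brunn--Minkowski via log-concavity (Theorem \ref{thm5}) applied to the convex combination yields
\[
\int_X \theta_{\tilde \phi_k}^n \ge \Bigl(\tfrac{1}{b}\bigl(\textstyle \int_X \theta_{\phi_k}^n\bigr)^{1/n} + \bigl(1 - \tfrac{1}{b}\bigr) N^{1/n}\Bigr)^n.
\]
If $\sup_X \phi_k \to -\infty$, I would pass to the normalized potentials $\hat \phi_k := \phi_k - \sup_X \phi_k$ and extract a subsequential $L^1$-limit $\hat \phi \in \textup{PSH}(X,\theta)$ with $\sup_X \hat \phi = 0$. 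Since $u_k \searrow u$ and $\tilde \phi_k \le u_k$, combining lower semicontinuity of non-pluripolar products (Theorem \ref{thm: lsc of MA measures}) with plurifine locality on the contact set $\{\phi_k = f_k\}$ (where $\theta_{\phi_k}^n$ is supported, by Theorem \ref{thm: envelope contact}) should upgrade the above to
\[
b^n \int_X \theta_u^n \le (b^n - 1)\int_X \theta_v^n,
\]
contradicting the strict hypothesis. Hence $\sup_X \phi_k$ is uniformly bounded below, $\phi_\infty \in \textup{PSH}(X,\theta)$ and $\phi_\infty \le f$, finishing the proof.

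The main obstacle is precisely this contradiction step. The mass identities at finite $k$ only say that each $\phi_k$ carries the fixed mass $N$, independently of $\int_X \theta_u^n$; the ingredient $\int_X \theta_u^n$ enters only in the limit, via the constraint $\tilde \phi_k \le u_k$ together with $u_k \searrow u$. One must therefore carefully manage how the non-pluripolar products behave under this limit (some mass can in principle be absorbed by pluripolar sets). The strictness of $b^n \int_X \theta_u^n > (b^n - 1)\int_X \theta_v^n$ is the margin that guarantees this collapse cannot occur, producing the desired $\theta$-psh limit.
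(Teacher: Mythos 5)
Your setup is sound and matches the paper's strategy in spirit: truncate to $u_k=\max(u,v-k)$, observe that $\phi_k:=P_\theta(bu_k+(1-b)v)$ is a genuine $\theta$-psh function squeezed between $v-bk$ and $v$ (hence of mass $N=\int_X\theta_v^n$), and reduce everything to a uniform lower bound on $\sup_X\phi_k$. But the proof has a genuine gap exactly where you admit it does: the contradiction step is never carried out. Your finite-$k$ mass bookkeeping for $\tilde\phi_k=b^{-1}\phi_k+(1-b^{-1})v$ yields $\int_X\theta_{\tilde\phi_k}^n=N$ on the nose (both bounds equal $N$, since all mixed masses $\int_X\theta_{\phi_k}^j\wedge\theta_v^{n-j}$ equal $N$ by Proposition \ref{prop: comparison generalization}), so it carries no information, and the phrase ``should upgrade the above to $b^n\int_X\theta_u^n\le(b^n-1)\int_X\theta_v^n$'' is precisely the theorem's content left unproved. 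Passing to normalized $L^1$-limits $\hat\phi_k=\phi_k-\sup_X\phi_k$ does not obviously produce the quantity $\int_X\theta_u^n$ at all: lower semicontinuity (Theorem \ref{thm: lsc of MA measures}) goes the wrong way for an upper bound, and the decreasing limit $\tilde\phi_\infty\le u$ is identically $-\infty$ in the scenario you are trying to exclude, so its mass is not defined.

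The paper's mechanism is different and worth internalizing. It replaces $v$ by the model envelope $\phi=P_\theta[v]$ (legitimate since $bu+(1-b)v\ge bu+(1-b)P_\theta[v]$), and instead of a global mass identity it proves a \emph{sublevel-set} estimate: since $\theta_{\varphi_j}^n$ lives on the contact set $D_j=\{\varphi_j=bu_j+(1-b)\phi\}$ (Theorem \ref{thm: envelope contact}) and ${\bf 1}_{D_j}b^{-n}\theta_{\varphi_j}^n\le{\bf 1}_{D_j}\theta_{u_j}^n$ (Lemma \ref{lem: concentration max}), one gets
\[
\theta_{\varphi_j}^n(\{\varphi_j\le\phi-bk\})\le b^n\,\theta_{u_j}^n(\{u\le\phi-k\})=b^n\Bigl(\int_X\theta_\phi^n-\int_{\{u>\phi-k\}}\theta_u^n\Bigr),
\]
which is where $\int_X\theta_u^n$ finally enters. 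The point of using $\phi=P_\theta[v]$ is Lemma \ref{lem: model sup over X}: it gives $\sup_X(\varphi_j-\phi)=\sup_X\varphi_j$, so $\sup_X\varphi_j\to-\infty$ forces $\{\varphi_j\le\phi-bk\}=X$ for large $j$, turning the left side into the full mass $\int_X\theta_\phi^n$ and yielding the contradiction with \eqref{eq: non-collapsing n} as $k\to\infty$. With your unmodified $v$, the implication ``$\sup_X\phi_k\to-\infty$ implies $\phi_k\le v-bk$ everywhere'' can fail where $v$ is very negative, so any completion along these lines needs that replacement. Finally, avoid invoking Theorem \ref{thm5} here: in this paper log-concavity is deduced from Theorem \ref{thm4}, whose proof depends (through uniqueness and the domination principle) on the very theorem you are proving, so that appeal is circular; fortunately the inequality you wanted from it already follows from Proposition \ref{prop: comparison generalization}.
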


To be clear, $P_\theta(bu+(1-b)v) := \textup{usc} (\sup\{h \in \textup{PSH}(X,\theta) \textup{ such that } h + (b-1)v \leq bu\})$, and the set of candidates for this supremum is typically empty. Observe that, if $u$ and $v$ have the same positive mass then \eqref{eq: non-collapsing n} is trivially satisfied.

\begin{proof}
Set $\phi:= P_{\theta}[v]$. 
It suffices to prove that $P_\theta(bu-(b-1)\phi) \in {\rm PSH}(X,\theta)$ since $v \leq P_\theta[v]$, hence $bu + (1-b)v \geq bu + (1-b) P_\theta[v]$. By Remark \ref{rem: increasing implies capacity} we have $\int_X \theta_v^n=\int_X\theta_{\phi}^n$.

For $j\in \mathbb{N}$ we set $u_j:=\max(u,\phi-j)$ and $\varphi_j:=P(bu_j+(1-b)\phi)$. Observe that $\varphi_j$ is a decreasing sequence of $\theta$-psh functions, having the same singularity type as $\phi$. The proof is finished if we can show that $\varphi:=\lim_j \varphi_j$ is not identically  $-\infty$. Assume by contradiction that $\sup_X \varphi_j \to -\infty$.  
Set $\psi_j:=b^{-1} \varphi_j +(1-b^{-1})\phi$ and $D_j:= \{b^{-1} \varphi_j +(1-b^{-1})\phi=u_j\}$. Note that $\psi_j\leq u_j$ with equality on the contact set $D_j$.  Lemma \ref{lem: concentration max} thus ensures that 
\begin{equation}
	\label{eq: subextension b b-1}
	{\bf 1}_{D_j}b^{-n} \theta_{\varphi_j}^n \leq {\bf 1}_{D_j} \theta_{\psi_j}^n \leq  {\bf 1}_{D_j} \theta_{u_j}^n. 
\end{equation}
Also, observe that by Theorem \ref{thm: envelope contact}, the Monge--Amp\`ere measure $\theta_{\varphi_j}^n$ is supported on $\{\varphi_j =bu_j+ (1-b)\phi\}=D_j$.

Fix $j>k>0$.  We note that $u_j=u$ on $\{u>\phi-k\}$ and, since  $u_j$ has the same singularity type as $\phi$, by Lemma \ref{lem: monotonicity new} and the plurifine locality we have
\begin{equation}\label{eq: interm_est}
\int_{\{u\leq \phi -k\}} \theta_{u_j}^n = \int_X \theta_{u_j}^n -\int_{\{u>\phi-k\}} \theta_{u_j}^n=\int_X \theta_{\phi}^n -\int_{\{u>\phi-k\}} \theta_{u}^n.
\end{equation}
Since $\{u_j \leq \phi-k\} = \{u\leq \phi-k\}$,  Theorem \ref{thm: envelope contact} and \eqref{eq: subextension b b-1}  we obtain  
\begin{flalign}\label{eq: sub extension p p-1 2}
&\theta_{\varphi_j}^n(\{\varphi_j\leq \phi-bk\}) \leq  b^n {\bf 1}_{D_j} \theta_{u_j}^n(\{ \varphi_j\leq \phi-bk\}) \leq  b^n \theta_{u_j}^n (\{bu_j+(1-b)\phi \leq \phi-bk\}) \nonumber \\
    & =  b^n \theta_{u_j}^n (\{u_j\leq \phi-k\})\leq   b^n \theta_{u_j}^n (\{u\leq \phi-k\}) \leq  b^n\bigg(\int_X \theta_{\phi}^n -\int_{\{u>\phi-k\}} \theta_u^n\bigg),
\end{flalign}
where in the last inequality we have used \eqref{eq: interm_est}. 
Since $\phi = P_{\theta}[v]$, by Lemma \ref{lem: model sup over X} we have $\sup_X ( \varphi_j-\phi)=\sup_X \varphi_j\to -\infty$.  From this we see that $\{\varphi_j\leq \phi-bk\}=X$ for $j\geq j_0$ large enough, $k$ being fixed. Thus,

$$\int_X \theta_{\phi}^n=\int_X \theta_{\varphi_{j_0}}^n \leq  b^n\bigg(\int_X \theta_{\phi}^n -\int_{\{u>\phi-k\}} \theta_u^n\bigg) $$
where the first identity follows from the fact that $\varphi_{j_0}$ and $\phi$ have the same singularity type. Letting $j_0\to \infty$ in \eqref{eq: sub extension p p-1 2}, and then $k\to \infty$ gives
$$
\int_X \theta_{\phi}^n \leq b^n \left (\int_X \theta_{\phi}^n -\int_X \theta_u^n\right),
$$
contradicting  \eqref{eq: non-collapsing n}.  Consequently,  $\varphi_j$ decreases to a  function in ${\rm PSH}(X,\theta)$, finishing the proof.
\end{proof}

In some instances the conclusion of the above result can be strengthened:

\begin{lemma}
\label{lem: class E relative}
Assume $u,v\in {\rm PSH}(X,\theta)$ and $u\leq v$. Assume also that, for all $b>1$, 
 $P_{\theta}(bu-(b-1)v) \in {\rm PSH}(X,\theta)$. Then, for all $b>1$, 
\[
\int_X (\theta+dd^c P_{\theta}(bu-(b-1)v))^n = \int_X \theta_v^n.
\] 
\end{lemma}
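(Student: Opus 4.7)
The plan is to set $w_b := P_\theta(bu-(b-1)v)$ and introduce the auxiliary $\theta$-psh potential
\[
g_b := \frac{w_b + (b-1)v}{b},
\]
a convex combination of $\theta$-psh functions which, because $w_b \leq bu-(b-1)v$, satisfies $g_b \leq u \leq v$. Since $b(\theta+dd^c g_b) = (\theta+dd^c w_b) + (b-1)(\theta+dd^c v)$, multi-linearity of the non-pluripolar product \cite[Proposition 1.4]{BEGZ10} yields the binomial expansion
\begin{equation}\label{eq: plan_binom_exp}
b^n \int_X \theta_{g_b}^n = \sum_{k=0}^n \binom{n}{k}(b-1)^{n-k} \int_X \theta_{w_b}^k \wedge \theta_v^{n-k},
\end{equation}
which will serve as the backbone of the argument.

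The crucial intermediate claim is that $\int_X \theta_{g_b}^n = \int_X \theta_v^n$, and the natural route is a bootstrap. First I would verify the envelope inequality $w_{b'} \leq b'g_b - (b'-1)v$ for every $b' > b$. Writing this as $\tfrac{b}{b'} w_{b'} + \tfrac{b'-b}{b'} v \leq w_b$, it suffices to check that the $\theta$-psh convex combination on the left satisfies
\[
\tfrac{b}{b'} w_{b'} + \tfrac{b'-b}{b'} v \leq \tfrac{b}{b'}\bigl(b'u-(b'-1)v\bigr) + \tfrac{b'-b}{b'} v = bu-(b-1)v,
\]
which follows from the defining bound $w_{b'} \leq b'u-(b'-1)v$ by a short computation; the envelope property of $w_b=P_\theta(bu-(b-1)v)$ then proves the claim. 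Consequently $w_{b'}$ is a $\theta$-psh candidate for $\tilde w_{b'} := P_\theta(b'g_b-(b'-1)v)$, and hence this envelope lies in $\PSH(X,\theta)$ for every $b'>b$.

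With the pair $(g_b,v)$ now satisfying the same hypothesis for every large $b'$, I would apply the analogue of \eqref{eq: plan_binom_exp} to it: setting $\tilde g_{b'} := (\tilde w_{b'}+(b'-1)v)/b' \leq g_b$ and keeping only the $k=0$ summand produces
\[
(b')^n \int_X \theta_{\tilde g_{b'}}^n \geq (b'-1)^n \int_X \theta_v^n.
\]
Combined with the monotonicity estimate $\int_X \theta_{\tilde g_{b'}}^n \leq \int_X \theta_{g_b}^n$ (Theorem \ref{thm: BEGZ_monotonicity_full}), this gives $\int_X \theta_{g_b}^n \geq (1-1/b')^n \int_X \theta_v^n$, and letting $b'\to\infty$ yields $\int_X \theta_{g_b}^n \geq \int_X \theta_v^n$. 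The reverse inequality is immediate from $g_b\leq v$ and the monotonicity theorem.

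Finally, plugging $\int_X \theta_{g_b}^n = \int_X \theta_v^n$ into \eqref{eq: plan_binom_exp} and using $\int_X \theta_{w_b}^k \wedge \theta_v^{n-k} \leq \int_X \theta_v^n$ (Theorem \ref{thm: BEGZ_monotonicity_full}) together with $\sum_{k=0}^n \binom{n}{k}(b-1)^{n-k}=b^n$ forces equality in every mixed mass; the case $k=n$ yields $\int_X \theta_{w_b}^n = \int_X \theta_v^n$, as desired. The main obstacle is the bootstrap inequality $w_{b'} \leq b'g_b-(b'-1)v$ enabling the iteration: once that is in place, the limit $b'\to\infty$ and multi-linearity together squeeze the mass of $w_b$ against that of $v$.
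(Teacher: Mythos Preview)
Your argument is correct. Both your proof and the paper's rest on the same algebraic observation: for $b'>b$ the convex combination $\tfrac{b}{b'}w_{b'}+(1-\tfrac{b}{b'})v$ lies below $bu-(b-1)v$ and is therefore dominated by $w_b$. From there the two approaches diverge. The paper applies the concavity of the operator $P_\theta[\cdot]$ (Lemma~\ref{lem: concavity of P}) to this inequality, uses $L^1$-compactness of the normalized envelopes $P_\theta[w_{b'}]$ to pass to the limit $b'\to\infty$, and concludes $P_\theta[w_b]=P_\theta[v]$, whence the mass equality follows from Remark~\ref{rem: increasing implies capacity}. Your route stays entirely at the level of masses: the binomial expansion of $\theta_{g_b}^n$ together with monotonicity (Theorem~\ref{thm: BEGZ_monotonicity_full}) and the limit $b'\to\infty$ squeeze first $\int_X\theta_{g_b}^n$ and then each mixed mass $\int_X\theta_{w_b}^k\wedge\theta_v^{n-k}$ against $\int_X\theta_v^n$. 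Your approach is more elementary in that it avoids the $P_\theta[\cdot]$ machinery and the compactness step; the paper's argument is shorter and yields the slightly stronger statement $P_\theta[w_b]=P_\theta[v]$, which is what is actually used later (e.g.\ in Theorem~\ref{thm: domination principle} and Lemma~\ref{lem: two functions in E}).
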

\begin{proof}
Fix $t>b$ and observe that $bu-(b-1)v=t^{-1}b (tu-(t-1)v)+(1-t^{-1}b)v$, where $t^{-1}b<1$. Hence
     \[
   \varphi:=  P_{\theta}(bu-(b-1)v) \geq t^{-1}b P_{\theta}(tu-(t-1)v) +(1-t^{-1}b)v,
     \]
and by Lemma \ref{lem: concavity of P}, 
    \[
    P_{\theta}[\varphi] \geq  t^{-1}bP_{\theta} [P_{\theta}(tu-(t-1)v)] + (1-t^{-1}b)P_{\theta}[v].
    \]
Set $\psi_t:= P_{\theta} [P_{\theta}(tu-(t-1)v)]$. Then $\sup_X \psi_t =0$ and 
    $\psi_t\in {\rm PSH}(X,A\omega)$ for some fixed constant $A>0$. It follows
     from \cite[Proposition 2.7]{GZ05} that the functions $\psi_t$ stay in a compact set of $L^1(X)$. 
Letting $t\to \infty$ we thus have
     \[
     P_{\theta}[\varphi] \geq P_{\theta}[v]. 
     \]
      Since $u\leq v$, we also have $\varphi \leq v$. 
      Combining this and the above inequality we see that 
       $P_{\theta}[\varphi] = P_{\theta}[v]$, and using 
      Remark \ref{rem: increasing implies capacity}  we arrive at the result. 
\end{proof}

Next we prove the domination principle of relative pluripotential theory, extending a result of Dinew from the K\"ahler case \cite{BL12PA}.

\begin{theorem}\label{thm: domination principle}
	Assume $\phi$ is a model potential and $u, v \in \mathcal{E}(X,\theta,\phi)$. Then
	\[
	\theta_u^n (\{u<v\}) =0 \Longrightarrow u\geq v. 
	\]
\end{theorem}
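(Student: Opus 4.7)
The plan is to proceed in three steps: first establish a relative comparison principle, then bootstrap via a shift to force $\theta_v^n$ to vanish on $\{u<v\}$, and finally close the argument using the subextension machinery developed earlier in the chapter.

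\textbf{Step 1 (Relative comparison principle).} First I would set $w := \max(u,v)$. By the remark following the definition of $\mathcal{E}(X,\theta,\phi)$, $w \in \mathcal{E}(X,\theta,\phi)$, so $\int_X \theta_w^n = \int_X \theta_u^n = \int_X \theta_v^n$. Lemma \ref{lem: concentration max} gives
$$
\theta_w^n \geq \mathbf{1}_{\{v \leq u\}} \theta_u^n + \mathbf{1}_{\{u < v\}} \theta_v^n,
$$
and integrating against the mass identity yields the relative comparison principle
$$
\int_{\{u<v\}} \theta_v^n \leq \int_{\{u<v\}} \theta_u^n.
$$
Since $u+\varepsilon \in \mathcal{E}(X,\theta,\phi)$ for every $\varepsilon > 0$ (shifting by a constant preserves both singularity type and mass), applying this principle with $u+\varepsilon$ in place of $u$ and using the hypothesis gives $\theta_v^n(\{u+\varepsilon < v\}) = 0$ for every $\varepsilon > 0$; continuity of the measure along the increasing union then forces $\theta_v^n(\{u<v\}) = 0$. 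Substituting back, the inequality of Lemma \ref{lem: concentration max} becomes an equality throughout, so $\theta_w^n = \theta_u^n$ as Borel measures on $X$.

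\textbf{Step 2 (Setup via subextension).} The problem now reduces to showing that $u \leq w$ in $\mathcal{E}(X,\theta,\phi)$ with $\theta_u^n = \theta_w^n$ forces $u \equiv w$, which is exactly $v \leq u$. For this I would invoke Theorem \ref{thm: subextension b b-1}: condition \eqref{eq: non-collapsing n} holds trivially since $u$ and $w$ carry the same positive mass $\int_X \theta_\phi^n$, so for every $b > 1$ the envelope $\eta_b := P_\theta(bu - (b-1)w)$ belongs to $\PSH(X,\theta)$. By Lemma \ref{lem: class E relative}, $\int_X \theta_{\eta_b}^n = \int_X \theta_w^n$, placing $\eta_b \in \mathcal{E}(X,\theta,\phi)$. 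Crucially, on the plurifine open set $\{u < w\} = \{u<v\}$ we have $\eta_b \leq bu-(b-1)w = u - (b-1)(w-u) < u$, so $\eta_b$ is strictly below $u$ there, while on $\{u = w\}$ the trial candidate $bu-(b-1)w$ coincides with $u$.

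\textbf{Step 3 (Main obstacle).} The final step, which I expect to be the main obstacle, is turning this strict inequality into a contradiction with the mass identity $\int_X \theta_{\eta_b}^n = \int_X \theta_u^n$. The plan is to apply Theorem \ref{thm: MA of env sing type}, which concentrates $\theta_{\eta_b}^n$ on the contact set $\{\eta_b = bu-(b-1)w\}$---forcing $\eta_b = u$ on the contact intersected with $\{u = w\}$---together with the relative comparison principle of Step 1 applied to the pair $(\eta_b, u)$, exploiting the freedom of $b > 1$. Executing this careful mass accounting should push all the mass of $\theta_{\eta_b}^n$ onto $\{u = w\}$, forcing $\{u<v\}$ to be capacity-negligible; since $\theta$-psh functions are determined by their values off pluripolar sets, this yields $u \geq v$ everywhere. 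The hard part is carrying out the accounting directly, without cycling through the uniqueness statement of Theorem \ref{thm4}, whose proof in Chapter 5 depends on the present domination principle.
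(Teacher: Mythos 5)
Your Steps 1 and 2 are correct and coincide with the paper's own proof: the reduction to $\theta_{\max(u,v)}^n=\theta_u^n$ via Lemma \ref{lem: concentration max} and the mass identity, and the introduction of the envelopes $\eta_b=P_\theta(bu-(b-1)w)$ via Theorem \ref{thm: subextension b b-1} and Lemma \ref{lem: class E relative}, are exactly the paper's opening moves. The problem is Step 3, which is where the theorem is actually proved, and which you leave as a plan rather than an argument. Moreover, the plan as stated contains a non sequitur: showing that ``all the mass of $\theta_{\eta_b}^n$'' sits on $\{u=w\}$ does \emph{not} force $\{u<v\}$ to be capacity-negligible or pluripolar. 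A Borel set can be null for every one of the measures $\theta_{\eta_b}^n$ (and for $\theta_u^n$, $\theta_v^n$) while having positive capacity -- indeed the hypothesis of the theorem is precisely that $\{u<v\}$ is $\theta_u^n$-null, and the whole difficulty is to upgrade this to pluripolarity. Mass accounting alone cannot do that.

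The missing mechanism is the following limiting argument. First, set $\psi_b:=b^{-1}\eta_b+(1-b^{-1})w\le u$, with equality on the contact set $D_b=\{\eta_b=bu-(b-1)w\}$; Theorem \ref{thm: envelope contact}, Lemma \ref{lem: concentration max} and multilinearity give $b^{-n}\theta_{\eta_b}^n\le {\bf 1}_{D_b}\theta_{\psi_b}^n\le{\bf 1}_{D_b}\theta_u^n$, and since $\theta_u^n$ already lives on $\{u=w\}$ (where $bu-(b-1)w=u$) one concludes $\theta_{\eta_b}^n={\bf 1}_{\{\eta_b=u\}}\theta_{\eta_b}^n\le\theta_u^n$, hence $\theta_{\eta_b}^n=\theta_u^n$ by equality of total masses. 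The decisive consequence is the uniform lower bound $\int_Xe^{\eta_b}\theta_u^n=\int_Xe^{\eta_b}\theta_{\eta_b}^n=\int_Xe^{u}\theta_u^n>0$. Since $u\le w$ makes $b\mapsto\eta_b$ decreasing, this bound (via monotone convergence) shows the decreasing limit $\eta:=\lim_{b\to\infty}\eta_b$ is not identically $-\infty$, i.e.\ $\eta\in\PSH(X,\theta)$. Finally $\eta_b\le u-(b-1)(w-u)\to-\infty$ on $\{u<w\}=\{u<v\}$, so $\{u<v\}\subset\{\eta=-\infty\}$ is pluripolar, hence Lebesgue-null, hence $u\ge v$ everywhere by quasi-plurisubharmonicity. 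Your write-up omits precisely the exponential-integral bound and the passage to the limit $\eta$, which are the steps that convert the measure-theoretic information into pluripolarity; without them the argument does not close.
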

\begin{proof}
	Since $\max(u,v)\in \mathcal{E}(X,\theta,\phi)$, we can assume that $u\leq v\leq 0$. By Lemma \ref{lem: concentration max} we have 
	\[
	\theta_v^n \geq {\bf 1}_{\{u=v\}} \theta_v^n \geq {\bf 1}_{\{u=v\}} \theta_u^n = \theta_u^n,
	\]
where the last identity follows from the fact that $\theta_u^n (u<v) =0$.
	Comparing the total mass we thus have $\theta_u^n=\theta_v^n$ and the measure is concentrated on $\{u=v\}$.  
	
	Fix $b>1$ and set $\varphi_b:= P_{\theta}(b u -(b-1)v)$.  Since the masses of $u$ and $v$ are equal the assumption in Theorem \ref{thm: subextension b b-1}  is trivially satisfied, hence $\varphi_b\in {\rm PSH}(X,\theta)$. By Lemma \ref{lem: class E relative} above, we have $\int_X \theta_{\varphi_b}^n =\int_X \theta_{\phi}^n>0$.

  Set $\psi_b:=b^{-1} \varphi_b +(1-b^{-1})v$ and $D_b:= \{b^{-1} \varphi_b+(1-b^{-1})v=u\}=\{\varphi_b=b u -(b-1)v\}$. Note that $\psi_b\leq u$ with equality on the contact set $D_b$.  Lemma \ref{lem: concentration max} and Theorem \ref{thm: envelope contact} thus ensure that 
\[
	b^{-n}\theta_{\varphi_b}^n \leq b^{-n}\theta_{\varphi_b}^n + {\bf 1}_{D_b} (1-b^{-1})^n\theta_v^n = {\bf 1}_{D_b}b^{-n} \theta_{\varphi_b}^n + {\bf 1}_{D_b}  (1-b^{-1})^n\theta_v^n \leq {\bf 1}_{D_b} \theta_{\psi_b}^n \leq  {\bf 1}_{D_b} \theta_{u}^n. 
\]
 We then infer 
\[
b^{-n}\theta_{\varphi_b}^n  \leq \theta_u^n,
\]
hence $\theta_{\varphi_b}^n$ is concentrated on $\{\varphi_b=bu-(b-1)v=u\}$. Since $\varphi_b\leq u$, it thus follows from Lemma \ref{lem: concentration max} that 
\[
\theta_{\varphi_b}^n= {\bf 1}_{\{\varphi_b=u\}} \theta_{\varphi_b}^n\leq \theta_{u}^n.
\]
Comparing the total mass, we obtain $\theta_{\varphi_b}^n=\theta_u^n$, hence 
\[
\int_X e^{\varphi_b} \theta_u^n =\int_X e^{\varphi_b} \theta_{\varphi_b}^n = \int_X e^{u}\theta_u^n>0. 
\]
Note also that, since $u\leq v$, $\varphi_b$ is decreasing in $b$. The above thus implies that $\varphi:= \lim_{b\to \infty} \varphi_b$ is not identically $-\infty$. Moreover, for any $x\in \{u<v\}$, we have $\varphi_b(x) \leq bu(x)-(b-1)v(x) \leq v(x) + b(u(x)-v(x))$. Letting $b\to \infty$, we see that $\varphi(x)=-\infty$. This implies that $\{u<v\}$ is pluripolar, hence empty. It then follows that $u\geq v$ as desired. 
\end{proof}

The following result states that the non-pluripolar Monge--Amp\`ere measure determines the potential within a relative full mass class. This extends a result of Dinew from the  K\"ahler case \cite{DiwJFA09}.

\begin{theorem}\label{thm: uniqueness}
	Assume $\phi$ is a model potential and $u, v \in \mathcal{E}(X,\theta,\phi)$. Then
	\[
	\theta_u^n  = \theta_v^n  \Longrightarrow u -v \; \text{is constant}. 
	\]
\end{theorem}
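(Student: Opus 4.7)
The plan is to adapt the envelope argument from the proof of Theorem \ref{thm: domination principle}. By replacing $v$ with $\max(u,v)$ --- which has the same Monge--Amp\`ere measure as $u$, via Lemma \ref{lem: concentration max} combined with the hypothesis $\theta_u^n = \theta_v^n$ and a total-mass comparison --- we may assume $u \leq v$. Further shifting $u$ by the finite nonnegative constant $\inf_X(v-u)$, we may also assume $\sup_X(u-v) = 0$. Under this normalization it suffices to show $u = v$, for then undoing the shifts recovers $u-v$ constant. By Theorem \ref{thm: domination principle}, this reduces to proving $\theta_u^n(\{u<v\})=0$, i.e., that $\{u<v\}$ is pluripolar.

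For $b > 1$, set $\varphi_b := P_\theta(bu - (b-1)v)$. Since $u \leq v$ share the same positive mass, Theorem \ref{thm: subextension b b-1} gives $\varphi_b \in \textup{PSH}(X,\theta)$, and Lemma \ref{lem: class E relative} yields $\int_X \theta_{\varphi_b}^n = \int_X \theta_\phi^n$, so $\varphi_b \in \mathcal{E}(X,\theta,\phi)$. The family $\{\varphi_b\}$ is decreasing in $b$, and on $\{u<v\}$ the pointwise bound $\varphi_b \leq bu-(b-1)v = v - b(v-u) \to -\infty$ gives $\varphi_\infty := \lim_{b\to\infty}\varphi_b \equiv -\infty$ there. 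If $\varphi_\infty \not\equiv -\infty$ on $X$, then $\varphi_\infty \in \textup{PSH}(X,\theta)$, its polar set is pluripolar, and therefore so is $\{u<v\}$.

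To secure $\varphi_\infty \not\equiv -\infty$, I rerun the chain of estimates from the proof of Theorem \ref{thm: domination principle}. With $\psi_b := b^{-1}\varphi_b + (1-b^{-1})v$ satisfying $\psi_b \leq u$ with equality on $D_b := \{\varphi_b = bu-(b-1)v\}$, combining Lemma \ref{lem: concentration max}, Theorem \ref{thm: envelope contact}, and the mixed-product bound for $\theta_{\psi_b}^n$ produces
\[
b^{-n}\theta_{\varphi_b}^n + (1-b^{-1})^n \mathbf{1}_{D_b}\theta_v^n \;\leq\; \mathbf{1}_{D_b}\theta_u^n .
\]
Substituting $\theta_u^n = \theta_v^n$ and comparing total masses (each equal to $\int_X \theta_\phi^n$), the chain should close up to equalities, yielding $\theta_{\varphi_b}^n = \theta_u^n$ with support contained in $\{\varphi_b = u\}$. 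This gives the uniform positive lower bound $\int_X e^{\varphi_b} \theta_u^n = \int_X e^u \theta_u^n > 0$, and monotone convergence then forces $\varphi_\infty \not\equiv -\infty$.

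The main obstacle is the collapse of the chain in the previous paragraph. In the proof of Theorem \ref{thm: domination principle}, the analogous conclusion $\theta_{\varphi_b}^n = \theta_u^n$ crucially used that $\theta_u^n$ is concentrated on $\{u=v\}$, a direct consequence of the domination hypothesis there. Here only the weaker identity $\theta_u^n = \theta_v^n$ is available, and the careful mass accounting needed to rule out a $b$-dependent residue of $\theta_{\varphi_b}^n$ on $D_b \cap \{u<v\}$ is the delicate part of the argument. Once this is granted, the tail of the proof reduces to the concluding pointwise argument of Theorem \ref{thm: domination principle} applied verbatim.
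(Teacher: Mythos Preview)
Your overall framework is the same as the paper's, and you correctly identify where the difficulty lies. But the gap you flag is real, and ``comparing total masses'' does \emph{not} close it. From
\[
b^{-n}\theta_{\varphi_b}^n + (1-b^{-1})^n \mathbf{1}_{D_b}\theta_v^n \leq \mathbf{1}_{D_b}\theta_u^n
\]
and $\theta_u^n=\theta_v^n=\mu$, integrating only yields $b^{-n}\int_X\theta_\phi^n \leq (1-(1-b^{-1})^n)\int_X\theta_\phi^n$, which is the strict inequality $b^{-n}+(1-b^{-1})^n<1$ for $n\geq 2$. There is genuine slack, so no chain of equalities follows, and you cannot conclude $\theta_{\varphi_b}^n=\mu$ from this alone. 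Your claimed support statement ``contained in $\{\varphi_b=u\}$'' is also off: the envelope contact set is $D_b=\{\varphi_b=bu-(b-1)v\}$, and on $D_b$ one has $\varphi_b=u$ only where $u=v$, which is not yet known.

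The missing ingredient is the mixed Monge--Amp\`ere inequality \cite[Proposition 1.11]{BEGZ10}. Writing $\theta_{\varphi_b}^n=f\mu$ with $f$ supported in $D_b$ (from the displayed inequality), one gets $\theta_{\varphi_b}^j\wedge\theta_v^{n-j}\geq f^{j/n}\mu$, hence by multilinearity
\[
\theta_{\psi_b}^n \geq (b^{-1}f^{1/n}+1-b^{-1})^n\mu.
\]
Combined with $\mathbf{1}_{D_b}\theta_{\psi_b}^n\leq\mathbf{1}_{D_b}\mu$ this forces $f\leq 1$ on $D_b$, hence $\theta_{\varphi_b}^n\leq\mu$; total mass then gives $f=1$. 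From this one also gets $\theta_{\psi_b}^n\geq\mu$, hence $\theta_{\psi_b}^n=\mu$ (same total mass), and since $\mu$ is concentrated on $D_b=\{\psi_b=u\}$ the domination principle yields $\psi_b=u$, i.e.\ $\varphi_b=bu-(b-1)v$ \emph{everywhere}. This is stronger than what you aim for and makes the endgame immediate: with $\sup_X(u-v)=0$ one has $\sup_X\varphi_b=0$, so $\varphi_b\searrow\varphi\in\textup{PSH}(X,\theta)$, and $\varphi=-\infty$ on $\{u<v\}$ shows that set is pluripolar.
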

\begin{proof}
We normalize $u, v$ by $\sup_X u =\sup_X v=0$. The goal is then to prove that $u=v$. Then $\max(u,v)\in \mathcal{E}(X,\theta,\phi)$ and by Lemma \ref{lem: concentration max}, $\theta_{\max(u,v)}^n\geq \mu: = \theta_u^n$. Comparing the total mass we see that $\theta_{\max(u,v)}^n=\mu$. Thus, replacing $v$ with $\max(u,v)$, we can assume that $u\leq v$.  
	
	Fix $b>1$ and set $\varphi_b:= P_{\theta}(b u -(b-1)v)$. Since the masses of $u$ and $v$ are equal, Theorem \ref{thm: subextension b b-1} ensures that $\varphi_b\in {\rm PSH}(X,\theta)$. By Lemma \ref{lem: class E relative}, we have $\int_X \theta_{\varphi_b}^n =\int_X \theta_\phi^n>0$.  Since $\varphi_b\leq u$, we infer that $\varphi_b\in \mathcal{E}(X,\theta,\phi)$. 
	
	 Set $\psi_b:=b^{-1} \varphi_b +(1-b^{-1})v$ and $D_b:= \{b^{-1} \varphi_b+(1-b^{-1})v=u\}$. Note that $\psi_b\leq u$ with equality on the contact set $D_b$.  Lemma \ref{lem: concentration max} and Theorem \ref{thm: envelope contact} thus ensure that 
  \[
 {\bf 1}_{D_b}\theta_{\psi_b}^n \leq  {\bf 1}_{D_b}\theta_u^n=  {\bf 1}_{D_b}\mu,
  \]
  hence
\[
	 b^{-n}\theta_{\varphi_b}^n + {\bf 1}_{D_b} (1-b^{-1})^n\theta_v^n = {\bf 1}_{D_b}b^{-n} \theta_{\varphi_b}^n + {\bf 1}_{D_b}  (1-b^{-1})^n\theta_v^n \leq {\bf 1}_{D_b} \theta_{\psi_b}^n \leq  {\bf 1}_{D_b} \theta_{u}^n. 
\]
It thus follows that $\theta_{\varphi_b}^n = f \mu$ for some $f\in L^1(\mu)$ whose support is contained in $D_b$. The mixed Monge--Amp\`ere inequalities, \cite[Proposition 1.11]{BEGZ10} yield 
\[
\theta_{\varphi_b}^j\wedge\theta_v^{n-j}\geq f^{j/n} \mu,
\]
hence
\begin{flalign}\label{eq_mix}
 \theta_{\psi_b}^n &=  \sum_{j=0}^n \binom{n}{j} (b^{-1}\theta_{\varphi_b})^j \wedge \left((1-b^{-1}) \theta_v\right)^{n-j} \nonumber \\
 &\geq   \sum_{j=0}^n \binom{n}{j} b^{-j}(1-b^{-1})^{n-j}f^{\frac{j}{n}} \mu \nonumber \\
  &= (b^{-1} f^{1/n}+ 1-b^{-1})^n \mu. 
\end{flalign}
Since ${\bf 1}_{D_b}\mu \geq {\bf 1}_{D_b}\theta_{\psi_b}^n$, it follows from the above that $f^{1/n} \leq 1$ on $D_b$. Since $f$ is supported in $D_b$ we get $f\leq 1$, and so $\theta_{\varphi_b}^n  \leq \mu$. Comparing the total masses (noting that $\varphi_b\in \mathcal{E}(X,\theta,\phi)$) gives $f=1$. Hence $\theta_{\varphi_b}^n =\mu$, and from \eqref{eq_mix} we also get $\theta_{\psi_b}^n \geq \mu$. Since $\psi_b\leq u$,  we then infer, via Theorem \ref{thm: BEGZ_monotonicity_full},  that $\theta_{\varphi_b}^n =\theta_{\psi_b}^n=\mu$ is concentrated on $\{\varphi_b=bu-(b-1)v\}$.
 Now, $\{\psi_b<u\} = \{\varphi_b<bu-(b-1)v\}$, hence
\[
\theta_{\psi_b}^n(\psi_b<u) =0.
\]
Invoking the domination principle (Theorem \ref{thm: domination principle}) we obtain $\psi_b=u$, hence $\varphi_b= bu -(b-1)v$. Since $u\leq v \leq 0$ and $\sup_X u=0$, it follows that there exists $x\in X$ with $u(x)=v(x)=0$. We then infer $\sup_X \varphi_b=0$, hence the functions $\varphi_b= v+b(u-v)$ decrease to some $\varphi\in {\rm PSH}(X,\theta)$ which is $-\infty$ in $\{u<v\}$. It follows that $\{u<v\}$ is a pluripolar set, hence $u\geq v$ almost everywhere. Since $u,v$ are quasi-psh functions we can conclude that $u\geq v$ everywhere.
\end{proof}

\paragraph{Maximality of model potentials.}Based on our previous findings, one wonders if the following set of potentials has a maximal element:

\[ F_\phi:= \bigg\{v\in {\rm PSH}(X,\theta) \; : \;   \phi \leq v \leq 0 \ \textrm{and} \ \int_X \theta_v^n =\int_X \theta_{\phi}^n\bigg\}. 
\]
In other words, does there exist a \emph{least singular} potential that is less singular than $\phi$ but has the same full mass as $\phi$. As shown in the following result, if $\int_X \theta_\phi^n >0$, this is indeed the  case, moreover this maximal potential is equal to $P_\theta[\phi]$.

\begin{theorem}\label{thm: ceiling coincide envelope non collapsing}
Assume that   $\phi\in {\rm PSH}(X,\theta)$ and $\int_X \theta_{\phi}^n>0$. Then 
$$P_{\theta}[\phi] = \sup_{v \in F_\phi} v.$$
In particular, $P_{\theta}[\phi]=P_{\theta}[P_{\theta}[\phi]]$. 
\end{theorem}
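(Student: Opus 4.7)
The plan is to establish (1) $\psi:=P_\theta[\phi]\in F_\phi$, (2) every $v\in F_\phi$ satisfies $v\leq \psi$, and (3) derive the idempotence $P_\theta[\psi]=\psi$ as a corollary. After normalizing $\phi\leq 0$, (1) is immediate: each $P_\theta(\phi+C,0)$ satisfies $\phi\leq P_\theta(\phi+C,0)\leq 0$, whence $\phi\leq \psi\leq 0$, and Remark \ref{rem: increasing implies capacity} gives $\int_X\theta_\psi^n=\int_X\theta_\phi^n$, so $\psi\in F_\phi$.

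For (2), I would fix $v\in F_\phi$ and pass to $u:=P_\theta[v]$. Since $v\leq 0$, $u\geq v\geq \phi$ and $u\leq 0$; from $v\geq \phi$ and monotonicity of the rooftop envelope ($P_\theta(v+C,0)\geq P_\theta(\phi+C,0)$) we get $u\geq \psi$; and Remark \ref{rem: increasing implies capacity} gives $\int_X\theta_u^n=\int_X\theta_\psi^n$. As $v\leq u$, (2) reduces to $u=\psi$. To prove this I apply Theorem \ref{thm: subextension b b-1}---whose mass condition is automatic since the two masses are equal---producing $\varphi_b:=P_\theta(b\psi-(b-1)u)\in \PSH(X,\theta)$ for every $b>1$, with $\int_X\theta_{\varphi_b}^n=\int_X\theta_\psi^n$ by Lemma \ref{lem: class E relative}. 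A crucial preliminary identity is $\theta_\psi^n=\theta_u^n$: by Theorem \ref{thm: MA of env sing type}, $\theta_\psi^n$ is supported on $\{\psi=0\}$, where $\psi\leq u\leq 0$ forces $u=\psi=0$; thus $\theta_\psi^n=\mathbf{1}_{\{\psi=u\}}\theta_\psi^n\leq \mathbf{1}_{\{\psi=u\}}\theta_u^n\leq \theta_u^n$ by Lemma \ref{lem: concentration max}, and equal total masses force $\theta_\psi^n=\theta_u^n$ and $\theta_\psi^n(\{\psi<u\})=0$.

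I would then mimic the Monge--Amp\`ere computation of Theorem \ref{thm: uniqueness} with $\psi, u$ playing the roles of $u, v$ there. Setting $\xi_b:=b^{-1}\varphi_b+(1-b^{-1})u\leq \psi$ and $D_b:=\{\varphi_b=b\psi-(b-1)u\}$, multilinearity of the non-pluripolar product, Lemma \ref{lem: concentration max}, and the concentration of $\theta_{\varphi_b}^n$ on $D_b$ from Theorem \ref{thm: envelope contact} give $b^{-n}\theta_{\varphi_b}^n\leq \mathbf{1}_{D_b}\theta_\psi^n$; since $\theta_\psi^n$ vanishes on $\{\psi<u\}$ and $\psi\leq u$, $\theta_{\varphi_b}^n$ is concentrated on $\{\varphi_b=\psi\}$. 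Combining this with $\varphi_b\leq \psi$, Lemma \ref{lem: concentration max}, and equal total masses forces $\theta_{\varphi_b}^n=\theta_\psi^n$, hence $\int_X e^{\varphi_b}\theta_\psi^n=\int_X e^\psi \theta_\psi^n>0$ for every $b>1$. As $\{\varphi_b\}$ is decreasing in $b$ (from $\psi\leq u$), dominated convergence yields $\int_X e^{\varphi_\infty}\theta_\psi^n=\int_X e^\psi\theta_\psi^n>0$, making $\varphi_\infty:=\lim_{b\to\infty}\varphi_b$ a $\theta$-psh function not identically $-\infty$, so that $\{\varphi_\infty=-\infty\}$ is pluripolar. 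But $\varphi_b\leq u+b(\psi-u)\to -\infty$ on $\{\psi<u\}$, so $\{\psi<u\}\subset \{\varphi_\infty=-\infty\}$ is pluripolar; upper semi-continuity of $\theta$-psh functions then yields $\psi=u$.

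For (3), applying (2) with $\phi$ replaced by $\psi$: any $w\in F_\psi$ has $w\geq \psi\geq \phi$ and $\int_X\theta_w^n=\int_X\theta_\psi^n=\int_X\theta_\phi^n$, so $w\in F_\phi$, whence $w\leq \psi$ by (2), forcing $w=\psi$; thus $F_\psi=\{\psi\}$ and $P_\theta[\psi]=\sup F_\psi=\psi$. The main technical obstacle is the preliminary identity $\theta_\psi^n=\theta_u^n$ (and the subsequent $\theta_{\varphi_b}^n=\theta_\psi^n$), which sidesteps the circular appeal to Theorem \ref{thm: domination principle} (inapplicable here since $\psi$ is not yet known to be model) via a direct Monge--Amp\`ere concentration argument.
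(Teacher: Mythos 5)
Your proof is correct, and it diverges from the paper's in an instructive way. The paper's argument shares your first key step (via Theorem \ref{thm: MA of env sing type}, $\theta_{P_\theta[\phi]}^n$ lives on $\{P_\theta[\phi]=0\}$, so it gives no mass to $\{P_\theta[\phi]<v\}$ for $v\le 0$), but then it simply places $P_\theta[\phi]$ and $v$ in $\mathcal E(X,\theta,P_\theta[v])$ and invokes the domination principle (Theorem \ref{thm: domination principle}) to get $P_\theta[\phi]\ge v$. You are right that, as stated, Theorem \ref{thm: domination principle} assumes the reference potential is model, and $P_\theta[P_\theta[v]]=P_\theta[v]$ is precisely the idempotence under proof; the paper's citation is therefore formally imprecise (though not actually circular, since the proof of Theorem \ref{thm: domination principle} never uses the model hypothesis — it only needs $\int_X\theta_{P_\theta[v]}^n>0$ and the membership of $u,v$ in the relative full mass class). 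Your workaround — replacing $v$ by $u=P_\theta[v]$, establishing $\theta_\psi^n=\theta_u^n$ with $\theta_\psi^n(\{\psi<u\})=0$ via the support argument and Lemma \ref{lem: concentration max}, and then inlining the subextension/concentration machinery of Theorems \ref{thm: subextension b b-1}, \ref{thm: envelope contact} and Lemma \ref{lem: class E relative} to show $\{\psi<u\}$ is pluripolar — is exactly a re-derivation of the domination principle in the one case needed, and it makes the argument self-contained at the cost of length. The only cosmetic quibbles: the final passage from "$\{\psi<u\}$ pluripolar" to "$\psi\ge u$ everywhere" rests on the sub-mean-value property of quasi-psh functions (a.e.\ inequality upgrades to everywhere), not merely upper semicontinuity; and your "$F_\psi=\{\psi\}$" in step (3), while true, is more than is needed — the paper's shorter observation $F_{P_\theta[\phi]}\subset F_\phi$ plus the already-proved supremum formula suffices, as in your last line.
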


As shown in Remark \ref{rem: increasing implies capacity}, $P_{\theta}[\phi] \in F_\phi$, hence by Theorem \ref{thm: ceiling coincide envelope non collapsing}, $P_{\theta}[\phi]$ is the maximal element of $F_\phi$.

\begin{proof}
Let $v\in F_{\phi}$. By Theorem \ref{thm: MA of env sing type} we have
\begin{flalign*}
\theta_{P_{\theta}[\phi]}^n (\{P_{\theta}[\phi] < v\}) &\leq {\bf 1}_{\{P_{\theta}[\phi]=0\}}\theta^n  (\{P_{\theta}[\phi] < v\}) \leq {\bf 1}_{\{P_{\theta}[\phi]=0\}}\theta^n  (\{P_{\theta}[\phi] < 0\})=0.
\end{flalign*}
As $\int_X \theta_\phi^n = \int_X \theta_v^n$, by Remark \ref{rem: increasing implies capacity} we have 
$$
\int_X \theta^n_{P_{\theta}[\phi]}=\int_X \theta_\phi^n = \int_X \theta_v^n=\int_X \theta^n_{P_{\theta}[v]}>0.
$$
Consequently, $P_{\theta}[\phi],v \in \mathcal E(X,\theta, P_{\theta}[v])$ and Theorem \ref{thm: domination principle} now ensures that $P_{\theta}[\phi] \geq v$, hence $P_{\theta}[\phi]\geq \sup_{v \in F_{\phi}} v$. As $P_{\theta}[\phi] \in F_{\phi}$, it follows that $P_{\theta}[\phi]= \sup_{v \in F_{\phi}} v$. 

For the last statement notice that $P_\theta[\phi]=\sup_{v \in F_\phi} v \geq  \sup_{v \in F_{P_{\theta}[\phi]}} v=P_\theta[P_\theta[\phi]]$, since $F_\phi \supset F_{P_{\theta}[\phi]}$. The reverse inequality is trivial. 
\end{proof}

As a consequence of this last result, we obtain the following characterization of membership in $\mathcal E(X,\theta,\phi)$. 
\begin{theorem}\label{thm: E_memb_char} Suppose $\phi \in \textup{PSH}(X,\theta)$ with $\int_X \theta_\phi^n >0$ and $\phi \leq 0$. The following are equivalent:\\
\noindent (i) $u \in \mathcal E(X,\theta,\phi)$.\\
\noindent (ii) $\phi$ is less singular than $u$, and $P_{\theta}[u](\phi)=\phi$.\\
\noindent (iii) $\phi$ is less singular than $u$, and $P_{\theta}[u]=P_{\theta}[\phi]$.
\end{theorem}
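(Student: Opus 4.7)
The strategy is to establish the easy implications (iii) $\Rightarrow$ (i), (i) $\Rightarrow$ (iii) and (ii) $\Rightarrow$ (iii), and then tackle the substantive direction (i) $\Rightarrow$ (ii). The implication (iii) $\Rightarrow$ (i) is immediate from Remark \ref{rem: increasing implies capacity}: $\int_X \theta_u^n = \int_X \theta_{P_\theta[u]}^n = \int_X \theta_{P_\theta[\phi]}^n = \int_X \theta_\phi^n$. For (i) $\Rightarrow$ (iii), monotonicity of $P_\theta[\cdot]$ yields $P_\theta[u]\leq P_\theta[\phi]$, both are model with equal mass by Theorem \ref{thm: ceiling coincide envelope non collapsing} and Remark \ref{rem: increasing implies capacity}. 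Since Theorem \ref{thm: MA of env sing type} localizes $\theta_{P_\theta[u]}^n$ on $\{P_\theta[u]=0\}\subset\{P_\theta[u]\geq P_\theta[\phi]\}$ (using $P_\theta[\phi]\leq 0$), the domination principle (Theorem \ref{thm: domination principle}) in $\mathcal E(X,\theta,P_\theta[\phi])$ forces $P_\theta[u]=P_\theta[\phi]$. For (ii) $\Rightarrow$ (iii), $\phi=P_\theta[u](\phi)\leq P_\theta[u]$ combined with monotone $P_\theta[\cdot]$ yields $P_\theta[\phi]\leq P_\theta[P_\theta[u]]=P_\theta[u]$, matching the reverse inequality obtained from $u\preceq\phi$.

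For the substantive direction (i) $\Rightarrow$ (ii), I first normalize so that $u\leq\phi$ (this leaves $P_\theta[u](\phi)$ unchanged since it depends only on $[u]$) and set $w:=P_\theta[u](\phi)$. Since $u$ itself is a valid candidate in the envelope defining $P_\theta(u+C,\phi)$ for every $C\geq 0$, we have $u\leq w\leq\phi$ directly from the construction. BEGZ monotonicity (Theorem \ref{thm: BEGZ_monotonicity_full}) then sandwiches
\[
\int_X \theta_u^n \;\leq\; \int_X \theta_w^n \;\leq\; \int_X \theta_\phi^n,
\]
and under hypothesis (i) all three masses coincide. Theorem \ref{thm: MA of env sing type} produces $\theta_w^n\leq \mathbf{1}_{\{w=\phi\}}\theta_\phi^n$, and matching the total masses upgrades this to the identity $\theta_w^n=\theta_\phi^n$ of measures. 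Both $w$ and $\phi$ lie in $\mathcal E(X,\theta,P_\theta[\phi])$ (the former because $w\leq\phi\leq P_\theta[\phi]$ with the same mass, the latter likewise), so the uniqueness statement Theorem \ref{thm: uniqueness} gives $w-\phi\equiv -c$ for some $c\geq 0$.

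The main obstacle is to rule out $c>0$. The clean resolution is that if $c>0$ then the contact set $\{w=\phi\}$ is empty, whence Theorem \ref{thm: MA of env sing type} forces $\theta_w^n=0$; this contradicts $\int_X\theta_w^n=\int_X\theta_\phi^n>0$. Therefore $c=0$ and $w=\phi$, establishing (ii).
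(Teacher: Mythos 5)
Your proof is correct, and the content of each step is close to the paper's, though the logical architecture differs: the paper closes the single cycle (i)$\Rightarrow$(ii)$\Rightarrow$(iii)$\Rightarrow$(i), while you prove (i)$\Leftrightarrow$(iii) directly and then add (i)$\Rightarrow$(ii), (ii)$\Rightarrow$(iii). For the substantive implication (i)$\Rightarrow$(ii), the paper argues in one stroke: Theorem \ref{thm: MA of env sing type} shows that $\theta_w^n$ is carried by the contact set $\{w=\phi\}$ for $w:=P_\theta[u](\phi)$, so $\theta_w^n(\{w<\phi\})=0$, and the domination principle (Theorem \ref{thm: domination principle}), applied in $\mathcal{E}(X,\theta,P_\theta[\phi])$, yields $w\geq\phi$ directly; with $w\leq\phi$ this finishes. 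You instead upgrade the same concentration statement to the measure identity $\theta_w^n=\theta_\phi^n$ and invoke the uniqueness theorem (Theorem \ref{thm: uniqueness}), which forces the extra step of killing the additive constant. Both routes rest on the same machinery (Theorem \ref{thm: uniqueness} is itself deduced from the domination principle); the paper's is just shorter. Two minor remarks: your explicit mass sandwich $\int_X\theta_u^n\leq\int_X\theta_w^n\leq\int_X\theta_\phi^n$ usefully verifies that $w\in\mathcal{E}(X,\theta,P_\theta[\phi])$, a point the paper leaves implicit; and in the constant-elimination step the set $\{w=\phi\}$ is not literally empty when $c>0$ (it equals $\{\phi=-\infty\}$), but since $\theta_\phi^n$ charges no pluripolar set the conclusion $\theta_w^n=0$ still holds, so the contradiction goes through.
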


As a consequence of the equivalence between (i) and (iii), we see that the potential $P_{\theta}[u]$ stays the same for all $u \in \mathcal E(X,\theta,\phi)$, i.e., it is an invariant of this class. In particular, since $\mathcal E(X,\theta,\phi) \subset \mathcal E(X,\theta,P_{\theta}[\phi])$, by the last statement of Theorem \ref{thm: ceiling coincide envelope non collapsing}, it seems natural to only consider potentials $\phi$ that are in the image of the operator $\psi \to P_{\theta}[\psi]$, when studying classes of relative full mass $\mathcal E(X,\theta,\phi)$. What is more, in the next section it will be clear that considering such $\phi$ is not just more natural, but also necessary when trying to solve complex Monge--Amp\`ere equations with prescribed singularity.

\begin{proof}Assume that (i) holds. By Theorem \ref{thm: MA of env sing type} it follows that $P_{\theta}[u](\phi) \geq \phi$ a.e. with respect to $\theta^n_{P_{\theta}[u](\phi)}$. Theorem \ref{thm: domination principle} gives $P_{\theta}[u](\phi) = \phi$, hence (ii) holds. 

Suppose (ii) holds. We can assume that $u \leq \phi \leq 0$. Then $P_{\theta}[u] \geq P_{\theta}[u](\phi) = \phi$. By the last statement of the previous theorem, this implies that 
$$P_{\theta}[u] = P_{\theta}[P_{\theta}[u]] \geq P_{\theta}[\phi].$$
As the reverse inequality is trivial, (iii) follows.

Lastly, assume that (iii) holds. By Remark \ref{rem: increasing implies capacity}  it follows that 
$$\int_X \theta_u^n=\int_X \theta_{P_{\theta}[u]}^n=\int_X \theta_{P_{\theta}[\phi]}^n=\int_X \theta_\phi^n,$$ hence (i) holds.
\end{proof}

\begin{coro}\label{cor: convexity}
	Suppose $\phi \in \textup{PSH}(X,\theta)$ such that $\int_X \theta_{\phi}^n>0$. Then $\mathcal{E}(X,\theta,\phi)$ is convex. Moreover, given $\psi_1, \ldots, \psi_n \in \mathcal{E}(X, \theta, \phi)$ we have
   \begin{equation}\label{mixed mass} 
    \int_X \theta_{\psi_1}^{s_1}\wedge \ldots \wedge \theta_{\psi_n}^{s_n}= \int_X \theta_\phi^n,
    \end{equation}
 where $s_j \geq 0$ are integers such that $\sum_{j=1}^n s_j=n$.
\end{coro}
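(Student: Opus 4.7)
My plan is to first establish convexity of $\mathcal{E}(X,\theta,\phi)$ via the envelope characterization in Theorem \ref{thm: E_memb_char}, and then to derive the mixed mass identity from convexity using multi-linearity of the non-pluripolar product.

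For convexity, I may assume $\phi \leq 0$ after subtracting $\sup_X \phi$, since this normalization leaves $\mathcal{E}(X,\theta,\phi)$ unchanged. Given $u,v \in \mathcal{E}(X,\theta,\phi)$ and $t \in [0,1]$, a convex combination of the inequalities $u \leq \phi + C_u$ and $v \leq \phi + C_v$ shows that $tu + (1-t)v$ is still more singular than $\phi$; by monotonicity of $P_\theta[\cdot]$ (immediate from its definition), this forces $P_\theta[tu + (1-t)v] \leq P_\theta[\phi]$. The reverse inequality follows from Lemma \ref{lem: concavity of P} combined with the equalities $P_\theta[u] = P_\theta[v] = P_\theta[\phi]$ supplied by Theorem \ref{thm: E_memb_char}:
$$P_\theta[tu + (1-t)v] \geq t P_\theta[u] + (1-t) P_\theta[v] = P_\theta[\phi].$$
The resulting equality $P_\theta[tu + (1-t)v] = P_\theta[\phi]$ then places $tu + (1-t)v$ in $\mathcal{E}(X,\theta,\phi)$ by Theorem \ref{thm: E_memb_char} once more.

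For the mixed mass identity, I fix $t_1,\ldots,t_n > 0$ with $\sum_j t_j = 1$ and set $\psi_t := \sum_j t_j \psi_j$. By the just-proved convexity, $\psi_t \in \mathcal{E}(X,\theta,\phi)$, so $\int_X \theta_{\psi_t}^n = \int_X \theta_\phi^n$. Writing $\theta + dd^c \psi_t = \sum_j t_j(\theta + dd^c \psi_j)$ and expanding by multi-linearity of the non-pluripolar product (\cite[Proposition 1.4]{BEGZ10}) gives
$$\int_X \theta_{\psi_t}^n = \sum_{s_1+\ldots+s_n=n} \binom{n}{s_1,\ldots,s_n}\, t_1^{s_1} \cdots t_n^{s_n}\, I_s, \qquad I_s := \int_X \theta_{\psi_1}^{s_1} \wedge \ldots \wedge \theta_{\psi_n}^{s_n}.$$
Since each $\psi_j$ is more singular than $\phi$, Theorem \ref{thm: BEGZ_monotonicity_full} yields $I_s \leq \int_X \theta_\phi^n$, while the multinomial weights sum to $(\sum_j t_j)^n = 1$. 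Hence the displayed equality expresses $\int_X \theta_\phi^n$ as a convex combination of quantities all capped by $\int_X \theta_\phi^n$; since every weight $\binom{n}{s_1,\ldots,s_n} t_1^{s_1}\cdots t_n^{s_n}$ is strictly positive, each $I_s$ must equal $\int_X \theta_\phi^n$.

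The only delicate step is the two-sided pinch yielding $P_\theta[tu + (1-t)v] = P_\theta[\phi]$, which draws simultaneously on the concavity of $P_\theta[\cdot]$ and the envelope rigidity of Theorem \ref{thm: E_memb_char}; the remainder is a clean combination of multilinearity and the monotonicity of Theorem \ref{thm: BEGZ_monotonicity_full}, with no further subtleties.
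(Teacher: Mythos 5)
Your proof is correct. The convexity half is essentially the paper's argument: the paper pins down $P_\theta[tu+(1-t)v](\phi)=\phi$ via criterion (ii) of Theorem \ref{thm: E_memb_char} together with Lemma \ref{lem: concavity of P}, while you pin down $P_\theta[tu+(1-t)v]=P_\theta[\phi]$ via criterion (iii); the two routes are interchangeable, and your normalization $\phi\leq 0$ correctly makes Theorem \ref{thm: E_memb_char} applicable. For the mixed-mass identity the two proofs diverge only in how the individual terms are extracted from the multilinear expansion. The paper lets the weights vary and reads $\int_X\big(\sum_j s_j\theta_{\psi_j}\big)^n=\int_X\big(\sum_j s_j\theta_\phi\big)^n$ as an identity of homogeneous degree-$n$ polynomials, so all coefficients must agree. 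You instead evaluate at a single interior point $t$, bound each mixed mass $I_s$ from above by $\int_X\theta_\phi^n$ using Theorem \ref{thm: BEGZ_monotonicity_full}, and observe that a convex combination with strictly positive multinomial weights can attain the common upper bound only if every $I_s$ equals it. Both are valid: the polynomial identification needs no monotonicity input but requires the identity over an open set of parameters, whereas your pinching argument uses one evaluation at the cost of invoking the monotonicity theorem for the a priori upper bound.
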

\begin{proof}
  Let $u,v\in \mathcal{E}(X,\theta,\phi)$ and fix $t\in (0,1)$. It follows from Theorem \ref{thm: E_memb_char} that $P_\theta[v](\phi)=P_\theta[u](\phi)=\phi$. By Lemma \ref{lem: concavity of P},  $$P_\theta[tv + (1-t)u](\phi)\geq tP_\theta[v](\phi) + (1-t)P_\theta[u](\phi)=\phi.$$
As the reverse inequality is trivial, another application of Theorem  \ref{thm: E_memb_char} gives  $t v + (1-t)u \in \mathcal E(X,\theta,\phi)$. 

We prove the last statement. Since $\mathcal E(X,\theta,\phi)$ is convex, given $\psi_1, \ldots, \psi_n \in \mathcal{E}(X, \theta, \phi)$ we know that any convex combination $\psi:=\sum_{j=1}^n s_j \psi_j$ with $0\leq s_j\leq 1 $ and $\sum_j s_j=n$, belongs to $\mathcal{E}(X, \theta, \phi)$. Hence
$$\int_X \bigg( \sum_j s_j \theta_{\psi_j}\bigg)^n=\int_X \theta_{\psi}^n= \int_X\theta_\phi^n=\int_X \bigg( \sum_j s_j\theta_\phi \bigg)^n.$$
We have an identity of two homogeneous polynomials of degree $n$. All the coefficients of these polynomials have to be equal, giving \eqref{mixed mass}.
\end{proof}

\begin{lemma}\label{lem: two functions in E}
Assume $\phi\in {\rm PSH}(X,\theta)$ is a model potential.  If  $b>1$ and $u,v \in \mathcal{E}(X,\theta,\phi)$ then $P_{\theta}(bu-(b-1)v) \in \mathcal{E}(X,\theta,\phi)$. 
\end{lemma}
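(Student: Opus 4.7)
The plan is to show $w := P_\theta(bu-(b-1)v) \in \mathcal{E}(X,\theta,\phi)$ by first producing $w$ as a $\theta$-psh function dominating an element of $\mathcal{E}(X,\theta,\phi)$, then identifying $P_\theta[w]$ with $\phi$ via a concavity argument, and finally invoking Theorem \ref{thm: E_memb_char} to conclude.

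First I would set $v^* := \max(u,v)$, which belongs to $\mathcal{E}(X,\theta,\phi)$ since the class is closed under maxima. Because $u \leq v^*$ and both potentials share the positive mass $\int_X \theta_\phi^n$, the mass hypothesis of Theorem \ref{thm: subextension b b-1} is trivially satisfied, giving $w^* := P_\theta(bu-(b-1)v^*) \in \textup{PSH}(X,\theta)$. Since $u \leq v^*$ forces $bu-(b-1)v^* \leq u$, we have $w^* \leq u$, so $w^*$ is more singular than $\phi$; combined with $\int_X \theta_{w^*}^n = \int_X \theta_{v^*}^n = \int_X \theta_\phi^n$ from Lemma \ref{lem: class E relative}, this yields $w^* \in \mathcal{E}(X,\theta,\phi)$, and hence $P_\theta[w^*] = \phi$ by Theorem \ref{thm: E_memb_char}. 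As $v \leq v^*$ gives $bu-(b-1)v \geq bu-(b-1)v^*$, the envelope satisfies $w \geq w^*$, so $w \in \textup{PSH}(X,\theta)$, and pointwise monotonicity of the operator $P_\theta[\cdot]$ produces $P_\theta[w] \geq P_\theta[w^*] = \phi$.

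For the reverse inequality $P_\theta[w] \leq \phi$ I would exploit the convex combination $\omega := \tfrac{1}{b} w + \tfrac{b-1}{b} v$. From $\theta + dd^c \omega = \tfrac{1}{b}\theta_w + \tfrac{b-1}{b}\theta_v \geq 0$ the function $\omega$ is $\theta$-psh, and the envelope bound $w \leq bu-(b-1)v$ rearranges to the pointwise inequality $\omega \leq u$. Monotonicity of $P_\theta[\cdot]$ gives $P_\theta[\omega] \leq P_\theta[u] = \phi$, the last equality coming from $u \in \mathcal{E}(X,\theta,\phi)$ via Theorem \ref{thm: E_memb_char}. On the other hand, applying Lemma \ref{lem: concavity of P} with $V_\theta$ in place of the lemma's $\phi$ and weight $t=1/b$ delivers
\[
P_\theta[\omega] \;\geq\; \tfrac{1}{b}\, P_\theta[w] + \tfrac{b-1}{b}\, P_\theta[v] \;=\; \tfrac{1}{b}\, P_\theta[w] + \tfrac{b-1}{b}\, \phi,
\]
since $P_\theta[v]=\phi$ as well. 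Combining the two bounds on $P_\theta[\omega]$ and simplifying, $P_\theta[w] \leq \phi$, and hence $P_\theta[w] = \phi$.

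To finish, the function $w - \sup_X w$ is $\theta$-psh with supremum $0$ and the same singularity type as $w$, so by the definition of $P_\theta[\cdot]$ it satisfies $w - \sup_X w \leq P_\theta[w] = \phi$. This gives $w \leq \phi + \sup_X w$ with $\sup_X w < \infty$, i.e., $\phi$ is less singular than $w$. Together with $P_\theta[w] = P_\theta[\phi] = \phi$, Theorem \ref{thm: E_memb_char} then delivers $w \in \mathcal{E}(X,\theta,\phi)$. The heart of the argument is the concavity step: what makes it work is that the envelope inequality $w \leq bu-(b-1)v$ gives the clean pointwise bound $\omega \leq u$ between $\theta$-psh functions, so that monotonicity of $P_\theta[\cdot]$ combined with Lemma \ref{lem: concavity of P} can trap $P_\theta[w]$ between $\phi$ and itself.
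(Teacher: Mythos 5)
Your proof is correct and follows essentially the same route as the paper's: both rely on Theorem \ref{thm: subextension b b-1} together with Lemma \ref{lem: class E relative} to produce a minorant of $\psi:=P_{\theta}(bu-(b-1)v)$ lying in $\mathcal{E}(X,\theta,\phi)$ (forcing $P_{\theta}[\psi]\geq \phi$), and then apply Lemma \ref{lem: concavity of P} to the inequality $b^{-1}\psi+(1-b^{-1})v\leq u$ to get $P_{\theta}[\psi]\leq\phi$, concluding via Theorem \ref{thm: E_memb_char}. The only cosmetic difference is your choice of minorant $P_{\theta}(bu-(b-1)\max(u,v))$ in place of the paper's $P_{\theta}(bu-(b-1)\phi)$.
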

\begin{proof}
The proof is similar to that of \cite[Corollary 3.20]{LN22}. 
Since $u,v\in \mathcal{E}(X,\theta,\phi)$, both $u$ and $v$ are more singular than $\phi$, 
we can assume that $\max(u,v)\leq \phi$. 
It follows from Theorem \ref{thm: subextension b b-1} (and the comment below it) that $P_{\theta}(bu-(b-1)\phi) \in {\rm PSH}(X,\theta)$ for all $b>1$. 
Lemma \ref{lem: class E relative} ensures that $P_{\theta}(bu-(b-1)\phi) \in \mathcal{E}(X,\theta,\phi)$ 
for all $b>1$. Set $\psi:= P_{\theta}(bu-(b-1)v)$ and $\varphi:= P_{\theta}(bu-(b-1)\phi)$. 
Then $\psi\geq \varphi$ and $\varphi \in \mathcal{E}(X,\theta,\phi)$. 
We also have 
\[
b^{-1}\psi + (1-b^{-1})v \leq u,
\]
which, by Lemma \ref{lem: concavity of P}, gives 
\[
b^{-1}P_{\theta}[\psi] + (1-b^{-1})P_{\theta}[v] \leq P_{\theta}[b^{-1}\psi + (1-b^{-1})v]
 \leq P_{\theta}[u]\leq P_{\theta}[\phi]=\phi. 
\]
Since $P_{\theta}[v]=\phi$, we can thus conclude that $P_{\theta}[\psi]\leq \phi$. On the other hand $ \phi=P_{\theta}[\varphi]\leq P_{\theta}[\psi]$. Thus $P_{\theta}[\psi]= \phi$ and $\psi\in \mathcal{E}(X,\theta,\phi)$. 
\end{proof}

\begin{lemma}\label{lem: P(u,v)_mass_exist}
Assume that $u,v,w \in \textup{PSH}(X,\theta)$ are  such that $\int_X \theta_u^n +\int_X \theta_v^n >\int_X \theta_w^n$ and $\max(u,v) \leq w$. Then $P_\theta(u,v)\in {\rm PSH}(X,\theta)$. 
\end{lemma}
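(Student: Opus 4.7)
The natural approach is to invoke Theorem~\ref{thm: subextension b b-1} with the choice
$$u_1 := \tfrac{u+v}{2}, \qquad u_2 := \max(u,v).$$
Both $u_1$ and $u_2$ lie in $\textup{PSH}(X,\theta)$, and $u_1 \le u_2 \le w$. Taking $b=2$, we have the key pointwise identity
$$b\,u_1 + (1-b)\,u_2 = u + v - \max(u,v) = \min(u,v).$$
Hence, once the mass hypothesis of Theorem~\ref{thm: subextension b b-1} is verified for $(u_1,u_2,b=2)$, its conclusion directly yields $P_\theta(u,v) = P_\theta(\min(u,v)) \in \textup{PSH}(X,\theta)$, as desired.

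The condition to verify reads
$$2^n \int_X \theta_{(u+v)/2}^n \;>\; (2^n-1)\int_X \theta_{\max(u,v)}^n.$$
By multilinearity of the non-pluripolar product (\cite[Proposition~1.4]{BEGZ10}),
$$2^n \int_X \theta_{(u+v)/2}^n = \int_X \bigl\langle (\theta_u + \theta_v)^n \bigr\rangle = \sum_{k=0}^n \binom{n}{k} \int_X \theta_u^k \wedge \theta_v^{n-k}.$$
Since $\max(u,v)\le w$, Theorem~\ref{thm: BEGZ_monotonicity_full} gives $\int_X \theta_{\max(u,v)}^n \le \int_X \theta_w^n$, so the assumption $\int_X \theta_u^n + \int_X \theta_v^n > \int_X \theta_w^n$ already ensures $\int_X \theta_u^n + \int_X \theta_v^n > \int_X \theta_{\max(u,v)}^n$. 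In dimension $n=1$, where $2^n-1=1$, this strictly exceeds $(2^n-1)\int_X \theta_{\max(u,v)}^n$ and the mass condition is immediate.

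\textbf{Main obstacle.} For $n \ge 2$ the coefficient $(2^n-1)\ge 3$ is much larger than $1$, so the extreme terms of the binomial expansion alone do not suffice; one must extract positivity from the intermediate mixed masses $\int_X \theta_u^k \wedge \theta_v^{n-k}$ for $0<k<n$, which is the delicate part of the argument. The natural strategy is to localize both sides over the plurifine-open sets $\{v<u\}$ and $\{u\le v\}$: by Lemma~\ref{lem: plurifine} combined with Lemma~\ref{lem: concentration max}, one identifies $\theta_{\max(u,v)}^n$ with $\theta_u^n$ on $\{v<u\}$ and with $\theta_v^n$ on $\{u\le v\}$ (up to a piece on the contact locus), while the mixed products $\theta_u^k\wedge \theta_v^{n-k}$ admit a matching plurifine decomposition; the monotonicity provided by Theorem~\ref{thm: BEGZ_monotonicity_full} then allows one to compare each localized piece of the mixed mass against the corresponding localized piece of $\theta_{\max(u,v)}^n$, and summing these local comparisons against the strict gain from the hypothesis $\int_X\theta_u^n + \int_X\theta_v^n > \int_X\theta_{\max(u,v)}^n$ closes the inequality.
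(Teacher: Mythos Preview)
Your reduction to Theorem~\ref{thm: subextension b b-1} via the identity $2\cdot\tfrac{u+v}{2}-\max(u,v)=\min(u,v)$ is elegant, but the required mass inequality
\[
2^n\int_X\theta_{(u+v)/2}^n\;>\;(2^n-1)\int_X\theta_{\max(u,v)}^n
\]
does \emph{not} follow from the hypotheses when $n\ge 2$, and your plurifine sketch cannot repair this because the inequality is simply false in general. Here is a concrete obstruction. On $(\mathbb{CP}^2,\omega_{\rm FS})$ take two distinct lines $L,L'$ meeting at a single point $r$, and set $u=\lambda\log|s_L|_h$, $v=\lambda\log|s_{L'}|_h$ for some $\lambda\in(0,1)$. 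Then $\omega_u=(1-\lambda)\omega+\lambda[L]$ (and similarly for $v$), so the non-pluripolar masses are
\[
\int_X\omega_u^2=\int_X\omega_v^2=\int_X\omega_u\wedge\omega_v=(1-\lambda)^2,
\]
whence $2^2\int_X\omega_{(u+v)/2}^2=4(1-\lambda)^2$. On the other hand $\max(u,v)$ is bounded away from $r$ and has an isolated logarithmic pole of Lelong number $\lambda$ there, so it has the same singularity type as $\lambda G_r$ and $\int_X\omega_{\max(u,v)}^2=1-\lambda^2$. Taking $w=0$, the lemma's hypothesis $2(1-\lambda)^2>1$ holds for all $\lambda<1-\tfrac{1}{\sqrt2}\approx 0.293$, yet your mass condition $4(1-\lambda)^2>3(1-\lambda^2)$ fails as soon as $\lambda>\tfrac17$. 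Thus for any $\lambda\in(\tfrac17,\,1-\tfrac{1}{\sqrt2})$ Theorem~\ref{thm: subextension b b-1} is inapplicable. The localization you outline does not help: Theorem~\ref{thm: BEGZ_monotonicity_full} controls only \emph{global} masses, not integrals of $\theta_u^k\wedge\theta_v^{n-k}$ over plurifine-open subsets, and in the example above all mixed masses already saturate the log-concavity lower bound, so there is no slack to exploit.

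The paper's argument is entirely different and avoids Theorem~\ref{thm: subextension b b-1}. After reducing to $w$ a model potential, one sets $u_j:=\max(u,w-j)$, $v_j:=\max(v,w-j)$, $h_j:=P_\theta(u_j,v_j)$, which all share the singularity type of $w$. Theorem~\ref{thm: MA of env sing type} and plurifine locality then give, for $j>s$ with $s$ large,
\[
\int_{\{h_j\le w-s\}}\theta_{h_j}^n\;\le\;2\int_X\theta_w^n-\int_{\{u>w-s\}}\theta_u^n-\int_{\{v>w-s\}}\theta_v^n\;<\;\int_X\theta_w^n,
\]
and this uniform bound, via Lemma~\ref{lem: model sup over X}, prevents $\sup_X h_j\to-\infty$; hence $P_\theta(u,v)=\lim_j h_j$ is $\theta$-psh.
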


\begin{proof}   
We can assume without loss of generality that $u,v,w \leq 0$. Replacing $w$ with $P_\theta[\varepsilon V_\theta + (1-\varepsilon)w]$ for small enough $\varepsilon>0$ we can also assume that $w$ is a model potential.

For $j \geq 0$ we set $u_j:=\max(u,w-j), v_j:=\max(v,w-j)$, $h_j:=P(u_j,v_j)$. Observe that $u_j, v_j, h_j$ have the same singularity type as $w$. Fix $s>0$ big enough, such that for all $j>s$, we have 
\[
\int_{\{u>w-s\}} \theta_{u_j}^n +\int_{\{v>w-s\}} \theta_{v_j}^n  = \int_{\{u>w-s\}} \theta_{u}^n +\int_{\{v>w-s\}} \theta_{v}^n >\int_X \theta_w^n, 
\]
where in the equality above we used Lemma \ref{lem: plurifine}. Observe that such $s$ exists thanks to the assumption.\\
It follows from  Theorem \ref{thm: MA of env sing type} and the above estimate, that for $j>s$,
\begin{flalign*}
\int_{\{h_j\leq w-s\}} \theta_{h_j}^n &\leq  \int_{\{u_j\leq w-s\}} \theta_{u_j}^n+\int_{\{v_j\leq w-s\}} \theta_{v_j}^n\\
&= 2 \int_X \theta_w^n -\int_{\{u>w-s\}} \theta_{u}^n -\int_{\{v>w-s\}} \theta_{v}^n 
<  \int_X \theta_w^n,
\end{flalign*}
where in the identity above we used the fact that $\{u_j\leq w-s\}= \{u\leq w-s\}$ and that $u_j, v_j$ and $w$ have the same singularity type. Since $u_j,v_j$ decrease to $u,v$ respectively, it follows that $h_j \searrow P(u,v)$. We now rule out the possibility that $P(u,v)\equiv -\infty$. Indeed, suppose  $\sup_X h_j$ decreases to $-\infty$. From Lemma \ref{lem: model sup over X} we obtain that  $\sup_X h_j = \sup_X (h_j - w) \searrow -\infty$. But then, for $j$ large enough the set $\{h_j\leq w-s\}$ coincides with $X$, contradicting our last integral estimate, since each $h_j$  has the same singularity type as $w$. 
\end{proof}
\begin{corollary}\label{cor: P(u,v) in E}
    If $\phi \in {\rm PSH}(X,\theta)$ be a model potential and $u,v\in \mathcal{E}(X,\theta,\phi)$ then $P_{\theta}(u,v)\in \mathcal{E}(X,\theta,\phi)$. 
\end{corollary}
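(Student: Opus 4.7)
The plan is as follows. After normalizing $u,v\leq\phi\leq 0$, I would first verify that $w:=P_\theta(u,v)$ belongs to $\textup{PSH}(X,\theta)$ by direct application of Lemma~\ref{lem: P(u,v)_mass_exist} with reference potential $\phi$: the hypothesis $\max(u,v)\leq\phi$ is immediate, and since $u,v\in\mathcal{E}(X,\theta,\phi)$ and $\phi$ is model, the mass assumption $\int_X\theta_u^n+\int_X\theta_v^n=2\int_X\theta_\phi^n>\int_X\theta_\phi^n$ is satisfied. The singularity comparison $w\leq u\leq\phi$ is then tautological, and Theorem~\ref{thm: BEGZ_monotonicity_full} gives the easy bound $\int_X\theta_w^n\leq\int_X\theta_\phi^n$.

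The substantive content is the reverse mass inequality. I would first dispatch the special case when $u,v$ already share the singularity of $\phi$: if $\phi-C\leq u,v\leq\phi$ for some $C>0$, then $\phi-C$ is itself a $\theta$-psh competitor below $\min(u,v)$, so $\phi-C\leq w\leq\phi$ and $[w]=[\phi]$, whence Lemma~\ref{lem: monotonicity new} yields $\int_X\theta_w^n=\int_X\theta_\phi^n$. For the general case I would truncate: set $u_k:=\max(u,\phi-k)$ and $v_k:=\max(v,\phi-k)$, both now sharing the singularity of $\phi$, so $w_k:=P_\theta(u_k,v_k)\in\mathcal{E}(X,\theta,\phi)$ by the special case, and the monotonicity of the rooftop envelope gives $w_k\searrow w$.

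To conclude $w\in\mathcal{E}(X,\theta,\phi)$ I would use the characterization Theorem~\ref{thm: E_memb_char} and aim at $P_\theta[w](\phi)=\phi$ (the inequality $\leq$ being trivial from $w\leq\phi$). Setting $\alpha_C:=P_\theta(u+C,\phi)$ and $\beta_C:=P_\theta(v+C,\phi)$, a direct manipulation of rooftops—checking both inclusions among the respective competitor families—yields the identity
\[
P_\theta(w+C,\phi)=P_\theta(\alpha_C,\beta_C).
\]
Lemma~\ref{lem: two functions in E} applied to $u,\phi\in\mathcal{E}(X,\theta,\phi)$ produces the competitor $P_\theta(bu-(b-1)\phi)\in\mathcal{E}(X,\theta,\phi)$, which sits below both $u$ and $\phi$, so Theorem~\ref{thm: BEGZ_monotonicity_full} forces $\alpha_C\in\mathcal{E}(X,\theta,\phi)$, and analogously $\beta_C\in\mathcal{E}(X,\theta,\phi)$. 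Moreover $\alpha_C\nearrow\phi$ and $\beta_C\nearrow\phi$ as $C\to\infty$ by Theorem~\ref{thm: E_memb_char}(ii). The problem thus reduces to the monotone continuity $P_\theta(\alpha_C,\beta_C)\nearrow\phi$, which is the main obstacle since $\phi$ is typically not a $\theta$-psh competitor below $\min(\alpha_C,\beta_C)$ at any finite $C$. I would resolve this by combining the lower semicontinuity Theorem~\ref{thm: lsc of MA measures} with a mass-convergence argument to show that the Monge--Amp\`ere masses of the rooftops $P_\theta(\alpha_C,\beta_C)$ approach $\int_X\theta_\phi^n$, and then invoking the domination principle Theorem~\ref{thm: domination principle} to identify the monotone limit with $\phi$; passing to the limit in the displayed identity then gives $P_\theta[w](\phi)\geq\phi$, closing the argument.
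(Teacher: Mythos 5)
Your setup is fine as far as it goes: the non-triviality of $w:=P_\theta(u,v)$ via Lemma \ref{lem: P(u,v)_mass_exist}, the identity $P_\theta(w+C,\phi)=P_\theta(\alpha_C,\beta_C)$, the membership $\alpha_C,\beta_C\in\mathcal E(X,\theta,\phi)$ (which is in fact immediate, since $u\leq\alpha_C\leq\phi$), and the convergence $\alpha_C,\beta_C\nearrow\phi$ are all correct. But the argument collapses exactly at the step you yourself flag as ``the main obstacle.'' Note that $w\leq P_\theta(\alpha_C,\beta_C)=P_\theta(w+C,\phi)\leq w+C$, so by Lemma \ref{lem: monotonicity new} the masses $\int_X\theta^n_{P_\theta(\alpha_C,\beta_C)}$ are all \emph{equal} to $\int_X\theta_w^n$; they do not ``approach'' $\int_X\theta_\phi^n$ unless $\int_X\theta_w^n=\int_X\theta_\phi^n$ already holds, which is precisely the statement to be proved. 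Moreover the tools you propose cannot produce it: lower semicontinuity (Theorem \ref{thm: lsc of MA measures}) applied to your decreasing approximants $w_k=P_\theta(u_k,v_k)\searrow w$ only yields $\int_X\theta_w^n\leq\liminf_k\int_X\theta_{w_k}^n=\int_X\theta_\phi^n$, i.e.\ the trivial direction (mass can genuinely drop along decreasing sequences), and the domination principle (Theorem \ref{thm: domination principle}) can only be invoked after one already knows the limit lies in $\mathcal E(X,\theta,\phi)$. So the reduction is circular and the crucial lower bound $\int_X\theta_w^n\geq\int_X\theta_\phi^n$ is never established.

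The missing idea is a \emph{quantitative lower bound on $w$ itself} by a convex combination whose mass is computable. This is what the paper does: with $u_b:=P_\theta(bu-(b-1)\phi)$ and $v_b:=P_\theta(bv-(b-1)\phi)$, which belong to $\mathcal E(X,\theta,\phi)$ by Lemma \ref{lem: two functions in E}, one has $b^{-1}u_b+(1-b^{-1})\phi\leq u$ and similarly for $v$, hence
\[
P_\theta(u,v)\;\geq\; b^{-1}P_\theta(u_b,v_b)+(1-b^{-1})\phi,
\]
where $P_\theta(u_b,v_b)\in{\rm PSH}(X,\theta)$ by Lemma \ref{lem: P(u,v)_mass_exist}. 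Since $P_\theta(u_b,v_b)\leq\phi$, multilinearity of the non-pluripolar product together with Theorem \ref{thm: BEGZ_monotonicity_full} shows that the total mass of the right-hand side tends to $\int_X\theta_\phi^n$ as $b\to\infty$, which forces $\int_X\theta_w^n\geq\int_X\theta_\phi^n$. You actually put $P_\theta(bu-(b-1)\phi)$ on the table when citing Lemma \ref{lem: two functions in E}, but you use it only to certify $\alpha_C\in\mathcal E(X,\theta,\phi)$ (where it is not needed) and never form the convex combination that transfers its mass back to $P_\theta(u,v)$. Without that step, or an equivalent lower bound for the mass of a rooftop envelope, the proof is incomplete.
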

\begin{proof}
    Fixing $b>1$, from Lemma \ref{lem: two functions in E} we infer that $u_b:=P_{\theta}(bu-(b-1)\phi)$ and $v_b:=P_{\theta}(bv-(b-1)\phi)$ belong to $\mathcal{E}(X,\theta,\phi)$. Lemma \ref{lem: P(u,v)_mass_exist} then ensures that $P_{\theta}(u_b,v_b)\in {\rm PSH}(X,\theta)$. We also have 
    \[
    P_{\theta}(u,v) \geq b^{-1}P_{\theta}(u_b,v_b) + (1-b^{-1})\phi. 
    \]
    Comparing the total mass via Theorem \ref{thm: BEGZ_monotonicity_full} and letting $b\to \infty$,  we obtain the result. 
\end{proof}

Plainly speaking, by the next lemma, the fixed point set of the map $\psi \to P[\psi]$ is stable under the operation $(\psi,\phi) \to P(\psi,\phi)$.

\begin{lemma}\label{lem: P_stab_fixed_point} Suppose $u_0,u_1 \in \textup{PSH}(X,\theta)$ are such that $P(u_0,u_1) \in \textup{PSH}(X,\theta)$, and $P[u_0]=u_0$ and $P[u_1]=u_1$. Then $P[P(u_0,u_1)]=P(u_0,u_1).$
\end{lemma}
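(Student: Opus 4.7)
Set $w := P_\theta(u_0,u_1)$, which is $\theta$-psh by hypothesis. The plan is to prove both inequalities $P_\theta[w]\ge w$ and $P_\theta[w]\le w$ directly from the defining limit $P_\theta[w]=(\lim_{C\to\infty} P_\theta(w+C,V_\theta))^*$, using only that each $u_i$ is a fixed point of $P_\theta[\cdot]$. The direction $P_\theta[w]\ge w$ is almost definitional: since $u_i = P_\theta[u_i]\le V_\theta$ for $i=0,1$, we have $w\le \min(u_0,u_1)\le V_\theta$, so $w$ itself is an admissible candidate in $P_\theta(w+C,V_\theta)$ for every $C\ge 0$. This yields $P_\theta(w+C,V_\theta)\ge w$ and, in the limit, $P_\theta[w]\ge w$.

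The key observation for the opposite inequality is a maximality principle hidden in the hypothesis $P_\theta[u_0]=u_0$: \emph{$u_0$ is the largest $\theta$-psh function $v$ with $v\le 0$ and $[v]\le [u_0]$}. Indeed, any such $v$ satisfies $v\le u_0+C$ for some $C\ge 0$ and $v\le V_\theta$ (since $v\le 0$ forces $v\le V_\theta$), hence $v\le P_\theta(u_0+C,V_\theta)$, and letting $C\to\infty$ gives $v\le P_\theta[u_0]=u_0$. The analogous maximality holds for $u_1$.

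Now for each $C\ge 0$ set $h_C := P_\theta(w+C,V_\theta)$. Then $h_C$ is $\theta$-psh, $h_C\le V_\theta\le 0$, and $h_C\le w+C \le u_0+C$ (using $w\le u_0$), so in particular $[h_C]\le [u_0]$. The maximality principle applied to $u_0$ forces $h_C\le u_0$, and symmetrically $h_C\le u_1$. Consequently $h_C\le \min(u_0,u_1)$, so being $\theta$-psh, $h_C\le P_\theta(u_0,u_1)=w$. Passing to the upper semi-continuous regularization of the limit as $C\to\infty$ yields $P_\theta[w]\le w$, and combined with the first step, $P_\theta[w]=w$.

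The main conceptual step is extracting the maximality principle from $P_\theta[u_i]=u_i$; once this is in hand, the argument bypasses any consideration of non-pluripolar Monge--Amp\`ere masses, membership in relative full mass classes, or the domination principle, and works uniformly regardless of whether $\int_X \theta_w^n$ is positive or zero.
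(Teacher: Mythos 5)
Your proof is correct and follows essentially the same route as the paper's: both sandwich $P_\theta[w]$ between $w$ (trivial, since $w\le 0$) and $P_\theta(u_0,u_1)=w$ using the fixed-point hypothesis. Your ``maximality principle'' is exactly the monotonicity of $\psi\mapsto P_\theta[\psi]$ with respect to singularity type that the paper invokes in the form $P_\theta[P_\theta(u_0,u_1)]\le \min(P_\theta[u_0],P_\theta[u_1])$; applying it to each approximant $h_C$ before passing to the limit is only a cosmetic difference.
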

\begin{proof} As $P(u_0,u_1)\leq \min(u_0,u_1) \leq 0$ and $P[P(u_0,u_1)] \leq \min(P[u_0], P[u_1])$, it follows that
$$P(u_0,u_1) \leq P[P(u_0,u_1)] \leq P(P[u_0],P[u_1])=P(u_0,u_1).
$$
This shows that all the inequalities above are in fact equalities.
\end{proof}

\begin{prop}\label{prop: envelope mixed} Let $\phi,\psi \in \textup{PSH}(X,\theta)$ be such that $\phi = P[\phi]$, $\psi = P[\psi]$, and $P(\phi,\psi) \in {\rm PSH}(X,\theta)$. If $u \in \mathcal E(X,\theta,\phi)$, $v \in \mathcal E(X,\theta,\psi)$ and $\int_X \theta_{P(\phi,\psi)}^n>0$ then $$P(u,v) \in \mathcal E(X,\theta,P(\phi,\psi)).$$ 
\end{prop}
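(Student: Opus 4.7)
The plan is to adapt the proof of Corollary \ref{cor: P(u,v) in E} to the setting of mixed reference potentials. After shifting, I assume $u\le\phi$ and $v\le\psi$. For each $b>1$ I would introduce the auxiliary potentials
$$
u_b := P_\theta(bu-(b-1)\phi), \qquad v_b := P_\theta(bv-(b-1)\psi),
$$
which by Lemma \ref{lem: two functions in E}, applied respectively to the pairs $u,\phi\in\mathcal{E}(X,\theta,\phi)$ and $v,\psi\in\mathcal{E}(X,\theta,\psi)$, belong to $\mathcal{E}(X,\theta,\phi)$ and $\mathcal{E}(X,\theta,\psi)$, with $u_b\le\phi$ and $v_b\le\psi$.

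The first key step is to show $P_\theta(u_b,v_b)\in\textup{PSH}(X,\theta)$ via Lemma \ref{lem: P(u,v)_mass_exist} with $w=\max(\phi,\psi)$. The requisite strict inequality
$$
\int_X \theta_\phi^n + \int_X \theta_\psi^n > \int_X \theta_{\max(\phi,\psi)}^n
$$
follows from Theorem \ref{thm: BEGZ_monotonicity_full}: since $\max(\phi,\psi)$ is less singular than both $\phi$ and $\psi$, one has $\int_X \theta_{\max(\phi,\psi)}^n\le \min(\int_X \theta_\phi^n,\int_X \theta_\psi^n)$, so the discrepancy is at least $\max(\int_X \theta_\phi^n,\int_X \theta_\psi^n)\ge \int_X \theta^n_{P_\theta(\phi,\psi)}>0$, where the hypothesis enters crucially. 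Next, from $u_b\le bu-(b-1)\phi$ I deduce $u\ge b^{-1}u_b+(1-b^{-1})\phi$, and analogously for $v$. The elementary inequality $\min(x_1+y_1,x_2+y_2)\ge \min(x_1,x_2)+\min(y_1,y_2)$ then gives
$$
\min(u,v)\ge b^{-1}\min(u_b,v_b)+(1-b^{-1})\min(\phi,\psi)\ge b^{-1}P_\theta(u_b,v_b)+(1-b^{-1})P_\theta(\phi,\psi).
$$
The right-hand side is $\theta$-psh, so it is a candidate in the envelope $P_\theta(u,v)$, forcing $P_\theta(u,v)\in\textup{PSH}(X,\theta)$ together with $P_\theta(u,v)\ge b^{-1}P_\theta(u_b,v_b)+(1-b^{-1})P_\theta(\phi,\psi)$. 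That $P_\theta(u,v)$ is more singular than $P_\theta(\phi,\psi)$ follows directly from $\min(u,v)\le\min(\phi,\psi)$.

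For the mass equality $\int_X \theta^n_{P_\theta(u,v)}=\int_X \theta^n_{P_\theta(\phi,\psi)}$, I expand via multilinearity of the non-pluripolar product,
$$
\int_X \theta^n_{b^{-1}P_\theta(u_b,v_b)+(1-b^{-1})P_\theta(\phi,\psi)}=\sum_{k=0}^n\binom{n}{k}b^{-k}(1-b^{-1})^{n-k}\int_X \theta^k_{P_\theta(u_b,v_b)}\wedge\theta^{n-k}_{P_\theta(\phi,\psi)}.
$$
By Theorem \ref{thm: BEGZ_monotonicity_full} each mixed integral is uniformly bounded in $b$ by $\int_X \theta^n_{P_\theta(\phi,\psi)}$, so letting $b\to\infty$ only the $k=0$ term survives, giving $\int_X \theta^n_{P_\theta(\phi,\psi)}$ in the limit. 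Combining with Theorem \ref{thm: BEGZ_monotonicity_full} applied to the displayed pointwise inequality yields $\int_X \theta^n_{P_\theta(u,v)}\ge \int_X \theta^n_{P_\theta(\phi,\psi)}$, while the reverse direction again follows from monotonicity since $P_\theta(u,v)\le P_\theta(\phi,\psi)$. Equality then gives $P_\theta(u,v)\in\mathcal{E}(X,\theta,P_\theta(\phi,\psi))$. I expect the main obstacle to be the verification that $P_\theta(u_b,v_b)\in\textup{PSH}(X,\theta)$, since this is where the positivity hypothesis $\int_X \theta^n_{P_\theta(\phi,\psi)}>0$ is essentially used; without it $P_\theta(u,v)$ need not even be $\theta$-plurisubharmonic.
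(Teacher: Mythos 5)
There is a genuine gap at the very step you flag as the crux. To apply Lemma \ref{lem: P(u,v)_mass_exist} to the pair $(u_b,v_b)$ with $w=\max(\phi,\psi)$ you need
$\int_X\theta_\phi^n+\int_X\theta_\psi^n>\int_X\theta_{\max(\phi,\psi)}^n$,
and you derive this from the claim $\int_X\theta_{\max(\phi,\psi)}^n\le\min\bigl(\int_X\theta_\phi^n,\int_X\theta_\psi^n\bigr)$. That inequality is backwards: $\max(\phi,\psi)\ge\phi$ means $\max(\phi,\psi)$ is \emph{less} singular than $\phi$, so Theorem \ref{thm: BEGZ_monotonicity_full} gives $\int_X\theta_{\max(\phi,\psi)}^n\ge\max\bigl(\int_X\theta_\phi^n,\int_X\theta_\psi^n\bigr)$ — the mass can only go up. With the correct direction, the strict inequality you need is not automatic; it would amount to a submodularity statement relating the masses of $\max(\phi,\psi)$, $P(\phi,\psi)$, $\phi$ and $\psi$ that is nowhere established in the paper and does not follow from the hypothesis $\int_X\theta_{P(\phi,\psi)}^n>0$ by anything available here. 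Once this step fails, you never get $P_\theta(u_b,v_b)\in\textup{PSH}(X,\theta)$, and the rest of the argument (the pointwise minorant for $P_\theta(u,v)$ and the mass computation, which are otherwise fine) has nothing to stand on.

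The paper avoids this by never comparing against $\max(\phi,\psi)$: it applies Lemma \ref{lem: P(u,v)_mass_exist} twice, each time with $w$ equal to one of the two model potentials. First it treats $P(u,\psi)=P(u,P(\phi,\psi))$ with $w=\phi$, where the mass condition reads $\int_X\theta_u^n+\int_X\theta_{P(\phi,\psi)}^n=\int_X\theta_\phi^n+\int_X\theta_{P(\phi,\psi)}^n>\int_X\theta_\phi^n$ and the domination $\max(u,P(\phi,\psi))\le\phi$ is clear; a scaling argument with $u_b$ then shows $P(u,\psi)\in\mathcal E(X,\theta,P(\phi,\psi))$. Second, it writes $P(u,v)=P(P(u,\psi),v)$ and applies the lemma with $w=\psi$, using $\int_X\theta_{P(u,\psi)}^n+\int_X\theta_v^n>\int_X\theta_\psi^n$. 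If you want to keep your one-shot structure, you must either prove the missing mass inequality for $\max(\phi,\psi)$ or restructure along these lines.
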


\begin{proof} We can assume that $u\leq \phi$ and $v\leq \psi$. 

{\bf Step 1.}
We first prove that $P(u,\psi) \in \mathcal{E}(X,\theta,P(\phi,\psi))$. 
By assumption we  have $$\int_X \theta_u^n +\int_X \theta_{P(\phi,\psi)}^n >\int_X \theta_{\phi}^n,$$ and Lemma \ref{lem: P(u,v)_mass_exist} gives $P(u,\psi) = P(u,P(\phi,\psi)) \in {\rm PSH}(X,\theta)$. 
Also, it follows from Theorem \ref{thm: subextension b b-1} that $u_b := P_{\theta}(bu -(b-1) \phi) \in {\rm PSH}(X,\theta)$ for all $b>1$. By definition, for $1<b<t$ we have 
$$
\phi\geq u_b \geq bt^{-1}u_t + (1-bt^{-1}) \phi. 
$$
 Comparing the total mass via Theorem \ref{thm: BEGZ_monotonicity_full} and letting $t\to \infty$ we see that $u_b \in \mathcal E(X,\theta,\phi)$. As above, Lemma \ref{lem: P(u,v)_mass_exist} ensures that $P(u_b,\psi) \in {\rm PSH}(X,\theta)$. On the other hand we also have 
$$
\phi\geq u \geq b^{-1} u_b + (1-b^{-1})\phi, 
$$
therefore $P(\phi,\psi)\geq P(u,\psi) \geq b^{-1} P(u_b,\psi) + (1-b^{-1})P(\phi,\psi)$. Comparing the total mass via Theorem \ref{thm: BEGZ_monotonicity_full} and letting $b \to \infty$ we arrive at $\int_X \theta_{P(\phi,\psi)}^n \geq \int_X \theta_{P(u,\psi)}^n \geq \int_X \theta_{P(\phi,\psi)}^n$, hence the conclusion.

{\bf Step 2.} We prove that $P(u,v) \in {\rm PSH}(X,\theta)$. 
It follows from Theorem \ref{thm: BEGZ_monotonicity_full}, the assumption $v \in \mathcal{E}(X,\theta,\psi)$, and the first step that 
$$
\int_X \theta_{P(u,\psi)}^n + \int_X \theta_v^n= \int_X \theta_{P(\phi,\psi)}^n + \int_X \theta_{\psi}^n > \int_X \theta_{\psi}^n.
$$ 
Since $\max(P(u,\psi),v) \leq \psi$,  Lemma \ref{lem: P(u,v)_mass_exist} can be applied giving $P(u,v) = P(P(u,\psi),v) \in {\rm PSH}(X,\theta)$. 

{\bf Step 3.} We conclude the proof. It follows from Theorem \ref{thm: subextension b b-1} that  $v_b:= P_{\theta}(bv -(b-1)\psi) \in {\rm PSH}(X,\theta)$ for all $b>1$. For $1<b<t$ we have 
$$
\psi\geq v_b \geq bt^{-1}v_t + (1-bt^{-1}) \psi. 
$$
Comparing the total mass via Theorem \ref{thm: BEGZ_monotonicity_full} and letting $t\to \infty$ we see that $v_b \in \mathcal E(X,\theta,\psi)$. By the second step we have that $P(u,v_b) \in {\rm PSH}(X,\theta)$.  On the other hand we also have 
$$
\psi \geq v \geq b^{-1} v_b + (1-b^{-1})\psi,
$$
therefore $P(u,\psi) \geq P(u,v) \geq b^{-1} P(u,v_b) + (1-b^{-1})P(u,\psi)$. Comparing the total mass via Theorem \ref{thm: BEGZ_monotonicity_full} and letting $b \to \infty$ we arrive at $ \int_X \theta_{P(u,\psi)}^n \geq \int_X \theta_{P(u,v)}^n \geq \int_X \theta_{P(u,\psi)}^n$. Combining this and the first step we arrive at the conclusion.
\end{proof}

\paragraph{Comparison principle.} We note the \emph{partial comparison principle} for functions s of relative full mass, generalizing a result of Dinew from \cite{DiwJFA09}:

\begin{prop}\label{prop: general CP} Suppose $\psi_k\in \textup{PSH}(X,\theta^k), k=1,\ldots ,j\leq n$ and $\phi\in {\rm PSH}(X,\theta)$ is a model potential. If $u,v\in \mathcal{E}(X,\theta,\phi)$ then 
\[
\int_{\{u<v\}} \theta_{v}^{n-j}\wedge  \theta^1_{\psi_1} \wedge \ldots \wedge \theta^j_{\psi_j} \leq \int_{\{u<v\}} \theta_{u}^{n-j}\wedge  \theta^1_{\psi_1} \wedge \ldots \wedge \theta^j_{\psi_j}.
\] 
\end{prop}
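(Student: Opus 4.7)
Set $w := \max(u,v)$, which lies in $\mathcal{E}(X,\theta,\phi)$ by the remark following the definition of the relative full mass class. Write $T := \theta^1_{\psi_1} \wedge \ldots \wedge \theta^j_{\psi_j}$ for brevity. The overall strategy is to establish invariance of the global mixed mass of $\theta_\eta^{n-j}\wedge T$ over $\eta \in \mathcal{E}(X,\theta,\phi)$, decompose $X$ along the plurifine partition $\{u<v\}\cup\{u=v\}\cup\{u>v\}$, and extract the required inequality by comparing the decompositions obtained with $\eta=u$ and $\eta=w$.

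The first key step is to show that, for every $\eta \in \mathcal{E}(X,\theta,\phi)$,
\[
\int_X \theta_\eta^{n-j}\wedge T = \int_X \theta_\phi^{n-j}\wedge T =: I.
\]
Since $\phi$ is a model potential, Theorem \ref{thm: E_memb_char}(iii) yields $P_\theta[\eta] = P_\theta[\phi] = \phi$. The argument of Remark \ref{rem: increasing implies capacity} transfers verbatim to the mixed setting: the approximants $\eta_C := P_\theta(\eta, V_\theta + C) \nearrow P_\theta[\eta]$ converge in capacity, and combining Theorem \ref{thm: lsc of MA measures} with the monotonicity of non-pluripolar masses (Theorem \ref{thm: BEGZ_monotonicity_full}) gives the equality above. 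Applying this identity to $u$, $v$ and $w$ yields the common value $I$.

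Next I use plurifine locality (Lemma \ref{lem: plurifine}): on the plurifine-open set $\{u<v\}$ one has $w=v$, and on the plurifine-open set $\{u>v\}$ one has $w=u$, so
\[
{\bf 1}_{\{u<v\}} \theta_w^{n-j}\wedge T = {\bf 1}_{\{u<v\}} \theta_v^{n-j}\wedge T, \qquad {\bf 1}_{\{u>v\}} \theta_w^{n-j}\wedge T = {\bf 1}_{\{u>v\}} \theta_u^{n-j}\wedge T.
\]
On the remaining set $\{u=v\}$, I invoke the mixed-product version of the last assertion of Lemma \ref{lem: concentration max}: if $u\leq w$ then ${\bf 1}_{\{u=w\}} \theta_u^{n-j}\wedge T \leq {\bf 1}_{\{u=w\}} \theta_w^{n-j}\wedge T$. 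Since $\{u=v\}\subset\{u=w\}$, this yields ${\bf 1}_{\{u=v\}} \theta_u^{n-j}\wedge T \leq {\bf 1}_{\{u=v\}} \theta_w^{n-j}\wedge T$. The proof of this extension mirrors that of Lemma \ref{lem: concentration max}, with $\theta_\bullet^n$ replaced by $\theta_\bullet^{n-j}\wedge T$ throughout: one applies the Demailly inequality to the truncations $\max(\varphi, V_\theta-k)$ and $\max(\psi, V_\theta-k)$, uses plurifine locality for bounded currents to handle the strict inequality sets, passes $t \searrow 0$ in $\max(\varphi_k,\psi_k+t)$ to recover the equality set, and finally localizes via ${\bf 1}_{\{\min(\varphi,\psi) > V_\theta-k\}}$ before letting $k\to\infty$.

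Combining everything, the decomposition of $\int_X \theta_w^{n-j}\wedge T = I$ gives
\[
\int_{\{u<v\}} \theta_v^{n-j}\wedge T + \int_{\{u=v\}} \theta_w^{n-j}\wedge T + \int_{\{u>v\}} \theta_u^{n-j}\wedge T = I,
\]
while the analogous decomposition of $\int_X \theta_u^{n-j}\wedge T = I$ reads the same with $\theta_v,\theta_w$ in the first two integrands both replaced by $\theta_u$. Subtracting these two identities and applying the concentration inequality on $\{u=v\}$,
\[
\int_{\{u<v\}} \theta_v^{n-j}\wedge T - \int_{\{u<v\}} \theta_u^{n-j}\wedge T = \int_{\{u=v\}} \theta_u^{n-j}\wedge T - \int_{\{u=v\}} \theta_w^{n-j}\wedge T \leq 0,
\]
which is the desired inequality. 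The main technical obstacle will be the mixed-product extension of Lemma \ref{lem: concentration max}: although it is a routine adaptation of the pure-power proof, one must track the background current $T$ carefully through the truncation, the plurifine-locality step, and the $t\searrow 0$ limit; the remaining combinatorics is cosmetic.
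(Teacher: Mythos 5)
Your argument is correct, and its first half --- the invariance of the global mixed mass $\int_X\theta_\eta^{n-j}\wedge T$ over $\eta\in\mathcal E(X,\theta,\phi)$, obtained from Theorem \ref{thm: E_memb_char} together with the mixed form of Remark \ref{rem: increasing implies capacity} and Theorem \ref{thm: BEGZ_monotonicity_full} --- is exactly the paper's first step. The two proofs diverge in how they exploit this invariance. The paper never touches the diagonal $\{u=v\}$: it uses plurifine locality only on the two plurifine-open sets $\{u<v\}$ and $\{u>v\}$ to get
$\int_X\theta_{\max(u,v)}^{n-j}\wedge T\geq\int_{\{u>v\}}\theta_u^{n-j}\wedge T+\int_{\{u<v\}}\theta_v^{n-j}\wedge T$,
rearranges this into $\int_{\{u<v\}}\theta_v^{n-j}\wedge T\leq\int_{\{u\leq v\}}\theta_u^{n-j}\wedge T$, and then removes the diagonal by replacing $u$ with $u+\varepsilon$ and letting $\varepsilon\searrow0$ via monotone convergence. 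You instead carry out the full three-way decomposition and must therefore control the measures on $\{u=v\}$ directly, which forces you to extend the last assertion of Lemma \ref{lem: concentration max} to mixed products $\theta_\bullet^{n-j}\wedge T$. That extension is true, and your sketch of its proof is the right one --- Lemma \ref{lem: plurifine}, Proposition \ref{prop: xing_conv} and Theorem \ref{thm: lsc of MA measures} are already stated for general mixed products, so the adaptation is mechanical --- but it is an extra lemma the paper does not need: the $\varepsilon$-perturbation is the cheaper way to dispose of the diagonal. In exchange your route yields, as a by-product, the pointwise concentration inequality on $\{u=v\}$ for mixed products, which is mildly stronger information than the proposition requires. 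One cosmetic remark: the increasing approximants should be $P_\theta(\eta+C,V_\theta)$ rather than $P_\theta(\eta,V_\theta+C)$ (the latter stabilizes at $\eta$ once $C\geq\sup_X\eta$); this slip is inherited from the wording of Remark \ref{rem: increasing implies capacity} and does not affect your argument.
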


\begin{proof} The proof follows the argument of \cite[Proposition 2.2]{BEGZ10} with a vital ingredient from Theorem \ref{thm: BEGZ_monotonicity_full}.

Since $\max(u,v)\in \mathcal{E}(X,\theta,\phi)$, we have, by Theorem \ref{thm: E_memb_char}, $P_{\theta}[\max(u,v)]=P_{\theta}[u]=P_{\theta}[v]=\phi$.    It thus follows from Theorem \ref{thm: BEGZ_monotonicity_full}  and Remark \ref{rem: increasing implies capacity} that
\begin{eqnarray*}
\int_X \theta_{\phi}^{n-j}\wedge  \theta^1_{\psi_1} \wedge \ldots \wedge \theta^j_{\psi_j} &=&
\int_X \theta_{v}^{n-j}\wedge  \theta^1_{\psi_1} \wedge \ldots \wedge \theta^j_{\psi_j} \\
&\leq & \int_X \theta_{\max(u,v)}^{n-j}\wedge  \theta^1_{\psi_1} \wedge \ldots \wedge \theta^j_{\psi_j}\\
&= &   \int_X \theta_{\phi}^{n-j}\wedge  \theta^1_{\psi_1} \wedge \ldots \wedge \theta^j_{\psi_j}. 
\end{eqnarray*}
Hence the inequality above is in fact equality. 
By locality of the non-pluripolar product we then can write:
\begin{eqnarray*}
	\int_X \theta_{\max(u,v)}^{n-j} \wedge \theta^1_{\psi_1} \wedge ... \wedge \theta^j_{\psi_j}
&\geq & \int_{\{u> v\}} \theta_{u}^{n-j} \wedge \theta^1_{\psi_1} \wedge ... \wedge \theta^j_{\psi_j} + \int_{\{v> u\}} \theta_{v}^{n-j} \wedge \theta^1_{\psi_1} \wedge ... \wedge \theta^j_{\psi_j} \\
&= & \int_X \theta_{u}^{n-j} \wedge \theta^1_{\psi_1} \wedge ... \wedge \theta^j_{\psi_j} - \int_{\{u\leq  v\}} \theta_{u}^{n-j} \wedge \theta^1_{\psi_1} \wedge ... \wedge \theta^j_{\psi_j} \\
&&+  \int_{\{v> u\}} \theta_{v}^{n-j} \wedge \theta^1_{\psi_1} \wedge ... \wedge \theta^j_{\psi_j}\\
& = & \int_X \theta_{\max(u,v)}^{n-j} \wedge \theta^1_{\psi_1} \wedge ... \wedge \theta^j_{\psi_j} - \int_{\{u\leq  v\}} \theta_{u}^{n-j} \wedge \theta^1_{\psi_1} \wedge ... \wedge \theta^j_{\psi_j} \\
&&+  \int_{\{v> u\}} \theta_{v}^{n-j} \wedge \theta^1_{\psi_1} \wedge ... \wedge \theta^j_{\psi_j}.
\end{eqnarray*}
We thus get
\begin{flalign*}
\int_{\{u < v\}} \theta_{v}^{n-j} \wedge \theta^1_{\psi_1} \wedge \ldots \wedge \theta^j_{\psi_j}  \leq \int_{\{u \leq v\}} \theta_{u}^{n-j} \wedge \theta^1_{\psi_1} \wedge \ldots \wedge \theta^j_{\psi_j}.
\end{flalign*}
Replacing $u$ with $u + \varepsilon$ in the above inequality, and letting $\varepsilon \searrow 0$, by the monotone convergence theorem we arrive at the result.
\end{proof}

The above result yields the following important consequence, generalizing \cite[Corollary 2.3]{BEGZ10}.:

\begin{coro}\label{cor: comparison principle} Suppose $\phi \in \textup{PSH}(X,\theta)$ is a model potential and assume that $u,v\in \mathcal{E}(X,\theta,\phi)$. Then 
\[
\int_{\{u<v\}} \theta_{v}^{n} \leq \int_{\{u<v\}} \theta_{u}^{n} \quad \textup{ and }\quad  \int_{\{u\leq v\}} \theta_{v}^{n} \leq \int_{\{u \leq v\}} \theta_{u}^{n}.\] 
\end{coro}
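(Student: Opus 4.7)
The strategy is to adapt the argument of Proposition \ref{prop: general CP} to the pure power case (essentially the $j=0$ instance, which does not fall formally under that proposition but uses the same ideas). The key input is that all three potentials $u,v,\max(u,v)$ lie in $\mathcal{E}(X,\theta,\phi)$ and therefore all have the same total Monge--Amp\`ere mass $\int_X \theta_\phi^n$; this was recorded in the remark immediately following the definition of the relative full mass class, and relies on the monotonicity Theorem \ref{thm: BEGZ_monotonicity_full}.

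First, I would apply Lemma \ref{lem: concentration max} with $\varphi=u$ and $\psi=v$ to get the pointwise estimate
\[
\theta_{\max(u,v)}^n \geq \mathbf{1}_{\{v\leq u\}}\theta_u^n + \mathbf{1}_{\{u<v\}}\theta_v^n.
\]
Integrating over $X$ and substituting $\int_X \theta_{\max(u,v)}^n=\int_X\theta_u^n$ on the left gives
\[
\int_X \theta_u^n \geq \int_{\{v\leq u\}}\theta_u^n+\int_{\{u<v\}}\theta_v^n,
\]
which rearranges to $\int_{\{u<v\}}\theta_v^n \leq \int_{\{u<v\}}\theta_u^n$, proving the first inequality.

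For the second inequality I would apply the first one with $u$ replaced by $u-\varepsilon$, $\varepsilon>0$. Since $u-\varepsilon$ differs from $u$ by a constant it still lies in $\mathcal{E}(X,\theta,\phi)$ and satisfies $\theta_{u-\varepsilon}^n=\theta_u^n$, so
\[
\int_{\{u-\varepsilon<v\}}\theta_v^n\leq \int_{\{u-\varepsilon<v\}}\theta_u^n.
\]
As $\varepsilon\searrow 0$ the sets $\{u-\varepsilon<v\}=\{u<v+\varepsilon\}$ decrease to $\{u\leq v\}$, so finiteness of the measures and dominated convergence pass the inequality to the limit, yielding $\int_{\{u\leq v\}}\theta_v^n\leq \int_{\{u\leq v\}}\theta_u^n$. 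I do not anticipate a real obstacle here: the only substantial ingredient is the mass-preservation within $\mathcal{E}(X,\theta,\phi)$, which is already available, and everything else is a short manipulation with plurifine locality and a limit on nested Borel sets.
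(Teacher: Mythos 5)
Your proof is correct and follows essentially the same route as the paper: the paper deduces the corollary from the partial comparison principle (Proposition \ref{prop: general CP}), whose proof is exactly your combination of Lemma \ref{lem: concentration max} with the mass identity $\int_X\theta_{\max(u,v)}^n=\int_X\theta_u^n$ for elements of $\mathcal E(X,\theta,\phi)$, and the second inequality is obtained by the same $\varepsilon$-perturbation and limit. You have simply inlined the $j=0$ case of that proposition rather than citing it, which is fine.
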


The second inequality follows from the first inequality applied to $u$ and $v+\varepsilon$ and $\varepsilon \searrow 0$.

\chapter{Generalized Monge--Amp\`ere capacities and integration by parts}

\section{Generalized Monge--Amp\`ere capacities}
In this section we prove a comparison of Monge--Amp\`ere capacities which will be used in the proof of the integration by parts formula in the next section. We first start with a version of the Chern-Levine-Nirenberg inequality.
\begin{lemma}
        \label{lem: CLN} Let $u,v,\psi\in{\rm PSH}(X,\omega)$. Assume that  $v\leq u\leq v+B$ for some positive constant $B$.  Then 
        \[
        \int_{X}\psi\omega_{u}^{n}\geq\int_{X}\psi\omega_{v}^{n}-nB\int_{X}\omega^{n}.
        \]
\end{lemma}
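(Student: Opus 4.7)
The plan is to reduce to bounded quasi-plurisubharmonic potentials via truncation, apply classical Bedford--Taylor integration by parts, and then pass to the limit using the definition of the non-pluripolar product together with plurifine locality. Since $\omega$ is K\"ahler we have $V_\omega \equiv 0$, so I would set $u_t := \max(u,-t)$, $v_t := \max(v,-t)$ and $\psi_s := \max(\psi,-s)$; these are bounded $\omega$-psh functions, and the relation $v_t \leq u_t \leq v_t + B$ is preserved under truncation.

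In the bounded case I would expand
\[
\omega_{u_t}^n - \omega_{v_t}^n \;=\; dd^c(u_t - v_t) \wedge T_t, \qquad T_t := \sum_{k=0}^{n-1} \omega_{u_t}^k \wedge \omega_{v_t}^{n-1-k},
\]
which is a positive closed $(n-1,n-1)$-current. Since all factors are bounded, Bedford--Taylor integration by parts against the bounded $\omega$-psh function $\psi_s$ gives
\[
\int_X \psi_s (\omega_{u_t}^n - \omega_{v_t}^n) \;=\; \int_X (u_t-v_t)\,\omega_{\psi_s}\wedge T_t \;-\; \int_X (u_t-v_t)\,\omega \wedge T_t.
\]
The first term on the right is nonnegative because $u_t - v_t \geq 0$ and $\omega_{\psi_s}\wedge T_t \geq 0$; the second is at most $B\int_X \omega \wedge T_t = nB\int_X \omega^n$, where the last identity is the cohomological equality $\int_X \omega \wedge \omega_{u_t}^k \wedge \omega_{v_t}^{n-1-k} = \int_X \omega^n$ valid for bounded $\omega$-psh functions. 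Hence $\int_X \psi_s\,\omega_{u_t}^n \geq \int_X \psi_s\,\omega_{v_t}^n - nB \int_X \omega^n$.

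Next I would pass to the limits. Letting $s\to\infty$ with $\psi$ bounded above (so $\psi_s - \sup_X\psi \searrow \psi - \sup_X\psi \leq 0$), monotone convergence upgrades the inequality to one with $\psi$ in place of $\psi_s$. For $t\to\infty$, I would split along the plurifine-open sets: by plurifine locality ${\bf 1}_{\{u>-t\}}\omega_{u_t}^n = {\bf 1}_{\{u>-t\}}\omega_u^n$, while on $\{u<-t\}$ one has $u_t \equiv -t$ and hence ${\bf 1}_{\{u<-t\}}\omega_{u_t}^n = {\bf 1}_{\{u<-t\}}\omega^n$ (the boundary set $\{u=-t\}$ carries no $\omega_{u_t}^n$ mass for all but countably many $t$, and I would restrict to such $t$). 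As $t\to\infty$, monotone convergence yields $\int_{\{u>-t\}}\psi\,\omega_u^n \to \int_X \psi\,\omega_u^n$ since $\omega_u^n$ does not charge the pluripolar set $\{u=-\infty\}$, while dominated convergence yields $\int_{\{u<-t\}}\psi\,\omega^n \to 0$ because $\omega^n$ is smooth and $\psi \in L^1(\omega^n)$. The same reasoning applies verbatim to $v$, giving the claimed inequality.

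The main obstacle is the last passage $t\to\infty$: the Bedford--Taylor measures $\omega_{u_t}^n$ do not in general converge weakly to the non-pluripolar product $\omega_u^n$, since mass can accumulate near $\{u=-\infty\}$. The key insight that tames this defect is that on the ``saturated'' region $\{u<-t\}$ the measure $\omega_{u_t}^n$ coincides with the smooth form $\omega^n$, against which any quasi-plurisubharmonic $\psi$ is integrable; this is what allows the boundary contribution to vanish in the limit and makes the telescoping expansion rigorous despite the unboundedness of $u,v,\psi$.
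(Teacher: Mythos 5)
Your reduction to bounded potentials and the integration-by-parts computation in the bounded case are correct and essentially identical to the paper's Step~1 computation. The genuine gap is in the passage $t\to\infty$. The parenthetical claim that $\{u=-t\}$ carries no $\omega_{u_t}^n$-mass for all but countably many $t$ is false: the measure $\omega_{u_t}^n$ itself varies with $t$, so no countability argument is available, and in fact $\omega_{u_t}^n(\{u=-t\})\to\int_X\omega^n-\int_X\omega_u^n$, which is strictly positive whenever $u\notin\mathcal E(X,\omega)$. (Concretely, for $n=1$ and $v$ a Green-type potential with $v=\log|z|+O(1)$ near a point, the full unit of residual mass of $\omega_{v_t}$ sits on the circle $\{v=-t\}$ for \emph{every} large $t$.) Your decomposition therefore must carry the extra term $\int_{\{v=-t\}}\psi\,\omega_{v_t}^n$. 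On the $u$-side this is harmless: after normalizing $\psi\le 0$ the term is nonpositive and you only need $\limsup_t\int_X\psi\,\omega_{u_t}^n\le\int_X\psi\,\omega_u^n$. But on the $v$-side you need $\liminf_t\int_X\psi\,\omega_{v_t}^n\ge\int_X\psi\,\omega_v^n$, and there the boundary term has the wrong sign and can diverge: taking $\psi$ with the same log pole as $v$ (e.g.\ $\psi=v$), one has $\psi\approx-t$ on $\{v=-t\}$ while $\omega_{v_t}^n(\{v=-t\})$ stays bounded away from $0$, so $\int_X\psi\,\omega_{v_t}^n\to-\infty$ and the truncated inequality becomes vacuous in the limit.

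This is precisely the difficulty the paper's two-step structure is designed to circumvent. It first proves the inequality under the extra hypothesis that $u=v$ on the open set $\{v<-C\}$: then for $t>B+C$ the residual measures ${\bf 1}_{\{v\le -t\}}\omega_{u^t}^n$ and ${\bf 1}_{\{v\le -t\}}\omega_{v^t}^n$ \emph{coincide} and cancel in the difference $\int_X\psi(\omega_{u^t}^n-\omega_{v^t}^n)$, so only the well-behaved part over $\{v>-t\}$ survives. The general case is then recovered by applying this to $v_a:=av$, $u_a:=\max(u,v_a)$ and letting $a\nearrow 1$, using the mass equality $\int_X\omega_u^n=\int_X\omega_v^n$ (Lemma \ref{lem: monotonicity new}), Theorem \ref{thm: lsc of MA measures}, and the continuity of $\psi$ after regularization. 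To repair your argument you would need some mechanism of this kind to cancel or control the mass that $\omega_{v_t}^n$ deposits on $\{v=-t\}$; as written, the limit step fails exactly when $u,v$ do not have full Monge--Amp\`ere mass, which is the case of interest.
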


\begin{proof} By the monotone convergence theorem we can assume that $\psi$ is bounded.
        By subtracting a constant we can assume that $u\leq 0$. We first prove the lemma under the assumption that $u=v$ on the open set 
        \[
        U:=\{\min(u,v)=v<-C\},
        \] 
        for some positive constant $C$.\\
        We approximate $u$ and $v$ by $u^{t}:=\max(u,-t)$ and $v^{t}:=\max(v,-t)$. For $t>0$ we apply the integration by parts formula for bounded $\omega$-psh functions, which is a consequence of Stokes theorem, to get 
        \[
        \int_{X}\psi(\omega_{u^{t}}^{n}-\omega_{v^{t}}^{n})=\int_X(u^{t}-v^{t})dd^{c}\psi\wedge
        S^{t},
        \]
        where $S^{t}:=\sum_{k=0}^{n-1}\omega_{u^{t}}^{k}\wedge\omega_{v^{t}}^{n-1-k}$. Note that $\int_X \omega\wedge S^t =n\int_X \omega^n$ as there are $n$ terms in the sum and each of them is equal to $\int_X \omega^n$ by Stokes' theorem.  Since $u^{t}\geq v^{t}$ we can continue the above estimate and obtain
        \[
        \int_{X}\psi(\omega_{u^{t}}^{n}-\omega_{v^{t}}^{n})=\int_{X}(u^{t}-v^{t})(\omega_{\psi}\wedge
        S^{t}-\omega\wedge S^{t})\geq-Bn\int_{X}\omega^{n}.
        \]
        For $t>B+C$ we have that $u^{t}=v^{t}$ on the open set $U$ which contains $\{u\leq-t\}=\{v\leq-t\}$. It thus follows that ${\bf 1}_{U}\omega_{u^{t}}^{n}={\bf 1}_{U}\omega_{v^{t}}^{n}$. Thus, for $t>B+C$ we have 
        \begin{eqnarray*}
                \int_{\{v>-t\}}\psi(\omega_{u}^{n}-\omega_{v}^{n})= \int_{\{v>-t\}}\psi(\omega_{u_t}^{n}-\omega_{v_t}^{n}) & = &
                \int_{X}\psi(\omega_{u^{t}}^{n}-\omega_{v^{t}}^{n})\geq-Bn\int_{X}\omega^{n}.
        \end{eqnarray*}
        Letting $t\to\infty$ we finish the first step.
        
        We now treat the general case. By approximating $\psi$ from above by smooth $\omega$-psh functions, see \cite{Dem94}, \cite{BK07}, we can assume that $\psi$ is smooth (in fact, we only need the continuity of $\psi$). We fix $a\in(0,1)$ and set $v_{a}:=av$, $u_{a}:=\max(u,v_{a})$. Setting $C:= a(1-a)^{-1}B$ we have that  $u_{a}=v_{a}$ on $U=\{\min(u_a, v_a)=v_{a}<-C\}$ (see arguments in Step 1 of the proof of Lemma \ref{lem: monotonicity new}). We can thus apply the first step to get  
        \[
        \int_X\psi\omega_{u_a}^n\geq\int_X \psi\omega_{v_a}^n-nB\int_X\omega^n.
        \]
        Observe that $v_a\searrow v$ and $u_a\searrow u$ as $a\nearrow 1$. Also, by the multilinearity of non-pluripolar products, we have 
        \[
        \lim_{a\to 1^-}\int_X (\omega+dd^c v_a)^n = \lim_{a\to 1^-}\int_X ((1-a)\omega+a \omega_v)^n =\int_X (\omega+dd^cv)^n. 
        \]
         Recalling that $v\leq u\leq 0$, we have  $u\leq u_a = \max(u,av)\leq \max(u,au)=au$. By Theorem \ref{thm: BEGZ_monotonicity_full} we thus have 
         \[
  \int_X \omega_u^n \leq \int_X \omega_{u_a}^n \leq \int_X (\omega+a dd^c u)^n.
         \]
        Using the multilinearity of non-pluripolar products, we then have 
         \[
                \int_X (\omega+ dd^c u)^n \leq \lim_{a\to 1^-}\int_X (\omega+dd^c u_a)^n \leq \lim_{a\to 1^-}\int_X (\omega+a dd^c u)^n =\int_X (\omega+dd^c u)^n. 
         \]
          Hence the above inequalities are equalities. It then follows from Theorem \ref{thm: lsc of MA measures} that the positive measures $\omega_{u_a}^n, \omega_{v_a}^n$ converge respectively to $\omega_u^n, \omega_{v}^n$ in the weak sense of Radon measures as $a\nearrow 1$.  Since $\psi$ is continuous on $X$ we thus obtain 
        \[
        \int_X \psi\omega_{u}^{n}\geq\int_{X}\psi\omega_{v}^{n}-nB\int_X \omega^n.
        \]
\end{proof}

We next use the Chern-Levine-Nirenberg  inequality to compare Monge--Amp\`ere capacities. 
\begin{definition}
	Given $\phi\in {\rm PSH}(X,\theta)$ and $E\subset X$ a Borel subset we define 
	\[
	{\rm Cap}_{\theta,\phi}(E):= \sup\left\{\int_E \theta_u^n \; : \; u\in {\rm PSH}(X,\theta), \; \phi-1\leq u \leq \phi\right\}. 
	\]
\end{definition}
Note that in the K\"ahler case  a related notion of capacity has been studied in \cite{DnL17,DiLu15}. 
In the case when $\phi=V_{\theta}$ we recover the Monge--Amp\`ere capacity used in \cite[Section 4.1]{BEGZ10}. 
\begin{lemma}
	\label{lem: cap is inner regular}
	The relative Monge--Amp\`ere capacity ${\rm Cap}_{\theta,\phi}$ is inner regular, i.e.
\[
{\rm Cap}_{\theta,\phi}(E) =\sup \{{\rm Cap}_{\theta,\phi}(K)\; : \;  K\subset E, \; K \; \textrm{is compact}\}.
\]
\end{lemma}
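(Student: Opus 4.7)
The plan is to argue both inequalities directly from the definition, exploiting the fact that each candidate measure $\theta_u^n$ is a finite Borel measure on the compact space $X$, hence automatically inner regular in the classical measure-theoretic sense.

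First I would handle the trivial inequality: if $K \subset E$ is compact, then for every admissible $u \in \textup{PSH}(X,\theta)$ with $\phi-1 \leq u \leq \phi$ we have $\int_K \theta_u^n \leq \int_E \theta_u^n$ because $\theta_u^n$ is a positive measure. Taking the supremum over $u$ on both sides gives ${\rm Cap}_{\theta,\phi}(K) \leq {\rm Cap}_{\theta,\phi}(E)$, and then taking the supremum over $K$ yields one direction.

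For the nontrivial direction, the main observation is that for each admissible $u$, the non-pluripolar Monge--Amp\`ere measure $\theta_u^n$ is a positive Borel measure on the compact manifold $X$ with total mass bounded above (by the definition, and by Theorem \ref{thm: BEGZ_monotonicity_full} or by direct inspection, the total mass is at most $\mathrm{Vol}(\{\theta\})$). Any finite Borel measure on a compact metrizable space is inner regular, so
\[
\int_E \theta_u^n \;=\; \sup\{\,\theta_u^n(K) : K\subset E,\; K \text{ compact}\,\}.
\]
For each compact $K \subset E$ we have $\theta_u^n(K) \leq {\rm Cap}_{\theta,\phi}(K)$ by definition, so passing to the supremum over $K$ first and then over $u$ gives
\[
{\rm Cap}_{\theta,\phi}(E) \;=\; \sup_u \int_E \theta_u^n \;\leq\; \sup\{{\rm Cap}_{\theta,\phi}(K): K\subset E,\; K\text{ compact}\}.
\]

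There is no serious obstacle here: everything reduces to the inner regularity of a finite Borel measure on a compact space, applied uniformly to the family $\{\theta_u^n\}$ of candidate measures, combined with the monotonicity of ${\rm Cap}_{\theta,\phi}$ in its Borel argument. The only point worth flagging is the need to note that each $\theta_u^n$ is genuinely a finite Borel measure on all of $X$ (not merely on the ample locus), which follows from the construction of the non-pluripolar product recalled in Chapter 2 together with the trivial extension by zero across the pluripolar complement of $\mathrm{Amp}(\{\theta\})$.
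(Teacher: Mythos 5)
Your proof is correct and follows essentially the same route as the paper: both arguments reduce the statement to the inner regularity of each candidate measure $\theta_u^n$ (a finite positive Borel measure on the compact manifold $X$) combined with the bound $\theta_u^n(K)\leq {\rm Cap}_{\theta,\phi}(K)$ for compact $K$. The only difference is cosmetic — you exchange the suprema directly where the paper runs an $\varepsilon$-argument — so there is nothing to add.
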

\begin{proof}
By definition ${\rm Cap}_{\theta,\phi}(E) \geq {\rm Cap}_{\theta,\phi}(K) $ for any compact set $ K\subset E$. Fix $\varepsilon>0$. There exists $u\in {\rm PSH}(X,\theta)$ such that $\phi-1\leq u\leq \phi$ and 
\[
\int_E \theta_u^n \geq {\rm Cap}_{\theta,\phi}(E)-\varepsilon.
\]
Since $\theta_u^n$ is an inner regular Borel measure it follows that there exists a compact set $K\subset E$ such that $\int_K \theta_u^n \geq \int_E \theta_u^n -\varepsilon\geq {\rm Cap}_{\theta,\phi}(E)-2\varepsilon$. Hence ${\rm Cap}_{\theta,\phi}(K)\geq {\rm Cap}_{\theta,\phi}(E)-2\varepsilon$. Taking the supremum over all the compact set $K\subset E$, we arrive at the conclusion. 
\end{proof}

\begin{prop}\label{prop: Cap omega phi and Cap omega}
	Assume $\phi \in {\rm PSH}(X,\omega)$ is a model potential.  There exists a constant $C>0$ depending on $X,\omega,n$ such that, for all Borel set $E$, we have 
	\[
	{\rm Cap}_{\omega,\phi}(E) \leq C {\rm Cap}_{\omega}(E)^{1/n}. 
	\]
\end{prop}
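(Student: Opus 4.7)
The plan is to use a truncation-and-optimization scheme built on Lemma \ref{lem: CLN} together with the structural consequence of the model hypothesis, $\omega_\phi^n \leq \mathbf{1}_{\{\phi=0\}}\omega^n$. First I would invoke Lemma \ref{lem: cap is inner regular} to reduce to the case where $E = K$ is compact. Fixing a competitor $u \in \textup{PSH}(X,\omega)$ with $\phi - 1 \leq u \leq \phi$ and a threshold $t \geq 1$, I would split
\[
\int_K \omega_u^n \;=\; \int_{K \cap \{u > -t\}} \omega_u^n \;+\; \int_{K \cap \{u \leq -t\}} \omega_u^n
\]
and estimate each piece in very different ways.

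For the bounded piece I would use a scaling argument. Setting $u_t := \max(u,-t) \in \textup{PSH}(X,\omega)$ (so $-t \leq u_t \leq 0$), plurifine locality (Lemma \ref{lem: plurifine}) gives $\mathbf{1}_{\{u > -t\}}\omega_u^n = \mathbf{1}_{\{u > -t\}}\omega_{u_t}^n$. Since $t \geq 1$, the scaled function $w := u_t/t$ is $\omega$-psh with $-1 \leq w \leq 0$, and the identity $\omega_w = (1 - t^{-1})\omega + t^{-1}\omega_{u_t}$ combined with the multilinearity of non-pluripolar products yields $\omega_w^n \geq t^{-n}\omega_{u_t}^n$. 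Thus
\[
\int_{K \cap \{u > -t\}}\omega_u^n \;\leq\; t^n \int_K \omega_w^n \;\leq\; t^n\,\textup{Cap}_\omega(K).
\]

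For the tail piece, the hypothesis $u \geq \phi - 1$ forces $\{u \leq -t\} \subseteq \{-\phi \geq t-1\}$, so Chebyshev gives $\int_{\{u \leq -t\}}\omega_u^n \leq (t-1)^{-1}\int_X (-\phi)\omega_u^n$. To bound the right-hand side uniformly, I would apply Lemma \ref{lem: CLN} with $\omega$-psh weight $\psi = \phi$, lower bound $\phi - 1 \leq u$, and $B = 1$, obtaining
\[
\int_X (-\phi)\,\omega_u^n \;\leq\; \int_X (-\phi)\,\omega_\phi^n + n\,\textup{Vol}(\{\omega\}).
\]
The first integral vanishes: the model hypothesis $P_\omega[\phi] = \phi$ forces, via Theorem \ref{thm: MA of env sing type}, the inclusion $\omega_\phi^n \leq \mathbf{1}_{\{\phi = 0\}}\omega^n$, so $\phi \equiv 0$ on $\operatorname{supp}\omega_\phi^n$. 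This yields $\int_X (-\phi)\omega_u^n \leq n\,\textup{Vol}(\{\omega\})$ and hence $\int_{\{u \leq -t\}}\omega_u^n \leq C(t-1)^{-1}$.

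Combining both pieces gives $\int_K\omega_u^n \leq t^n\,\textup{Cap}_\omega(K) + C(t-1)^{-1}$ for every $t > 1$, and optimizing in $t$ produces a comparison of the desired form. The hard part will be attaining the sharp exponent $1/n$: naive optimization of $At^n + B(t-1)^{-1}$ only gives exponent $1/(n+1)$, which is strictly weaker than the claimed bound. Closing this gap requires a strengthened tail estimate of the form $\int_X (-\phi)^p\omega_u^n \leq C_p$ for some $p>1$, which, after Chebyshev and re-optimization, yields exponent $p/(n+p)$ and, for $p$ sufficiently large, the claimed $1/n$. Such a higher-moment bound is not accessible by applying Lemma \ref{lem: CLN} against a single $\omega$-psh weight (natural candidates like $e^{\lambda\phi}-1$ for $\lambda\in(0,1]$ give only mass bounds, not decay); extracting it, perhaps via an iterative CLN scheme or a Cegrell-type inequality for mixed non-pluripolar masses, is the genuine technical core of the proof.
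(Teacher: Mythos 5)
Your individual steps are correct, but the argument as written does not prove the stated inequality: after splitting at level $t$ and optimizing $t^{n}\,{\rm Cap}_{\omega}(K)+C(t-1)^{-1}$ you only obtain the exponent $1/(n+1)$, and you acknowledge yourself that you cannot close the gap to $1/n$. That gap is genuine, and the missing idea is not a higher-moment bound on $\phi$ at all — it is a different choice of weight in the Chern--Levine--Nirenberg inequality. The paper takes the global extremal function $V_K$ of the compact set $K$ (with $V_K^*$ its usc regularization and $M_K:=\sup_X V_K^*$), sets $\psi:=V_K^*-M_K$, and observes that $\psi=-M_K$ on $K$ modulo a pluripolar set. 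Applying Lemma \ref{lem: CLN} with this $\psi$ (and $v=\phi-1\le u\le \phi$, $B=1$) gives directly
\[
M_K\int_K\omega_u^n\;\le\;\int_X(-\psi)\,\omega_u^n\;\le\;\int_X(-\psi)\,\omega_{\phi}^n+n\int_X\omega^n\;\le\;\int_X(-\psi)\,\omega^n+n\int_X\omega^n\;\le\;A,
\]
where, exactly as in your argument, the model hypothesis enters through $\omega_\phi^n\le {\bf 1}_{\{\phi=0\}}\omega^n$; note that $\int_X(-\psi)\,\omega^n$ is uniformly bounded since $V_K^*-M_K$ is a normalized $\omega$-psh function. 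This avoids Chebyshev entirely: the weight is \emph{constant} equal to $-M_K$ on $K$, so no power of $t$ is lost. The exponent $1/n$ then comes from the Alexander--Taylor type comparison $M_K^{-1}\le C\,{\rm Cap}_\omega(K)^{1/n}$ (\cite[Theorem 7.1]{GZ05}), after disposing of the easy case $M_K<1$ where ${\rm Cap}_\omega(K)$ is bounded below. So the "genuine technical core" you were looking for is the extremal function of $K$ plus the Alexander--Taylor inequality, not an iterated CLN scheme.

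One mitigating remark: for the only place Proposition \ref{prop: Cap omega phi and Cap omega} is used in this paper (Theorem \ref{thm: Cap comparison}), any continuous majorant vanishing at $0$ suffices, so your exponent $1/(n+1)$ would be enough for the downstream applications. But it does not prove the proposition as stated.
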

\begin{proof}
	By inner regularity of the capacities, we can assume that $E=K$ is compact. We can also assume that $K$ is non-pluripolar, otherwise the inequality is trivial. Let $V_K^*$ be the global extremal function which it is defined as
 $$V_K:=\sup\{ u\in \psh(X, \omega), \;\; u\leq 0 \; {\rm on } \; K\}.$$
 We recall that $V_K^* \geq 0$ and that let $M_K:= \sup_X V_K^*< \infty$ if and only if $K$ is non-pluripolar. By \cite[Theorem 5.2]{GZ05}, $\omega_{V_K^*}^n$ is concentrated on $K$. If $M_K < 1$ then
	\[
	\int_X \omega^n= \int_X \omega_{V_K^*}^n =\int_K \omega_{V_K^*}^n=\int_K \omega_{V_{K^*}-1}^n\leq {\rm Cap}_{\omega}(K),
	\]
	while ${\rm Cap}_{\omega,\phi}(K)\leq \int_X \omega^n$. Hence for $C\geq (\int_X \omega^n)^{1-1/n}$, the desired inequality holds. 
	
	Assume now that $M_K\geq 1$. Then $v:=M_K^{-1}V_K^*-1$ is $\omega$-psh and takes values in $[-1,0]$, hence 
	\[
	M_K^{-n}\int_X \omega^n= M_K^{-n}\int_X \omega_{V_K^*}^n =M_K^{-n}\int_K \omega_{V_K^*}^n\leq \int_K \omega_v^n \leq {\rm Cap}_{\omega}(K). 
	\]
	 Set $\psi:=V_{K}^*-M_{K}$ and let $u$ be a $\omega$-psh function such that $\phi-1 \leq u \leq \phi$.  Since $\psi=-M_K$ on $K$ modulo a pluripolar set, it follows from Lemma \ref{lem: CLN} that 
\[
M_{K}\int_K \omega_u^n \leq \int_X (-\psi) \omega_u^n \leq \int_X (-\psi) \omega_{\phi}^n + n\int_X \omega^n \leq \int_X (-\psi) \omega^n + n\int_X \omega^n\leq A,
\]
for a constant $A>0$ depending on $X, \omega, n$. We have used above the fact that,   $\omega_{\phi}^n\leq {\bf 1}_{\{\phi=0\}} \omega^n$ since $\phi$ is a model potential (see Theorem \ref{thm: MA of env sing type}), hence $\int_X (-\psi) \omega_{\phi}^n \leq \int_X (-\psi) \omega^n $. Taking the supremum over all such $u$ yields 
\[
{\rm Cap}_{\omega,\phi}(K) \leq A M_{K}^{-1} \leq C {\rm Cap}_{\omega}(K)^{1/n},
\]
where the last inequality follows from \cite[Theorem 7.1]{GZ05}.
\end{proof}

\begin{lemma}
        \label{lem: from cap psi to cap varphi} Fix $\varphi,\psi\in{\rm PSH(X,\theta)}$ such that $\psi\leq\varphi$ and $\int_{X}\theta_{\varphi}^n=\int_{X}\theta_{\psi}^n$. Then there exists a continuous function $g:[0,\infty)\rightarrow[0,\infty)$ with $g(0)=0$ such that, for all Borel sets $E$, 
        \[
        {\rm Cap}_{\theta,\psi}(E)\leq
        g\left({\rm Cap}_{\theta,\varphi}(E)\right).
        \]
\end{lemma}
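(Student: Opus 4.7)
The plan is to fix a competitor $u \in \textup{PSH}(X,\theta)$ with $\psi - 1 \le u \le \psi$ and decompose $\int_E \theta_u^n$ at the threshold $\varphi - C$ for a large parameter $C > 1$: the good part $E \cap \{u > \varphi - C\}$ will be controlled by $\textup{Cap}_{\theta,\varphi}(E)$ through a rescaling trick, and the bad part $E \cap \{u \le \varphi - C\}$ by a $u$-independent tail quantity $\eta(C)$ vanishing as $C \to \infty$. Summing the two bounds will yield $\textup{Cap}_{\theta,\psi}(E) \le C^n \textup{Cap}_{\theta,\varphi}(E) + \eta(C)$ for every $C$, and the desired $g$ will emerge on optimizing over $C$.

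For the good part I would set $v := \max(u,\varphi - C) \in [\varphi - C, \varphi]$ using $u \le \psi \le \varphi$, and then form the convex combination $w := C^{-1} v + (1 - C^{-1}) \varphi \in [\varphi - 1, \varphi]$, a $\theta$-psh function admissible for $\textup{Cap}_{\theta,\varphi}$. Multi-linearity of the non-pluripolar product gives $\theta_w^n \ge C^{-n} \theta_v^n$, while plurifine locality (Lemma~\ref{lem: plurifine}) identifies $\theta_v^n = \theta_u^n$ on the plurifine-open set $\{u > \varphi - C\}$, producing $\int_{E \cap \{u > \varphi - C\}} \theta_u^n \le C^n \textup{Cap}_{\theta,\varphi}(E)$.

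For the bad part, since $u \ge \psi - 1$, I note $\{u \le \varphi - C\} \subseteq F_C := \{\psi \le \varphi - C + 1\}$, a $u$-independent set decreasing with $C$ to a subset of $\{\psi = -\infty\}$, which is pluripolar since $\varphi$ is bounded above. Set $\eta(C) := \sup \{\theta_u^n(F_C) : u \in [\psi - 1, \psi] \cap \textup{PSH}(X,\theta)\}$; for each fixed $u$, $\theta_u^n(F_C) \to 0$ because non-pluripolar measures do not charge pluripolar sets. \textbf{The main obstacle is upgrading this to uniformity in $u$}, since the competitors are not uniformly bounded in $L^\infty$ (they are only trapped between $\psi - 1$ and $\psi$, which may be $-\infty$), so standard Bedford--Taylor convergence does not apply directly. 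I would argue by contradiction and compactness: if $\theta_{u_k}^n(F_{C_k}) \ge \varepsilon$ for some $C_k \to \infty$ and $u_k \in [\psi - 1, \psi]$, then $\{u_k\}$ is bounded in $L^1$ (from $\psi \in L^1(\omega^n)$), so a subsequence converges in $L^1$ to some $u_\infty \in \textup{PSH}(X,\theta)$, with $u_\infty \in [\psi - 1, \psi]$ after USC-regularization. Upgrading $L^1$ to convergence in capacity via the truncations $u_{k_j}^T := \max(u_{k_j}, V_\theta - T)$ onto uniformly bounded families, applying classical convergence for bounded potentials, and letting $T \to \infty$ through the monotone construction of the non-pluripolar product, one obtains $\theta_{u_{k_j}}^n \to \theta_{u_\infty}^n$ weakly by Theorem~\ref{thm: lsc of MA measures}. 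The inclusion $F_{C_{k_j}} \subseteq F_{C_0}$ for $j$ large, combined with upper semi-continuity of non-pluripolar measure on a closed set containing $F_{C_0}$ modulo pluripolar, yields $\limsup_j \theta_{u_{k_j}}^n(F_{C_{k_j}}) \le \theta_{u_\infty}^n(F_{C_0})$; letting $C_0 \to \infty$ forces the right-hand side to zero, contradicting $\theta_{u_{k_j}}^n(F_{C_{k_j}}) \ge \varepsilon$.

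Finally, $\textup{Cap}_{\theta,\psi}(E) \le C^n \textup{Cap}_{\theta,\varphi}(E) + \eta(C)$ for all $C > 1$. Setting $g(t) := \inf_{C > 1} (C^n t + \eta(C))$ gives a concave, non-decreasing function on $[0,\infty)$ as an infimum of affine functions, with $g(0) = \inf_C \eta(C) = 0$; continuity at $0$ follows since for every $\varepsilon > 0$ there is $C_0$ with $\eta(C_0) < \varepsilon/2$, giving $g(t) < \varepsilon$ whenever $t < \varepsilon/(2 C_0^n)$. This $g$ satisfies the required bound.
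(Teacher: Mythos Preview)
Your decomposition at the level $\varphi-C$ and your treatment of the good part via the rescaled competitor $w=C^{-1}\max(u,\varphi-C)+(1-C^{-1})\varphi$ are exactly what the paper does. The difficulty is entirely in the bad part, and there your compactness argument has a genuine gap.

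The problem is the step ``upgrading $L^1$ to convergence in capacity''. For quasi-psh functions $L^1$ convergence does \emph{not} imply convergence in capacity, even for uniformly bounded families; the sandwich $u_k\in[\psi-1,\psi]$ only controls one side (via the Hartogs envelope $(\sup_{j\ge k}u_j)^*\searrow u_\infty$, which handles $\{u_k>u_\infty+\delta\}$) but gives no control on $\{u_k<u_\infty-\delta\}$, since there is no ``lower Hartogs'' for psh functions. Without capacity convergence you cannot invoke Theorem~\ref{thm: lsc of MA measures} to get weak convergence of $\theta_{u_{k_j}}^n$, and even if you had weak convergence you could not apply the $\limsup$ inequality on the merely quasi-closed set $F_{C_0}=\{\psi\le\varphi-C_0+1\}$: Lemma~\ref{lem: cv MA quasi-open} again requires capacity convergence of the potentials. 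So the uniformity $\eta(C)\to 0$ is not established.

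The paper bypasses compactness entirely by using the comparison principle to produce a \emph{direct} $u$-independent bound. With $u\in[\psi-1,\psi]$ one has $\{u\le\varphi-2t\}\subset\{\psi-1\le\tfrac{u+\varphi}{2}-t\}$, and since $\theta_u^n\le 2^n\theta_{(u+\varphi)/2}^n$ and $\psi,\,\tfrac{u+\varphi}{2}\in\mathcal E(X,\theta,P_\theta[\varphi])$, Corollary~\ref{cor: comparison principle} gives
\[
\int_{\{u\le\varphi-2t\}}\theta_u^n\;\le\;2^n\int_{\{\psi-1\le\frac{u+\varphi}{2}-t\}}\theta_{(u+\varphi)/2}^n\;\le\;2^n\int_{\{\psi-1\le\frac{u+\varphi}{2}-t\}}\theta_\psi^n\;\le\;2^n\int_{\{\psi\le -t+1\}}\theta_\psi^n,
\]
the last set being contained in $\{\psi\le\varphi-t+1\}\subset\{\psi\le -t+1\}$ since $u\le\psi$ and $\varphi\le 0$. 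The right-hand side depends only on $\psi$ and tends to $0$ as $t\to\infty$ because $\theta_\psi^n$ does not charge pluripolar sets. Optimizing $t$ against $\mathrm{Cap}_{\theta,\varphi}(E)$ then yields the explicit $g$. This is precisely the missing idea in your argument: the comparison principle converts the bad-part integral, which a priori depends on the competitor $u$, into a tail of the fixed measure $\theta_\psi^n$.
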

Our proof uses an idea from \cite{GLZ19}.
\begin{proof}
        We can assume that $\varphi\leq 0$.   
                 We claim that if $v\in{\rm PSH}(X,\theta)$ with $\varphi-t\leq v\leq\varphi$ (for $t\geq 0$)
         then for any Borel set $E$ we have 
        \[
        \int_{E}\theta_{v}^{n}\leq\max(t,1)^{n}\capa_{\theta,\varphi}(E).
        \]
        If $t\in [0,1]$ then $v$ is a candidate defining the capacity $\capa_{\theta,\varphi}$, hence the desired inequality holds. For $t> 1$, the function $v_{t}:=t^{-1}v+(1-t^{-1})\varphi$ is $\theta$-psh and $\varphi-1\leq v_t\leq\varphi$. Since non-pluripolar products are multilinear, we thus have 
        \[
        t^{-n}\int_{E}\theta_{v}^{n}\leq\int_{E}\theta_{v_{t}}^{n}\leq\capa_{\theta,\varphi}(E),
        \]
        yielding the claim. 
        
        Let $u$ be a $\theta$-psh function such that $\psi-1\leq u\leq\psi$. Fix $t>1$ and set $u_{t}:=\max(u,\varphi-2t)$, $E_{t}:=E\cap\{u>\varphi-2t\}$, $F_{t}:=E\cap\{u\leq\varphi-2t\}$. Observe that $\varphi-2t\leq u_t\leq \varphi$. By plurifine locality and the claim we have that 
        \[
        \int_{E_{t}}\theta_{u}^{n}=\int_{E_{t}}\theta_{u_{t}}^{n}\leq(2t)^{n}{\rm Cap}_{\theta,\varphi}(E_{t})\leq(2t)^{n}{\rm Cap}_{\theta,\varphi}(E).
        \]
        On the other hand, using the inclusions 
        \[
        F_{t}\subset\left\{ \psi-1\leq
        \frac{u+\varphi}{2}-t\right\}
        \subset\{\psi-1\leq\varphi-t\}\subset\{\psi-1\leq-t\}
        \]
        and the comparison principle, Corollary \ref{cor: comparison principle},  we infer 
        \begin{flalign*}
                \int_{F_{t}}\theta_{u}^{n}& \leq \int_{\{ \psi-1\leq\frac{u+\varphi}{2}-t\}}\theta_{u}^{n}
                 \leq 2^n\int_{\{ \psi-1\leq\frac{u+\varphi}{2}-t\}}\theta_{\frac{u+\varphi}{2}}^{n} \\
                &\leq 2^{n}\int_{\{\psi\leq\varphi-t+1\}}\theta_{\psi}^{n} \leq 2^{n}\int_{\{\psi\leq-t+1\}}\theta_{\psi}^n. 
        \end{flalign*}
        Taking the supremum over all candidates $u$ we obtain
        \[
        \capa_{\theta,\psi}(E)\leq(2t)^{n}\capa_{\theta,\varphi}(E)+2^{n}\int_{\{\psi\leq-t+1\}}\theta_{\psi}^n.
        \]
        Set $t:=(\capa_{\theta,\varphi}(E))^{-1/2n}$. If $t>1$ we get 
        $\capa_{\theta,\psi}(E)\leq g\left(\capa_{\theta,\varphi}(E)\right)$, 
        where $g$ is defined on $[0,\infty)$ by 
        \[
        g(s):=(2^{n}+\vol(\{\theta\}))s^{1/2}+2^{n}\int_{\{\psi\leq-s^{-1/(2n)}+1\}}\theta_{\psi}^n.
        \]
        Observe that $g(0)=0$ since $\theta_{\psi}^n$ does not charge the pluripolar set $\{\psi=-\infty\}$.
        
        If $t\leq 1$ then $s:=\capa_{\theta,\varphi}(E)\geq 1$, and by the choice of $g$ above we have 
        $$\capa_{\theta,\psi}(E)\leq \vol(\{\theta\})\leq g\left(\capa_{\theta,\varphi}(E)\right),$$ finishing the proof. 
\end{proof}

\begin{theorem}
        \label{thm: Cap comparison} 
        Assume that $\psi\in \PSH(X,\theta)$. Then there exists a continuous function $f:[0,\infty)\rightarrow[0,\infty)$ with $f(0)=0$ such that, for any Borel set $E$, 
        \[
        \capa_{\theta,\psi}(E)\leq
        f\left(\capa_{\omega}(E)\right). 
        \]
\end{theorem}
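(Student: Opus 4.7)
The plan is to reduce to the Kähler case by enlarging $\theta$ to $A\omega$ for $A$ large enough, and then invoke Proposition~\ref{prop: Cap omega phi and Cap omega} combined with Lemma~\ref{lem: from cap psi to cap varphi}. First I would dispose of the trivial case $\int_X \theta_\psi^n = 0$: any candidate $u$ for $\capa_{\theta,\psi}(E)$ satisfies $u \leq \psi$, hence is more singular than $\psi$, so Theorem~\ref{thm: BEGZ_monotonicity_full} forces $\int_X \theta_u^n = 0$ and $\capa_{\theta,\psi} \equiv 0$. Henceforth I assume $\int_X \theta_\psi^n > 0$ and normalize $\sup_X \psi = 0$ (which leaves $\capa_{\theta,\psi}$ invariant).

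Fix $A \geq 1$ with $\theta \leq A\omega$ as smooth forms, so that $\PSH(X,\theta) \subset \PSH(X,A\omega)$. For any candidate $u$ of $\capa_{\theta,\psi}$, expanding $A\omega + dd^c u = (A\omega - \theta) + (\theta + dd^c u)$ via multi-linearity of non-pluripolar products (\cite[Proposition 1.4]{BEGZ10}) gives $(A\omega + dd^c u)^n \geq \theta_u^n$, since $A\omega - \theta$ is a positive smooth form and all terms of the expansion are non-negative. As $u$ is also a candidate for $\capa_{A\omega,\psi}$, this yields $\capa_{\theta,\psi}(E) \leq \capa_{A\omega,\psi}(E)$.

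Next I would set $\tilde\psi := P_{A\omega}[\psi]$. Since $\psi \leq 0$ is itself a candidate in the envelope defining $P_{A\omega}(\psi)$, one has $\psi = P_{A\omega}(\psi) \leq \tilde\psi \leq 0$. By Theorem~\ref{thm: ceiling coincide envelope non collapsing}, together with the positivity of $\int_X (A\omega - \theta)^n$, $\tilde\psi$ is an $A\omega$-model potential with the same non-pluripolar mass as $\psi$ (Remark~\ref{rem: increasing implies capacity}). Applying Lemma~\ref{lem: from cap psi to cap varphi} with $A\omega$ in place of $\theta$ yields a continuous $g$ with $g(0) = 0$ such that $\capa_{A\omega,\psi}(E) \leq g(\capa_{A\omega,\tilde\psi}(E))$, while Proposition~\ref{prop: Cap omega phi and Cap omega} applied to the Kähler form $A\omega$ and the model potential $\tilde\psi$ gives $\capa_{A\omega,\tilde\psi}(E) \leq C\, \capa_{A\omega}(E)^{1/n}$.

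The remaining ingredient $\capa_{A\omega}(E) \leq A^n \capa_\omega(E)$ is elementary: if $u$ is $A\omega$-psh with $-1 \leq u \leq 0$, then $u/A$ is $\omega$-psh with $-1 \leq u/A \leq 0$ (using $A \geq 1$), and $(A\omega + dd^c u)^n = A^n(\omega + dd^c(u/A))^n$. Chaining the four inequalities gives $\capa_{\theta,\psi}(E) \leq f(\capa_\omega(E))$ with $f(s) := g(CA\, s^{1/n})$, which is continuous with $f(0) = 0$. I anticipate no serious obstacle: the argument is a sequence of standard reductions (Kähler enlargement, passage to a model potential, rescaling) chaining previously proven results, with the only delicate step being the verification that $\psi \leq \tilde\psi$ after normalization, which hinges on the identity $P_{A\omega}(\psi) = \psi$.
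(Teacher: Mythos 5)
Your proof is correct and follows essentially the same route as the paper: reduce to the K\"ahler case (the paper normalizes $\theta\leq\omega$ where you enlarge to $A\omega$ and rescale at the end), then chain Lemma \ref{lem: from cap psi to cap varphi} applied to $\psi\leq P_{A\omega}[\psi]$ with Proposition \ref{prop: Cap omega phi and Cap omega} applied to the model potential $P_{A\omega}[\psi]$. Your explicit treatment of the degenerate case $\int_X\theta_\psi^n=0$ and the verification that $P_{A\omega}[\psi]$ has positive mass are welcome details that the paper's own proof leaves implicit.
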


\begin{proof}
	We can assume that $\theta\leq \omega$. Since $\psh(X, \theta)\subseteq \psh(X, \omega)$, for any Borel set $E$ we have $\capa_{\theta,\psi}(E)\leq \capa_{\omega,\psi}(E)$. Also, by Proposition \ref{prop: Cap omega phi and Cap omega} and Lemma \ref{lem: from cap psi to cap varphi} we get
	\[
	\capa_{\theta,\psi}(E)\leq \capa_{\omega,\psi}(E)\leq  g\left(\capa_{\omega, P_{\omega}[\psi]}(E)\right) \leq  g\left(C\capa_{\omega}(E)^{1/n}\right). 
	\]
\end{proof}

\section{Integration by parts}
\label{sec: integration by parts}
The integration by parts formula was recently obtained \cite{Xia19} using Witt Nystr\"om's construction. We give an argument here following \cite{Lu21} (see \cite{Vu21} for a more general result). We first start with the following key lemma.

\begin{lemma}
        \label{lem: pre integration by parts} 
        Let $\varphi_{1},\varphi_{2},\psi_{1},\psi_{2}\in\PSH(X,\theta)$ be such that $[\varphi_{1}]= [\varphi_{2}]$ and $[\psi_{1}] =[\psi_{2}]$. Then 
        \[
        \int_{X}(\varphi_{1}-\varphi_{2})\left(\theta_{\psi_{1}}^{n}-\theta_{\psi_{2}}^{n}\right)=\int_{X}(\psi_{1}-\psi_{2})(S_{1}-S_{2}),
        \]
        where $S_{j}:=\sum_{k=0}^{n-1}\theta_{\varphi_{j}}\wedge\theta_{\psi_{1}}^{k}\wedge\theta_{\psi_{2}}^{n-k-1}$, $j=1,2$. 
\end{lemma}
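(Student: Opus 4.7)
The plan is a truncation-and-limit argument that reduces the lemma to the classical integration by parts formula in the locally bounded setting.

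First, I would rewrite both sides in symmetric form. By multilinearity of the non-pluripolar product, the telescoping identities
\[
\theta_{\psi_1}^n - \theta_{\psi_2}^n \;=\; dd^c(\psi_1-\psi_2)\wedge R,\qquad S_1-S_2 \;=\; dd^c(\varphi_1-\varphi_2)\wedge R,
\]
hold with $R := \sum_{k=0}^{n-1} \theta_{\psi_1}^k \wedge \theta_{\psi_2}^{n-1-k}$. The claimed identity thus becomes the symmetric statement $\int_X f\, dd^c g \wedge R = \int_X g\, dd^c f\wedge R$, where $f := \varphi_1-\varphi_2$ and $g := \psi_1-\psi_2$. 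The essential observation is that while the individual potentials $\varphi_j, \psi_j$ may be unbounded, the functions $f$ and $g$ are \emph{globally bounded} on $X$, thanks to $\varphi_1\simeq\varphi_2$ and $\psi_1\simeq\psi_2$.

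Next, truncate to minimal singularity by setting $\varphi_j^t := \max(\varphi_j, V_\theta - t)$ and $\psi_j^t := \max(\psi_j, V_\theta - t)$ for $t > 0$. These all have minimal singularity type and hence are locally bounded on the ample locus $\Omega = \Amp(\{\theta\})$. Using the Lipschitz property $|\max(a,c)-\max(b,c)|\le|a-b|$, one obtains that $f^t := \varphi_1^t - \varphi_2^t$ and $g^t := \psi_1^t - \psi_2^t$ remain uniformly bounded by $\|f\|_\infty$ and $\|g\|_\infty$. Moreover, $f^t \to f$ and $g^t \to g$ in capacity. For truncated (minimally singular) potentials, Bedford--Taylor theory on $\Omega$ combined with the facts that $X$ is compact without boundary and that $X\setminus\Omega$ is pluripolar (hence invisible to non-pluripolar products) yields the integration by parts identity
\[
\int_X f^t\bigl(\theta_{\psi_1^t}^n - \theta_{\psi_2^t}^n\bigr) \;=\; \int_X g^t(S_1^t - S_2^t),
\]
with $S_j^t, R^t$ defined using the truncated potentials.

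The main obstacle is the passage to the limit $t\to\infty$. Taking limits for individual measures $\theta_{\psi_j^t}^n$ fails, since these have full mass $\vol(\{\theta\})$ while $\theta_{\psi_j}^n$ generally has strictly smaller mass---the excess ``escapes'' onto the pluripolar set $\{\psi_j=-\infty\}$. The decisive point is that $\psi_1\simeq\psi_2$ forces the two potentials to share the same polar set and the same Lelong numbers, so the excess masses produced by the truncations of $\psi_1$ and $\psi_2$ cancel out in the signed difference $\theta_{\psi_1^t}^n - \theta_{\psi_2^t}^n$. To make this cancellation rigorous, one combines Theorem \ref{thm: Cap comparison} (which dominates the truncated Monge--Amp\`ere measures by a continuous function of $\capa_\omega$) with Lemma \ref{lem: convergence} to pass to the limit of the bounded test functions $f^t, g^t$ against these signed measures. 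The same argument applies to the right-hand side, so passing to the limit yields the desired identity.
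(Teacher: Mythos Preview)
Your approach has a genuine gap in the passage to the limit $t\to\infty$. You correctly observe that the individual measures $\theta_{\psi_j^t}^n$ do not converge weakly to $\theta_{\psi_j}^n$, and you claim the excess masses cancel in the signed difference; but the tools you invoke do not make this rigorous. Lemma~\ref{lem: convergence} requires as a \emph{hypothesis} that the positive measures converge weakly, so it cannot be applied to $\theta_{\psi_j^t}^n$. The domination from Theorem~\ref{thm: Cap comparison} is not uniform in $t$ either: since $V_\theta - t \le \psi_j^t \le V_\theta$, one only obtains $\int_E \theta_{\psi_j^t}^n \le t^n \capa_{\theta,V_\theta}(E)$, which blows up. More concretely, the excess mass of $\theta_{\psi_1^t}^n$ lives on $\{\psi_1 \le V_\theta - t\}$ while that of $\theta_{\psi_2^t}^n$ lives on the \emph{different} set $\{\psi_2 \le V_\theta - t\}$; integrated against the nonconstant bounded function $f^t$, there is no a priori cancellation.

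The paper's proof avoids this by a two-stage reduction (after first passing from big to K\"ahler $\theta$ via a polynomial argument in $s$ along $\theta+s\omega$). In Step~1.1 one assumes in addition that $\psi_1=\psi_2$ on a sublevel set $\{\min(\psi_1,\psi_2)<-C\}$ and likewise for the $\varphi$'s; then the truncations $\psi_1^t,\psi_2^t$ literally coincide on the bad set, so plurifine locality gives exact equality ${\bf 1}_{\{\psi_1\le -t\}}\theta_{\psi_1^t}^n={\bf 1}_{\{\psi_1\le -t\}}\theta_{\psi_2^t}^n$, and the limit is just dominated convergence. Step~1.2 reduces the general case to this one by the approximation $\psi_{2,\varepsilon}:=\max(\psi_1,(1+\varepsilon)\psi_2)$: as $\varepsilon\to 0$ these \emph{increase} to $\psi_2$, so the masses (hence the measures) do converge weakly, and since all $\psi_{2,\varepsilon}$ stay between $\psi_2-B$ and $\psi_2$ the capacity domination from Theorem~\ref{thm: Cap comparison} is uniform in $\varepsilon$, which is exactly what Lemma~\ref{lem: convergence} needs. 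This uniform-in-$\varepsilon$ control is the ingredient your one-step truncation lacks.
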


\begin{proof}
It follows from Proposition \ref{prop: comparison generalization} that 
\[
\int_{X}(\theta_{\psi_{1}}^{n}-\theta_{\psi_{2}}^{n})= \int_{X}(S_{1}-S_{2})=0.
\]
By adding a constant we can assume that $\varphi_{1},\varphi_{2},\psi_{1},\psi_{2}$ are negative and
that $\varphi_{1}\leq\varphi_{2}$ and $\psi_{1}\leq\psi_{2}$. Let $B>0$ be a constant such that 
        \[
        \varphi_{2}\leq\varphi_{1}+B,\quad  \psi_{2}\leq\psi_{1}+B.
        \]
        
\smallskip 

\noindent {\bf Step 1.} We assume $\theta$ is K\"ahler and $\psi_1,\psi_2,\varphi_1,\varphi_2$ are $\lambda\theta$-psh 
for some $\lambda\in(0,1)$. Observe that the last condition ensures that their Monge--Amp\`ere mass is strictly positive.
\medskip
\noindent {\bf Step 1.1.} We also assume that there exists $C>0$ such that $\psi_1=\psi_2$ on $U:=\{\min(\psi_1,\psi_2)<-C\}$ and $\varphi_1=\varphi_2$ on $V:=\{\min(\varphi_1,\varphi_2)<-C\}$. 
        
         For a function $u\in \psh(X, \theta)$ we consider its canonical approximant $u^{t}:=\max(u,-t)$, $t>0$. It follows from Stokes' theorem that 
        \[
        \int_{X}(\varphi_1^t-\varphi_2^t)\left(\theta_{\psi_{1}^{t}}^{n}-\theta_{\psi_{2}^{t}}^{n}\right)=\int_{X}(\psi_{1}^{t}-\psi_{2}^{t})(S_{1}^{t}-S_{2}^{t}),
        \]
        where $S_{j}^{t}:=\sum_{k=0}^{n-1}\theta_{\varphi_{j}^{t}}\wedge\theta_{\psi_{1}^{t}}^{k}\wedge\theta_{\psi_{2}^{t}}^{n-k-1}$, $j=1,2$. We now consider the limit as $t\to \infty$ in the above equality. 
        
         Fix $t>C$. By assumption we have $\varphi_1^t = \varphi_2^t$ on $V$ and $\{\varphi_1\leq -t\} = \{\varphi_2\leq -t\}\subset V$. The same properties for $\psi_1,\psi_2$ also hold: $\psi_{1}^{t}=\psi_{2}^{t}$ in the open set $U$ and $\{\psi_1\leq-t\}=\{\psi_2\leq-t\}\subset U$. It thus follows that
      \[
        {\bf 1}_U\theta_{\psi_1^t}^n={\bf 1}_U\theta_{\psi_2^t}^n\quad
        \text{and} \quad  {\bf 1}_VS_1^t={\bf
                1}_VS_2^t,
        \]    
        hence multiplying with the characteristic functions ${\bf 1}_{\{\psi_1\leq-t\}}$ and $ {\bf 1}_{\{\varphi_1\leq-t\}}$ respectively gives
        \[
        {\bf 1}_{\{\psi_1\leq-t\}}\theta_{\psi_1^t}^n={\bf 1}_{\{\psi_1\leq-t\}}\theta_{\psi_2^t}^n\quad 
        \text{and} \quad  {\bf 1}_{\{\varphi_1\leq-t\}}S_1^t={\bf
                1}_{\{\varphi_1\leq-t\}}S_2^t. 
        \]

         By plurifine locality of the non-pluripolar product we thus have
\begin{flalign*}
\int_X(\varphi_1^t-\varphi_2^t)\left(\theta_{\psi_1^t}^{n}-\theta_{\psi_2^t}^n\right)
&=  \int_{\{\psi_1>-t\} \cap \{\varphi_1>-t\}} (\varphi_1^t-\varphi_2^t)(\theta_{\psi_1^t}^n-\theta_{\psi_2^t}^n)\\
& = \int_{\{\psi_1>-t\} \cap \{\varphi_1>-t\}}(\varphi_1-\varphi_2)(\theta_{\psi_{1}}^{n}-\theta_{\psi_{2}}^{n}),
\end{flalign*}
and 
	\[
        \int_{X}(\psi_{1}^{t}-\psi_{2}^{t})(S_{1}^{t}-S_{2}^{t})=
        \int_{\{\psi_{1}>-t\}\cap\{\varphi_{1}>-t\}}(\psi_{1}-\psi_{2})(S_{1}-S_{2}).
        \]
      Since $\varphi_1-\varphi_2$ and $\psi_1-\psi_2$ are bounded,  using the dominated convergence theorem we finish Step 1.1.
      \medskip
        
\noindent {\bf Step 1.2.} We remove the assumption made in Step 1.1.\\
Recall that $\psi_{1},\psi_{2},\varphi_{1},\varphi_{2}$ are $\lambda\theta$-psh for some
$\lambda\in(0,1)$.  For each $\varepsilon\in(0,1-\lambda)$ we define 
        \[
        \psi_{2,\varepsilon}:=\max(\psi_1,(1+\varepsilon)\psi_2)\ ;\
        \varphi_{2,\varepsilon}:=\max(\varphi_1,(1+\varepsilon)\varphi_2).
        \]
        Since $\theta$ is assumed to be K\"ahler, and $\varepsilon\in (0,1-\lambda)$, the functions $(1+\varepsilon)\psi_{2}$ and  
        $(1+\varepsilon)\varphi_{2}$ are still $\theta$-psh. Also,  $\psi_{1}\leq\psi_{2,\varepsilon}\leq(1+\varepsilon)(\psi_{1}+B)$ and 
        $\varphi_{1}\leq\varphi_{2,\varepsilon}\leq(1+\varepsilon)(\varphi_{1}+B)$. These are $\theta$-psh functions satisfying the 
        assumptions in Step 1.1 with $C=B+B\varepsilon^{-1}$. Indeed, if $\varphi_{1}(x)<-C$ then
        \[
        (1+\varepsilon)\varphi_{2}(x)=\varphi_{2}(x)+\varepsilon\varphi_{2}(x)\leq\varphi_{1}(x)+
        B+\varepsilon(B-C) =\varphi_{1}(x).
        \]
        It then follows that $\varphi_{2, \varepsilon}=\varphi_1$ on $V=\{\varphi_1=\min(\varphi_1, \varphi_{2, \varepsilon})<-C\}$. Similarly we have that  $\psi_{2, \varepsilon}=\psi_1$ on $U=\{\psi_1=\min(\psi_1, \psi_{2, \varepsilon})<-C\}$, with the same $C$.\\
        We can thus apply Step 1.1 to $\psi_{1}$, $\psi_{2,\varepsilon}$, $\varphi_{1}$,$\varphi_{2,\varepsilon}$
         to obtain 
        \[
        \int_X (\varphi_{1}-\varphi_{2,\varepsilon}) \left(\theta_{\psi_{1}}^{n}-\theta_{\psi_{2,\varepsilon}}^n\right)
        =\int_{X}\left(\psi_{1}-\psi_{2,\varepsilon}\right)(S_{1,\varepsilon}-S_{2,\varepsilon}),
        \]
where 
$S_{1,\varepsilon}:=\sum_{k=0}^{n-1}\theta_{\varphi_1}\wedge\theta_{\psi_1}^k\wedge\theta_{\psi_{2,\varepsilon}}^{n-k-1}$ and $S_{2,\varepsilon}:=\sum_{k=0}^{n-1}\theta_{\varphi_{2,\varepsilon}}\wedge\theta_{\psi_1}^k\wedge\theta_{\psi_{2,\varepsilon}}^{n-k-1}$. \\
By Theorem \ref{thm: Cap comparison} there exists a continuous function $f:[0,\infty)\rightarrow[0,\infty)$ with $f(0)=0$ such that for every Borel set $E$, 
        \[
        \capa_{\theta,\psi}(E)\leq f(\capa_{\theta}(E)),
        \]
where 
\[
\psi:=\frac{\varphi_1+\varphi_2+\psi_1+\psi_2}{5}-B.
\]
Note that $\psi$ is $\theta$-psh and  $\int_X\theta_{\psi}^n>0$. Indeed, recalling that in this step $\theta$ is K\"ahler, we have
\[
\int_X \theta_{\psi}^n = 5^{-n}\int_X (\theta+\theta_{\varphi_1}+\theta_{\varphi_2}
+\theta_{\psi_1}+\theta_{\psi_2})^n \geq 5^{-n} \theta^n>0. 
\]
Since we have assumed that 
$\psi_2-B\leq \psi_1\leq \psi_2\leq 0$ and $\varphi_2-B\leq \varphi_1\leq \varphi_2\leq 0$, we get
        \[
        \varphi_2-B\leq \varphi_1 \leq \varphi_{2,\varepsilon} = \max(\varphi_1,(1+\varepsilon)\varphi_2)
        \leq \max(\varphi_1,\varphi_2) = \varphi_2,
        \]
and 
        \[
        \psi_2-B \leq \psi_1\leq \psi_{2,\varepsilon}=\max(\psi_1,(1+\varepsilon \psi_2) )
        \leq \max(\psi_1,\psi_2) = \psi_2. 
        \]
In particular $\psi_1\simeq\psi_2\simeq \psi_{2,\varepsilon} $ and $\varphi_1\simeq\varphi_2\simeq \varphi_{2,\varepsilon}$.\\
Set $u_\varepsilon:=\frac{1}{5}( \varphi_1+\varphi_{2,\varepsilon}+\psi_{2,\varepsilon}+\psi_1)$. Using the above inequalities we get
        \begin{flalign*}
        \psi &\leq \frac{\varphi_{1}+\psi_{1}+\varphi_{2}+\psi_{2}}{5} -\frac{2B}{5} \\
        & \leq \frac{\varphi_{1}+\psi_{1}+\varphi_{2,\varepsilon}+\psi_{2,\varepsilon}}{5} = u_\varepsilon\\
        &\leq \frac{\varphi_{1}+\psi_{1}+\varphi_{2}+\psi_{2}}{5}
       = \psi+B. 
        \end{flalign*}
Also observe that there exists a positive constant $C'$ such that
$$S_{j,\varepsilon}\leq C'(5\theta+dd^c(\varphi_1+\varphi_{2,\varepsilon}+\psi_{2,\varepsilon}+\psi_1))^n= C' 5^n \theta_{u_\varepsilon}^n$$
We then obtain that
for any Borel set $E$ and any $\varepsilon\in(0,1-\lambda),\; j=1,2$, 
\[
\int_{E}S_{j,\varepsilon}\leq C'5^nB^n \capa_{\theta,\psi}(E) \leq  C''f(\capa_{\theta}(E)),
\]
where the first inequality follows from the arguments at the beginning of the proof of Lemma \ref{lem: from cap psi to cap varphi}, and the second follows from Theorem \ref{thm: Cap comparison}. 

 Since $\psi_{2,\varepsilon}$ and $\varphi_{2,\varepsilon}$ are increasing to $\psi_{2}$ and $\varphi_{2}$ respectively, for each $j\in\{1,2\}$ we also have that $S_{j,\varepsilon}\to S_j$ and
$\theta_{\psi_{2,\varepsilon}}^n\to\theta_{\psi_2}^n$ as $\varepsilon \to 0$ in the weak sense 
of Radon measures (see Theorem \ref{thm: lsc of MA measures} and Remark \ref{rem: increasing implies capacity}). 
By the above, these measures are uniformly dominated by $\capa_{\theta}$. \\
Note also that  $\varphi_1-\varphi_{2,\varepsilon},\varphi_1-\varphi_2,\psi_{2,\varepsilon}-\psi_1,\psi_2-\psi_1$ are uniformly bounded and quasi-continuous (because difference of quasi-psh functions). 
Moreover, $\psi_{2,\varepsilon}-\psi_1\to\psi_2-\psi_1$, and $\varphi_1-\varphi_{2,\varepsilon}\to\varphi_1-\varphi_2$ in capacity as $\varepsilon \to 0$.  
It  thus follows from Lemma \ref{lem: convergence} that 
\[
 \lim_{\varepsilon\to 0} \int_{X}(\varphi_1-\varphi_{2,\varepsilon})\left(\theta_{\psi_1}^n-
 \theta_{\psi_{2,\varepsilon}}^n\right) = \int_X (\varphi_1-\varphi_2)\left(\theta_{\psi_1}^n-
 \theta_{\psi_2}^n\right)
 \]
and 
\[
  \lim_{\varepsilon\to 0} \int_X\left(\psi_1-\psi_{2,\varepsilon}\right)(S_{1,\varepsilon}-S_{2,\varepsilon}) = 
  \int_X\left(\psi_1-\psi_2\right)(S_1-S_2),
 \]
finishing the proof of Step 1.2.
        \medskip

\noindent {\bf Step 2.} We merely assume that $\{\theta\}$ is big. 
We can assume that $ -\omega\leq \theta\leq \omega$. 
For $s>2$ we consider $\theta_s:=\theta+s\omega$, which is K\"ahler, and we observe that
$\varphi_1, \varphi_2, \psi_1, \psi_2$ are $\lambda\theta_s$-psh for some $\lambda\in (0,1)$. 
We can thus apply the first step to get 
        \[
        \int_{X}u\left((\theta_{s}+dd^{c}\psi_{1})^{n}-(\theta_{s}+dd^{c}\psi_{2})^{n}\right)=\int_{X}v
        T_{s},
        \]
where $u=\varphi_{1}-\varphi_{2}$, $v=\psi_{1}-\psi_{2}$ and 
        \begin{flalign*}
                T_{s}= & \sum_{k=0}^{n-1} (\theta_s +dd^c
                \varphi_1)\wedge(\theta_s+dd^{c}\psi_{1})^{k}\wedge(\theta_{s}+dd^{c}\psi_{2})^{n-k-1}\\
                - & \sum_{k=0}^{n-1}(\theta_s+dd^c
                \varphi_2)\wedge(\theta_{s}+dd^{c}\psi_{1})^{k}\wedge(\theta_{s}+dd^{c}\psi_{2})^{n-k-1}.
        \end{flalign*}
        We thus obtain an equality between two polynomials in $s$. Identifying the coefficients we arrive at 
        the conclusion.
\end{proof}

Next we prove the integration by parts formula, extending the one in \cite{BEGZ10} which only applies to the case of potentials with small unbounded locus.

\begin{theorem}\label{thm: int_by_parts} Let $u,v \in L^\infty(X)$ be differences of quasi-psh functions, and $\phi_j \in \textup{PSH}(X,\theta^j)$, $j \in  \{1, \ldots,n-1\}$ with $\{\theta^j\}$ big. Then
$$\int_X u dd^c v  \wedge \theta^1_{\phi_1} \wedge \ldots \wedge \theta^{n-1}_{\phi_{n-1}}= \int_X v dd^c u \wedge \theta^1_{\phi_1} \wedge \ldots \wedge \theta^{n-1}_{\phi_{n-1}}.$$
\end{theorem}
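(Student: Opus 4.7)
My strategy is to extend the three-stage proof of Lemma \ref{lem: pre integration by parts} from the specific symmetrized form $T_\psi = \sum_{k=0}^{n-1}\theta_{\psi_1}^k \wedge \theta_{\psi_2}^{n-1-k}$ appearing there to the general mixed non-pluripolar product $T := \theta^1_{\phi_1} \wedge \ldots \wedge \theta^{n-1}_{\phi_{n-1}}$.

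First I reduce to the case where $u$ and $v$ are themselves bounded quasi-psh. Writing $u = u_1 - u_2$ and $v = v_1 - v_2$ with $u_i, v_i$ quasi-psh, and using that $u,v \in L^\infty$, a standard truncation-plus-limit argument combined with bilinearity in $(u,v)$ of both sides of the identity reduces matters to the case where $u$ and $v$ are themselves bounded $\alpha$-psh for a single smooth closed $(1,1)$-form $\alpha$ (say $\alpha = A\omega$ with $A$ large).

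The core argument then mirrors the three stages of the proof of Lemma \ref{lem: pre integration by parts}, now with $T$ in place of $T_\psi$. \emph{Stage A:} assume all $\theta^j$ are K\"ahler and each $\phi_j$ is bounded. Here the non-pluripolar product $T$ coincides with the classical Bedford--Taylor product and is closed, so Stokes' theorem applied to smooth Demailly approximations of $u,v,\phi_j$, followed by a Bedford--Taylor continuity limit, yields the identity. \emph{Stage B:} extend to unbounded $\phi_j$ via the canonical truncations $\phi_j^t := \max(\phi_j, V_{\theta^j} - t)$, employing the two-substep scheme of Lemma \ref{lem: pre integration by parts}: first treat the case where each $\phi_j$ coincides with a bounded reference potential on a set of the form $\{\min<-C\}$, then remove this restriction via the $(1+\varepsilon)$-trick $\tilde\phi_j := \max(\phi_j^t,(1+\varepsilon_j)\phi_j)$. \emph{Stage C:} remove the K\"ahler assumption on the $\theta^j$ by the polynomial trick of Step 2 in Lemma \ref{lem: pre integration by parts}: replace each $\theta^j$ by $\theta^j + s\omega$, obtaining a polynomial identity in $s$, and identify coefficients. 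Throughout Stages B and C, convergence of the integrals is controlled by the capacity comparison (Theorem \ref{thm: Cap comparison}) together with Lemma \ref{lem: cv MA quasi-open}, Lemma \ref{lem: convergence}, Theorem \ref{thm: lsc of MA measures}, and Remark \ref{rem: increasing implies capacity}.

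The principal obstacle is Stage B. With $n-1$ simultaneously unbounded potentials inside $T$, the $(1+\varepsilon)$-trick must be applied to each $\phi_j$ in parallel with parameters $\varepsilon_1,\ldots,\varepsilon_{n-1}$ and a uniform truncation level $C$; the accumulated error terms must then be shown to be controlled by a single application of Theorem \ref{thm: Cap comparison}. Arranging this multi-parameter approximation so that both $\int_X u\, dd^c v \wedge T$ and $\int_X v\, dd^c u \wedge T$ converge to their intended limits, and agree in that limit, is where the bulk of the technical work will lie.
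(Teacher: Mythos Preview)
Your plan diverges significantly from the paper's route, and in a way that creates real difficulties.

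The paper does \emph{not} redo the approximation machinery of Lemma~\ref{lem: pre integration by parts} with a general mixed product in place of $T_\psi$. Instead it uses Lemma~\ref{lem: pre integration by parts} as a black box and extracts Theorem~\ref{thm: int_by_parts} by two polynomial tricks. First, with $\theta$ K\"ahler and $u=\varphi_1-\varphi_2$, $v=\psi_1-\psi_2$ all $\theta$-psh, one fixes a single $\phi\in\textup{PSH}(X,\theta)$ and sets $\psi_{j,s}:=s\psi_j+(1-s)\phi$; since $[\psi_{1,s}]=[\psi_{2,s}]$ for each $s$, Lemma~\ref{lem: pre integration by parts} applies and yields an identity of polynomials in $s$. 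Differentiating at $s=0$ gives $\int_X u\,dd^c v\wedge\theta_\phi^{n-1}=\int_X v\,dd^c u\wedge\theta_\phi^{n-1}$. Second, one passes from $\theta_\phi^{n-1}$ to an arbitrary mixed product $\theta^1_{\phi_1}\wedge\cdots\wedge\theta^{n-1}_{\phi_{n-1}}$ by polarization: take $\theta=\sum s_j\theta^j$, $\phi=\sum s_j\phi_j$, and identify coefficients. The passage from K\"ahler to big is the same $+\,t\omega$ trick you describe in Stage~C. No fresh capacity estimates or multi-parameter approximations are needed.

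Your Stage~B, as written, has a concrete problem. The $(1+\varepsilon)$-trick in Lemma~\ref{lem: pre integration by parts} is designed for a \emph{pair} $\psi_1,\psi_2$ with the same singularity type: one sets $\psi_{2,\varepsilon}=\max(\psi_1,(1+\varepsilon)\psi_2)$ so that $\psi_{2,\varepsilon}=\psi_1$ on a deep sublevel set, enabling plurifine cancellation in the \emph{difference} $\theta_{\psi_1}^n-\theta_{\psi_{2,\varepsilon}}^n$. In your setting each $\phi_j$ stands alone; your formula $\tilde\phi_j:=\max(\phi_j^t,(1+\varepsilon_j)\phi_j)$ collapses to $\phi_j^t$ whenever $\phi_j\le 0$, so it does nothing. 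More seriously, the naive truncation $T^t:=\theta^1_{\phi_1^t}\wedge\cdots\wedge\theta^{n-1}_{\phi_{n-1}^t}$ has total mass equal to the minimal-singularity mass (Proposition~\ref{prop: comparison generalization}), which in general is strictly larger than $\int_X T$; hence the hypothesis \eqref{eq: global_mass_semi_cont} of Theorem~\ref{thm: lsc of MA measures} fails and you cannot conclude $\int_X u\,dd^c v\wedge T^t\to\int_X u\,dd^c v\wedge T$ for each side separately. Without a cancellation structure analogous to the pairs in Lemma~\ref{lem: pre integration by parts}, there is no mechanism to kill the escaping mass. The paper's differentiation-at-$s=0$ argument sidesteps all of this by manufacturing exactly such a pair $(\psi_{1,s},\psi_{2,s})$ out of the given $\phi$.
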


\begin{proof}
We first assume that $\theta$ is K\"ahler, $u=\varphi_1-\varphi_2$ and $v=\psi_1-\psi_2$ where 
$\psi_1,\psi_2,\varphi_1,\varphi_2$ are $\theta$-psh. Fix $\phi\in{\rm PSH}(X,\theta)$ 
and for each $s\in[0,1]$, $j=1,2$, we set $\psi_{j,s}:=s\psi_{j}+(1-s)\phi$. 
Note that $\psi_{1,s}\simeq \psi_{2,s}$. 
It follows from Lemma \ref{lem: pre integration by parts} that for any $s\in[0,1]$,
\[
\int_X u\left(\theta_{\psi_{1,s}}^n-\theta_{\psi_{2,s}}^n\right)= \int_X (\psi_{1,s}-
\psi_{2,s}) T_s=\int_X sv T_s,
\]
where 
\[
T_s:= \sum_{k=0}^{n-1}\theta_{\varphi_1} \wedge \theta_{\psi_{1,s}}^k \wedge \theta_{\psi_{2,s}}^{n-k-1}
-\sum_{k=0}^{n-1}\theta_{\varphi_2} \wedge \theta_{\psi_{1,s}}^k \wedge \theta_{\psi_{2,s}}^{n-k-1}.
\]
We thus have an identity between two polynomials in $s$. 
We then compute the first derivative in $s=0$ and we find that for $j=1,2$
$$\frac{\partial}{\partial s} (s\theta_{\psi_j} + (1-s)\theta_{\phi})^n |_{s=0} = n \theta_{\psi_j} \wedge \theta_{\phi}^{n-1} -n \theta_\phi^n $$
and
$$\frac{\partial}{\partial s} (sT_s)|_{s=0}=T_0.$$
Noticing that $T_0= n dd^c u \wedge \theta_\phi^{n-1}$, we obtain 

\[
\int_X udd^c v\wedge\theta_{\phi}^{n-1}=\int_Xvdd^c u\wedge\theta_{\phi}^{n-1}.
\]
For the general case, i.e. $\{\theta\}$ is merely big, we can write 
$u=\varphi_1-\varphi_2$ and $v=\psi_1-\psi_2$, where $\psi_1,\psi_2,\varphi_1,\varphi_2$ are 
$A\omega$-psh, for some $A>0$ large enough. 
We apply the first step with $\theta$ replaced by $\theta + t\omega$, for $t>A$ to get
\[
\int_Xu dd^c v\wedge(t\omega+\theta_
{\phi})^{n-1}=\int_X vdd^c u\wedge(t\omega+\theta_{\phi})^{n-1}.
\]
Identifying the coefficients of these two polynomials in $t$ we obtain
\[
\int_X udd^c v\wedge\theta_{\phi}^{n-1}=\int_{X}vdd^c u\wedge\theta_{\phi}^{n-1}.
\] 
We now consider $\theta=s_1\theta_1+....+s_{n-1} \theta_{n-1}$, $\phi:=s_1\phi_1+...+s_{n-1}\phi_{n-1}$ 
with $s_1,...,s_{n-1}\in[0,1]$ and $\sum_{j=1}^{n-1} s_j=1$. 
We obtain an identity between two polynomials in $(s_1,...,s_{n-1})$, and identifying the coefficients we arrive at the result.
\end{proof}

\chapter{Complex Monge--Amp\`ere equations with prescribed singularity type}\label{sec 4}

Let $\theta$ be a smooth closed real $(1,1)$-form on $X$ such that $\{\theta\}$ is big and $\phi \in \textup{PSH}(X,\theta)$. By $\mathrm{PSH}(X,\theta,\phi)$  we denote the set of $\theta$-psh functions that are more singular than $\phi$. We say that $v \in \textup{PSH}(X,\theta,\phi)$ has \emph{relatively minimal singularity type} if $v$ has the same singularity type as $\phi$. Clearly, $\mathcal E(X,\theta,\phi) \subset \textup{PSH}(X,\theta,\phi)$. 

Let $\mu$ be a non-pluripolar positive  measure on $X$ such that $\mu(X)=\int_X \theta_\phi^n > 0$.
Our aim is to study existence and uniqueness of solutions to the following equation of complex Monge--Amp\`ere type:
\begin{equation}\label{eq: CMAE_sing}
\theta_u^n = \mu, \ \ \ u \in \mathcal E(X,\theta,\phi).
\end{equation}
It is not hard to see that this equation does not have a solution for arbitrary $\phi$. Indeed, suppose for the moment that $\theta = \omega$, and  choose $\phi \in \mathcal E(X,\omega) :=\mathcal E(X,\omega,0)$ unbounded. It is clear that $\mathcal E(X,\omega,\phi) \subsetneq \mathcal E(X,\omega,0)$. By \cite[Theorem A]{BEGZ10}, the (trivial) equation $\omega_{u}^n = \omega^n, \ u \in \mathcal E(X,\omega,0)$ is \emph{only} solved by potentials $u$ that are constant over $X$, hence we cannot have $u \notin \mathcal E(X,\omega,\phi)$.

This simple example suggests that we need to be more selective in our choice of $\phi$, to make \eqref{eq: CMAE_sing} well posed. As it turns out, the natural choice is to take $\phi$ such that  $P_{\theta}[\phi]=\phi$ (see Theorem \ref{thm: naturality_of_model} for concrete evidence). Therefore, for the rest of this section we ask that $\phi$ additionally satisfies:
\begin{equation}\label{eq: fixed_point}
\phi=P_{\theta}[\phi].
\end{equation}
We recall that such a potential $\phi$  is model, and $[\phi]$ is a \emph{model type singularity}. As $V_\theta = P_\theta[V_\theta]$, one can think of such $\phi$ as generalizations of $V_\theta$, the potential with minimal singularity from \cite{BEGZ10}. 

 One wonders if maybe model type potentials (those that satisfy \eqref{eq: fixed_point}) always have small unbounded locus. Sadly, this is not the case, as we now point out. Suppose $\theta$ is a K\"ahler form, and $\{x_j\}_j \subset X$ is a dense countable subset. Also let $v_j \in \textup{PSH}(X,\theta)$ such that $v_j < 0$, $\int_X (-v_j) \theta^n = 1$, and $v_j$ has a positive Lelong number at $x_j$. Then $u = \sum_j \frac{1}{2^j} v_j \in \textup{PSH}(X,\theta)$ has positive Lelong numbers at all $x_j$. As we have argued in Lemma \ref{lem: Lelong} below, the Lelong numbers of $P_{\theta}[u]$ are the same as those of $u$, hence the model type potential $P_{\theta}[u]$ cannot have small unbounded locus. 

\begin{lemma} \label{lem: Lelong}Suppose that $u \in \textup{PSH}(X,\theta)$. Then $\mathcal I (tu) = \mathcal I(tP_\theta[u])$ for all $t>0$. In particular, all the Lelong numbers of $u$ and $P_\theta[u]$ are the same.
\end{lemma}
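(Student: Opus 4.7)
The easy inclusion $\mathcal{I}(tu)\subseteq\mathcal{I}(tP_\theta[u])$ is immediate from the pointwise bound $P_\theta[u]\geq u-\sup_X u$, which holds because $u-\sup_X u\leq 0$ has the same singularity type as $u$ and hence is a candidate in the supremum defining $P_\theta[u]$. This yields $e^{-tP_\theta[u]}\leq e^{t\sup_X u}e^{-tu}$, so local integrability of $|f|^2e^{-tu}$ near any point $x$ forces the same for $|f|^2e^{-tP_\theta[u]}$. As a byproduct, the corresponding Lelong-number inequality $\nu(P_\theta[u],x)\leq\nu(u,x)$ follows.

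For the reverse inclusion $\mathcal{I}(tP_\theta[u])\subseteq\mathcal{I}(tu)$, my plan is to work with the rooftop approximants $\phi_C:=P_\theta(u+C,V_\theta)$, which by the construction of Chapter~2 form an increasing family with $(\lim_{C\to\infty}\phi_C)^*=P_\theta[u]$ and satisfy $\phi_C\leq u+C$. The latter inequality, via the easy direction applied to $\phi_C$, gives $\mathcal{I}(t\phi_C)\subseteq\mathcal{I}(t(u+C))=\mathcal{I}(tu)$ for every $C>0$. It therefore suffices to establish
\[
\mathcal{I}(tP_\theta[u])=\bigcup_{C>0}\mathcal{I}(t\phi_C).
\]
Concretely, given a germ $f$ at a point $x$ with $\int_U|f|^2e^{-tP_\theta[u]}\,d\lambda<\infty$ on a neighborhood $U$ of $x$, I would show that $\int_U|f|^2e^{-t\phi_C}\,d\lambda<\infty$ for all $C$ sufficiently large. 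The sequence $|f|^2e^{-t\phi_C}$ decreases pointwise to $|f|^2e^{-tP_\theta[u]}$, but this alone is insufficient because monotone convergence for a decreasing sequence does not transfer integrability of the limit to a member. The key ingredient, filling this gap, is that near any pole $x$ of $u$ in the ample locus $\mathrm{Amp}(\{\theta\})$, the envelope $\phi_C$ coincides with $u+C$ up to a locally bounded function for $C$ large: since $V_\theta$ is locally bounded on $\mathrm{Amp}(\{\theta\})$ while $u+C\to-\infty$ at $x$, the obstacle $\min(u+C,V_\theta)$ is locally $u+C$, and because $u+C$ is itself $\theta$-psh, $\phi_C$ saturates this obstacle on a neighborhood of $x$. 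This last assertion follows from the contact-set Theorem~\ref{thm: envelope contact} combined with the plurifine locality of the non-pluripolar product.

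The equality of Lelong numbers $\nu(P_\theta[u],x)=\nu(u,x)$ at every $x\in X$ then follows from the equality of multiplier ideal sheaves, applied across all $t>0$, through the standard relationship between Lelong numbers and the vanishing order of multiplier ideals (e.g.\ Demailly--Koll\'ar integrability, or the asymptotic $\nu(\phi,x)=\lim_{k\to\infty}k^{-1}\min\{m:\mathfrak{m}_x^m\not\subseteq\mathcal{I}(k\phi)_x\}$). The principal obstacle throughout is the local comparison step for $\phi_C$ with $u+C$: even though $P_\theta[u]$ is a priori strictly less singular than $u$, the family $\{\phi_C\}$ must eventually recover the full pole of $u$ near each singular point, so that the corresponding multiplier ideals exhaust $\mathcal{I}(tP_\theta[u])$ in the limit.
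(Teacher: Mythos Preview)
Your overall strategy matches the paper's: use the rooftop approximants $\phi_C := P_\theta(u+C,V_\theta)$, observe that $[\phi_C]=[u]$ so that $\mathcal I(t\phi_C)=\mathcal I(tu)$, and then pass to the limit $\phi_C \nearrow P_\theta[u]$. The paper closes the argument with the Guan--Zhou strong openness theorem, which is precisely the statement that for an increasing sequence of quasi-psh functions $\phi_C\nearrow\psi$ one has $\mathcal I(t\psi)=\bigcup_C\mathcal I(t\phi_C)$ (equivalently, $\mathcal I(t\phi_C)=\mathcal I(t\psi)$ for $C$ large). This is the step you isolate as the crux, and it is genuinely deep.

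Your proposed replacement for Guan--Zhou, however, does not work. You argue that near a pole $x$ of $u$ the envelope $\phi_C$ agrees with $u+C$ up to a bounded function. This is true (indeed $u-\sup_X u\leq\phi_C\leq u+C$ globally), but it only yields $\mathcal I(t\phi_C)_x=\mathcal I(tu)_x$, which is already known and is the \emph{conclusion} you are after, not a route to it. What you need at this point is a comparison between $\phi_C$ and $P_\theta[u]$, and your local argument says nothing about that: knowing that $\phi_C$ has the same poles as $u$ does not tell you that a germ $f$ with $|f|^2e^{-tP_\theta[u]}$ integrable also has $|f|^2e^{-t\phi_C}$ integrable. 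The invocation of Theorem~\ref{thm: envelope contact} and plurifine locality is also misplaced: these concern the support of the non-pluripolar Monge--Amp\`ere measure of an envelope, not pointwise equality of the envelope with its obstacle, and in any case they would again only relate $\phi_C$ to $u$, not to $P_\theta[u]$.

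In short, the identity $\mathcal I(tP_\theta[u])=\bigcup_C\mathcal I(t\phi_C)$ is exactly strong openness for the family $\phi_C\nearrow P_\theta[u]$; there is no known elementary substitute. You should invoke Guan--Zhou directly, as the paper does.
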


Recall that $\mathcal I(v)$ is the multiplier ideal sheaf of a quasi-psh function $v$, whose germs are defined by 
 $$\mathcal I(v, x):=\left \{ f\in {\mathcal O}_x \ :\ \int_U |f|^2 e^{-v} \omega^n < +\infty \quad  {\rm for\ some \ open \ subset\ }  U \subset X, x\in U \right\}.$$

We use the valuative criteria of integrability of Boucksom--Favre--Jonsson. For a more elementary argument see the proof of \cite[Theorem 1.1]{DDL1}.

\begin{proof} Since $P_\theta(u+C,V_\theta) \nearrow P_\theta[u]$ a.e., as $C \nearrow \infty$, from the Guan--Zhou openess theorem \cite{GZh15} it follows that $\mathcal I (tP_\theta(u+C,V_\theta)) = \mathcal I(tP_\theta[u])$ for $C>0$ big enough. However $[P_\theta(u+C,V_\theta)] =[u]$ for all $C>0$, so we get that $\mathcal I (tu) = \mathcal I(tP_\theta[u])$.
 The conclusion about Lelong numbers follows from the equivalence between (i) (applied with the proper modification $\pi=id$) and (ii) in \cite[Theorem A]{BFJ08}.
\end{proof}

\section{The relative finite energy class}

To develop the variational approach to \eqref{eq: CMAE_sing}, we study the relative version of  the Monge--Amp\`ere energy, and its bounded locus $\mathcal E^1(X,\theta,\phi)$. For $u\in \Ec(X,\theta, \phi) $ with relatively minimal singularity type, we define the Monge--Amp\`ere energy of $u$ relative to $\phi$ as 
\[
\mathrm{I}_{\phi} (u) :=\frac{1}{n+1} \sum_{k=0}^n \int_X (u-\phi) \theta_u^k \wedge\theta_{\phi}^{n-k}. 
\]
Before we study this energy closely, we note the following inequality:

\begin{lemma}\label{lem: MA_monotone_ineq}

Assume $u, v \in \mathcal{E}(X,\theta,\phi)$ have the same singularity type. Then
\[
\int_X (u-v)\theta_u^k \wedge \theta_v^{n-k} \leq \int_X (u-v) \theta_u^{k-1}\wedge \theta_v^{n-k+1}.  
\]
\end{lemma}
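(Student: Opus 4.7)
The plan is to rearrange the desired inequality as the nonpositivity of a Dirichlet-type integral, and then conclude by integration by parts. Setting $w := u-v$ (which is bounded because $[u]=[v]$) and $T := \theta_u^{k-1}\wedge\theta_v^{n-k}$ (which is a closed positive $(n-1,n-1)$-current, well-defined as a non-pluripolar product), the difference
\[
\int_X (u-v)\,\theta_u^k\wedge\theta_v^{n-k} \;-\;\int_X (u-v)\,\theta_u^{k-1}\wedge\theta_v^{n-k+1}
\]
collapses, using $\theta_u-\theta_v=dd^c w$, to
\[
\int_X w\,\theta_u^{k-1}\wedge\theta_v^{n-k}\wedge(\theta_u-\theta_v) \;=\; \int_X w\,dd^c w\wedge T.
\]
So it suffices to show $\int_X w\,dd^c w\wedge T\le 0$.

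The next step is to invoke the integration by parts identity
\[
\int_X w\,dd^c w\wedge T \;=\; -\int_X dw\wedge d^c w\wedge T,
\]
whose right-hand side is manifestly $\le 0$ since $dw\wedge d^c w\wedge T$ is a positive measure. Granted this identity, the lemma follows at once.

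The main obstacle is justifying the above ``Dirichlet'' form of integration by parts in the present non-pluripolar setting, where $u$ and $v$ themselves are unbounded. Note that the symmetric integration by parts formula of Theorem~\ref{thm: int_by_parts} is tautological when both arguments are taken to be $w$, so it does not directly yield the Dirichlet form; an approximation is needed. A natural approximation is to replace $u,v$ by their canonical truncations $u_s:=\max(u,\phi-s)$ and $v_s:=\max(v,\phi-s)$: these have the same singularity type as $\phi$, and on the plurifine open set $U_s:=\{u>\phi-s\}\cap\{v>\phi-s\}$ they agree with $u,v$, so by plurifine locality one can replace $u,v$ with $u_s,v_s$ up to integrating over $U_s$ only. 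Combining this truncation with a further cutoff so that the integrand becomes bounded (e.g.\ multiplying by a quasi-continuous cutoff equal to $1$ on $U_s$), one reduces to the bounded Bedford--Taylor setting, where Stokes' theorem yields the Dirichlet identity. Passing to the limit as $s\to\infty$ is then carried out using Theorem~\ref{thm: lsc of MA measures}, Lemma~\ref{lem: convergence}, and the capacity comparison of Chapter~4, all of which are tailor-made for exactly this kind of argument.
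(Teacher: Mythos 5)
Your algebraic reduction is correct: the difference of the two sides does collapse, by multilinearity, to $\int_X w\, dd^c w\wedge T$ with $w=u-v$ and $T=\theta_u^{k-1}\wedge\theta_v^{n-k}$, so the lemma is equivalent to the nonpositivity of that quantity. The gap is exactly at the step you flag as ``the main obstacle'': the Dirichlet identity $\int_X w\,dd^c w\wedge T=-\int_X dw\wedge d^c w\wedge T$ is not available from anything in this paper, and your sketch does not establish it. Multiplying by a quasi-continuous cutoff equal to $1$ on $U_s$ and then ``applying Stokes'' does not work: Stokes produces derivative-of-cutoff terms, and a cutoff adapted to a plurifine open set is not differentiable; nor can you restrict a Stokes identity to $U_s$ by plurifine locality, since $dw\wedge d^c w\wedge T$ involves gradients and is not one of the Monge--Amp\`ere-type measures covered by Lemma \ref{lem: plurifine}. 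Even for the truncations $u_s,v_s$ (which still share the singularity type of a general model $\phi$, possibly with non-small unbounded locus), or after a further truncation at $V_\theta-t$, a global Stokes argument on the ample locus requires boundary control --- precisely the difficulty that forced \cite{BEGZ10} to assume small unbounded locus, and that Chapter 4 circumvents only for the \emph{symmetric} formula $\int_X u\,dd^c v\wedge\cdots=\int_X v\,dd^c u\wedge\cdots$, which, as you note, is tautological here. Defining $\langle dw\wedge d^c w\wedge T\rangle$ and proving the quadratic identity in this generality is genuinely harder and is part of the content of \cite{Xia19,Vu21}; it cannot be dispatched by the approximation you describe. Note also that the paper's logic runs in the opposite direction: the nonpositivity of $\int_X (u-v)\,dd^c(u-v)\wedge\theta_{u_t}^{n-1}$ (the concavity of $\mathrm{I}_\phi$ in Theorem \ref{thm: basic I energy}) is \emph{deduced from} this lemma.

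The paper's proof is much shorter and avoids gradients entirely: after normalizing so that $v\le u$ (harmless, since adding a constant to $u$ changes both sides by the same amount by Theorem \ref{thm: BEGZ_monotonicity_full}), write $\int_X(u-v)\,\theta_u^k\wedge\theta_v^{n-k}=\int_0^\infty\theta_u^k\wedge\theta_v^{n-k}(\{u>v+t\})\,dt$ and apply the partial comparison principle (Proposition \ref{prop: general CP}) on each superlevel set to trade one factor of $\theta_u$ for one of $\theta_v$. If you want to salvage your route you would have to import the full relative integration-by-parts machinery of \cite{Xia19} or \cite{Vu21}; within this paper's toolkit, the layer-cake argument is the intended proof.
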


\begin{proof}
Adding a constant to $u$ does not affect the inequality (by Theorem \ref{thm: BEGZ_monotonicity_full}), so we can assume that $v\leq u$.  
By the partial comparison principle (Proposition \ref{prop: general CP}), we have 
\begin{flalign*}
    \int_X (u-v)\theta_u^k \wedge \theta_v^{n-k}& = \int_0^{\infty} \theta_u^k \wedge \theta_v^{n-k} (u>v+t) dt \\
    & \leq \int_0^{\infty} \theta_u^{k-1} \wedge \theta_v^{n-k+1} (u>v+t) dt\\
    &= \int_X (u-v)\theta_u^{k-1} \wedge \theta_v^{n-k+1}. 
\end{flalign*}
\end{proof}

In the next theorem we collect basic properties of the Monge--Amp\`ere energy:

\begin{theorem}\label{thm: basic I energy} Suppose $u,v \in \mathcal E(X,\theta,\phi)$ have relatively minimal singularity type. Then:\\
\noindent (i) $ \mathrm{I}_{\phi}(u)-\mathrm{I}_{\phi}(v) = \frac{1}{n+1}\sum_{k=0}^n \int_X (u-v) \theta_{u}^k \wedge \theta_{v}^{n-k}.$ In particular $\mathrm{I}_{\phi}(u)\leq \mathrm{I}_{\phi}(v)$ if $u\leq v$.\\
\noindent (ii) If $u\leq \phi$ then, $
\int_X (u-\phi) \theta_u^n \leq I_{\phi}(u) \leq \frac{1}{n+1} \int_X (u-\phi) \theta_{u}^n. $ \\
\noindent (iii) $\mathrm{I}_{\phi}$ is concave along affine curves. Also, the following estimates hold: $$
	\int_X (u-v) \theta_u^n \leq I_{\phi}(u) -I_{\phi}(v) \leq \int_X (u-v) \theta_v^n.$$
\end{theorem}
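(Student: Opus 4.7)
The plan is to derive all three parts from the single variational identity
\[
\frac{d}{dt}\mathrm{I}_\phi(u_t) = \int_X \dot{u}_t\,\theta_{u_t}^n,
\]
valid along any smooth affine path $u_t$ of potentials with relatively minimal singularity type. To prove it I would differentiate the defining sum term by term. Each Leibniz contribution $k\int_X(u_t-\phi)\,dd^c\dot u_t\wedge\theta_{u_t}^{k-1}\wedge\theta_\phi^{n-k}$ is rewritten by Theorem \ref{thm: int_by_parts}, together with $dd^c(u_t-\phi) = \theta_{u_t}-\theta_\phi$, as $k\int_X\dot u_t(\theta_{u_t}-\theta_\phi)\wedge\theta_{u_t}^{k-1}\wedge\theta_\phi^{n-k}$; reindexing and telescoping the resulting double sum collapses everything to the lone top-degree term on the right-hand side. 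The application of Theorem \ref{thm: int_by_parts} is legitimate because relative minimality makes $u_t-\phi$ and $\dot u_t$ bounded differences of quasi-psh functions.

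For (i), I apply the identity along the affine segment $u_t := (1-t)v + tu$, where $\theta_{u_t} = (1-t)\theta_v + t\theta_u$ and $\dot u_t = u-v$. A binomial expansion and integration over $t\in[0,1]$ give
\[
\mathrm{I}_\phi(u)-\mathrm{I}_\phi(v) = \sum_{k=0}^{n}\binom{n}{k}\int_0^1 t^k(1-t)^{n-k}\,dt\int_X(u-v)\,\theta_u^k\wedge\theta_v^{n-k};
\]
the Beta evaluation $\binom{n}{k}\int_0^1 t^k(1-t)^{n-k}\,dt = \frac{1}{n+1}$ yields the stated cocycle formula, and monotonicity under $u\leq v$ is immediate from the sign of each summand.

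For (ii), I apply (i) with $v=\phi$ (noting $\mathrm{I}_\phi(\phi)=0$) to write $\mathrm{I}_\phi(u)=\tfrac{1}{n+1}\sum_{k=0}^n a_k$ for $a_k:=\int_X(u-\phi)\theta_u^k\wedge\theta_\phi^{n-k}$. Lemma \ref{lem: MA_monotone_ineq} gives $a_n\leq a_{n-1}\leq\cdots\leq a_0$, while $u\leq\phi$ forces every $a_k\leq 0$. Averaging then sandwiches $\mathrm{I}_\phi(u)$ between $a_n$ (since $a_k\geq a_n$ for all $k$) and $\tfrac{a_n}{n+1}$ (since $\sum_{k<n}a_k\leq 0$). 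Exactly the same monotonicity, applied now to $b_k:=\int_X(u-v)\theta_u^k\wedge\theta_v^{n-k}$, combines with (i) to produce the two estimates in (iii); no sign hypothesis on $u-v$ is required, because adding a constant to $u$ leaves $\mathrm{I}_\phi(u)-\mathrm{I}_\phi(v)$ invariant by the mixed-mass identity of Corollary \ref{cor: convexity}.

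Concavity of $\mathrm{I}_\phi$ along the affine segment $u_t=(1-t)u+tv$ is then a formal consequence of the estimates just established. For $0\leq t_1<t_2<t_3\leq 1$, the lower bound of (iii) on $[t_1,t_2]$ and the upper bound on $[t_2,t_3]$ together sandwich $\int_X(v-u)\,\theta_{u_{t_2}}^n$ between the two successive difference quotients, forcing the right quotient to be at most the left — the familiar chord-slope characterization of concavity. The only genuine technical worry throughout is the use of integration by parts with unbounded potentials, but this is neutralized by Theorem \ref{thm: int_by_parts} thanks to relative minimality; the remainder of the argument is elementary algebra and combinatorics.
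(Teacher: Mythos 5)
Your proposal is correct and follows essentially the same route as the paper: differentiate $\mathrm{I}_\phi$ along the affine segment, use Theorem \ref{thm: int_by_parts} to telescope the first variation down to $\int_X \dot u_t\,\theta_{u_t}^n$, integrate with the Beta identity to get (i), and deduce (ii) and the estimates in (iii) from the monotonicity of the mixed terms in Lemma \ref{lem: MA_monotone_ineq}. The only (harmless) deviation is that you obtain concavity via the chord-slope comparison derived from the estimates in (iii), whereas the paper computes $f''(t)=n\int_X(u-v)(\theta_u-\theta_v)\wedge\theta_{u_t}^{n-1}\leq 0$ directly — both reduce to the same lemma.
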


\begin{proof}
Using Theorem \ref{thm: int_by_parts}, it is possible to repeat  the arguments of  \cite[Proposition 2.8]{BEGZ10}, almost word for word. As a courtesy, we present a detailed proof.

We compute the derivative of  $f(t):=I_{\phi}(u_t), t\in [0,1]$, where $u_t:=tu +(1-t)v$. By the multi-linearity property of the non-pluripolar product we see that $f(t)$ is a polynomial in $t$. Using integration by parts (Theorem \ref{thm: int_by_parts}), one can check the following formula:
\begin{eqnarray*}
f'(t) &=&\frac{1}{n+1}\bigg(\sum_{k=0}^n \int_X (u-v) \theta_{u_t}^k\wedge \theta_{\phi}^{n-k} +\sum_{k=1}^n \int_X k(u_t-\phi) dd^c (u-v)\wedge \theta_{u_t}^{k-1}\wedge \theta_{\phi}^{n-k}\bigg)\\
&=& \frac{1}{n+1}\bigg(\sum_{k=0}^n \int_X (u-v) \theta_{u_t}^k\wedge \theta_{\phi}^{n-k} +\sum_{k=1}^n \int_X k(u-v) (\theta_{u_t}-\theta_{\phi})\wedge \theta_{u_t}^{k-1}\wedge \theta_{\phi}^{n-k}\bigg)\\
&=& \int_X (u-v) \theta_{u_t}^n. 
\end{eqnarray*}

Computing one more derivative, we arrive at
$$f''(t)=n \int_X (u-v) dd^c (u-v) \wedge \theta_{u_t}^{n-1}= n \int_X (u-v)  (\theta_u-\theta_v) \wedge \theta_{u_t}^{n-1}\leq 0,$$
where the inequality follows from Lemma \ref{lem: MA_monotone_ineq}.

Now, the function $t\mapsto f'(t)$ is continuous on $[0,1]$, thanks to convergence property of the Monge--Amp\`ere operator (see Theorem \ref{thm: lsc of MA measures}). It thus follows that 
\[
I_{\phi}(u_1) -I_{\phi}(u_0) = \int_0^1 f'(t)dt = \int_0^1 \int_X (u-v) \theta_{u_t}^n dt. 
\]
Using the multi-linearity of the non-pluripolar product again, we get that
\begin{eqnarray*}
\int_0^1 \int_X (u-v) \theta_{u_t}^n dt &=& \sum_{k=0}^n \left(\int_0^1\binom{n}{k} t^k (1-t)^{n-k} dt\right) \int_X   (u-v) \theta_{u}^k \wedge \theta_v^{n-k}\\
&=&  \frac{1}{n+1}\sum_{k=0}^n \int_X   (u-v) \theta_{u}^k \wedge \theta_v^{n-k}.
\end{eqnarray*}
This verifies (i), and another application of Lemma \ref{lem: MA_monotone_ineq} finishes the proof of (iii).

 For (ii) we observe that since $u-\phi\leq $ each term $ \int_X (u-\phi)\theta_u^k \wedge \theta_\phi^{n-k}$ is negative, hence $I_\phi(u) \leq \int_X (u-\phi) \theta_{u}^n$. The left-hand side inequality of (ii) follows from Lemma \ref{lem: MA_monotone_ineq}. 
\end{proof}

\begin{lemma}\label{lem: convergence of AM} Suppose $u_j,u \in \mathcal{E}(X,\theta,\phi)$ have relatively minimal singularity type such that $u_j\searrow u$. Then $\AM_{\phi}(u_j)\searrow \mathrm{I}_{\phi}(u)$.   
\end{lemma}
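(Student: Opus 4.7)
The plan is to squeeze $\mathrm{I}_\phi(u_j) - \mathrm{I}_\phi(u)$ between two quantities using Theorem \ref{thm: basic I energy}(iii), and then show that the upper bound vanishes by a direct dominated convergence argument. Since $u_j, u \in \mathcal{E}(X,\theta,\phi)$ both have relatively minimal singularity type, part (iii) is applicable and gives
\[
\int_X (u_j - u)\, \theta_{u_j}^n \leq \mathrm{I}_\phi(u_j) - \mathrm{I}_\phi(u) \leq \int_X (u_j - u)\, \theta_u^n.
\]
The left-hand side is non-negative since $u_j \geq u$, and by Theorem \ref{thm: basic I energy}(i) the sequence $\{\mathrm{I}_\phi(u_j)\}_j$ is decreasing. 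So the whole task reduces to proving that the upper bound tends to zero.

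For this step, I would first exploit the relatively minimal singularity type to obtain a uniform $L^\infty$ bound: since $u_1$ and $u$ share the singularity type of $\phi$, there exists a constant $C>0$ with $0 \leq u_j - u \leq u_1 - u \leq C$ on $X \setminus \{\phi = -\infty\}$. As $\theta_u^n$ does not charge the pluripolar set $\{\phi = -\infty\}$, the integrand is bounded by the constant $C$ modulo a $\theta_u^n$-null set. Combined with the pointwise convergence $u_j - u \searrow 0$ (outside $\{u = -\infty\}$), the dominated convergence theorem yields
\[
\int_X (u_j - u)\, \theta_u^n \longrightarrow 0,
\]
which completes the proof.

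There is no genuine obstacle here: the whole delicate pluripotential-theoretic content has been absorbed into Theorem \ref{thm: basic I energy}(iii), and once that sandwich estimate is available the conclusion is reduced to monotone/dominated convergence against a finite Radon measure giving no mass to the pluripolar locus where the potentials become infinite. The only point that deserves explicit mention is the uniform boundedness of $u_j - u$, which is precisely what "relatively minimal singularity type" provides.
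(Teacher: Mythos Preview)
Your proof is correct and follows essentially the same approach as the paper: both use the estimate from Theorem~\ref{thm: basic I energy}(iii) to bound $\mathrm{I}_\phi(u_j)-\mathrm{I}_\phi(u)$ above by $\int_X (u_j-u)\,\theta_u^n$, and then invoke dominated convergence. You spell out in more detail the uniform bound $0\le u_j-u\le u_1-u\le C$ coming from the relatively minimal singularity hypothesis, which the paper leaves implicit.
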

\begin{proof} From Theorem \ref{thm: basic I energy} it follows that $$|\AM_\phi(u_j) - \AM_\phi(u)|=\AM_\phi(u_j) - \AM_\phi(u) \leq \int_X (u_j - u)\theta_u^n.$$ 
An application of the dominated convergence theorem finishes the argument.
\end{proof}

We can now define the Monge--Amp\`ere energy for arbitrary $u\in \mathrm{PSH}(X,\theta,\phi)$ using  a familiar formula:
\[
\AM_{\phi}(u) : =\inf \{\AM_{\phi}(v) \setdef  v\in \Ec(X,\theta,\phi), \; v\ \textrm{has relatively minimal singularity type, and } u\leq v\}. 
\]
\begin{lemma}\label{lem: easy convergence AMO}
	If $u\in \psh(X,\theta,\phi)$ then $\AMO(u)=\lim_{t\to \infty} \AMO(\max(u,\phi-t))$. 
\end{lemma}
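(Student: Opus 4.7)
The plan is to verify that $u_t := \max(u, \phi - t)$ is itself a legitimate test element in the infimum defining $\AMO(u)$ for $t$ large, and then establish matching upper and lower bounds that pinch $\AMO(u_t)$ toward $\AMO(u)$.

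First, since $u \in \psh(X,\theta,\phi)$ there is a constant $C$ with $u \leq \phi + C$, so $\phi - t \leq u_t \leq \phi + \max(C,0)$, which gives $u_t \simeq \phi$; in particular $u_t$ has relatively minimal singularity type. By Lemma \ref{lem: monotonicity new}, $\int_X \theta_{u_t}^n = \int_X \theta_\phi^n$, so $u_t \in \mathcal E(X,\theta,\phi)$. Since $u \leq u_t$, $u_t$ is a candidate in the infimum defining $\AMO(u)$, yielding $\AMO(u) \leq \AMO(u_t)$. Moreover, $t \mapsto u_t$ is pointwise decreasing, so by monotonicity from Theorem \ref{thm: basic I energy}(i) the sequence $t \mapsto \AMO(u_t)$ is also decreasing. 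Thus the limit $\lim_{t\to\infty} \AMO(u_t)$ exists and bounds $\AMO(u)$ from above.

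For the reverse inequality, fix an arbitrary candidate $v$ in the infimum defining $\AMO(u)$. Since $v$ has relatively minimal singularity type, $v \geq \phi - C_v$ for some constant $C_v > 0$. For $t > C_v$ we have $\phi - t \leq v$, hence
$$u_t = \max(u, \phi - t) \leq \max(v, \phi - t) = v.$$
Both $u_t$ and $v$ belong to $\mathcal E(X,\theta,\phi)$ with relatively minimal singularity, so another application of Theorem \ref{thm: basic I energy}(i) gives $\AMO(u_t) \leq \AMO(v)$. Letting $t \to \infty$, $\lim_t \AMO(u_t) \leq \AMO(v)$; taking the infimum over candidates $v$ yields $\lim_t \AMO(u_t) \leq \AMO(u)$.

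Combining the two inequalities gives the identity. There is no substantive obstacle: the result is a direct consequence of how $\AMO$ is extended by infimum to $\psh(X,\theta,\phi)$, the monotonicity of $\AMO$ on the subclass of relatively minimal singularity potentials, and the equal-singularity mass identity from Lemma \ref{lem: monotonicity new}.
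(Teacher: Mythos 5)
Your proof is correct and follows essentially the same route as the paper: the upper bound on $\AMO(u)$ comes from $u_t=\max(u,\phi-t)$ being an admissible candidate, and the lower bound from the observation that any candidate $v$ dominates $u_t$ for $t$ large, together with monotonicity of $\AMO$ on potentials of relatively minimal singularity type. Your version merely spells out in more detail why $u_t$ has the same singularity type as $\phi$ and lies in $\mathcal E(X,\theta,\phi)$.
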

\begin{proof}
	It follows from the above definition that $\AMO(u)\leq \lim_{t\to \infty} \AMO(\max(u,\phi-t))$. Assume now that $v\in \psh(X,\theta,\phi)$ is such that $u\leq v$, and $v$ has the same singularity type as $\phi$ (i.e. $v$ is a candidate in the definition of $I_{\phi}(u)$). Then for $t$ large enough we have $\max(u,\phi-t)\leq v$, hence the other inequality follows from monotonicity of $\AMO$.
\end{proof}

Let $\mathcal{E}^1(X,\theta,\phi)$ be the set of all $u\in \psh(X,\theta,\phi)$  such that $\AMO(u)$ is finite. As a result of Lemma \ref{lem: easy convergence AMO} and Theorem \ref{thm: basic I energy}$(i)$ we observe that $\AMO$ is non-decreasing in $\psh(X,\theta,\phi)$. Consequently, $\mathcal{E}^1(X,\theta,\phi)$ is stable under the max operation, moreover we have the following familiar characterization of $\mathcal E^1(X,\theta,\phi)$:

\begin{lemma}\label{charcterization class E^1}
Let $u\in \mathrm{PSH}(X,\theta,\phi)$. Then $u\in \mathcal{E}^1(X,\theta,\phi)$ if and only if $u\in \mathcal{E}(X,\theta,\phi)$ and $\int_X (u-\phi) \theta_u^n >-\infty$. 
\end{lemma}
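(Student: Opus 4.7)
The strategy is to compare $I_\phi(u)$ and $\int_X(u-\phi)\theta_u^n$ via the canonical approximants $u_t:=\max(u,\phi-t)$, which have relative minimal singularity. After replacing $u$ by $u-C$ (which shifts $I_\phi(u)$ by $-C\int_X\theta_\phi^n$ and $\int_X(u-\phi)\theta_u^n$ by $-C\int_X\theta_u^n$, preserving finiteness in both cases), I may assume $u\leq\phi$, so $u_t\leq\phi$, and Theorem \ref{thm: basic I energy}(ii) yields
\[
\int_X(u_t-\phi)\theta_{u_t}^n \;\leq\; I_\phi(u_t) \;\leq\; \frac{1}{n+1}\int_X(u_t-\phi)\theta_{u_t}^n,
\]
while $I_\phi(u_t)\searrow I_\phi(u)$ by Lemma \ref{lem: easy convergence AMO}. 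Lemma \ref{lem: monotonicity new} gives $\int_X\theta_{u_t}^n=\int_X\theta_\phi^n$, and plurifine locality on the plurifine-open set $\{u>\phi-t\}$, on which $u_t=u$ and $\theta_{u_t}^n=\theta_u^n$, gives the key decomposition
\[
\int_X(u_t-\phi)\theta_{u_t}^n \;=\; \int_{\{u>\phi-t\}}(u-\phi)\theta_u^n \;-\; t\int_{\{u\leq\phi-t\}}\theta_{u_t}^n.
\]

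For the direction $(\Leftarrow)$: assuming $u\in\mathcal E(X,\theta,\phi)$, I will use $\int_X\theta_{u_t}^n=\int_X\theta_u^n$ together with plurifine locality to rewrite the last term as $t\int_{\{u\leq\phi-t\}}\theta_u^n$. Since $u-\phi\leq-t$ pointwise on $\{u\leq\phi-t\}$, I get $-t\int_{\{u\leq\phi-t\}}\theta_u^n\geq\int_{\{u\leq\phi-t\}}(u-\phi)\theta_u^n$, and substitution yields $\int_X(u_t-\phi)\theta_{u_t}^n\geq\int_X(u-\phi)\theta_u^n$. Combined with the lower estimate in Theorem \ref{thm: basic I energy}(ii) this gives $I_\phi(u_t)\geq\int_X(u-\phi)\theta_u^n>-\infty$ for all $t$, hence $I_\phi(u)>-\infty$.

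For the direction $(\Rightarrow)$: from $(n+1)I_\phi(u_t)\leq\int_X(u_t-\phi)\theta_{u_t}^n$, together with $I_\phi(u_t)\geq I_\phi(u)$ and $\int_{\{u>\phi-t\}}(u-\phi)\theta_u^n\leq 0$, the decomposition gives
\[
t\int_{\{u\leq\phi-t\}}\theta_{u_t}^n \;\leq\; -(n+1)I_\phi(u).
\]
The left-hand side equals $t\bigl(\int_X\theta_\phi^n-\int_{\{u>\phi-t\}}\theta_u^n\bigr)$, so dividing by $t$ and letting $t\to\infty$ forces $\int_{\{u>\phi-t\}}\theta_u^n\to\int_X\theta_\phi^n$. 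As $\theta_u^n$ does not charge pluripolar sets and $\{u>\phi-t\}$ exhausts $\{u>-\infty\}\cap\{\phi>-\infty\}$ up to a pluripolar set, the monotone convergence theorem yields $\int_X\theta_u^n=\int_X\theta_\phi^n$, i.e.\ $u\in\mathcal E(X,\theta,\phi)$. The lower bound in Theorem \ref{thm: basic I energy}(ii) then reads $\int_{\{u>\phi-t\}}(u-\phi)\theta_u^n\geq(n+1)I_\phi(u_t)+t\int_{\{u\leq\phi-t\}}\theta_{u_t}^n\geq(n+1)I_\phi(u)$; passing to the limit on the nonpositive integrand via monotone convergence delivers $\int_X(u-\phi)\theta_u^n\geq(n+1)I_\phi(u)>-\infty$.

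The main obstacle is keeping the signs consistent (both $u-\phi$ and $I_\phi$ are nonpositive, and the inequalities in Theorem \ref{thm: basic I energy}(ii) switch direction after multiplying by $n+1$). Beyond that, the argument reduces to the trivial pointwise inequality $u-\phi\leq-t$ on $\{u\leq\phi-t\}$, combined with mass conservation of $\theta_{u_t}^n$; no machinery beyond what is already in this chapter is needed.
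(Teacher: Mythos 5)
Your argument is correct and follows essentially the same route as the paper's proof: both work with the canonical approximants $\max(u,\phi-t)$, exploit the two-sided bound of Theorem \ref{thm: basic I energy}(ii) together with Lemma \ref{lem: easy convergence AMO}, and use the same plurifine decomposition of $\int_X(u_t-\phi)\theta_{u_t}^n$ into the contributions from $\{u>\phi-t\}$ and $\{u\leq\phi-t\}$ to control both the mass defect and the energy. The only difference is presentational (you state the decomposition once and read off both implications from it), not mathematical.
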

\begin{proof} Let $u\in \mathcal{E}^1(X,\theta,\phi)$. We can assume that $u\leq \phi$. For each $C>0$ we set $u^C:= \max(u, \phi-C)$. 

If $I_{\phi}(u) >-\infty$ (say $I_{\phi}(u) >-A$), then by the monotonicity property we have $I_\phi(u^C) \geq  I_{\phi}(u)$. Since $u^C \leq \phi$, an application of Theorem \ref{thm: basic I energy}$(ii)$ gives that $\int_X (u^C-\phi) \theta_{u^C}^n\geq -A, \ \forall C$, for some  $A>0$. From this we  obtain that 
\[
\int_{\{u\leq \phi-C\}} \theta_{u^C}^n\leq \int_{\{u\leq \phi-C\}} \frac{\phi-u}{C} \,\theta_{u^C}^n \leq \int_{X} \frac{\phi-u^C}{C} \, \theta_{u^C}^n \leq  \frac{A}{C} \to 0, 
\]
as $C\to \infty$. By plurifine locality and the above, we have 
\[
\int_{\{u>\phi-C\}} \theta_u^n = \int_{\{u>\phi-C\}} \theta_{u^C}^n = \int_X \theta_{u^C}^n -  \int_{\{u\leq \phi-C\}} \theta_{u^C}^n =\int_X \theta_{\phi}^n -  \int_{\{u\leq \phi-C\}} \theta_{u^C}^n \to \int_X \theta_{\phi}^n
\]
as $C\to \infty$, showing that $u\in \mathcal{E}(X,\theta,\phi)$. Moreover,
\[
\int_X (u^C -\phi) \theta_{u^C}^n \leq \int_{\{u>\phi-C\}} (u-\phi)\theta_u^n.
\]
Letting $C\to \infty$ we see that $\int_X (u-\phi)\theta_u^n >-A$. 

To prove the converse statement,  assume that $u\in \mathcal{E}(X,\theta,\phi)$ and $\int_X (u-\phi)\theta_u^n >-\infty$.
For each $C>0$ the measures $\theta_u^n$ and $\theta_{u^C}^n$ have the same total mass and they coincide on $\{u>\phi-C\}$.  It follows that $\int_{\{u\leq \phi-C\}} \theta_{u^C}^n =\int_{\{u\leq \phi-C\}} \theta_{u}^n$, and from this we deduce that 
\begin{eqnarray*}
\int_X (u^C-\phi)  \theta_{u^C}^n &=&-\int_{\{u\leq \phi-C\}} C \theta_u^n  + \int_{\{u>\phi-C\}}(u-\phi) \theta_{u}^n \geq  \int_X (u-\phi) \theta_u^n>-A. 
\end{eqnarray*}
It thus follows from Theorem \ref{thm: basic I energy}$(ii)$ that $I_{\phi}(u^C)$ is uniformly bounded.  Finally, it follows from Lemma \ref{lem: easy convergence AMO}  that $I_{\phi}(u^C)\searrow I_{\phi}(u)$ as $C\to \infty$, finishing the proof. 
\end{proof}

We finish this section with a series of standard results listing various properties of the class $\mathcal E^1(X,\theta,\phi)$.

\begin{lemma}\label{lem: AM continuous along decreasing sequence}
Assume that $u_j,u \in \mathcal{E}^1(X,\theta,\phi)$ such that $u_j \searrow u$. Then $\AMO(u_j)$ decreases to $\AMO(u)$.
\end{lemma}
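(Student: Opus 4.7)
The plan is to reduce the statement to the case of relatively minimal singularity type, where Lemma \ref{lem: convergence of AM} applies directly, and then exploit the truncation at height $\phi-C$ together with Lemma \ref{lem: easy convergence AMO}. The monotonicity of $\AMO$ on $\textup{PSH}(X,\theta,\phi)$ (which is immediate from its infimum definition) already gives one direction: since $u\le u_j$ we have $\AMO(u)\le \AMO(u_j)$, and since $u_j\searrow u$ we see $\AMO(u_j)$ is a non-increasing sequence bounded below by $\AMO(u)$, so $\lim_j \AMO(u_j)\ge \AMO(u)$.

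For the reverse inequality, I would introduce the canonical truncations $u^C:=\max(u,\phi-C)$ and $u_j^C:=\max(u_j,\phi-C)$. After normalizing so that $u\le \phi$ (and hence $u_j\le \phi$ up to modifying by constants), these all have relatively minimal singularity type with respect to $\phi$, lie in $\Ec(X,\theta,\phi)$ (by Theorem \ref{thm: BEGZ_monotonicity_full}, since they are sandwiched between $u$ and $\phi$), and $u_j^C\searrow u^C$ as $j\to\infty$. Applying Lemma \ref{lem: convergence of AM} at fixed $C$ yields
\[
\lim_{j\to\infty}\AMO(u_j^C)=\AMO(u^C).
\]
By monotonicity of $\AMO$, $\AMO(u_j)\le \AMO(u_j^C)$, hence
\[
\lim_{j\to\infty}\AMO(u_j)\le \AMO(u^C).
\]

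To conclude, I would let $C\to\infty$ and invoke Lemma \ref{lem: easy convergence AMO}, which gives $\AMO(u^C)\searrow \AMO(u)$. Combining everything yields $\lim_j \AMO(u_j)\le \AMO(u)$, finishing the proof.

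The main (mild) obstacle is purely bookkeeping: making sure the truncations $u^C$ and $u_j^C$ genuinely satisfy the hypotheses of Lemma \ref{lem: convergence of AM}, namely that they lie in $\Ec(X,\theta,\phi)$ and have the same singularity type as $\phi$. This requires the preliminary normalization $u\le \phi$ (after which $\phi-C\le u^C\le \phi$), plus the observation from Theorem \ref{thm: BEGZ_monotonicity_full} that $\int_X \theta_{u^C}^n$ is sandwiched between $\int_X \theta_u^n$ and $\int_X \theta_\phi^n$, both equal to the common value $\int_X\theta_\phi^n$ since $u\in \Ec(X,\theta,\phi)$. Beyond this routine verification, the argument is essentially a double-limit switch made legitimate by the monotonicity of $\AMO$.
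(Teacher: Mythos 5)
Your argument is correct and is essentially identical to the paper's proof: both truncate to $u_j^C=\max(u_j,\phi-C)$, apply Lemma \ref{lem: convergence of AM} at fixed $C$, use monotonicity of $\AMO$ to bound $\lim_j\AMO(u_j)\leq \AMO(u^C)$, and conclude by letting $C\to\infty$ via Lemma \ref{lem: easy convergence AMO}. Your extra verification that the truncations have relatively minimal singularity type and lie in $\Ec(X,\theta,\phi)$ is the same routine bookkeeping the paper leaves implicit.
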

\begin{proof}
Without loss of generality we can assume that $u_j\leq \phi$ for all $j$. For each $C>0$ we set $u_j^C:=\max (u_j, \phi-C)$ and $u^C:= \max(u, \phi-C)$. Note that $u_j^C, u^C$ have the same singularity type as $\phi$. Then Lemma \ref{lem: convergence of AM} insures that $\lim_{j} I_{\phi}(u_j^C) = I_{\phi}(u^C)$. Monotonicity of $I_\phi$ gives now that $I_{\phi}(u) \leq \lim_j I_\phi(u_j) \leq \lim_j I_\phi(u_j^C)= I_\phi(u^C)$. Letting $C \to \infty$, the result follows from Lemma \ref{lem: easy convergence AMO}.
\end{proof}

\begin{lemma}
	\label{lem: criteria in E1}
	Assume that $\{u_j\}_j \subset \Ec^1(X,\theta,\phi)$  is decreasing, such that $\AMO(u_j)$ is uniformly bounded. Then the limit $u:=\lim_{j} u_j$ belongs to $\Ec^1(X,\theta,\phi)$ and $\AMO(u_j)$ decreases to $\AMO(u)$.
\end{lemma}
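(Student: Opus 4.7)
My strategy is to truncate the $u_j$ against $\phi-C$, reducing to potentials of relatively minimal singularity type where the energy convergence in Lemma~\ref{lem: convergence of AM} applies directly. After subtracting a uniform constant (possible since $u_1\preceq \phi$), I can assume $u_j\leq\phi$ for every $j$, and let $A>0$ be such that $|\AMO(u_j)|\leq A$ for all $j$. For each $C>0$, set $u_j^C:=\max(u_j,\phi-C)$; this is a $\theta$-psh function with the same singularity type as $\phi$ and it decreases in $j$. Let $u:=\lim_j u_j$. The only subtle step will be showing $u\not\equiv-\infty$; everything afterward is an application of results already proved.

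Suppose, for contradiction, that $u\equiv-\infty$. Then for each fixed $C$ one checks $u_j^C\searrow \phi-C$ pointwise, and Lemma~\ref{lem: convergence of AM} (applicable because $u_j^C$ and $\phi-C$ all have relatively minimal singularity type) yields
\[
\lim_{j\to\infty}\AMO(u_j^C)=\AMO(\phi-C)=-C\int_X\theta_\phi^n.
\]
On the other hand, monotonicity of $\AMO$ on $\psh(X,\theta,\phi)$ combined with the energy bound gives $\AMO(u_j^C)\geq\AMO(u_j)\geq -A$. Passing to the limit in $j$ forces $-C\int_X\theta_\phi^n\geq -A$, which fails for $C$ large since $\int_X\theta_\phi^n>0$. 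This contradiction shows $u\in\psh(X,\theta,\phi)$.

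With $u$ a genuine $\theta$-psh function, for each $C$ the truncations $u_j^C$ decrease to $u^C:=\max(u,\phi-C)$, which still has relatively minimal singularity type and therefore lies in $\mathcal{E}^1(X,\theta,\phi)$. Lemma~\ref{lem: convergence of AM} gives $\AMO(u_j^C)\searrow \AMO(u^C)\geq -A$ uniformly in $C$, and then Lemma~\ref{lem: easy convergence AMO} applied to the decreasing family $u^C\searrow u$ yields $\AMO(u)=\lim_{C\to\infty}\AMO(u^C)\geq -A$. Combined with the trivial upper bound $\AMO(u)\leq 0$ coming from $u\leq\phi$, this shows $u\in\mathcal{E}^1(X,\theta,\phi)$. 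The desired convergence $\AMO(u_j)\searrow\AMO(u)$ is then immediate from the preceding Lemma~\ref{lem: AM continuous along decreasing sequence} applied to the decreasing sequence $u_j\searrow u$ inside $\mathcal{E}^1(X,\theta,\phi)$.
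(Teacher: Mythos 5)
Your proof is correct and follows essentially the same route as the paper: normalize so that $u_j\leq\phi$, rule out $u\equiv-\infty$ using the uniform energy bound, then pass to the truncations $\max(\cdot,\phi-C)$ and conclude via Lemmas \ref{lem: convergence of AM}, \ref{lem: easy convergence AMO} and \ref{lem: AM continuous along decreasing sequence}. The only cosmetic difference is in the non-degeneracy step, where the paper derives the contradiction from $\sup_X(u_j-\phi)\to-\infty$ (Lemma \ref{lem: model sup over X}) combined with the $k=0$ term of the energy, whereas you compare with $\AMO(\phi-C)=-C\int_X\theta_\phi^n$; both arguments are equally valid and rest on $\int_X\theta_\phi^n>0$.
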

\begin{proof}
We can assume that $u_j\leq \phi$ for all $j$. As all the  terms in the definition of $I_\phi(u_j)$ are negative, we notice that $- C \leq (n+1)\AMO(u_j)\leq \int_X (u_j-\phi) \theta_{\phi}^n \leq 0$ for some $C > 0$.

If $u \equiv -\infty$, then $\sup_X u_j = \sup_X (u_j - \phi) \to -\infty$ (Lemma \ref{lem: model sup over X}). This implies that $\int_X (u_j-\phi) \theta_{\phi}^n \leq \sup_X (u_j - \phi) \int_X \theta_\phi^n \to -\infty$, a contradiction. Hence $u \in \textup{PSH}(X,\theta)$. 

By continuity along decreasing sequences (Lemma \ref{lem: AM continuous along decreasing sequence}) we have $$\lim_{j\rightarrow \infty} \AMO(\max(u_j, \phi-C))= \AMO(\max(u, \phi-C)).$$  
It follows that  $\AMO(\max(u,\phi-C))$ is uniformly bounded. Lemma \ref{lem: easy convergence AMO} then insures that  $\AMO(u)$ is finite, i.e., $u\in \Ec^1(X,\theta,\phi)$. 
\end{proof}

\begin{coro}\label{cor: I_conc_on_E}
$\AMO$ is concave along affine curves in $\psh(X,\theta,\phi)$. In particular, the set $\Ec^1(X,\theta,\phi)$ is convex. 
\end{coro}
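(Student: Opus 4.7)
The plan is to reduce the statement to Theorem \ref{thm: basic I energy}(iii), which already gives concavity of $I_\phi$ along affine curves within the subset of $\mathcal{E}(X,\theta,\phi)$ consisting of potentials with relatively minimal singularity type. For general $u_0,u_1\in \psh(X,\theta,\phi)$ and $t\in(0,1)$, set $u_t:=tu_1+(1-t)u_0$ and consider the canonical rel. min. sing. truncations $u_i^C:=\max(u_i,\phi-C)$, $i=0,1$, for $C>0$. I will handle these truncations first and then pass to the limit using Lemmas \ref{lem: easy convergence AMO}, \ref{lem: AM continuous along decreasing sequence}, \ref{lem: criteria in E1}.

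First I will observe that the affine combination $\tilde u_t^C:=tu_1^C+(1-t)u_0^C$ is $\theta$-psh, decreases pointwise to $u_t$ as $C\nearrow\infty$, and has relatively minimal singularity type for each $C$ (since $\phi-C\leq \tilde u_t^C\leq 0$ after a harmless shift ensuring $u_i\leq\phi$). Applying Theorem \ref{thm: basic I energy}(iii) to the endpoints $u_0^C,u_1^C$ yields
\[
I_\phi(\tilde u_t^C)\ \geq\ t\, I_\phi(u_1^C)+(1-t)\,I_\phi(u_0^C).
\]

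I then assume the non-trivial case $u_0,u_1\in \mathcal{E}^1(X,\theta,\phi)$; otherwise the right-hand side in the target inequality is $-\infty$ and there is nothing to prove. By Lemma \ref{lem: easy convergence AMO}, $I_\phi(u_i^C)\searrow I_\phi(u_i)>-\infty$ as $C\to\infty$, so the right-hand side of the displayed inequality is uniformly bounded below. On the other hand, $\tilde u_t^C$ is decreasing in $C$ (as each $u_i^C$ is), and within the rel. min. sing. class $I_\phi$ is monotone (Theorem \ref{thm: basic I energy}(i)), so $I_\phi(\tilde u_t^C)$ is decreasing in $C$ and in particular bounded above. Thus $I_\phi(\tilde u_t^C)$ is uniformly bounded, and Lemma \ref{lem: criteria in E1} applied to the decreasing sequence $\tilde u_t^C\searrow u_t$ yields simultaneously that $u_t\in\mathcal{E}^1(X,\theta,\phi)$ (giving convexity of $\mathcal{E}^1(X,\theta,\phi)$) and $I_\phi(\tilde u_t^C)\searrow I_\phi(u_t)$.

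Passing to the limit $C\to\infty$ in the inequality above then gives $I_\phi(u_t)\geq t I_\phi(u_1)+(1-t)I_\phi(u_0)$, completing the proof. I expect no serious obstacle: the only delicate point is to verify that $\tilde u_t^C$ is a valid decreasing sequence in $\mathcal{E}^1(X,\theta,\phi)$ with bounded energy, so that Lemma \ref{lem: criteria in E1} applies and upgrades the pointwise inequality into an energy inequality on the limit.
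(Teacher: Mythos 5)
Your proof is correct and follows essentially the same route as the paper: truncate to $u_i^C=\max(u_i,\phi-C)$, apply Theorem \ref{thm: basic I energy}(iii) to the relatively minimally singular combinations, and pass to the decreasing limit via Lemma \ref{lem: criteria in E1}. The extra detail you supply on the uniform two-sided bound for $\AMO(\tilde u_t^C)$ is exactly the verification the paper leaves implicit.
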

\begin{proof}
Let $u,v\in \psh(X,\theta,\phi)$ and $u_t:=tu+(1-t)v, t\in (0,1)$. If one of $u,v$ is not in $\Ec^1(X,\theta,\phi)$ then the conclusion is obvious. So, we can assume that both $u$ and $v$ belong to $\Ec^1(X,\theta,\phi)$. For each $C>0$ we set $u_t^C:=t \max(u,\phi-C) + (1-t)\max(v,\phi-C)$. By Theorem \ref{thm: basic I energy}$(iii)$, $t \to \AMO(u_t^C)$ is concave. Since $u_t^C$ decreases to $u_t$ as $C\to \infty$, Lemma  \ref{lem: criteria in E1} gives the conclusion. 
\end{proof}

\begin{lemma}\label{lem: inequalities energy}
Suppose $u,v \in \mathcal E^1(X,\theta,\phi)$ have the same singularity type. Then 
$$ \int_X (u-v) \theta_{u}^n \leq \AMO(u)-\AMO(v) \leq   \int_X (u-v) \theta_{v}^n.
$$
\end{lemma}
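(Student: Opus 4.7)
The result already holds, by Theorem~\ref{thm: basic I energy}(iii), when $u$ and $v$ have relatively minimal singularity type, i.e.\ the same singularity type as $\phi$. The idea is to reduce to this case by the canonical truncations $u^C := \max(u,\phi-C)$ and $v^C := \max(v,\phi-C)$, and then pass to the limit $C \to \infty$. Since $u \simeq v$, there is a constant $A>0$ with $|u-v|\leq A$, and since both are more singular than $\phi$, both $u^C$ and $v^C$ have the same singularity type as $\phi$; moreover one checks immediately that $|u^C - v^C| \leq A$ uniformly in $C$.

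By Theorem~\ref{thm: basic I energy}(iii) applied to $u^C, v^C$ (which have relatively minimal singularity type), we have
\[
\int_X (u^C - v^C) \theta_{u^C}^n \;\leq\; \AMO(u^C) - \AMO(v^C) \;\leq\; \int_X (u^C - v^C) \theta_{v^C}^n.
\]
The energies converge: by Lemma~\ref{lem: easy convergence AMO}, $\AMO(u^C)\to \AMO(u)$ and $\AMO(v^C)\to \AMO(v)$ as $C\to\infty$, both limits being finite since $u,v\in\mathcal{E}^1(X,\theta,\phi)$.

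The main task is therefore to show
\[
\int_X (u^C - v^C) \theta_{u^C}^n \;\longrightarrow\; \int_X (u-v)\theta_u^n,
\]
and analogously for $v^C$ in place of $u^C$. The key observation is that on the plurifine open set $\{u>\phi-C+A\}$ one has $u^C = u$ and $v\geq u - A > \phi - C$, hence $v^C = v$ there; thus plurifine locality (Lemma~\ref{lem: plurifine}) gives
\[
\int_X (u^C-v^C)\theta_{u^C}^n \;=\; \int_{\{u>\phi-C+A\}} (u-v)\,\theta_u^n \;+\; \int_{\{u\leq \phi-C+A\}} (u^C - v^C)\,\theta_{u^C}^n.
\]
Since $u\in\mathcal{E}(X,\theta,\phi)$ and $\int_X \theta_{u^C}^n = \int_X \theta_\phi^n$ (Lemma~\ref{lem: monotonicity new}), plurifine locality also yields
\[
\int_{\{u\leq \phi-C+A\}} \theta_{u^C}^n \;=\; \int_X \theta_\phi^n - \int_{\{u>\phi-C+A\}} \theta_u^n \;\longrightarrow\; 0
\]
as $C\to \infty$, using that $\theta_u^n$ does not charge the pluripolar set $\{u-\phi = -\infty\}$. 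Combined with the uniform bound $|u^C - v^C|\leq A$, the second term above vanishes in the limit, while dominated convergence handles the first. Letting $C\to\infty$ in the inequality for $u^C,v^C$ yields the claim.

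\textbf{Main obstacle.} The only delicate point is the convergence of the Monge--Amp\`ere integrals, since the measures $\theta_{u^C}^n$ themselves vary with $C$. The boundedness of $u-v$ (which follows from $u\simeq v$) is what ultimately makes the tail estimate work; without this, it would not be possible to replace $u^C - v^C$ by the uniform constant $A$ on $\{u\leq \phi-C+A\}$.
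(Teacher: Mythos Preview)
Your proof is correct and follows the same overall strategy as the paper: truncate to $u^C=\max(u,\phi-C)$, $v^C=\max(v,\phi-C)$, apply Theorem~\ref{thm: basic I energy}(iii), and pass to the limit using Lemma~\ref{lem: easy convergence AMO} for the energies.

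The only difference is in how you justify the convergence of the boundary integrals $\int_X (u^C-v^C)\,\theta_{u^C}^n \to \int_X (u-v)\,\theta_u^n$. The paper observes that $u^C-v^C$ is uniformly bounded and converges in capacity to $u-v$ (since $u^C\searrow u$, $v^C\searrow v$), and that $\int_X\theta_{u^C}^n=\int_X\theta_\phi^n=\int_X\theta_u^n$; then Theorem~\ref{thm: lsc of MA measures} gives the convergence in one stroke. You instead give a direct argument via plurifine locality and an explicit tail estimate on $\{u\leq\phi-C+A\}$. Your route is more hands-on and avoids invoking the general convergence machinery, at the cost of being a bit longer; the paper's route is shorter but leans on the heavier Theorem~\ref{thm: lsc of MA measures}. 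Both are valid and essentially equivalent in spirit.
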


With a bit of extra work, one can also get rid of the assumption that $u$ and $v$ have the same singularity type \cite[Proposition 2.5]{DDL3}.

\begin{proof}
First, note that these estimates hold for $u^C := \max(u,\phi -C), v^C:=\max(v,\phi -C)$, by Theorem \ref{thm: basic I energy}$(iii)$. It is easy to see that $u^C - v^C$ is uniformly bounded and converges in capacity to $u - v$.  Putting these last two facts together, Theorem \ref{thm: lsc of MA measures} gives that 
\begin{flalign*}
\bigg|\int_X (u^C - v^C) \theta_{v^C}^n-\int_X (u - v) \theta_{v}^n\bigg| \to 0, \ \ \ \bigg|\int_X (u^C - v^C) \theta_{u^C}^n-\int_X (u - v) \theta_{u}^n\bigg| \to 0.
\end{flalign*}
The result follows from Lemma \ref{lem: easy convergence AMO}.
\end{proof}

\section{Monge--Amp\`ere equations with prescribed singularity type}

Throughout this section  $\phi$ is a $\theta$-psh function satisfying 
$\phi=P_\theta[\phi]$,  and $\int_X \theta_{\phi}>0$. We additionally assume the normalization  
$$\int_X \theta_{\phi}^n=1.$$
This can always be achieved by rescaling our big class $\{\theta\}$. In this section we consider the following complex Monge--Amp\`ere equation in our prescribed setting: 

\begin{equation}
\label{eq: MA lambda}
\theta_u^n = e^{\lambda u} \mu, \ u\in \Ec(X,\theta,\phi). 
\end{equation}
where $\lambda \geq 0$, $\mu$ is a positive non-pluripolar probability measure on $X$.  

When $\lambda>0$, we adapt the variational method in \cite{BBGZ13} to our needs. When $\lambda=0$, it is also possible to use the variational method to solve the equation, but it requires a detailed study of the relative Monge--Amp\`ere capacities, as carried out in \cite{DDL2}.  In this survey we provide a simpler approach by using the solutions for the case $\lambda>0$ and letting $\lambda\to 0^+$.

\begin{prop}\label{prop: usc of AMO}
$\AMO:\Ec^1(X,\theta,\phi) \to \mathbb R$  is upper semicontinuous with respect to the weak $L^1$ topology of potentials. 
\end{prop}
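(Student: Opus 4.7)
The plan is to adapt the classical BBGZ13 upper semicontinuity strategy: combine monotonicity of $\AMO$ with continuity along decreasing sequences (Lemma \ref{lem: AM continuous along decreasing sequence}) via the standard upper-envelope trick. Suppose $u_j \to u$ in $L^1$ with $u_j, u \in \mathcal{E}^1(X,\theta,\phi)$. Since $L^1$-convergence of $\theta$-psh functions forces $\sup_X u_j$ to be uniformly bounded (by the Hartogs-type inequality in $\psh(X,\theta)$), after a harmless normalization I may assume $\sup_X u_j \leq 0$. I then set $v_j := \bigl(\sup_{k\geq j} u_k\bigr)^*$, which is a standard construction yielding a decreasing sequence in $\psh(X,\theta)$ with $v_j \searrow u$ almost everywhere.

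The main step is verifying $v_j \in \mathcal{E}^1(X,\theta,\phi)$. For the singularity bound, this is exactly where the model assumption $\phi = P_\theta[\phi]$ enters: by Lemma \ref{lem: model sup over X} applied to each $u_k$, we have $\sup_X(u_k - \phi) = \sup_X u_k \leq 0$, so $u_k \leq \phi$ pointwise, hence $v_j \leq \phi$ and $v_j \in \psh(X,\theta,\phi)$. For the finiteness of $\AMO(v_j)$, the lower bound $\AMO(v_j) \geq \AMO(u) > -\infty$ is immediate from monotonicity of $\AMO$ (since $v_j \geq u$); the upper bound is obtained by noticing that $\max(v_j,\phi - A) \geq v_j$ has relatively minimal singularity type, so by Theorem \ref{thm: basic I energy} its energy is trivially finite, making it a valid competitor in the defining infimum of $\AMO(v_j)$.

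With $v_j \in \mathcal{E}^1(X,\theta,\phi)$ and $v_j \searrow u$, Lemma \ref{lem: AM continuous along decreasing sequence} gives $\AMO(v_j) \searrow \AMO(u)$. Combined with $u_j \leq v_j$ (which gives $\AMO(u_j) \leq \AMO(v_j)$ by monotonicity), I conclude
\[
\limsup_{j\to\infty} \AMO(u_j) \leq \lim_{j\to\infty} \AMO(v_j) = \AMO(u),
\]
which is the required upper semicontinuity.

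I expect the only delicate point to be the verification that the upper envelope $v_j$ remains more singular than $\phi$. For a generic $\phi$ this would fail, and it is precisely the model property $\phi = P_\theta[\phi]$, together with the sharp identity of Lemma \ref{lem: model sup over X}, that rescues the argument. Once this is in place, the remainder is a routine combination of monotonicity and the continuity of $\AMO$ along decreasing sequences already established earlier in the section.
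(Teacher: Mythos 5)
Your argument is correct and follows essentially the same route as the paper: normalize, form the upper envelopes $v_j=(\sup_{k\ge j}u_k)^*$, check they lie in $\mathcal E^1(X,\theta,\phi)$, and combine monotonicity of $\AMO$ with continuity along decreasing sequences (Lemma \ref{lem: AM continuous along decreasing sequence}). The only cosmetic difference is that you obtain the lower bound $\AMO(v_j)\ge \AMO(u)>-\infty$ directly from $v_j\ge u$, whereas the paper gets it from $\AMO(v_j)\ge \AMO(u_j)$ via stability of $\mathcal E^1$ under finite maxima; both are valid, and your explicit verification that $v_j\le\phi$ via Lemma \ref{lem: model sup over X} is a point the paper leaves implicit.
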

\begin{proof}
	Assume that $\{u_j\}_j$ is a sequence in $\mathcal{E}^1(X,\theta,\phi)$ $L^1$-converging to $u\in \Ec^1(X,\theta,\phi)$. We can assume that $u_j\leq 0$ for all $j$.  For each $k,\ell \in \mathbb{N}$ we set $v_{k,\ell}:= \max(u_{k},...,u_{k+\ell})$. As $\Ec^1(X,\theta,\phi)$ is stable under the max operation,  we have that $v_{k,\ell}\in \Ec^1(X,\theta,\phi)$. 

Moreover $v_{k,\ell}\nearrow \varphi_k:= \textup{usc}\left(\sup_{j \geq k} u_j\right)$, hence by the monotonicity property we get $\AMO(\varphi_k)\geq \AMO(v_{k,\ell})\geq \AMO(u_k)>-\infty$. As a result, $\varphi_k\in  \Ec^1(X,\theta,\phi)$.
     By Hartogs' lemma $\varphi_k \searrow u$ as $k\to \infty$.  By Lemma \ref{lem: AM continuous along decreasing sequence} it follows that $\AMO(\varphi_k)$ decreases to $\AMO(u)$. Thus, using the monotonicity of $\AMO$ we get $\AMO(u)= \lim_{k\rightarrow \infty} \AMO(\varphi_k) \geq \limsup_{k\rightarrow \infty} \AMO(u_k),$ finishing the proof.
\end{proof}

Next we describe the first order variation of $I_\phi$, shadowing a result from \cite{BB10}:

\begin{prop}\label{prop: derivative of AMO}
Let $u\in \mathcal{E}^1(X,\theta,\phi)$ and $\chi$ be a continuous function on $X$. For each $t\in \mathbb R$ set $u_t:= P_{\theta}(u+t\chi)$. Then $u_t\in \mathcal{E}^1(X,\theta,\phi)$, $t\mapsto \AMO(u_t)$ is differentiable, and its derivative is given by
\[
\frac{d}{dt} \AMO(u_t) = \int_X \chi \theta_{u_t}^n, \ t \in \mathbb R. 
\]
\end{prop}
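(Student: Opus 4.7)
The plan is first to realize $u_t$ as a well-defined Lipschitz family in $\mathcal{E}^1(X,\theta,\phi)$, and then to use a contact set argument of Berman--Boucksom type to pin down the derivative of $I_\phi$.

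\textbf{Step 1 (membership and Lipschitz continuity in $t$).} Set $M := \sup_X |\chi|$. Since $u - |t|M$ is $\theta$-psh and lies below $u + t\chi$, the defining supremum of the envelope gives $u - |t|M \leq u_t \leq u + |t|M$. Hence $u_t$ has the same singularity type as $u$, so $P_\theta[u_t] = P_\theta[u] = \phi$ (using Theorem~\ref{thm: E_memb_char} applied to $u$), and the same theorem gives $u_t \in \mathcal E(X,\theta,\phi)$. Finiteness of $I_\phi(u_t)$, i.e. $u_t \in \mathcal E^1(X,\theta,\phi)$, follows from Lemma~\ref{lem: inequalities energy} applied to $u$ and $u_t$. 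The very same bound yields $\|u_{t+s} - u_t\|_{L^\infty(X)} \leq |s|M$, so the family is Lipschitz in the uniform topology.

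\textbf{Step 2 (contact set squeeze).} By Theorem~\ref{thm: envelope contact}, $\theta_{u_t}^n$ is concentrated on the contact set $\{u_t = u + t\chi\}$, and on this set the trivial inequality $u_{t+s} \leq u + (t+s)\chi$ yields $u_{t+s} - u_t \leq s\chi$. Symmetrically, $\theta_{u_{t+s}}^n$ is concentrated on $\{u_{t+s} = u + (t+s)\chi\}$, where $u_{t+s} - u_t \geq s\chi$. Since $u_t$ and $u_{t+s}$ share the singularity type of $u$, Lemma~\ref{lem: inequalities energy} applies and, together with the two pointwise inequalities above, yields for every $s \in \mathbb{R}$
\begin{equation*}
s\int_X \chi\, \theta_{u_{t+s}}^n \;\leq\; \int_X (u_{t+s}-u_t)\,\theta_{u_{t+s}}^n \;\leq\; I_\phi(u_{t+s}) - I_\phi(u_t) \;\leq\; \int_X (u_{t+s}-u_t)\,\theta_{u_t}^n \;\leq\; s\int_X \chi\, \theta_{u_t}^n.
\end{equation*}

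\textbf{Step 3 (passage to the limit).} Divide by $s$ (flipping all inequalities when $s<0$) and let $s \to 0$. Since $u_{t+s} \to u_t$ uniformly, the convergence holds in capacity, and every $u_{t+s} \in \mathcal E(X,\theta,\phi)$ carries the same total non-pluripolar Monge--Amp\`ere mass $\int_X \theta_\phi^n$, so hypothesis \eqref{eq: global_mass_semi_cont} of Theorem~\ref{thm: lsc of MA measures} holds with equality. That theorem then delivers the weak convergence $\chi\,\theta_{u_{t+s}}^n \to \chi\,\theta_{u_t}^n$, so both squeezing bounds tend to $\int_X \chi\,\theta_{u_t}^n$, which is therefore the common value of the right- and left-hand derivatives of $t \mapsto I_\phi(u_t)$.

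The only nontrivial ingredient is the contact set argument in Step~2: it crucially uses the concentration property of Theorem~\ref{thm: envelope contact} and converts an \emph{a priori} only sublinear increment $I_\phi(u_{t+s}) - I_\phi(u_t)$ into a sharp two-sided comparison with $\int_X \chi\,\theta_{u_t}^n$. The rest is bookkeeping, relying on the Lipschitz estimate from Step~1 and the weak continuity of $\theta_{(\cdot)}^n$ from Theorem~\ref{thm: lsc of MA measures}.
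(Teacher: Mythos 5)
Your proof is correct and follows essentially the same route as the paper's: the two-sided bound from Lemma \ref{lem: inequalities energy}, the contact-set squeeze via Theorem \ref{thm: envelope contact}, and the weak convergence $\theta_{u_{t+s}}^n \to \theta_{u_t}^n$ from Theorem \ref{thm: lsc of MA measures} as $s\to 0$ (which, paired with the continuity of $\chi$ on $X$, gives the convergence of the integrals even though $\chi$ may change sign). The only cosmetic difference is that you justify $u_t \in \mathcal E^1(X,\theta,\phi)$ via the uniform sandwich $u - |t|\sup_X|\chi| \leq u_t \leq u + |t|\sup_X|\chi|$ and Theorem \ref{thm: E_memb_char}, whereas the paper invokes monotonicity of $\AMO$ directly; both are fine.
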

\begin{proof}
For $t\geq 0$  the potential $u+t \inf_X \chi $ is a candidate in each envelope, hence $u+t \inf_X \chi\leq u_t$. Monotonicity of $I_\phi$ now implies that $u_t \in \mathcal E^1(X,\theta,\phi)$. A similar argument implies that $u_t \in \mathcal E^1(X,\theta,\phi)$ for $t \leq 0$.

Let $t \in \mathbb R$ and $s > 0$. As the singularity type of each $u_t$ is the same of that of $u$, we can apply Lemma \ref{lem: inequalities energy}  and conclude: 
\[
 \int_X (u_{t+s}-u_t) \theta_{u_{t+s}}^n \leq \AMO(u_{t+s})-\AMO(u_t) \leq   \int_X (u_{t+s}-u_t) \theta_{u_{t}}^n.
\]
It follows from Theorem \ref{thm: envelope contact} that $\theta_{u_t}^n$ is supported on $\{u_t=u+t\chi\}$. We thus have
\[
\int_X (u_{t+s}-u_t) \theta_{u_{t}}^n = \int_X (u_{t+s}-u-t\chi) \theta_{u_t}^n\leq \int_X s\chi \theta_{u_t}^n,
\]
since $u_{t+s}\leq u+(t+s)\chi$. Similarly we have 
\[
\int_X (u_{t+s}-u_t) \theta_{u_{t+s}}^n = \int_X (u+(t+s)\chi-u_t) \theta_{u_{t+s}}^n\geq \int_X s\chi \theta_{u_{t+s}}^n.
\]
Since $u_{t+s}$ converges uniformly to $u_t$  as $s\to 0$, by Theorem \ref{thm: lsc of MA measures} it follows that $\theta_{u_{t+s}}^n$ converges weakly to $\theta_{u_t}^n$. As $\chi$ is continuous, dividing by $s>0$ and letting $s\to 0^+$ we see that the right derivative of $\AMO(u_t)$ at $t$ is $\int_X \chi \theta_{u_t}^n$. The same argument applies for the left derivative.
\end{proof}

\paragraph{The case $\lambda>0$.} It suffices to treat the case $\lambda=1$ as the other cases can be done similarly. 
 
We introduce the following functional on $\mathcal{E}^1(X,\theta,\phi)$: 
\[
F(u) :=F_{\mu}(u):= \AMO(u) -L_{\mu}(u) , \ u\in \Ec^1(X,\theta,\phi),
\]
where $L_{\mu}(u) :=\int_X e^{u} d\mu$.

\begin{theorem}
\label{thm: maximizers are solutions}
Assume that $u\in \Ec^1(X,\theta,\phi)$ maximizes $F$ on $\mathcal{E}^1(X,\theta,\phi)$. Then $u$ solves the equation \eqref{eq: MA lambda}. 
\end{theorem}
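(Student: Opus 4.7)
The plan is to run the first-order variational argument using the envelope perturbations $u_t := P_\theta(u + t\chi)$ introduced in Proposition \ref{prop: derivative of AMO}. Fix any $\chi \in C^0(X)$. Since $u$ is itself $\theta$-psh, we have $u_0 = P_\theta(u) = u$, and by Proposition \ref{prop: derivative of AMO} each $u_t$ lies in $\mathcal{E}^1(X,\theta,\phi)$ with
\[
\left.\frac{d}{dt}\right|_{t=0} I_\phi(u_t) = \int_X \chi\, \theta_u^n.
\]

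Now exploit the maximality: for every $t\in\mathbb R$ we have $F(u_t)\le F(u)$, which rearranges to
\[
I_\phi(u_t) - I_\phi(u) \le L_\mu(u_t) - L_\mu(u).
\]
By the very definition of the envelope, $u_t \le u + t\chi$ pointwise on $X$, so since $x\mapsto e^x$ is increasing,
\[
L_\mu(u_t) - L_\mu(u) \le \int_X \bigl(e^{u+t\chi} - e^u\bigr)\, d\mu.
\]
Combining these yields the single, sign-symmetric inequality
\[
I_\phi(u_t) - I_\phi(u) \le \int_X \bigl(e^{u+t\chi} - e^u\bigr)\, d\mu,
\]
valid for all $t\in\mathbb R$. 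Since $u\le 0$ (after a harmless normalization) and $\chi$ is bounded, the integrands $(e^{u+t\chi}-e^u)/t$ are dominated by a constant multiple of $\|\chi\|_\infty\, e^{\|\chi\|_\infty}$ on any fixed interval $|t|\le 1$, and converge pointwise to $\chi e^u$ as $t\to 0$. Dominated convergence then gives
\[
\lim_{t\to 0}\frac{1}{t}\int_X \bigl(e^{u+t\chi} - e^u\bigr)\, d\mu = \int_X \chi\, e^u\, d\mu.
\]

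Dividing the main inequality by $t>0$ and letting $t\to 0^+$, the left-hand side tends to $\int_X \chi\,\theta_u^n$ by Proposition \ref{prop: derivative of AMO}, so we obtain
\[
\int_X \chi\, \theta_u^n \le \int_X \chi\, e^u\, d\mu.
\]
Dividing instead by $t<0$ (flipping the inequality) and letting $t\to 0^-$ yields the reverse inequality. Hence
\[
\int_X \chi\, \theta_u^n = \int_X \chi\, e^u\, d\mu \quad \text{for every } \chi \in C^0(X),
\]
which means exactly $\theta_u^n = e^u \mu$ as Radon measures, so $u$ solves \eqref{eq: MA lambda} with $\lambda = 1$.

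There is no serious obstacle here: the envelope construction and its first-order derivative formula (Proposition \ref{prop: derivative of AMO}) have already been established, and the pointwise bound $u_t\le u+t\chi$ is built into the definition of the envelope. The only mildly delicate point is the interchange of limit and integral in $L_\mu$, but it is immediate from dominated convergence because $e^u\le 1$ and $\chi$ is bounded. The same argument works verbatim for general $\lambda > 0$ (with $L_\mu$ replaced by $\lambda^{-1}\int_X e^{\lambda u}\, d\mu$).
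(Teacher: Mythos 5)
Your proof is correct and is essentially the paper's argument: both use the perturbation $u_t = P_\theta(u+t\chi)$, the derivative formula of Proposition \ref{prop: derivative of AMO}, and the key inequality $L_\mu(u_t)\le L_\mu(u+t\chi)$ coming from $u_t\le u+t\chi$; the paper packages this as the statement that $g(t)=I_\phi(u_t)-L_\mu(u+t\chi)$ attains its maximum at $t=0$, while you take the two one-sided difference quotients directly. (One cosmetic remark: you cannot literally normalize the maximizer by $\sup_X u=0$ since adding a constant changes $F$, but the dominated convergence step only needs $u$ bounded above, which is automatic.)
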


\begin{proof}
Let $\chi$ be an arbitrary continuous function on $X$ and set $u_t:= P_{\theta}(u+t\chi)$. It follows from  Proposition \ref{prop: derivative of AMO} that $u_t\in \Ec^1(X,\theta,\phi)$ for all $t\in \mathbb{R}$, that the function 
$$g(t):=\AMO(u_t) -L_{\mu}(u+t\chi)$$ 
is differentiable on $\mathbb{R}$, and its derivative is given by $$g'(t)=\int_X \chi \theta_{u_t}^n-\int_X \chi e^{(u+t\chi)}d\mu.$$ Moreover, as $u_t\leq u+t\chi$, we have $$g(t) \leq \AMO(u_t) -L_{\mu}(u_t)= F_{\mu}(u_t)\leq \sup_{\Ec^1(X,\theta,\phi)} F_{ \mu}=F(u)=g(0).$$
This means that $g$ attains a maximum at $0$, hence  $g'(0)=0$. Since $\chi \in C^0(X)$ is arbitrary  it follows that $\theta_u^n=e^{\lambda u}\mu$. 
\end{proof}

Having computed the first order variation of the Monge--Amp\`ere energy, we establish the following  existence and uniqueness result. 

\begin{theorem}\label{thm: existence_MA_eq_exp}
Assume that $\mu$ is a positive non-pluripolar measure on $X$. Then there exists a unique $u\in \Ec^1(X,\theta,\phi)$ such that 
\begin{equation}\label{eq: MA_exp_version}
	\theta_{u}^n =e^{ u} \mu. 
\end{equation}
\end{theorem}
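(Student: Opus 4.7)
\emph{Uniqueness} follows from the comparison and domination principles. If $u, v \in \mathcal{E}^1(X,\theta,\phi)$ both satisfy $\theta_u^n = e^u \mu$ and $\theta_v^n = e^v\mu$, then applying Corollary \ref{cor: comparison principle} on $\{u<v\}$ yields
\[
\int_{\{u<v\}} e^v\, d\mu = \int_{\{u<v\}} \theta_v^n \leq \int_{\{u<v\}} \theta_u^n = \int_{\{u<v\}} e^u\, d\mu.
\]
Since $e^u<e^v$ on this set, $\mu(\{u<v\})=0$; hence $\theta_u^n(\{u<v\})=0$, and the domination principle (Theorem \ref{thm: domination principle}) forces $u\geq v$. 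Symmetry gives $u=v$.

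\emph{Existence} follows the variational scheme of \cite{BBGZ13}: by Theorem \ref{thm: maximizers are solutions} it suffices to produce a maximizer of $F(u):=\AMO(u)-L_\mu(u)$ in $\Ec^1(X,\theta,\phi)$. Note $F$ is concave along affine curves (Corollary \ref{cor: I_conc_on_E} plus convexity of $t\mapsto e^t$), and $M:=\sup F$ is finite since $F(\phi)=-L_\mu(\phi)\in(-\infty,0]$. I would take a maximizing sequence $\{u_j\}\subset \Ec^1(X,\theta,\phi)$ and exploit the translation identity $F(u+c)=\AMO(u)+c-e^c L_\mu(u)$: the optimal shift $c_j=-\log L_\mu(u_j)$ normalizes $L_\mu(u_j)\equiv 1$ and yields $\AMO(u_j)=F(u_j)+1\to M+1$.

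Compactness comes next. From $\mu(X)=\int_X\theta_\phi^n=1$ and $L_\mu(u_j)=1$, Jensen's inequality gives $\int_X u_j\, d\mu\leq 0$ and the estimate $1=\int e^{u_j}\,d\mu\leq e^{\sup_X u_j}$ gives $\sup_X u_j\geq 0$; an upper bound on $\sup_X u_j$ follows from the boundedness of $\AMO(u_j)$ via the identity $\AMO(u_j)=\AMO(u_j-\sup_X u_j)+\sup_X u_j$ and an $L^1$-compactness argument on the shifted functions $v_j = u_j-\sup_X u_j$ lying in the compact set $\{\sup v=0\}$. Extracting an $L^1$-convergent subsequence $u_j\to u$, Lemma \ref{lem: model sup over X} shows $u$ is more singular than $\phi$, and Proposition \ref{prop: usc of AMO} yields $\AMO(u)\geq \limsup_j\AMO(u_j)=M+1$, so $u\in\Ec^1(X,\theta,\phi)$.

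The main obstacle is to conclude $F(u)=M$, which requires passing $L_\mu$ to the limit even though $L^1$-convergence of $\theta$-psh functions is a priori too weak to guarantee $\mu$-a.e.\ convergence. The resolution exploits the uniform upper bound $\sup_X u_j\leq C$: the truncations $u_j^K:=\max(u_j,-K)$ satisfy the uniform-in-$j$ estimate $|L_\mu(u_j)-L_\mu(u_j^K)|\leq e^{-K}\mu(X)$, while for each fixed $K$ the uniformly bounded $\theta$-psh family $u_j^K$ converges to $u^K=\max(u,-K)$ in capacity (a standard upgrade of $L^1$ to capacity convergence for uniformly bounded $\theta$-psh families), whence dominated convergence against the non-pluripolar $\mu$ gives $L_\mu(u_j^K)\to L_\mu(u^K)$. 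A diagonal argument in $K$ and $j$ then yields $L_\mu(u)=\lim_j L_\mu(u_j)=1$; combined with $\AMO(u)\geq M+1$ this gives $F(u)=\AMO(u)-1\geq M$, and maximality of $M$ forces equality. Theorem \ref{thm: maximizers are solutions} then produces the desired solution $u\in\Ec^1(X,\theta,\phi)$.
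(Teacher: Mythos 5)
Your overall strategy is the paper's: variational existence via Theorem \ref{thm: maximizers are solutions}, compactness from \cite[Proposition 2.7]{GZ05}, upper semicontinuity of $\AMO$, and uniqueness via the comparison and domination principles (your uniqueness argument is exactly Lemma \ref{lem: comp_sol_subsol}); the translation-normalization $L_\mu(u_j)\equiv 1$ is a pleasant variant not used in the paper. However, your mechanism for the uniform upper bound on $\sup_X u_j$ does not work. Since $v_j:=u_j-\sup_X u_j\leq \phi$ (Theorem \ref{thm: ceiling coincide envelope non collapsing}), every term defining $\AMO(v_j)$ is nonpositive, so the identity $\AMO(u_j)=\AMO(v_j)+\sup_X u_j$ yields only $\sup_X u_j\geq \AMO(u_j)$, i.e.\ a \emph{lower} bound. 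An upper bound would require $\AMO(v_j)$ to be bounded below, and $\AMO$ is merely upper semicontinuous (and equal to $-\infty$ off $\Ec^1$) on the $L^1$-compact set $\{\sup_X v=0\}$, so no ``$L^1$-compactness argument'' can supply this. The bound must come from the exponential term, as in the paper: if $\sup_X u_j\to\infty$ then $F(u_j)\leq \sup_X u_j-c\,e^{\sup_X u_j}\to-\infty$; equivalently, in your normalization, one must prove $\liminf_j\int_X e^{v_j}\,d\mu>0$, which is where the actual work lies.

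The second gap is the assertion that a uniformly bounded family of $\theta$-psh functions converging in $L^1$ converges in capacity. This is false: Cegrell's classical discontinuity example produces uniformly bounded plurisubharmonic $u_j\to u$ in $L^1_{\loc}$ with $(dd^cu_j)^n\not\to(dd^cu)^n$, whereas capacity convergence would force weak convergence of the Monge--Amp\`ere measures by Proposition \ref{prop: xing_conv}. (Monotone convergence upgrades to capacity convergence; plain $L^1$ convergence does not.) If capacity convergence \emph{did} hold, the rest of your truncation-and-diagonal argument would go through, since convergence in capacity yields $\mu$-a.e.\ convergence along subsequences for any non-pluripolar $\mu$. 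So you have correctly isolated the delicate point — the paper itself disposes of it rather tersely by invoking ``continuity of $\varphi\mapsto\int_Xe^{\varphi}d\mu$'' along the $L^1$-convergent maximizing sequence — but the justification you offer for crossing it is not valid, and the same unproved lower-semicontinuity of $L_\mu$ is what is missing in both places above.
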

\begin{proof}
	We use the variational method. Let $\{u_j\}_j$ be a sequence in $\mathcal{E}^1(X,\theta,\phi)$ such that $\lim_{j}F(u_j)=\sup_{\mathcal{E}^1(X,\theta,\phi)}F>-\infty$. We claim that $\sup_X u_j$ is uniformly bounded from above. Indeed, assume that it were not the case. Then by relabeling the sequence we can assume that $\sup_X u_j$ increase to $\infty$. By the compactness property \cite[Proposition 2.7]{GZ05} it follows that the sequence $\psi_j:=u_j-\sup_X u_j$ converges in $L^1(X,\omega^n)$ to some $\psi\in \psh(X,\theta)$ such that $\sup_X \psi=0$.  In particular, $\int_X e^{\psi} d\mu >0$. It thus follows that 
\begin{equation}
	\label{eq: proof of Theorem 2.23 1}
	\int_X e^{u_j}d\mu = e^{\sup_X u_j}  \int_X e^{\psi_j} d\mu  \geq c \ e^{\sup_X u_j}
\end{equation}
for some $c>0$. Note also that $\psi_j \leq \phi$ since $\psi_j\in \Ec(X,\theta,\phi)$ and $\psi_j\leq 0$ and $\phi$ is the maximal function with these properties (see Theorem \ref{thm: ceiling coincide envelope non collapsing}). It then follows that 
\begin{equation}
	\label{eq: proof of Theorem 2.23 2}
\AMO(u_j) = \AMO(\psi_j) + \sup_X u_j \leq \sup_X u_j. 
\end{equation}
From \eqref{eq: proof of Theorem 2.23 1} and \eqref{eq: proof of Theorem 2.23 2} we arrive at 
\[
\lim_{j\to \infty} F(u_j) \leq  \lim_{j\to \infty} \left(\sup_X u_j - ce^{\sup_X u_j}\right) = -\infty, 
\]
which is a contradiction. Thus $\sup_X u_j$ is bounded from above as claimed. 
 Since $F(u_j)\leq\AMO(u_j)\leq \sup_X u_j$ it follows that $\AMO(u_j)$ and hence $\sup_X u_j$ is also bounded from below. It follows again from \cite[Proposition 2.7]{GZ05} that a subsequence of $u_j$ (still denoted by $u_j$) converges in $L^1(X,\omega^n)$ to some $u\in {\rm PSH}(X,\theta)$. Since ${\rm I}_{\phi}$ is upper semicontinuous we have
 $$\limsup_{j\to \infty} {\rm I}_\phi (u_j)\leq {\rm I}_\phi(u),$$
 hence $u\in \Ec^1(X,\theta,\phi)$. Moreover, by continuity of $\varphi \mapsto \int_Xe^{\varphi}d\mu$ we get that $F(u) \geq \sup_{\Ec^1(X,\theta,\phi)} F$.  Hence $u$ maximizes $F$ on $\Ec^1(X,\theta,\phi)$. Now  Theorem \ref{thm: maximizers are solutions} shows that $u$ solves the desired complex Monge--Amp\`ere equation. The next lemma address the uniqueness question.
\end{proof}
\begin{lemma}
	\label{lem: comp_sol_subsol}
	Assume that $u\in \Ec(X,\theta,\phi)$ is a solution of \eqref{eq: MA_exp_version} and $v\in \Ec(X,\theta,\phi)$ is a subsolution, i.e., $\theta_{v}^n \geq e^{ v}\mu.$
	Then $u\geq v$ on $X$. 
\end{lemma}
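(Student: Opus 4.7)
The plan is to combine the partial comparison principle with the domination principle in a standard way. Set $E := \{u < v\}$. Since $\phi$ is a model potential and $u, v \in \mathcal E(X,\theta,\phi)$, Corollary \ref{cor: comparison principle} applies and yields
\[
\int_E \theta_v^n \leq \int_E \theta_u^n.
\]
I would then substitute the equation for $u$ and the subsolution inequality for $v$ into the two sides: the left-hand side is bounded below by $\int_E e^v\, d\mu$, and the right-hand side equals $\int_E e^u\, d\mu$. Hence
\[
\int_E \bigl(e^v - e^u\bigr)\, d\mu \leq 0.
\]
Since $e^v - e^u > 0$ pointwise on $E$, this forces $\mu(E) = 0$.

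Having $\mu(E) = 0$, the equation $\theta_u^n = e^u \mu$ immediately gives $\theta_u^n(E) = 0$, i.e.\ $\theta_u^n(\{u < v\}) = 0$. Since $\phi$ is a model potential and $u, v \in \mathcal E(X,\theta,\phi)$, the domination principle (Theorem \ref{thm: domination principle}) applies and concludes $u \geq v$ on $X$.

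The only subtle point is that the comparison principle in Corollary \ref{cor: comparison principle} and the domination principle in Theorem \ref{thm: domination principle} both require $\phi$ to be a model potential and $u, v \in \mathcal E(X,\theta,\phi)$; these hypotheses are precisely the ones assumed in the lemma and in the ambient setting of the section (as stated at the beginning of Section 5.2, $\phi = P_\theta[\phi]$). No further ingredients are needed; in particular, we do not have to worry about pluripolar issues since $\mu$ is non-pluripolar and $\theta_u^n$ does not charge $\{u = -\infty\}$.
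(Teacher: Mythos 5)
Your proof is correct and follows essentially the same route as the paper: apply Corollary \ref{cor: comparison principle} on $\{u<v\}$, sandwich the integrals using the equation for $u$ and the subsolution inequality for $v$ to get $\mu(\{u<v\})=0$, deduce $\theta_u^n(\{u<v\})=0$, and conclude with the domination principle (Theorem \ref{thm: domination principle}). No gaps.
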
 
\begin{proof}
	By the comparison principle for the class $\Ec(X,\theta,\phi)$ (Corollary \ref{cor: comparison principle}) we have 
	$$
	\int_{\{u<v\}} \theta_{v}^n \leq \int_{\{u<v\}} \theta_{u}^n.
	$$
	As $u$ is a solution and $v$ is a subsolution to \eqref{eq: MA_exp_version} we then have
	$$
	\int_{\{u<v\}} e^{v} d\mu \leq \int_{\{u<v\}} \theta_{v}^n \leq \int_{\{u<v\}} \theta_{u}^n = \int_{\{u<v\}} e^{ u} d\mu\leq \int_{\{u<v\}} e^{v} d\mu.
	$$
	It follows that all inequalities above are equalities, hence $\mu(\{u<v\})=0$. Since $\mu=e^{u} \theta_u^n$, it follows that $\theta_u^n (\{u<v\})=0$. By the domination principle (Theorem \ref{thm: domination principle}) we get that $u\geq v$ everywhere on $X$.
\end{proof}

\paragraph{The case $\lambda=0$.}

It immediately follows from Lemma \ref{lem: concentration max} that subsolutions are preserved under taking maximums. In addition to this, the $L^1$-limit of subsolutions is also a subsolution:
\begin{lemma}\label{lem: stability of subsolutions}
	Let $(u_j)$ be a sequence of $\theta$-psh functions such that $\theta_{u_j}^n \geq f_j \mu$, where $f_j \in L^1(X,\mu)$ and $\mu$ is a positive non-pluripolar Borel measure on $X$. Assume that $f_j$ converge in $L^1(X,\mu)$ to $f \in L^1(X,\mu)$,  and $u_j$ converge in $L^1(X,\omega^n)$ to $u\in \PSH(X,\theta)$. Then  $\theta_u^n \geq f \mu$. 
\end{lemma}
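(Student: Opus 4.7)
The strategy combines the stability of subsolutions under finite maxima (Lemma \ref{lem: concentration max}) with the weak convergence of non-pluripolar products along monotone sequences (Theorem \ref{thm: lsc of MA measures} and Remark \ref{rem: increasing implies capacity}), together with a plurifine-localization trick that transfers bounds from unbounded potentials to bounded truncations. Passing to a subsequence we may assume $u_j \to u$ and $f_j \to f$ both pointwise $\mu$-a.e., and for simplicity $f_j \geq 0$. Set $g_k := \inf_{l\geq k} f_l$, a non-decreasing sequence of measurable functions with $g_k \nearrow f$ a.e., satisfying $\theta_{u_l}^n \geq g_k\mu$ for every $l \geq k$.

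For fixed $j$ and $m \geq 0$, consider the finite maximum $V_j^m := \max(u_j,\dots,u_{j+m})$. Iterating Lemma \ref{lem: concentration max} along the Borel partition of $X$ formed by ``the first index achieving the max'' yields $\theta_{V_j^m}^n \geq g_j\mu$. As $m \to \infty$, $V_j^m$ increases a.e.\ to the usc regularization $v_j := (\sup_{l\geq j} u_l)^* \in \psh(X,\theta)$ (the family $\{u_l\}$ is locally uniformly bounded above because of its $L^1$ convergence). Remark \ref{rem: increasing implies capacity} and Theorem \ref{thm: lsc of MA measures} then give $\theta_{V_j^m}^n \to \theta_{v_j}^n$ weakly, so the measure bound $\theta_{v_j}^n \geq g_j\mu$ passes to the weak limit.

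Now take $j \to \infty$. The sequence $v_j$ is non-increasing, and at any point where $u_l \to u$ one has $\sup_{l\geq j} u_l \searrow \limsup_l u_l = u$, so $v_j \searrow u$ a.e., hence everywhere as $\theta$-psh functions. To sidestep the potential unboundedness of the $v_j$, fix $C>0$ and set $v_j^C := \max(v_j,V_\theta - C) \searrow u^C := \max(u,V_\theta - C)$; these all have minimal singularity type, so $\theta_{v_j^C}^n \to \theta_{u^C}^n$ weakly by the standard Bedford--Taylor/BEGZ theory (alternatively by Theorem \ref{thm: lsc of MA measures} combined with the equality of total masses). Since $E := \{u > V_\theta - C\} \subseteq \{v_j > V_\theta - C\}$, plurifine locality (Lemma \ref{lem: plurifine}) gives
\[
{\bf 1}_E\,\theta_{v_j^C}^n \;=\; {\bf 1}_E\,\theta_{v_j}^n \;\geq\; {\bf 1}_E\,g_j\mu,
\]
which, being trivial on $X\setminus E$, upgrades to the global measure inequality $\theta_{v_j^C}^n \geq {\bf 1}_E g_j\mu$ on $X$. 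Since $g_j \nearrow f$, monotone convergence gives ${\bf 1}_E g_j\mu \to {\bf 1}_E f\mu$ weakly, and pairing the two weak limits against any continuous $\chi \geq 0$ yields $\theta_{u^C}^n \geq {\bf 1}_E f\mu$. A final application of plurifine locality rewrites this as ${\bf 1}_E \theta_u^n \geq {\bf 1}_E f\mu$.

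Finally, as $C \nearrow \infty$ the set $E$ exhausts $\{u > -\infty\}$, whose complement is pluripolar. Since $\theta_u^n$ never charges pluripolar sets and $\mu$ is non-pluripolar by hypothesis, the inequality extends to all of $X$, giving $\theta_u^n \geq f\mu$. The main technical obstacle is the passage $v_j \searrow u$: without uniform boundedness there is no direct convergence theorem for the $\theta_{v_j}^n$, and for decreasing unbounded sequences one typically loses the lower-semicontinuity that would be needed. The workaround is precisely the plurifine-localization onto $E$, where $v_j$ and its bounded truncation $v_j^C$ coincide; this lets us apply Bedford--Taylor convergence on $v_j^C$ to the already established lower bound on $\theta_{v_j}^n$, and then exhaust $X$ by letting $C\to\infty$.
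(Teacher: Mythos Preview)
Your proof is correct and follows essentially the same route as the paper's: take $g_k=\inf_{l\geq k}f_l$, use Lemma~\ref{lem: concentration max} on finite maxima together with Theorem~\ref{thm: lsc of MA measures}/Remark~\ref{rem: increasing implies capacity} to get $\theta_{v_k}^n\geq g_k\mu$ for $v_k=(\sup_{l\geq k}u_l)^*$, then truncate by $V_\theta-C$ and use plurifine locality to pass the decreasing limit $v_k\searrow u$, and finally let $C\to\infty$. One small slip: the pointwise convergence of $u_j$ you extract should be $\omega^n$-a.e.\ (from $L^1(X,\omega^n)$), not $\mu$-a.e., in order to conclude $v_j\searrow u$ everywhere; in any case this step is the standard Hartogs-type consequence of $L^1$-convergence and needs no subsequence.
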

\begin{proof}
	By extracting a subsequence if necessary, we can assume that $f_j$ converge $\mu$-a.e. to $f$. For each $k$ we set $v_k:= \textup{usc}(\sup_{j\geq k} u_j)$. Then $v_k$ decreases pointwise to $u$ and Lemma \ref{lem: concentration max} together with Theorem \ref{thm: lsc of MA measures} gives 
	\[
	\theta_{v_k}^n  \geq \left(\inf_{j\geq k} f_j \right) \mu. 
	\]
Indeed $\max(u_k, \ldots, u_{k+l}) \nearrow v_k$ a.e., as $l \to \infty$, making Theorem \ref{thm: lsc of MA measures} applicable due to Remark \ref{rem: increasing implies capacity}.

To explain our notation below, for $t>0$ and a function $g$ we set $g^t:= \max(g,V_{\theta}-t)$. 

Note that $\{ u >V_\theta-t\}\subset \{ v_k >V_\theta-t\}$. Multiplying both sides of the above estimate with $\id_{\{u >V_{\theta}-t\}}$, $t>0$, and using the locality of the complex Monge--Amp\`ere operator with respect to the plurifine topology we arrive at 
	\[
	\theta_{v_k^t}^n \geq \id_{\{u >V_{\theta}-t\}} \left(\inf_{j\geq k} f_j\right) \mu.
	\]
	Note that for $t>0$ fixed, $v_k^t$ decreases to $u^t$, all having minimal singularity type (in particular they all have full mass); also the sequence $\left(\inf_{j\geq k} f_j\right)$ is increasing to $f$. Letting $k\to \infty$ and using Theorem \ref{thm: lsc of MA measures} we obtain 

	\[
	\theta_{u^t}^n \geq \id_{\{u >V_{\theta}-t\}} f \mu,\ t>0.
	\]
  	Again, multiplying both sides with $\id_{\{u >V_{\theta}-t\}}$, $t>0$, and using the locality of the complex Monge--Amp\`ere operator with respect to the plurifine topology we arrive at 
	\[
	\id_{\{u >V_{\theta}-t\}}\theta_{u}^n \geq \id_{\{u >V_{\theta}-t\}} f \mu.
	\]
	Finally, letting $t\to \infty$ we obtain the result. 
\end{proof}

\begin{theorem}
	\label{thm: existence lambda =0}
	Assume that $\mu$ is a non-pluripolar positive measure such that $\mu(X)=1$. Then there exists a unique $u\in \mathcal{E}(X,\theta,\phi)$ such that $\theta_{u}^n=\mu$ and $\sup_X u=0$. 
\end{theorem}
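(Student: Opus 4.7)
\textbf{Uniqueness} follows directly from Theorem~\ref{thm: uniqueness}: any two solutions $u,u'\in\mathcal{E}(X,\theta,\phi)$ of $\theta_\bullet^n=\mu$ differ by a constant, and the normalizations $\sup_X u=\sup_X u'=0$ force equality.

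\textbf{Existence.} Following the strategy advertised just before the statement, the plan is to pass to the limit $\lambda\to 0^+$ in the family of equations $\theta_{u_\lambda}^n = e^{\lambda u_\lambda}\mu$. For each $\lambda\in(0,1]$, the variational argument of Theorem~\ref{thm: existence_MA_eq_exp} (trivially adapted to exponent $\lambda$ in place of $1$) produces $u_\lambda\in\mathcal{E}^1(X,\theta,\phi)$ solving this equation; comparing total masses yields $\int_X e^{\lambda u_\lambda}d\mu = \int_X\theta_\phi^n = 1 = \mu(X)$. Setting $s_\lambda:=\sup_X u_\lambda$ and $v_\lambda:=u_\lambda-s_\lambda\in\mathcal{E}(X,\theta,\phi)$, one has $\sup_X v_\lambda=0$, and the maximality of $P_\theta[\phi]=\phi$ from Theorem~\ref{thm: ceiling coincide envelope non collapsing} forces $v_\lambda\le\phi$. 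The rewritten mass identity
\[
e^{\lambda s_\lambda}\int_X e^{\lambda v_\lambda}\,d\mu = 1,
\]
combined with $v_\lambda\le 0$, immediately gives $s_\lambda\ge 0$.

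The hardest part of the argument will be the a priori upper bound $\sup_{\lambda\in(0,1]} s_\lambda<\infty$, equivalent to $\liminf_{\lambda\to 0^+}\int_X e^{\lambda v_\lambda}d\mu>0$. By the standard compactness of normalized $\theta$-psh potentials, along a subsequence $v_\lambda\to v$ in $L^1(X,\omega^n)$ with $v\in\textup{PSH}(X,\theta)$, $\sup_X v=0$ and $v\le\phi$; since $\mu$ charges no pluripolar set, $v$ is $\mu$-a.e.\ finite. A Hartogs-type argument (comparing each $v_\lambda$ from above with the usc-regularized upper envelope $(\sup_{\eta\le\lambda}v_\eta)^*$, which decreases to $v$) combined with dominated convergence should then yield $\int_X e^{\lambda v_\lambda}d\mu\to 1$ as $\lambda\to 0^+$. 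This is the only step where the non-pluripolarity of $\mu$ is exploited in an essential way.

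\textbf{Limit passage.} Once $s_\lambda$ is bounded, the measures $\theta_{v_\lambda}^n=e^{\lambda u_\lambda}\mu$ are uniformly dominated by a fixed multiple of $\mu$. A further extraction refines the $L^1$-limit $v_\lambda\to v$ to convergence in capacity, via the stability tools of Chapter~2. On the right-hand side, $e^{\lambda u_\lambda}\mu=e^{\lambda s_\lambda}e^{\lambda v_\lambda}\mu$ converges weakly to $\mu$ (using $e^{\lambda s_\lambda}\to 1$ and the integrability just established); on the left, Theorem~\ref{thm: lsc of MA measures} together with the total mass normalization $\int_X\theta_{v_\lambda}^n\equiv 1$ upgrades the lower semicontinuity to weak convergence $\theta_{v_\lambda}^n\to\theta_v^n$. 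Matching the two limits yields $\theta_v^n=\mu$. Finally, $\sup_X v=0$ is preserved in the limit, and $v\in\mathcal{E}(X,\theta,\phi)$ follows from $v\le\phi$ together with the equality of total masses.
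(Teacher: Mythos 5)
Your overall strategy --- solve $\theta_{u_\lambda}^n=e^{\lambda u_\lambda}\mu$ and let $\lambda\to0^+$ --- is exactly the paper's, and the uniqueness part and the reduction $v_\lambda\le\phi$ via Theorem \ref{thm: ceiling coincide envelope non collapsing} are fine. But two steps do not go through as written. First, the claimed equivalence between $\sup_\lambda s_\lambda<\infty$ and $\liminf_{\lambda\to0^+}\int_Xe^{\lambda v_\lambda}d\mu>0$ is false: from $e^{\lambda s_\lambda}\int_Xe^{\lambda v_\lambda}\,d\mu=1$, a positive liminf only bounds $e^{\lambda s_\lambda}$, i.e.\ $\lambda s_\lambda$, and since $\lambda\to0$ this says nothing about $s_\lambda$ itself. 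Fortunately everything you use downstream requires only $e^{\lambda s_\lambda}\to1$ and $e^{\lambda s_\lambda}$ bounded, so the statement you actually need is $\int_Xe^{\lambda v_\lambda}d\mu\to1$. Your proposed proof of that is, however, one-sided: the comparison $v_\lambda\le(\sup_{\eta\le\lambda}v_\eta)^*\searrow v$ gives only $\limsup\int_X e^{\lambda v_\lambda}d\mu\le1$; there is no lower envelope for psh functions, so dominated convergence has nothing to dominate from below. The missing lower bound is what the paper gets from the uniform capacity decay of sublevel sets, ${\rm Cap}_{\omega}(\{v_\lambda<-\varepsilon/\lambda\})\to0$ by \cite[Proposition 3.6]{GZ05}, whence $e^{\lambda v_\lambda}\to1$ in capacity.

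Second, and more seriously, the limit passage fails: an $L^1$-convergent sequence of $\theta$-psh functions need not converge in capacity, and no ``stability tool of Chapter 2'' performs that upgrade (only monotone sequences do, cf.\ Remark \ref{rem: increasing implies capacity}). Hence Theorem \ref{thm: lsc of MA measures} cannot be invoked to obtain the two-sided weak convergence $\theta_{v_\lambda}^n\to\theta_v^n$, and the identity $\theta_v^n=\mu$ is left unproved. The paper's proof deliberately avoids this: it applies Lemma \ref{lem: stability of subsolutions} (which internally works with the decreasing envelopes ${\rm usc}(\sup_{j\ge k}u_j)\searrow u$, for which convergence in capacity \emph{does} hold) to the relations $\theta_{v_\lambda}^n=f_\lambda\mu$ with $f_\lambda=e^{\lambda u_\lambda}\to1$ in $L^1(\mu)$, concluding only the inequality $\theta_v^n\ge\mu$; equality then follows because $v\le\phi$ forces $\int_X\theta_v^n\le\int_X\theta_\phi^n=\mu(X)$ by Theorem \ref{thm: BEGZ_monotonicity_full}. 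You should replace your weak-convergence step by this one-sided subsolution argument plus the mass comparison.
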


\begin{proof}
The uniqueness follows from Theorem \ref{thm: uniqueness}.  
For each $j>0$, using Theorem \ref{thm: existence_MA_eq_exp} we solve
\[
   \theta_{v_j}^n = e^{ j^{-1}v_j } \mu, \quad v_j \in \mathcal{E}(X,\theta,\phi). 
\]
 We set $u_j:= v_j -\sup_X v_j$, so that $\sup_X u_j=0$. Up to extracting a subsequence, we can assume that $u_j \to u\in {\rm PSH}(X,\theta)$ in $L^1$ and almost everywhere. Since $u_j\leq \phi$, we have $u\leq \phi$. Observe also that $j^{-1}u_j$ converge in capacity to $0$.  Indeed, for any fixed $\varepsilon>0$ we have
 \[
 {\rm Cap}_{\omega}(j^{-1}u_j<-\varepsilon) = {\rm Cap}_{\omega}(u_j<-\varepsilon j) \to 0
 \]
 as follows from \cite[Proposition 3.6]{GZ05}. 
 
Hence  the functions $e^{j^{-1}u_j}$ converge in capacity to $1$.   It thus follows from Theorem \ref{thm: lsc of MA measures} that 
\[
\lim_{j\to \infty}\int_X e^{j^{-1} u_j} d\mu  = \mu(X)=1. 
\]
Since $\theta_{u_j}^n = e^{ j^{-1}\sup_X v_j } e^{ j^{-1}u_j } \mu$, it follows from the above that $j^{-1}\sup_X v_j \to 0$ as $j\to \infty$, hence $e^{j^{-1}v_j}$ converges to $1$ in $L^1(\mu)$ and almost everywhere.  It thus follows from Lemma \ref{lem: stability of subsolutions} that $\theta_u^n \geq \mu$. 
Since $u\leq \phi$, Theorem \ref{thm: BEGZ_monotonicity_full} ensures that $\int_X \theta_u^n \leq  \int_X \theta_{\phi}^n= 1 =  \mu(X)$. Hence $\theta_u^n = \mu$ as desired. 
\end{proof}

\paragraph{Log concavity of non-pluripolar masses.}We give a proof for the following result from \cite{DDL4}, initially conjectured in \cite[Conjecture 1.23]{BEGZ10}:

\begin{theorem}\label{thm: log concavity s.u.l.}
	Let $T_1,...,T_n$ be positive $(1,1)$-currents  on a compact K\"ahler manifold $X$. Then 
	\[
	\int_X \langle T_1 \wedge ...\wedge T_n\rangle \geq \bigg(\int_X \langle T_1^n\rangle \bigg)^{\frac{1}{n}} ... \bigg(\int_X \langle T_n^n\rangle \bigg)^{\frac{1}{n}}.
	\] 
\end{theorem}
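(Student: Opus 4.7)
The plan is to reduce the inequality to the pointwise mixed Monge--Amp\`ere inequality of \cite[Proposition 1.11]{BEGZ10}, by solving a family of auxiliary complex Monge--Amp\`ere equations sharing a common right-hand side via Theorem \ref{thm: existence lambda =0}. First I would dispose of trivialities: write $T_j = \theta_j + dd^c \varphi_j$ with $\theta_j$ smooth closed and $\varphi_j \in \textup{PSH}(X,\theta_j)$. If any $\int_X \langle T_j^n \rangle$ vanishes the claim is trivial, so I assume each $c_j := \int_X \langle T_j^n \rangle$ is positive, which in particular forces every class $\{\theta_j\}$ to be big. Both sides of the inequality are homogeneous of degree one in each $T_j$ by multilinearity of the non-pluripolar product, so I rescale to arrange $c_j = 1$ for all $j$. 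Setting $\phi_j := P_{\theta_j}[\varphi_j]$, the mixed version of Remark \ref{rem: increasing implies capacity} yields
\[
\int_X \langle T_1 \wedge \ldots \wedge T_n \rangle = \int_X (\theta_1)_{\phi_1} \wedge \ldots \wedge (\theta_n)_{\phi_n},
\]
so it suffices to prove the inequality with each $\varphi_j$ replaced by the model potential $\phi_j$, which satisfies $\int_X (\theta_j)_{\phi_j}^n = 1$.

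Next I would bring in a common reference measure. Let $\nu := \omega^n / \int_X \omega^n$, a non-pluripolar probability measure. By Theorem \ref{thm: existence lambda =0} applied in each big class $\{\theta_j\}$ (with reference $\phi_j$), there is a unique $u_j \in \mathcal{E}(X,\theta_j,\phi_j)$ satisfying $(\theta_j)_{u_j}^n = \nu$. Since all the $n$th self-products $(\theta_j)_{u_j}^n$ equal the common measure $\nu$, the pointwise mixed Monge--Amp\`ere inequality \cite[Proposition 1.11]{BEGZ10} applied to the currents $(\theta_j)_{u_j}$ yields
\[
(\theta_1)_{u_1} \wedge \ldots \wedge (\theta_n)_{u_n} \geq \nu
\]
as Borel measures on $X$, so integration gives $\int_X (\theta_1)_{u_1} \wedge \ldots \wedge (\theta_n)_{u_n} \geq \nu(X) = 1$. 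Finally, since $u_j \in \mathcal{E}(X,\theta_j,\phi_j)$, Theorem \ref{thm: E_memb_char} gives $P_{\theta_j}[u_j] = \phi_j$, and another application of Remark \ref{rem: increasing implies capacity} yields
\[
\int_X (\theta_1)_{u_1} \wedge \ldots \wedge (\theta_n)_{u_n} = \int_X (\theta_1)_{\phi_1} \wedge \ldots \wedge (\theta_n)_{\phi_n} = \int_X \langle T_1 \wedge \ldots \wedge T_n \rangle.
\]
Combining the two displays above and undoing the normalization produces the desired inequality $\int_X \langle T_1 \wedge \ldots \wedge T_n \rangle \geq \prod_j c_j^{1/n}$.

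The delicate point is the mixed-class invariance used twice in the argument: the mass $\int_X (\theta_1)_{u_1} \wedge \ldots \wedge (\theta_n)_{u_n}$ is unchanged upon replacing each $u_j$ by $P_{\theta_j}[u_j]$ even when the background forms $\theta_j$ differ. This is the mixed form of the identity stated at the end of Remark \ref{rem: increasing implies capacity}, and it is established by combining the monotonicity Theorem \ref{thm: BEGZ_monotonicity_full} with the lower semicontinuity property Theorem \ref{thm: lsc of MA measures}, applied to the increasing approximations $P_{\theta_j}(u_j, V_{\theta_j} + C) \nearrow P_{\theta_j}[u_j]$ as $C \to \infty$. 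Once this mixed-class invariance is granted, the rest of the proof is a direct assembly of tools already developed in the survey, with the pointwise mixed Monge--Amp\`ere inequality providing the sole numerical input.
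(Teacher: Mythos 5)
Your proposal is correct and follows essentially the same route as the paper: solve auxiliary Monge--Amp\`ere equations with right-hand side a multiple of $\omega^n$ in the classes $\mathcal E(X,\theta_j,P_{\theta_j}[\varphi_j])$ via Theorem \ref{thm: existence lambda =0}, apply the mixed Monge--Amp\`ere inequality of \cite[Proposition 1.11]{BEGZ10}, and transfer the masses back using Theorem \ref{thm: E_memb_char} together with the mixed identity of Remark \ref{rem: increasing implies capacity}. The only differences are cosmetic normalization choices (you rescale each $T_j$ so that $c_j=1$, the paper instead normalizes $\int_X\omega^n=1$ and keeps the constants $c_j$ explicit).
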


\begin{proof}We can assume that the classes of $T_j$ are big and their total masses are non-zero. Other\-wise the right-hand side of the inequality to be proved is zero.  Consider smooth closed real $(1,1)$-forms $\theta^j$,  and $u_j \in \psh(X,\theta^j)$ such that $T_j = \theta^j_{u_j}$.  

We can assume that $\int_X \omega^n=1$. For each $j=1,...,n$, Theorem \ref{thm: existence lambda =0} ensures that there exists a normalizing constant $c_j>0$ and $\varphi_j\in \mathcal{E}(X,\theta^j,P_{\theta^j}[u_j])$ such that $\big(\theta^j_{\varphi_j}\big)^n = c_j\omega^n$. 

Thus we have
$$c_j=\int_X \big(\theta^j_{\varphi_j}\big)^n=   \int_X \big(\theta^j_{P_{\theta^j}[u_j]}\big)^n=\int_X \big(\theta^j_{u_j}\big)^n= \int_X \langle T_j^n \rangle .$$ 
Remark \ref{rem: increasing implies capacity} and  Theorem \ref{thm: E_memb_char} then gives
$$
\int_X \theta^1_{\varphi_1} \wedge ... \wedge \theta^n_{\varphi_n}  = \int_X  \theta^1_{P_{\theta^1}[u_1]} \wedge ... \wedge \theta^n_{P_{\theta^n}[u_n]}= \int_X  \theta^1_{u_1} \wedge ... \wedge \theta^n_{u_n} = \int_X\langle T_1 \wedge \ldots \wedge T_n \rangle .
$$
An application of the mixed Monge--Amp\`ere inequalities (\cite[Proposition 1.11]{BEGZ10}) gives that $\theta^1_{\varphi_1} \wedge \ldots \wedge \theta^n_{\varphi_n} \geq c_1^{1/n} \ldots c_n^{1/n} \omega^n$. The result follows from integrating this estimate. 
\end{proof}

\section{Relative boundedness of solutions}

Recall that we are working with $\phi \in \psh(X,\theta)$ such that $P_{\theta}[\phi]=\phi$, and $\int_X \theta_\phi^n >0$. Let $f \in L^p(\omega^n)$ with $f \geq 0$. In the previous section we have shown that the equation
\begin{equation*}
\theta_u^n = f \omega^n, \ \ u \in \mathcal E(X,\theta,\phi)
\end{equation*}
has a unique solution. In this section we will show that this solution has the same singularity type as $\phi$. This generalizes \cite[Theorem B]{BEGZ10}, that treats the particular case of solutions with minimal singularity type in a big class. Analogous results will be obtained for equations of the type \eqref{eq: MA_exp_version} as well.

Our arguments follow the one in \cite{GL21a} which relies on quasi-psh envelopes. The original argument in \cite{DDL2,DDL4} was given using a Kolodziej type estimate, inspired from \cite{Kol98}.

In our study we make use of the following lemma multiple times:
\begin{lemma}\label{ineq: forms_MA}
Let $v \in \mathcal E(X,\theta,\phi)$, $v \leq \phi$, and $\gamma: \mathbb{R}^+ \cup \{\infty\} \mapsto \mathbb{R}^+\cup\{\infty\}$ denote a concave continuous increasing function with $\gamma'\leq 1$. Then $\chi:= -\gamma(\phi-v)+\phi \in \textup{PSH}(X,\theta,\phi)$ and $\chi$ satisfies
\begin{equation}\label{eq: measure_ineq}
\theta_\chi^n \geq (\gamma'(\phi-v))^n \theta_v^n.
\end{equation}
\end{lemma}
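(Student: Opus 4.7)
The plan is first to establish membership $\chi \in \textup{PSH}(X,\theta,\phi)$ via an envelope representation of $\chi$, and then to derive the Monge--Amp\`ere inequality by a chain-rule computation in the smooth case, followed by approximation to the singular setting.

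For the first step, concavity of $\gamma$ gives the dual representation $-\gamma(s) = \sup_{t \geq 0}(\gamma'(t)(t-s) - \gamma(t))$, where $\gamma'(t)$ denotes the right derivative (or any super-gradient). Substituting $s = \phi - v$ shows that, off the pluripolar locus $\{\phi = -\infty\} \cup \{v = -\infty\}$,
\[
\chi = \sup_{t \geq 0} w_t, \qquad w_t := (1-\gamma'(t))\phi + \gamma'(t) v + (t\gamma'(t) - \gamma(t)).
\]
Since $0 \leq \gamma'(t) \leq 1$, each $w_t$ is a convex combination of the $\theta$-psh functions $\phi$ and $v$ plus a constant, hence $\theta$-psh. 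Noting that $h(t) := t\gamma'(t) - \gamma(t)$ satisfies $h(0)\leq 0$ and $h'(t)=t\gamma''(t)\leq 0$ gives the uniform upper bound $w_t\leq \phi$, so the usc regularization of $\sup_t w_t$ is $\theta$-psh and agrees with $\chi$ off the polar locus. This puts $\chi$ in $\textup{PSH}(X,\theta)$, and the inequality $\chi\leq \phi$ (from $\gamma\geq 0$) yields $\chi \in \textup{PSH}(X,\theta,\phi)$.

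For the Monge--Amp\`ere estimate, assume first that $v, \phi$ are smooth and $\gamma\in C^2$. The chain rule gives
\[
\theta + dd^c \chi = (1-\gamma'(\phi-v))(\theta+dd^c\phi) + \gamma'(\phi-v)(\theta+dd^c v) - \gamma''(\phi-v)\, d(\phi-v)\wedge d^c(\phi-v).
\]
The last term is a positive current since $\gamma''\leq 0$ and $d(\phi-v)\wedge d^c(\phi-v)\geq 0$, and the remaining two summands are positive as $0 \leq \gamma' \leq 1$. Dropping all but the middle summand and taking the $n$-th wedge power yields $\theta_\chi^n \geq (\gamma'(\phi-v))^n \theta_v^n$.

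For the general case, I would employ a double approximation. Approximate $\gamma$ by smooth concave $\gamma_k$ with $\gamma_k'\leq 1$ converging locally uniformly to $\gamma$, and replace $v,\phi$ by the canonical truncations $v_j := \max(v,V_\theta-j)$, $\phi_j:=\max(\phi,V_\theta-j)$, which are locally bounded on the ample locus $\Amp(\{\theta\})$. Setting $\chi_{j,k}:=\phi_j-\gamma_k(\phi_j-v_j)$, Bedford--Taylor theory applied to the smooth computation above (after a further smoothing by $\theta+\varepsilon\omega$-psh approximants) yields
\[
\theta_{\chi_{j,k}}^n \geq (\gamma_k'(\phi_j - v_j))^n \theta_{v_j}^n
\]
on $\Amp(\{\theta\})$. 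Sending $j\to \infty$ and then $k\to \infty$, the uniformly bounded quasi-continuous weights $(\gamma_k'(\phi_j - v_j))^n$ converge in capacity to $(\gamma'(\phi-v))^n$ off the polar locus, and Theorem~\ref{thm: lsc of MA measures} combined with Remark~\ref{rem: increasing implies capacity} passes the measures $\theta_{\chi_{j,k}}^n,\theta_{v_j}^n$ to their non-pluripolar limits $\theta_\chi^n,\theta_v^n$. The principal obstacle is precisely the capacity convergence of the weights near $\{v=-\infty\}$ where $\gamma'$ may be evaluated at infinite arguments; this is circumvented by choosing the $\gamma_k$ so that $\gamma_k'\to \gamma'$ outside the countable jump set of $\gamma'$, which is enough for the Xing-type convergence of Proposition~\ref{prop: xing_conv} to pass to the limit on the non-pluripolar parts, and the inequality trivially holds on the pluripolar locus where both measures vanish.
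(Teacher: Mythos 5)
Your proof is correct, and for the Monge--Amp\`ere inequality itself it follows essentially the paper's route: the chain-rule identity $\theta_{\chi}=(1-\gamma'(\phi-v))\theta_{\phi}+\gamma'(\phi-v)\theta_{v}-\gamma''(\phi-v)\,d(\phi-v)\wedge d^c(\phi-v)$ in the smooth case, raising to the $n$-th power \emph{at the level of the smooth approximants} (the paper explicitly flags that the current inequality $\theta_\chi\geq\gamma'(\phi-v)\theta_v$ alone does not give \eqref{eq: measure_ineq} because the right-hand side is not closed, and you correctly avoid this trap), followed by truncation, Proposition \ref{prop: xing_conv}, and plurifine locality. Where you genuinely diverge is the membership $\chi\in\textup{PSH}(X,\theta,\phi)$: you use the Legendre-type representation $-\gamma(s)=\sup_{t\geq 0}\big(\gamma'(t)(t-s)-\gamma(t)\big)$ to realize $\chi$ as the regularized upper envelope of the $\theta$-psh functions $w_t=(1-\gamma'(t))\phi+\gamma'(t)v+(t\gamma'(t)-\gamma(t))$, uniformly bounded above by $\phi$, whereas the paper mollifies $v,\phi$ in local charts and verifies that $\chi_j=-\gamma(\phi_j-v_j)+\phi_j$ is $\theta$-psh and decreasing in $j$. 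Your envelope argument is global and avoids both the local charts and the monotonicity check, at the mild cost of identifying $\chi$ with the usc regularization of a supremum, which agrees with it only off a pluripolar set --- harmless, and the same convention the paper adopts on $\{\phi=-\infty\}$. One imprecision to fix in the final limit: since $v_j=\max(v,V_\theta-j)$ \emph{decreases} to $v$, Remark \ref{rem: increasing implies capacity} does not apply and $\theta_{v_j}^n$ need not converge weakly to $\theta_v^n$ when $v$ lacks full mass; the passage $j\to\infty$ should instead go through plurifine locality on $\{v>V_\theta-t\}$, where $\theta_{v_j}^n=\theta_v^n$ identically for $j>t$, exactly as in the paper's concluding step.
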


To be precise, in the above statement the function $\chi$ is equal to $-\infty$ on the pluripolar set $\{\phi = -\infty\}$ and  $\chi= -\gamma(\phi-v)+\phi$ otherwise.

\begin{proof} Notice that there exists $\gamma_k: \mathbb{R}^+ \cup \{\infty\} \mapsto \mathbb{R}^+\cup\{\infty\}$ smooth on $[0,\infty)$, such that $\gamma_k \nearrow \gamma$ pointwise and $\gamma'_k \leq 1$. As a result, it will be enough to prove the theorem for $\gamma$ smooth.

First we want to show that $\chi= -\gamma(\phi-v)+\phi \in \textup{PSH}(X,\theta,\phi)$. Observe that $\chi\leq \phi$ since $-\gamma\leq 0$. In order to prove that $\chi$ is $\theta$-psh, we will show that it is the decreasing limit of $\theta$-psh functions. 

Since the problem is local, we can work in a local chart $V\subset X$ where we write $\theta = dd^c g$ in $V$. After slightly shrinking $V$, the mollifications $v_j: =  \rho_{1/j} * (v + g) - g$ and $\phi_j: =  \rho_{1/j} * (\phi + g) - g$ are smooth approximants of $v$ and $\phi$ on $V$. Moreover, they are $\theta$-psh on $V$, $v_j \searrow v$, $\phi_j \searrow \phi$, and $v_j\leq \phi_j$. Let $\chi_{j} := -\gamma(\phi_j-v_j)+\phi_j$. Then for any $j$, $\chi_{j}$ is a smooth $\theta$-psh function in $V$ since
 \begin{eqnarray}\label{eq_psh}
\nonumber  \theta +dd^c\chi_j& = & \gamma'(\phi_j-v_j) \theta_{v_j} + (1-\gamma'(\phi_j-v_j)) \theta_{\phi_j} - \gamma''(\phi_j-v_j) d(\phi_j-v_j) \wedge d^c (\phi_j-v_j)\\
&\geq & \gamma'(\phi_j-v_j) \theta_{v_j},
 \end{eqnarray}
where  we used that $\gamma$ is concave and that $\gamma'\leq 1$. 

We claim that $\chi_j$ is decreasing in $j$. Indeed, for $s\in \mathbb R$ fixed, the function $t\mapsto -\gamma(t-s) +t$ is increasing in $[s,+\infty)$ because $1-\gamma'(t-s)\geq 0$. Using also that $\gamma$ is increasing we find that for $j\leq k$ we have
\[
\chi_j = -\gamma(\phi_j-v_j)+\phi_j\geq -\gamma(\phi_j-v_k)+\phi_j \geq -\gamma(\phi_k-v_k) + \phi_k=\chi_k. 
\]
It follows that $\chi_j$ decreases to some $\theta$-psh function in $V$ which has to be $\chi$. Indeed, the pointwise convergence $\chi_j \to \chi$ is seen to hold on the set $\{\phi>-\infty\}$. Since we have $\chi_j \leq \phi_j \to -\infty$ on $\{\phi>-\infty\}$, pointwise convergence holds everywhere on $X$. It follows that $\chi$ is $\theta$-psh on $X$.

Next we now show that
 \begin{flalign}\label{eq: to_argue}
 \theta +dd^c \chi \geq \gamma'(\phi-v) \theta_v.
 \end{flalign}
 For $t>0$, let $\phi^t := \max(\phi,V_\theta - t)$, $v^t := \max(v,V_\theta - t)$ and $\chi^t:= -\gamma({\phi^t-v_t})+\phi^t$. Note that all of these potentials are locally bounded on $\textup{Amp}(\{\theta\})$. As before, after slightly shrinking $V$, the mollifications $v^t_j$ and $\phi^t_j$ are smooth approximants of $v^t$ and $\phi^t$ on $V$. 
The same computations give that $\chi^t_{j}:= -\gamma({\phi^t_{j}-v^t_j})+\phi^t_{j}$ satisfies \eqref{eq_psh}.
 By Proposition \ref{prop: xing_conv}, letting $j\rightarrow\infty$ we obtain
\begin{eqnarray*}
\theta +dd^c \chi^t  \geq   \gamma'(\phi^t-v^t) (\theta+dd^c v^t) \quad {\rm in} \; V.
\end{eqnarray*}
Let $U_t = \{v > V_\theta - t\}=\{\phi > V_\theta - t\} \cap \{v > V_\theta - t\}$. Using plurifine locality, we get that
	 \begin{eqnarray*}
	 {\bf 1}_{U_t}(\theta +dd^c \chi)={\bf 1}_{U_t}(\theta +dd^c \chi_t) \geq {\bf 1}_{U_t}   \gamma'(\phi-v) (\theta+dd^c v) \quad {\rm in} \; V. 
\end{eqnarray*}
Letting $t \to \infty$, we arrive at \eqref{eq: to_argue}.

Unfortunately \eqref{eq: to_argue} does not directly imply \eqref{eq: measure_ineq}  since $ \gamma'(\phi-v) \theta_v$ is not a closed form. However the above approximation process allows to conclude on the chart $V$ considered above.\\
Since $\chi^t_{j}$ and $v^t_{j} $  are smooth, from \eqref{eq: to_argue} we get that 
\begin{eqnarray*}
(\theta +dd^c \chi^t_{j} )^n \geq   (\gamma'(\phi^t_{j}-v^t_{j}) )^n(\theta+dd^c v^t_j)^n  \quad {\rm in} \; V.
\end{eqnarray*}
We now conclude similarly. By Proposition \ref{prop: xing_conv}, letting $j\rightarrow\infty$ we obtain
\begin{eqnarray*}
(\theta +dd^c \chi^t )^n \geq   (\gamma'(\phi^t-v^t) )^n(\theta+dd^c v^t)^n  \quad {\rm in} \; V.
\end{eqnarray*}
Let $U_t = \{\phi > V_\theta - t\} \cap \{v > V_\theta - t\}$. Using plurifine locality, we get that
	 \begin{eqnarray*}
	 {\bf 1}_{U_t}(\theta +dd^c \chi)^n \geq {\bf 1}_{U_t}   (\gamma'(\phi-v) )^n(\theta+dd^c v)^n \quad {\rm in} \; V. 
\end{eqnarray*}
Letting $t \to \infty$, we arrive at the conclusion.
\end{proof}

\begin{theorem}\label{thm: min sing of solution}
Let $u \in \mathcal E(X,\theta,\phi)$ with $\sup_X u =0$. If $\theta_u^n = f \omega^n$ for some $f \in L^p(\omega^n), \ p > 1,$ then $u$ has the same singularity type as $\phi$. More precisely:
$$
\phi - C\Big(\|f\|_{L^p},p,\omega,\theta,\int_X \theta_\phi^n\Big)  \leq u \leq \phi.
$$
\end{theorem}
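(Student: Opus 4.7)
The upper bound $u\le\phi$ is essentially free. Since $u\in\mathcal E(X,\theta,\phi)$, Theorem \ref{thm: E_memb_char} gives $P_\theta[u]=P_\theta[\phi]=\phi$, so the normalization $\sup_X u=0$ together with $\phi\le 0$ yields $u\le u-\sup_X u\le P_\theta[u]=\phi$.

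The content is the lower bound. I will follow the quasi-psh envelope approach of Guedj--Lu, adapted to the relative setting. The idea is to run a Ko{\l}odziej-type iteration on the relative sublevel sets of $\phi-u$, using the relative capacity $\capa_{\theta,\phi}$ developed in Chapter 4.

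\emph{Step 1 (Absolute continuity of $f\omega^n$ w.r.t.\ $\capa_{\theta,\phi}$).} By H{\"o}lder's inequality with exponents $p,q$,
$$ \int_E f\,\omega^n \;\le\; \|f\|_{L^p}\,\bigl(\volo(E)\bigr)^{1/q}. $$
The Alexander--Taylor--Ko{\l}odziej bound gives $\volo(E)\le A\exp(-B/\capa_{\omega}(E)^{1/n})$, and Theorem \ref{thm: Cap comparison} upgrades the right-hand side to a function of $\capa_{\theta,\phi}(E)$. Altogether one obtains
$$ \int_E f\,\omega^n \;\le\; F\!\left(\capa_{\theta,\phi}(E)\right),\qquad E\subset X\ \text{Borel}, $$
for a continuous $F$ with $F(s)\to 0$ faster than any power of $s$, controlled by $\|f\|_{L^p},p,\omega,\theta$.

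\emph{Step 2 (Dinew--Ko{\l}odziej iteration on relative sublevel sets).} For $s\ge 0$ set $g(s):=\capa_{\theta,\phi}(\{u<\phi-s\})$. The goal is the iteration
$$ t^n\, g(s+t) \;\le\; \int_{\{u<\phi-s\}} f\,\omega^n \;\le\; F\!\bigl(g(s)\bigr),\qquad s\ge 0,\; t\in(0,1]. $$
Here is how I would set it up: given $w\in\PSH(X,\theta)$ with $\phi-1\le w\le\phi$, form the shifted convex combination $w_t:=(1-t)u+t(w-s-t+\phi)$ inside the class $\mathcal E(X,\theta,\phi)$ (this is admissible by Corollary~\ref{cor: convexity} and Lemma~\ref{lem: two functions in E}). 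On $\{u<\phi-s-t\}$ one has $w_t > u$, while by multilinearity of the non-pluripolar product $\theta_{w_t}^n\ge t^n\theta_w^n$. The partial/full comparison principle of Corollary \ref{cor: comparison principle}, applied to $u$ and $w_t$, then gives
$$ t^n\!\!\int_{\{u<\phi-s-t\}} \theta_w^n \;\le\; \int_{\{u<w_t\}} \theta_{w_t}^n \;\le\; \int_{\{u<w_t\}} \theta_u^n \;\le\; \int_{\{u<\phi-s\}} f\,\omega^n. $$
Taking the supremum over all admissible $w$ yields the iteration.

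\emph{Step 3 (Conclusion).} Combining Steps 1--2 gives $t^n g(s+t)\le F(g(s))$ with $F$ super-polynomially small. A standard De Giorgi--Ko{\l}odziej style lemma then produces $s_\infty$, depending only on $\|f\|_{L^p},p,\omega,\theta$ and $\int_X\theta_\phi^n=g(0)$-controllers, such that $g(s)=0$ for $s\ge s_\infty$. Since $\theta_u^n=f\omega^n$ does not charge pluripolar sets, $g(s_\infty)=0$ forces $\{u<\phi-s_\infty\}$ to be pluripolar. Finally, the domination principle (Theorem \ref{thm: domination principle}) applied inside $\mathcal E(X,\theta,\phi)$ to $u$ and $\max(u,\phi-s_\infty)$ upgrades \emph{pluripolar} to \emph{empty}, giving $u\ge\phi-s_\infty$.

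\emph{Main obstacle.} The crux is Step 2: constructing a genuinely $\theta$-psh test potential $w_t$ that lives in $\mathcal E(X,\theta,\phi)$ and that interacts correctly with the relative comparison principle. In the absolute Ko{\l}odziej argument one simply subtracts a constant; here the shifts involve the singular model potential $\phi$, so the convexity of $\mathcal E(X,\theta,\phi)$ (Corollary \ref{cor: convexity}), the stability properties established in Lemma \ref{lem: two functions in E}, and the capacity comparison Theorem \ref{thm: Cap comparison} are precisely the ingredients that legitimize the shift-by-$\phi$ maneuver. Without the relative theory developed in Chapters 3--4, this iteration simply could not be carried out.
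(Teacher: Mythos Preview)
Your strategy --- a Ko{\l}odziej-type iteration on the relative sublevel sets $\{u<\phi-s\}$ using the capacity $\capa_{\theta,\phi}$ --- is a legitimate alternative, and in fact it is precisely the \emph{original} route taken in \cite{DDL2,DDL4}. The paper deliberately replaces it by the Guedj--Lu quasi-psh envelope method (Lemma~\ref{ineq: forms_MA} plus a carefully chosen convex weight $\chi$ and an ODE-type argument), which avoids capacities altogether and gives a more self-contained a priori estimate. So the two routes are genuinely different: yours needs the full relative capacity machinery, the paper's needs only the envelope Lemma~\ref{ineq: forms_MA} and Theorem~\ref{thm: envelope contact}.

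That said, your sketch has two concrete gaps. First, in Step~1 you invoke Theorem~\ref{thm: Cap comparison} to pass from $\capa_\omega$ to $\capa_{\theta,\phi}$, but that theorem goes the \emph{wrong} way: it bounds $\capa_{\theta,\phi}$ above by a function of $\capa_\omega$, whereas to insert into the Alexander--Taylor bound $\volo(E)\le A\exp(-B/\capa_\omega(E)^{1/n})$ you would need $\capa_\omega(E)\le G(\capa_{\theta,\phi}(E))$. What the Ko{\l}odziej route actually requires is a direct volume--$\capa_{\theta,\phi}$ inequality of the form $\volo(E)\le A\exp\bigl(-a(\int_X\theta_\phi^n/\capa_{\theta,\phi}(E))^{1/n}\bigr)$, proved in \cite{DDL2} via the \emph{relative} global extremal function of $E$; this ingredient is not in the present survey and must be established separately.

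Second, your test function in Step~2 is malformed: $w_t=(1-t)u+t(w-s-t+\phi)$ contains the term $t(w+\phi)$, so $w_t$ is only $(1+t)\theta$-psh, and $\theta_{w_t}^n\ge t^n\theta_w^n$ fails. The correct competitor is $tw+(1-t)\phi-s$, which is $\theta$-psh, lies in $\mathcal E(X,\theta,\phi)$, and satisfies $\{u<\phi-s-t\}\subseteq\{u<tw+(1-t)\phi-s\}\subseteq\{u<\phi-s\}$; the comparison principle (Corollary~\ref{cor: comparison principle}) then yields $t^n\int_{\{u<\phi-s-t\}}\theta_w^n\le\int_{\{u<\phi-s\}}f\,\omega^n$ as desired.
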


\begin{proof}
To simplify the notation, we set $\mu=f\omega^n$.  

\noindent  {\bf A priori estimate. }
We assume at the moment that $u-\phi$ is bounded, hence
 \[
 T_{\max}:= \sup \{t>0 \; : \; \mu(u <\phi-t) >0\}<\infty.  
 \]
Our goal is to establish a uniform bound on $T_{\max}$. 
 Since $f\in L^p(\omega^n)$ and ${\rm PSH}(X,\omega)\subset L^q(\omega^n)$ for any $q>0$, by H\"older inequality, ${\rm PSH}(X,\omega) \subset L^r(\mu)$, for any $r>0$ and  the quantity 
\[
A_r(\mu):= \sup \left\{\int_X (-h)^r d\mu \; : \; h \in {\rm PSH}(X,\omega), \; \sup_X h=0\right\}
\]
is finite and it depends on an upper bound for $\|f\|_p$.  

    By definition,  $u\geq \phi-T_{\max}$ almost everywhere with respect to $\mu=\theta_u^n$,  hence everywhere by the domination principle (Theorem \ref{thm: domination principle}), providing the desired a priori bound. In particular $|u-\phi|\leq T_{\max}$.
    
 We let $\chi:\mathbb{R}^+ \rightarrow \mathbb{R}^+$ denote a convex increasing function
 such that $\chi(0)=0$ and $\chi'(0) = 1$ (hence $\chi'(t)\geq 1$). Let $\gamma : \mathbb{R}^+ \rightarrow \mathbb{R}^+$ denote the inverse function of $\chi$, which is concave and increasing. 
 We set $\psi=-\chi (\phi-u)+ \phi$, $v=P_{\theta}(\psi)$,  and observe that 
$$\varphi:=-\gamma(\phi-v) +\phi\leq -\gamma(\phi-\psi) +\phi=  -\gamma(\chi(\phi-u)) +\phi\leq  u$$ with equality on the contact set $\{v=\psi\}$. Since $u$ has the same singularity type as $\phi$, so does $v$. In particular, by Proposition \ref{prop: comparison generalization} $$\int_X \theta_v^n=\int_X \theta_\phi^n= \int_X \theta_u^n.$$
Observe that $\gamma'\leq 1$ since $\chi'(\gamma(t))\gamma'(t)=1$ and that $\phi-u= \gamma (\phi-v)$  on $\{v=\psi\}$. Lemmas \ref{ineq: forms_MA} and \ref{lem: concentration max} thus give
 \[
 \id_{\{v=\psi\}} (\chi'(\phi-u))^{-n}\theta_v^n = \id_{\{v=\psi\}} (\gamma'(\phi-v))^n\theta_v^n \leq\id_{\{v=\psi\}} \theta_\varphi^n \leq \id_{\{v=\psi\}} \theta_u^n \leq \theta_u^n,
 \] 
 hence $\id_{\{v=\psi\}} \theta_v^n \leq (\chi'(\phi-u))^n \mu$. By Theorem \ref{thm: envelope contact}, we can infer $\theta_v^n \leq (\chi'(\phi-u))^n \mu$.
  
 \smallskip
 
\noindent  {\bf Step 1: Controlling the energy of $v$}.
 The convexity of $\chi$ and the normalization $\chi(0)=0$ yields $\chi(t) \leq t \chi'(t)$.
 Since $v=-\chi (\phi-u)+\phi$ on $\{v=\psi\}$, the above inequality, the convexity of $\chi$ and H\"older inequality (with $p= n+2$ and $q=(n+2)/(n+1)$) yields
 \begin{flalign*}
 \int_X (\phi-v) \theta_v^n
 &=   \int_X \chi (\phi-u)  \theta_v^n \leq  \int_X \chi (\phi-u)(\chi'(\phi-u))^{n} d\mu \\
 &\leq \int_X (\phi-u)(\chi'(\phi-u))^{n+1} d\mu\\
 &\leq\left(\int_X (\phi-u)^{n+2} d\mu \right)^{\frac{1}{n+2}} \left(\int_X (\chi'(\phi-u))^{n+2} d\mu \right)^{\frac{n+1}{n+2}}\\
 &\leq A_{n+2}(\mu)^{\frac{1}{n+2}}. \left(\int_X (\chi'(\phi-u))^{n+2} d\mu \right)^{\frac{n+1}{n+2}}. 
 \end{flalign*}

  \smallskip

\noindent  {\bf Step 2: Controlling the norms $||u||_{L^m}$}. To simplify the notation we set $m=n+3$. 
We are going to choose below the weight $\chi$ in such a way that
$\int_X ( \chi'  (\phi-u))^{n+2} d\mu \leq 2$.
This provides a uniform lower bound on $\sup_X v$ as we now explain. 
Indeed,
$$
0 \leq  (-\sup_X v) \mu(X) = -\sup_X (v-\phi)  \int_X \theta_v^n   
\leq \int_X (\phi-v)\theta_v^n \leq 2A_{n+2}(\mu)^{\frac{1}{n+2}} =C_1,
$$
where the first identity follows from Lemma \ref{lem: model sup over X}.
This yields $\sup_X v \geq -C_1 \mu(X)^{-1}$.
We infer that  $v$ belongs to a compact set of $\theta$-psh functions, hence its norm $||v||_{L^m(\mu)}$ is under control. Since $v\leq \psi$ it follows that $\phi-v \geq \chi(\phi-u) \geq 0$ and hence 
\begin{flalign*}
  \int_X(\chi (\phi-u))^m d\mu  &\leq \int_X (\phi-v)^m d\mu \\
  & \leq \int_X |v-\sup_X v +\sup_X v|^m d\mu \\
  &\leq 2^{m-1} \int_X(|v-\sup_X v|^m + |\sup_X v|^m) d\mu \\
  &\leq 2^{m-1} A_{m}(\mu) + 2^{m-1} |\sup_X v|^m \mu(X)\leq C_2. 
\end{flalign*} 
Chebyshev's inequality thus yields
\begin{equation} \label{eq:clef}
 {\mu }(u<\phi-t) \leq  \frac{1}{(\chi(t))^m}\int_X(\chi (\phi-u))^m d\mu\leq  \frac{C_2}{(\chi(t))^m}. 
 \end{equation}

   \smallskip
 
\noindent  {\bf Step 3: Choice of $\chi$}.
 If $g: \mathbb{R}^+ \rightarrow \mathbb{R}^+$  is strictly increasing with $g(0)=1$, Lebesgue's formula gives
 \begin{equation}\label{Leb_int}
\int_X g (\phi-u) d\mu = \mu(X) +\int_0^{T_{\max}}  g'(t) \mu(u<\phi-t) dt.
 \end{equation}
Fix $0<T_0<T_{\max}$. 
Setting $g(t)=(\chi' (t))^{n+2}$  we define $\chi$ by imposing $\chi(0)=0$, $\chi'(0)=1$, and 
$$
g'(t)=
\begin{cases}
	\dfrac{1}{(1+t)^2 {\mu }(u<\phi-t)}, \; \text{if} \; t\leq T_0\\
	\; 
\\	\frac{1}{(1+t)^2} \; \; \; \; \text{ if}\;  t >  T_0
\end{cases}.
$$
This choice guarantees that $\chi: \mathbb{R}^+ \rightarrow \mathbb{R}^+$ is convex increasing with $\chi' \geq 1$, and by \eqref{Leb_int} that
$$
\int_X ( \chi'  (\phi-u))^{n+2} d\mu \leq  \mu(X) + \int_0^{\infty} \frac{dt}{(1+t)^2} =2.
$$

\smallskip

 \noindent  {\bf Step 4: Conclusion}.  
Observe that $g(t) \geq g(0)=1$, hence $\chi'(t) =(g(t))^{\frac{1}{n+2}} \geq 1$. This yields
\begin{equation} \label{eq:minh(1)}
\chi(t)=\int_0^t \chi'(s) ds \geq t.
\end{equation}
In particular $\chi(t)\geq t$. Together with \eqref{eq:clef}, our choice of $\chi$ yields, for all $t\in [0,T_0]$, 
$$
\frac{1}{(1+t)^2g'(t)}={\mu }(u<\phi-t) \leq \frac{C_2}{(\chi(t))^m}.
$$
 This reads
$$
(\chi(t))^m \leq  C_2(1+t)^2 g'(t)=(n+2) C_2 (1+t)^2 \chi''(t)  (\chi'(t))^{n+1}, \quad \forall t\in [0,T_0]. 
$$
Multiplying  by $\chi'\geq 1$,  integrating between $0$ and $t$, 
we get that for all $t \in [0,T_0]$
\begin{eqnarray*}
\frac{(\chi(t))^{m+1}}{m+1}
&\leq&   (n+2)  C_2   \int_0^t (1+s)^2 \chi''(s)  (\chi'(s))^{n+2}\ ds\\
&\leq &   (n+2)  C_2   \int_0^t [(1+s)^2 \chi''(s)  (\chi'(s))^{n+2} + 2(1+s) ((\chi'(s))^{n+3}-1)]\ ds \\
&= &  \frac{(n+2) C_2 (1+t)^2 }{n+3} (1+s)^2  \left ((\chi'(s))^{n+3} - 1 \right) \big|_{s=0}^{s=t}\\
&\leq  & \frac{(n+2) C_2 (1+t)^2 }{n+3} \left ((\chi'(t))^{n+3} - 1 \right)  \\
&\leq&    C_3    (1+t)^2   (\chi'(t))^{n+3}.
\end{eqnarray*}
Recall that we choose $m=n+3$ so that 
$
\alpha:=m+1> \beta:= n+3>2.
$
The previous inequality then reads
$$
 (1+t)^{-\frac{2}{\beta}} \leq C_4  {\chi'(t)}{\chi(t)^{-\frac{\alpha}{\beta}}}. 
$$
Since $\alpha>\beta>2$ and $\chi(1) \geq 1$ (by \eqref{eq:minh(1)}), 
integrating the above inequality between $1$ and $T_0$ we obtain 

$$ \frac{\beta-2}{\beta} (1+t)^{\frac{\beta-2}{\beta}} \big|_1^{T_0} \leq C_4 \frac{\beta-\alpha}{\beta} \chi(t)^{\frac{-\alpha+\beta}{\beta}} \big|_1^{T_0} \leq C_4 \frac{\alpha-\beta}{\beta} \chi(1)^{\frac{-\alpha+\beta}{\beta}} \leq C_5.$$
It then follows from the above that 
$
T_0 \leq C_6,
$
for some uniform constant $C_6>0$.
Since $T_0$ was chosen arbitrarily in $(0,T_{\max})$ the uniform estimate for $T_{\max}$ follows. 

\paragraph{Relative boundedness of $u$.}
To finish the proof we finally prove that $u-\phi$ is bounded.  
We fix $1<q<p$ and a constant $\varepsilon>0$ so small that $e^{-\varepsilon h}f \in L^q(X)$ for all $h\in {\rm PSH}(X,\omega)$.
For each $j$ we solve 
\[
u_j \in \mathcal{E}(X,\theta,\phi), \; \theta_{u_j}^n = \id_{\{u>\phi-j\}}e^{\varepsilon(u_j-\max(u,\phi-j)} \mu. 
\]
Observe that for any fixed $j$, $\max(u,\phi-j)$ is a subsolution of the above equation because 
\[
\theta_{\max(u,\phi-j)}^n \geq {\bf 1}_{\{u>\phi-j\}} \theta_u^n={\bf 1}_{\{u>\phi-j\}} e^{\varepsilon(\max(u,\phi-j)-\max(u,\phi-j))} \mu.
\]
Lemma \ref{lem: comp_sol_subsol} then gives $u_j\geq \max(u,\phi-j)$, hence $\sup_X u_j\geq 0$ and  $u_j-\phi$ is bounded. 
If $j<k$, then 
\[
\theta_{u_k}^n = {\bf 1}_{\{u>\phi-k\}} \id_{\{u>\phi-k\}}e^{\varepsilon(u_k-\max(u,\phi-k)} \mu \geq \id_{\{u>\phi-j\}}e^{\varepsilon(u_k-\max(u,\phi-j)} \mu.
\]
Invoking again Lemma \ref{lem: comp_sol_subsol} we obtain $u_j\geq u_k$.  The measures 
\[
\mu_j:= \id_{\{u>\phi-j\}}e^{\varepsilon(u_j-\max(u,\phi-j)} \mu = f_j \omega^n
\]
have densities $f_j\leq e^{\varepsilon u_1} e^{-\varepsilon \max(u,\phi-j)}f\leq e^{\varepsilon \sup_X u_1}e^{-\varepsilon \max(u,\phi-j)}f$ which are uniformly bounded in $L^q(X)$. By the above a priori estimate,  we have a uniform bound $u_j \geq \phi-C$, hence $v:=\lim_{j\to \infty}u_j$ satisfies $v\geq \phi-C$ and $\theta_{v}^n =e^{\varepsilon(v-u)}\mu$. Since $\theta_{v}^n =e^{\varepsilon(u-u)}\mu $, Lemma \ref{lem: comp_sol_subsol} ensures $u=v$. Thus $u-\phi$ is bounded as desired. This finishes the proof.  
\end{proof}

\begin{coro}
	 If $\lambda>0$ and $u \in \mathcal{E}(X,\theta,\phi)$, $\theta_u^n = e^{\lambda u}f \omega^n$ for some $f \in L^p(\omega^n), \ p > 1,$ then $u$ has the same singularity type as $\phi$.
\end{coro}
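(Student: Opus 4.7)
The plan is to reduce this corollary to Theorem \ref{thm: min sing of solution} (the $\lambda = 0$ case) by a simple normalization, absorbing the exponential factor $e^{\lambda u}$ into the right-hand side density.

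First I would exploit the fact that $u$ is $\theta$-psh on the compact manifold $X$, hence bounded above: set $M := \sup_X u < \infty$ and $\tilde u := u - M$. Since adding a constant does not affect membership in $\textup{PSH}(X,\theta,\phi)$, the non-pluripolar mass, or the singularity type, we still have $\tilde u \in \mathcal E(X,\theta,\phi)$ with $\sup_X \tilde u = 0$, and $\theta_{\tilde u}^n = \theta_u^n$.

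Next I would rewrite the equation in the normalized form
\[
\theta_{\tilde u}^n \;=\; e^{\lambda u}f\,\omega^n \;=\; e^{\lambda M}\,e^{\lambda \tilde u}\,f\,\omega^n \;=\; g\,\omega^n,
\]
where $g := e^{\lambda M} e^{\lambda \tilde u} f \geq 0$. Because $\tilde u \leq 0$, we have $0 \leq e^{\lambda \tilde u} \leq 1$, hence $0 \leq g \leq e^{\lambda M} f$, so $g \in L^p(\omega^n)$ with $\|g\|_{L^p} \leq e^{\lambda M}\|f\|_{L^p}$.

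Finally, I would apply Theorem \ref{thm: min sing of solution} to $\tilde u$: it lies in $\mathcal E(X,\theta,\phi)$, satisfies $\sup_X \tilde u = 0$, and solves $\theta_{\tilde u}^n = g\,\omega^n$ with $g \in L^p(\omega^n)$. The theorem then yields $\phi - C \leq \tilde u \leq \phi$, so $\tilde u$ has the same singularity type as $\phi$, and therefore so does $u = \tilde u + M$. There is no real obstacle here; the only thing to be careful about is that $M$ is automatically finite (being the supremum of a qpsh function on a compact manifold), so the reduction requires no a priori bound beyond what $\theta$-plurisubharmonicity already provides.
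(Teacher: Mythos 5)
Your argument is exactly the paper's: since $u$ is bounded above on the compact manifold $X$ and $\lambda>0$, the density $e^{\lambda u}f$ stays in $L^p(\omega^n)$, and Theorem \ref{thm: min sing of solution} applies directly. The normalization $\tilde u = u - \sup_X u$ is a harmless bookkeeping step; the proof is correct and matches the paper's.
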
 

\begin{proof} 
Since $u$ is bounded from above on $X$ and $\lambda>0$ it follows that $e^{\lambda u} f\in L^p(X,\omega^n)$, $p>1$. The result follows from Theorem \ref{thm: min sing of solution}. 
\end{proof}

\section{Naturality of model type singularities and examples}
\label{sec: Examples}

One may still wonder if our choice of model potentials is a natural one in our pursuit of  complex Monge--Amp\`ere equations with prescribed singularity. We address these doubts in the next well posedness result.

\begin{theorem}\label{thm: naturality_of_model} Suppose that $\psi \in \textup{PSH}(X,\theta)$ and the equation 
$$\theta_u^n = f \omega^n$$
has a solution $u \in \textup{PSH}(X,\theta)$ with the same singularity type as $\psi$, for all $f \in L^{\infty}(X), \ f \geq 0$ satisfying $\int_X \theta_\psi^n = \int_X f \omega^n > 0$. Then $\psi$ has model type singularity. 
\end{theorem}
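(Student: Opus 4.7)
The plan is to compare any hypothesized solution of $\theta_u^n = f\omega^n$ with $[u] = [\psi]$ against the canonical solution living inside the relative full mass class $\mathcal{E}(X,\theta, P_\theta[\psi])$, and to deduce from their equality (via uniqueness) that $[\psi] = [P_\theta[\psi]]$.

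First I would set $\phi := P_\theta[\psi]$. By the last assertion of Theorem~\ref{thm: ceiling coincide envelope non collapsing} combined with Remark~\ref{rem: increasing implies capacity}, $\phi$ is a model potential with $\int_X \theta_\phi^n = \int_X \theta_\psi^n > 0$. Now fix an arbitrary $f \in L^\infty(X)$ with $f \geq 0$ and $\int_X f\omega^n = \int_X \theta_\phi^n$. Theorem~\ref{thm4}(i) applied to $\phi$ (valid because $L^\infty \subset L^p$ for every $p > 1$) will produce $v \in \textup{PSH}(X,\theta)$ with $[v] = [\phi]$ and $\theta_v^n = f\omega^n$. The hypothesis of the present theorem then supplies $u \in \textup{PSH}(X,\theta)$ with $[u] = [\psi]$ and $\theta_u^n = f\omega^n$.

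Next I would verify that both $u, v \in \mathcal{E}(X,\theta,\phi)$, so that the uniqueness statement applies. Since $\psi - \sup_X \psi \leq \phi$ by the very definition of $P_\theta[\psi]$, we have $\psi \preceq \phi$, hence $u \preceq \phi$ because $[u] = [\psi]$; and $\int_X \theta_u^n = \int_X f\omega^n = \int_X \theta_\phi^n$, so indeed $u \in \mathcal{E}(X,\theta,\phi)$. Membership $v \in \mathcal{E}(X,\theta,\phi)$ is immediate from $[v] = [\phi]$. Theorem~\ref{thm: uniqueness} then forces $u - v$ to be a constant, so $[\psi] = [u] = [v] = [\phi] = [P_\theta[\psi]]$, which is precisely the condition that $\psi$ has model type singularity.

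No genuine obstacle is expected: once one spots that $P_\theta[\psi]$ itself is a model potential with the same mass as $\psi$, the whole statement reduces to running the existence theorem for that canonical candidate and invoking the uniqueness theorem in the relative full mass class. The only point worth confirming carefully is that the hypothesized $u$ automatically lands in $\mathcal{E}(X,\theta,\phi)$, which follows because its singularity type equals $[\psi]$ and hence its non-pluripolar mass agrees with that of $\phi$. One may remark that the role of the $L^\infty$ assumption in the hypothesis is only to guarantee that Theorem~\ref{thm4} is applicable to the produced right-hand side; indeed a single judicious choice of $f$ (e.g.\ a positive constant multiple of $\omega^n$ with the correct total mass) already suffices to run the argument.
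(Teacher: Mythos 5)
Your proof is correct: $\phi:=P_\theta[\psi]$ is a model potential with $\int_X\theta_\phi^n=\int_X\theta_\psi^n$, both the hypothesized $u$ and the solution $v$ produced by Theorem~\ref{thm4}(i) land in $\mathcal E(X,\theta,\phi)$ with the same Monge--Amp\`ere measure, and Theorem~\ref{thm: uniqueness} forces $[\psi]=[u]=[v]=[\phi]$. The crux --- uniqueness in the larger class $\mathcal E(X,\theta,P_\theta[\psi])$ --- is the same as in the paper, but you manufacture the comparison solution differently. The paper argues by contradiction with a single tailor-made right-hand side: by Theorem~\ref{thm: MA of env sing type} the measure $\theta^n_{P_\theta[\psi]}$ itself has bounded density $g$, so $P_\theta[\psi]$ is \emph{already} the unique solution of $\theta_u^n=g\omega^n$ in $\mathcal E(X,\theta,P_\theta[\psi])$, and no existence theorem is needed at all; if $[\psi]\neq[P_\theta[\psi]]$, the hypothesized solution for this particular $g$ cannot exist. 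You instead take an arbitrary admissible $f$ and invoke the full existence-and-relative-boundedness machinery (Theorem~\ref{thm4}(i), i.e.\ Theorems~\ref{thm: existence lambda =0} and~\ref{thm: min sing of solution}) to produce $v$ with $[v]=[\phi]$. Both routes are valid and non-circular (those results precede this theorem in the paper); yours is direct rather than by contradiction, at the cost of relying on considerably heavier machinery, while the paper's choice of $f$ makes the comparison object available for free. Your closing remark that a single $f$ suffices is accurate and in fact points toward the paper's more economical version.
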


\begin{proof} Suppose that $[\psi]$ is not of model type. Then $P_{\theta}[\psi]$ is strictly less singular than $\psi$. On the other hand $\mathcal E(X,\theta,\psi) \subset \mathcal E(X,\theta,P_{\theta}[\psi])$ as $\int_X \theta_\psi^n = \int_X \theta_{P_{\theta}[\psi]}^n$. 

By Theorem \ref{thm: MA of env sing type}, there exists $g \in L^\infty$ such that $\theta^n_{P_{\theta}[\psi]} = g \omega^n.$ 
By uniqueness (Theorem \ref{thm: uniqueness}), $P_{\theta}[\psi]$ is the only solution of this last equation inside $\mathcal E(X,\theta,P_{\theta}[\psi])$. 

Since $\mathcal E(X,\theta,\psi) \subset \mathcal E(X,\theta,P_{\theta}[\psi])$, but $P_{\theta}[\psi] \notin \mathcal E(X,\theta,\psi)$, we get that $\theta_u^n = g \omega^n$ cannot have any solution that has the same singularity type as $\psi$.
\end{proof}

Next we point out a simple way to construct potential with model singularity types:

\begin{prop}\label{prop: Lp example} Suppose that $\psi \in \textup{PSH}(X,\theta)$  and $\theta_\psi^n = f \omega^n$ for some $f \in L^p(\omega^n), \ p > 1$ with $\int_X f \omega^n >0$. Then $\psi$ has model type singularity.
\end{prop}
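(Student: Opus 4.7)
The strategy is to observe that this proposition is essentially a corollary of Theorem \ref{thm: min sing of solution}, once the correct auxiliary model potential is chosen. Set $\phi := P_\theta[\psi]$. My first step is to verify that $\phi$ is a model potential in the sense of our definitions: by Theorem \ref{thm: ceiling coincide envelope non collapsing} we have
\[
P_\theta[\phi] = P_\theta\bigl[P_\theta[\psi]\bigr] = P_\theta[\psi] = \phi,
\]
and by Remark \ref{rem: increasing implies capacity}, $\int_X \theta_\phi^n = \int_X \theta_\psi^n > 0$. So $\phi$ fits the standing hypotheses of the relevant section.

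Next I would normalize: put $u := \psi - \sup_X \psi$, so that $\sup_X u = 0$ and $\theta_u^n = \theta_\psi^n = f\,\omega^n$ with $f \in L^p(\omega^n)$. Since $u$ and $\psi$ differ by a constant, $[u]=[\psi]$, and in particular $u$ is an admissible competitor in the supremum defining $P_\theta[\psi]$, giving $u \leq \phi$. Combined with the mass identity from Remark \ref{rem: increasing implies capacity}, this shows $u \in \mathcal{E}(X,\theta,\phi)$.

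All hypotheses of Theorem \ref{thm: min sing of solution} are now satisfied. The theorem yields a constant $C>0$ such that
\[
\phi - C \leq u \leq \phi,
\]
so $u$ has the same singularity type as $\phi$. Hence $[\psi] = [u] = [\phi] = [P_\theta[\psi]]$, which is exactly the statement that $\psi$ has model type singularity.

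Since the proof is a clean reduction to Theorem \ref{thm: min sing of solution}, there is no real obstacle — the substantive work (the $L^p$ a priori estimate producing the uniform lower bound $u \geq \phi - C$) has already been carried out there. The only point to double-check is the soft verification that $\psi$ lies in $\mathcal{E}(X,\theta,P_\theta[\psi])$, which is immediate from the definition of $P_\theta[\psi]$ together with Remark \ref{rem: increasing implies capacity}.
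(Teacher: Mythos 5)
Your proof is correct and follows essentially the same route as the paper: both reduce the statement to the relative boundedness result (Theorem \ref{thm: min sing of solution}) applied with $\phi = P_\theta[\psi]$, after the soft verification that $\psi$ (normalized) lies in $\mathcal{E}(X,\theta,P_\theta[\psi])$. Your write-up merely spells out in more detail the checks (that $P_\theta[\psi]$ is a model potential and that the mass identity holds) which the paper leaves implicit.
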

\begin{proof}
We first observe that $\psi \in \mathcal{E}(X,\theta,P_{\theta}[\psi])$. Since $\theta_{\psi}^n$ has $L^p$ density with $p>1$, it thus follows from Theorem  \ref{thm: min sing of solution} that  $\psi-P_{\theta}[\psi]$ is bounded on $X$, hence $[\psi]=[P_\theta[\psi]]$, implying that $\psi$ has model singularity type.
\end{proof}

As noticed in \cite{RWN14} (see \cite{RS05} for the local case), all analytic singularity types are of model type.

\begin{prop}\label{prop: analytic example} Suppose $\psi \in \textup{PSH}(X,\theta)$ has analytic singularity type, i.e. $\psi$ can be locally written as $\frac{c}{2} \log \big(\sum_j |f_j|^2\big) + g$, where $f_j$ are holomorphic, $c > 0$ and $g$ is bounded. Then $[\psi]$ is of model type.
\end{prop}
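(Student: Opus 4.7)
The plan is to prove that $P_\theta[\psi] \leq \psi + C$ for some finite constant $C$; since $\psi - \sup_X \psi \leq P_\theta[\psi]$ always holds from the definition of the envelope (as $\psi - \sup_X \psi$ is itself a candidate), this upgrades to the equality $[\psi] = [P_\theta[\psi]]$, which is the model property.

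First I would reduce to a divisorial singularity via a log-resolution. Let $\mathcal{J} \subset \mathcal{O}_X$ be the coherent ideal sheaf locally generated by the holomorphic functions $(f_j)$ from the analytic singularity description of $\psi$, and pick a log-resolution $\pi: Y \to X$ so that $\pi^{-1}\mathcal{J}\cdot \mathcal{O}_Y = \mathcal{O}_Y(-\sum_i a_i E_i)$ for an SNC divisor $D = \sum_i a_i E_i$ on $Y$. Then $\pi^*\psi = c\log|s_D| + g$ with $g$ bounded on $Y$. Because $\pi$ is biholomorphic off the pluripolar exceptional locus and plurisubharmonic functions extend uniquely across pluripolar sets, pullback gives a bijection between candidates in the sup defining $P_\theta[\psi]$ on $X$ and those defining $P_{\pi^*\theta}[\pi^*\psi]$ on $Y$, preserving singularity types. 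A direct verification then gives $\pi^* P_\theta[\psi] = P_{\pi^*\theta}[\pi^*\psi]$, so it suffices to prove the model property on $Y$ for the divisorial singularity $\pi^*\psi$.

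Next I would establish a uniform local bound on $Y$. Fix a candidate $v \in \psh(Y, \pi^*\theta)$ with $v \leq 0$ and $[v] = [\pi^*\psi]$. In any SNC chart with coordinates $(z_1, \ldots, z_n)$ where $D$ is cut out by $z_1^{b_1}\cdots z_k^{b_k} = 0$ (so $\pi^*\psi = \sum_{j=1}^k b_j\log|z_j| + O(1)$ locally, with $b_j = c a_{i_j}$), the condition $[v] \leq [\pi^*\psi]$ forces the Lelong numbers of $v$ along each component $\{z_j=0\}$ to be at least $b_j$. A Siu decomposition of $\pi^*\theta + dd^cv$, combined with sub-averaging on small polydisks against the explicit local model $\sum_j b_j \log|z_j|$, yields the bound $v \leq \sum_j b_j\log|z_j| + C_0 = \pi^*\psi + O(1)$ on the chart, with $C_0$ depending only on the chart geometry and on the a priori upper bound $v \leq 0$. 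Covering $Y$ by finitely many such SNC charts and noting $\pi^*\psi$ is bounded away from $D$ yields a single global constant $C$ with $v \leq \pi^*\psi + C$ on $Y$; taking the supremum over candidates gives $P_{\pi^*\theta}[\pi^*\psi] \leq \pi^*\psi + C$, which descends to $X$ as required.

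The main obstacle will be making the local estimate uniform in the candidate $v$ at intersection points of several components of $D$, where the local model has a nontrivial multi-parameter asymptotic. A clean way to package this step is via Lemma \ref{lem: Lelong}, which gives $\mathcal{I}(mv) \supset \mathcal{I}(m\pi^*\psi)$ for every $m > 0$. For SNC divisorial singularities the right-hand side admits the explicit formula $\pi_*\mathcal{O}_Y(K_{Y/X} - \lfloor mc\sum_i a_i E_i\rfloor)$, and combined with Skoda's division theorem this forces $v = \pi^*\psi + O(1)$ chart by chart, with the $O(1)$ controlled by a routine compactness argument once $v \leq 0$ is imposed.
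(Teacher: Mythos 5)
Your proposal is correct, and its skeleton (log-resolution of the ideal generated by the $f_j$, descent of $\pi^*\theta$-psh functions to $X$ giving $P_{\pi^*\theta}[\pi^*\psi]=P_\theta[\psi]\circ\pi$, then proving the model property for the resulting snc divisorial singularity on $Y$) coincides with the paper's. Where you diverge is the last step. The paper applies Lemma \ref{lem: Lelong} to the single function $\psi\circ\pi$: since $\mathcal I(t\,\psi\circ\pi)=\mathcal I(t\,P_{\pi^*\theta}[\psi\circ\pi])$ for all $t$ (Guan--Zhou openness plus the Boucksom--Favre--Jonsson valuative criterion), the envelope has generic Lelong number $\alpha_j$ along each component $D_j$, and the standard Siu-decomposition argument then bounds it by $\psi\circ\pi+O(1)$ -- no uniformity over candidates is ever needed because the supremum has already been taken. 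You instead prove a bound $v\le\pi^*\psi+C_0$ with $C_0$ \emph{uniform} over all candidates $v\le 0$ more singular than $\pi^*\psi$, via Siu plus sub-mean-value inequalities against the integrable local model $\sum_j b_j\log|z_j|$; this is more elementary (it avoids the openness theorem entirely) at the price of the uniformity bookkeeping, which you correctly identify as the crux and which does go through since $\int_B\log|z_j|$ is bounded on polydisks of fixed radius. One small point to tidy up: the paper's working definition of $P_\theta[\psi]$ is $\bigl(\lim_{C\to\infty}P_\theta(\psi+C,V_\theta)\bigr)^*$ rather than the supremum over candidates of the same singularity type, and Lelong numbers can drop along that increasing limit; with your route you should therefore apply the uniform estimate to each $P_{\pi^*\theta}(\pi^*\psi+C,0)$ (which satisfies the hypotheses with $C_0$ independent of $C$) before letting $C\to\infty$, which is a one-line repackaging of what you wrote. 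Your closing alternative via explicit multiplier ideals of snc divisors and Skoda division is closer to the paper's mechanism but is not needed given your main argument.
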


We give an argument that is different from the elementary one given in \cite[Section 4.5]{DDL2}. Following \cite{RWN14}, we use resolution of singularities, and get a slightly more general result (with $g$ being bounded instead of smooth). 

\begin{proof} Let $\mathcal I $ be the coherent sheaf of holomorphic functions $f$ satisfying $|f| \leq A e^{\frac{2\psi}{c}}$ for some $A>0$ and $\pi: Y \to X$ be a resolution of singularities of $\mathcal I$, as in \cite[Remark 5.9]{Dem12}. We obtain that the singularity type  of $\psi \circ \pi$ is modelled by an snc divisor $D = \sum_{j} \alpha_j D_j \subset Y$, where $\alpha_j > 0$. That is to say, the Lelong numbers of $\psi \circ \pi$ along each $D_j$ is $\alpha_j$, i.e., $\psi \circ \pi$ has $\alpha_j \log|z|$ type poles along each $D_j$.

By Lemma \ref{lem: Lelong} we know that the Lelong numbers of $P_{\pi^*\theta}[\psi \circ \pi]$ and $\psi \circ \pi$ are the same. In particular,  we obtain that $[P_{\pi^*\theta}[\psi \circ \pi]] =[\psi \circ \pi]$. Since any $\pi^*\theta$-psh function can be written as $u\circ \pi$, for some $\theta$-psh function $u$, it follows that for any $C>0$, $P_{\pi^*\theta}(\psi \circ \pi +C, 0) = P_{\theta}(\psi +C, 0) \circ \pi$. This means that $P_{\pi^*\theta}[\psi \circ \pi] =P_{\theta}[\psi] \circ \pi $.
 Combining the above and pushing forward to $X$ we obtain that $[P_{\theta}[\psi]] =[\psi]$, as desired.
\end{proof}

 \chapter{The finite energy range of the complex Monge--Amp\`ere operator}

In this chapter we give a characterization of the Borel measures $\mu$ that are equal to the complex Monge--Ampere measure of some $u \in \mathcal E_\chi(X,\theta,\phi)$, with $\chi$ having polynomial growth, and $\phi$ a model potential ($\phi = P_\theta[\phi]$) with $\int_X \theta_\phi^n >0$.

Before we can achieve this we need to develop more potential theory. A weight is a continuous increasing function  $\chi: [0,\infty) \rightarrow [0,\infty)$ such that $\chi(0)=0$ and $\chi(\infty)=\infty$. Denote by $\chi^{-1}$ its inverse function, i.e. such that $\chi(\chi^{-1}(t))= t$ for all $t\geq 0$.
  
 Throughout  this section we assume that the weight $\chi$ satisfies the following condition 
 \begin{equation}
 	\label{eq: growth condition}
 	\forall t \geq 0, \; \forall \lambda \geq 1, \; \chi(\lambda t) \leq \lambda^M \chi(t),
 \end{equation}
 where $M\geq 1$ is a fixed constant. 
 Observe that from \eqref{eq: growth condition} it follows that 
\begin{equation}
 	\label{eq: growth condition1}
\forall t \geq 0, \; \forall \gamma < 1, \; \chi(\gamma t) \geq \gamma^M \chi(t). 
 \end{equation}
We fix $\phi$ a model potential and we let $\mathcal{E}_{\chi}(X,\theta,\phi)$ denote the set of all $u\in \mathcal{E}(X,\theta,\phi)$ such that 
\[
E_{\chi}(u,\phi):=\int_X \chi(|u-\phi|) \theta_u^n <\infty. 
\]
When $\phi=V_{\theta}$, we denote $\mathcal{E}(X,\theta)=\mathcal{E}(X,\theta,V_{\theta})$, $\mathcal{E}_{\chi}(X,\theta)=\mathcal{E}_{\chi}(X,\theta,V_{\theta}$) and $E_{\chi}(u)=E_{\chi}(u,V_{\theta})$. Compared to \cite{GZ07},  we have changed the sign of the weight, but the weighted classes are the same.  The following simple observation shows that the class $\mathcal{E}(X,\theta,\phi)$ is stable under adding a constant. 

\begin{lemma}
    \label{lem: E chi stable adding constant}
    If $u$ belongs to $\mathcal{E}_\chi(X,\theta,\phi)$ then so does $u+C$ for any constant $C$. 
\end{lemma}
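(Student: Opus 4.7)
The plan is to verify membership in two separate steps: first that $u+C \in \mathcal{E}(X,\theta,\phi)$, and then that the weighted energy $E_\chi(u+C,\phi)$ is finite.

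For the first step I would observe that $u+C$ has the same singularity type as $u$, so in particular $u+C$ is still more singular than $\phi$. By Lemma~\ref{lem: monotonicity new} (or directly, since adding a constant does not affect the non-pluripolar product), $\theta_{u+C}^n=\theta_u^n$, hence
\[
\int_X \theta_{u+C}^n=\int_X \theta_u^n=\int_X\theta_\phi^n,
\]
so $u+C\in\mathcal{E}(X,\theta,\phi)$.

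For the second step the key is the elementary inequality, derived from \eqref{eq: growth condition}, that for all $s,t\geq 0$
\[
\chi(s+t)\leq \chi(2\max(s,t))\leq 2^M\chi(\max(s,t))\leq 2^M\bigl(\chi(s)+\chi(t)\bigr).
\]
Applying this with $s=|u-\phi|$ and $t=|C|$, and using the triangle inequality $|u+C-\phi|\leq|u-\phi|+|C|$ together with monotonicity of $\chi$, we get
\[
\chi(|u+C-\phi|)\leq 2^M\chi(|u-\phi|)+2^M\chi(|C|).
\]
Integrating against $\theta_{u+C}^n=\theta_u^n$ and using $\int_X\theta_u^n=\int_X\theta_\phi^n<\infty$ yields
\[
E_\chi(u+C,\phi)\leq 2^M E_\chi(u,\phi)+2^M\chi(|C|)\int_X\theta_\phi^n<\infty,
\]
which is the desired conclusion.

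There is no real obstacle here: the result is essentially a direct consequence of the quasi-subadditivity of $\chi$ that comes for free from \eqref{eq: growth condition}, combined with the invariance of the non-pluripolar Monge--Amp\`ere measure under translation of the potential. The only thing to be slightly careful about is that the splitting $\chi(s+t)\lesssim \chi(s)+\chi(t)$ is not automatic for an arbitrary weight, which is precisely why the polynomial growth assumption \eqref{eq: growth condition} is imposed.
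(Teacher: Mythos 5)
Your proposal is correct and follows essentially the same route as the paper: both arguments reduce to the quasi-subadditivity $\chi(s+t)\le 2^M\chi(\max(s,t))$ derived from \eqref{eq: growth condition} with $\lambda=2$, combined with the invariance $\theta_{u+C}^n=\theta_u^n$. The only cosmetic difference is that the paper bounds by $2^M\max(\chi(s),\chi(t))$ while you bound by $2^M(\chi(s)+\chi(t))$, which changes nothing of substance.
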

\begin{proof}
Since $\chi$ satisfies \eqref{eq: growth condition}, for $t>0$ and $s>0$ we have 
\[
\chi(t+s) = \chi(2(t+s)/2) \leq 2^M \chi((t+s)/2) \leq 2^M \chi(\max(t,s))\leq 2^M  \max(\chi(t), \chi(s) ),
\]
where the last two inequalities follow from the fact that $\chi $ is increasing.
Then 
   \begin{flalign*}
       \int_X \chi(|u+C-\phi|)\theta_u^n &\leq \int_X \chi(|u- \phi|+|C|)\theta_u^n\\
       &\leq 2^M\int_X \max(\chi(|u-\phi|),\chi(|C|))\theta_u^n \\
       &\leq 2^M \max \left(\int_X \chi(|u-\phi|)\theta_u^n, \ \chi(|C|) \int_X \theta_u^n \right)<\infty. 
   \end{flalign*}
\end{proof}
Next we prove a few of technical results that will be often used.

\begin{lemma}
	\label{lem: int_bound}
 There exists a uniform constant $C>0$ such that for all $u\in \psh(X,\theta,\phi)$ normalized with $\sup_X u=0$ we have
 $$\int_X \chi(\phi-u)\theta_\phi^n \leq C.$$
 \end{lemma}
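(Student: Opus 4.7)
The plan is to dominate $\theta_\phi^n$ by a multiple of $\omega^n$ using the model property of $\phi$, then exploit the polynomial growth of $\chi$ together with standard $L^p$ compactness of normalized $\theta$-psh functions.

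First, since $\phi$ is a model potential, $\phi = P_\theta[\phi] \leq 0$; and because $u \in \psh(X,\theta,\phi)$ is more singular than $\phi$, Lemma \ref{lem: model sup over X} gives $\sup_X(u-\phi) = \sup_X u = 0$, so $u \leq \phi \leq 0$ on $X$ and $\phi - u \geq 0$.

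Next, Theorem \ref{thm: MA of env sing type} applied to the model potential $\phi$ yields $\theta_\phi^n \leq \mathbf{1}_{\{\phi = 0\}} \theta^n$. Since $\phi \leq V_\theta \leq 0$ forces $\{\phi = 0\} \subseteq \{V_\theta = 0\}$, and Theorem \ref{thm: GLZ_prop5.2} asserts that $\mathbf{1}_{\{V_\theta = 0\}} \theta^n = \theta_{V_\theta}^n$ is a positive Borel measure, writing the smooth $(n,n)$-form $\theta^n = f\omega^n$ with $f \in C^\infty(X)$ forces $f \geq 0$ almost everywhere on $\{V_\theta = 0\}$. Hence $\mathbf{1}_{\{\phi = 0\}} \theta^n \leq \|f\|_{L^\infty} \, \omega^n$ as positive measures, so there exists a constant $A > 0$ (depending only on $X, \omega, \theta$) with $\theta_\phi^n \leq A \omega^n$. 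Since $\theta_\phi^n$ is supported on $\{\phi = 0\}$ (up to a null set for that measure), on this set $\phi - u = -u$, giving
\[
\int_X \chi(\phi - u)\theta_\phi^n \;\leq\; A \int_X \chi(-u)\, \omega^n.
\]

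Finally, applying the growth condition \eqref{eq: growth condition} with $\lambda = t$ for $t \geq 1$ and monotonicity for $t \leq 1$ yields $\chi(t) \leq \chi(1)(1 + t^M)$ for all $t \geq 0$. Therefore
\[
\int_X \chi(-u)\,\omega^n \;\leq\; \chi(1)\left(\int_X \omega^n + \int_X (-u)^M \omega^n\right),
\]
and the latter integral is uniformly bounded over all $u \in \psh(X,\theta)$ with $\sup_X u = 0$ by the classical $L^p$-compactness of normalized quasi-psh functions (a consequence of \cite[Proposition 2.7]{GZ05}). Combining the two estimates produces the required uniform constant $C$. I do not expect a serious obstacle here; the only point requiring care is the chain $\theta_\phi^n \leq \mathbf{1}_{\{\phi=0\}} \theta^n \leq A\omega^n$, where the positivity of $\theta_{V_\theta}^n$ is what allows us to treat the signed form $\theta^n$ as a bounded positive measure on the relevant contact set.
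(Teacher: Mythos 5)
Your proposal is correct and follows essentially the same route as the paper: the key chain $\theta_\phi^n \leq \mathbf{1}_{\{\phi=0\}}\theta^n \leq A\,\omega^n$ via Theorem \ref{thm: MA of env sing type}, the polynomial bound $\chi(t) \leq \chi(1)(1+t^M)$ from \eqref{eq: growth condition}, and the uniform bound on $\int_X |h|^M\omega^n$ for normalized $\theta$-psh $h$. The only (harmless) differences are that you justify the domination $\mathbf{1}_{\{\phi=0\}}\theta^n \leq A\omega^n$ in more detail via the positivity from Theorem \ref{thm: GLZ_prop5.2}, and you use the support of $\theta_\phi^n$ to replace $\phi-u$ by $-u$ where the paper simply bounds $\chi(\phi-u) \leq C'(|\phi|^M+|u|^M+1)$.
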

\begin{proof}
     We recall that by Theorem \ref{thm: MA of env sing type}, $\theta_{\phi}^n\leq {\bf 1}_{\{ \phi=0\}} \theta^n \leq A \omega^n$, for some $A>0$.
     Also, thanks to \eqref{eq: growth condition}, if $\phi-u\geq 1$ we have
     $\chi(\phi-u)\leq (\phi-u)^M \chi(1) $; otherwise (since $\chi$ is increasing) we have   $\chi(\phi-u)\leq \chi(1) $. In both cases we can infer that
     $$\int_X \chi(\phi-u)\theta_\phi^n\leq C' \int_X (|\phi|^M+|u|^M + 1)\omega^n.$$
     The conclusion then follows from the fact that $\int_X |h|^M \omega^n$ is uniformly bounded for  $h \in {\rm PSH}(X,\theta)$ with $\sup_X h =0$. 
\end{proof}
\begin{lemma}
	\label{lem: bound_mixed}
 Let $u\in \mathcal{E}(X,\theta,\phi)$ with $\sup_X u=0$. Then, for any $j\in \{1,...,n\}$
 $$\int_X \chi(\phi-u) \theta_u^j\wedge \theta_{\phi}^{n-j} \leq \int_X \chi(\phi-u) \theta_u^n.$$
  \end{lemma}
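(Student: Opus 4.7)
The plan is to combine the partial comparison principle (Proposition \ref{prop: general CP}) with a layer-cake decomposition of $\chi(\phi-u)$. First I would record two preliminary observations. Since $\phi = P_\theta[\phi]$ and $\sup_X u = 0$, Lemma \ref{lem: model sup over X} (applied with $v = \phi$) gives $\sup_X (u - \phi) = \sup_X u = 0$, so that $u \leq \phi$; in particular $\chi(\phi - u)$ is well defined and non-negative. Also, for each $s > 0$ the translate $\phi - s$ has the same singularity type and the same non-pluripolar mass as $\phi$, hence lies in $\mathcal{E}(X,\theta,\phi)$, and trivially $\theta_{\phi - s} = \theta_\phi$.

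Next I would apply Proposition \ref{prop: general CP} to the pair $u, v := \phi - s \in \mathcal{E}(X,\theta,\phi)$, with $\theta^k = \theta$ and $\psi_1 = \cdots = \psi_j = u$ as the $j$ extra potentials. This yields, for every $s > 0$,
\[
\int_{\{\phi - u > s\}} \theta_\phi^{n-j} \wedge \theta_u^j \;\leq\; \int_{\{\phi - u > s\}} \theta_u^n.
\]

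The final ingredient is the layer-cake formula: since $\chi$ is continuous and increasing with $\chi(0) = 0$, for any positive Borel measure $\mu$ on $X$ and any non-negative measurable $f$,
\[
\int_X \chi(f) \, d\mu \;=\; \int_0^\infty \mu(\{f > s\}) \, d\chi(s),
\]
where $d\chi$ is the Lebesgue--Stieltjes measure associated to $\chi$ (continuity of $\chi$ rules out atoms, so replacing $>$ by $\geq$ is harmless). Applying this identity to $f = \phi - u$ with $\mu = \theta_\phi^{n-j} \wedge \theta_u^j$ and with $\mu = \theta_u^n$, and integrating the pointwise comparison above against $d\chi(s)$ on $(0,\infty)$, gives
\[
\int_X \chi(\phi - u)\, \theta_u^j \wedge \theta_\phi^{n-j} \;\leq\; \int_X \chi(\phi - u)\, \theta_u^n,
\]
as desired.

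No serious obstacle arises: the entire argument is a direct reduction to Proposition \ref{prop: general CP} via stratifying the weight $\chi$ by its superlevel sets. The only point deserving a little care is confirming that $\phi - s$ remains in the relative full mass class so that the partial comparison principle is applicable, and this is immediate from the invariance of $\theta_\phi^n$ under translation of $\phi$ by a constant.
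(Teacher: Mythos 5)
Your proof is correct and follows essentially the same route as the paper: reduce to the partial comparison principle (Proposition \ref{prop: general CP}) applied with $v=\phi-s$ and $\psi_1=\cdots=\psi_j=u$, then integrate over the superlevel sets of $\phi-u$ via the layer-cake formula. The only cosmetic difference is that you integrate against the Stieltjes measure $d\chi(s)$ whereas the paper substitutes $t=\chi(s)$ and writes $\int_0^\infty \mu(u<\phi-\chi^{-1}(t))\,dt$; these are identical.
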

\begin{proof}
By Lemma \ref{lem: model sup over X} we have that $u \leq \phi \leq 0$.  By the partial comparison principle (Proposition \ref{prop: general CP}) we have that for any $j\in \{1,...,n\}$
	\begin{flalign*}
		\int_X \chi(\phi-u) \theta_u^j\wedge \theta_{\phi}^{n-j} &= \int_0^{\infty} \theta_u^{j} \wedge \theta_{\phi}^{n-j} (u< \phi-\chi^{-1}(t))dt \\
		&\leq  \int_0^{\infty} \theta_u^{n}(u<\phi-\chi^{-1}(t)) dt 
		= \int_X \chi(\phi-u) \theta_u^n. 
	\end{flalign*}
\end{proof}

Lemmas \ref{lem: mixed energy 1},  \ref{lem: mixed energy 2} below are essentially known by \cite{GZ07}, but we repeat the simple proof for the reader's convenience. Similar simplifications can also be found in \cite{Gup22}.

\begin{lemma}
	\label{lem: mixed energy 1}
	If $u,v\in \mathcal{E}(X,\theta,\phi)$ and $u,v\leq 0$, then 
	\[
	\int_X \chi(\phi-u) \theta_v^n \leq 2^{n+M} E_{\chi}(u,\phi)  +  E_{\chi}(v,\phi).   
	\]
\end{lemma}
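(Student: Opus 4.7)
The plan is to split the integral along the plurifine-open set $\{u<v\}$ and its complement.

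On $\{v \leq u\}$, the inequality $\phi - u \leq \phi - v$ combined with the monotonicity of $\chi$ yields $\chi(\phi-u) \leq \chi(\phi-v)$, and hence
\[
\int_{\{v \leq u\}} \chi(\phi - u)\, \theta_v^n \leq \int_X \chi(\phi-v)\, \theta_v^n = E_\chi(v, \phi).
\]

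On $\{u < v\}$, I would introduce the midpoint $w := (u+v)/2$, which lies in $\mathcal{E}(X,\theta,\phi)$ by the convexity of this class (Corollary \ref{cor: convexity}). Since $u, v \in \mathcal{E}(X,\theta,\phi)$ and $u, v \leq 0$, combining Theorems \ref{thm: E_memb_char} and \ref{thm: ceiling coincide envelope non collapsing} gives $u, v \leq \phi$, and therefore
\[
\phi - w = \tfrac{1}{2}(\phi - u) + \tfrac{1}{2}(\phi - v) \geq \tfrac{1}{2}(\phi - u) \geq 0.
\]
The growth condition \eqref{eq: growth condition1} applied with $\gamma = 1/2$ then gives the pointwise bound $\chi(\phi - u) \leq 2^M \chi(\phi - w)$, while multi-linearity of the non-pluripolar product gives $\theta_v^n \leq (\theta_u + \theta_v)^n = 2^n\, \theta_w^n$ as measures. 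Combining these two estimates:
\[
\int_{\{u<v\}} \chi(\phi - u)\, \theta_v^n \leq 2^{n+M} \int_X \chi(\phi-w)\, \theta_w^n = 2^{n+M}\, E_\chi(w, \phi).
\]

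The final step is to reduce $E_\chi(w, \phi)$ to $E_\chi(u, \phi)$, which I expect to be the main technical obstacle. My strategy is to expand $\theta_w^n = 2^{-n}(\theta_u + \theta_v)^n$ via multi-linearity and control the resulting mixed integrals $\int_X \chi(\phi-u)\, \theta_u^k \wedge \theta_v^{n-k}$ by $E_\chi(u,\phi)$. Using the Fubini identity $\chi(\phi-u) = \int_0^\infty \mathbf{1}_{\{u<\phi-\chi^{-1}(t)\}}\, dt$, the partial comparison principle (Proposition \ref{prop: general CP}) applied to the pair $(u,v)$ compares the mixed product to $\theta_u^n$ on $\{u<v\}$, while on $\{v \leq u\}$ plurifine locality together with the non-pluripolar product identities recovers $\theta_u^n$. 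Carefully intersecting these estimates with the level sets $\{u < \phi-s\}$ and assembling the pieces should produce the desired bound, closing the argument.
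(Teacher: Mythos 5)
Your opening split is sound: on $\{v\leq u\}$ the pointwise bound $\chi(\phi-u)\leq\chi(\phi-v)$ does produce the term $E_\chi(v,\phi)$, and the reduction of the remaining piece to $2^{n+M}\int_{\{u<v\}}\chi(\phi-w)\,\theta_w^n$ with $w=\tfrac{u+v}{2}$ is correct. The gap is the final step, and it is not merely technical. Expanding $\theta_w^n=2^{-n}\sum_k\binom{n}{k}\theta_u^k\wedge\theta_v^{n-k}$ reintroduces, as its $k=0$ term, the quantity $2^{-n}\int_{\{u<v\}}\chi(\phi-w)\,\theta_v^n\geq 2^{-n-M}\int_{\{u<v\}}\chi(\phi-u)\,\theta_v^n$ (since $\phi-w\geq\tfrac{1}{2}(\phi-u)$ and by \eqref{eq: growth condition1}); after multiplying by your prefactor $2^{n+M}$ its coefficient is at least $1$, so this term cannot be absorbed into the left-hand side: the argument is circular. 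Moreover, the claim that $\int_X\chi(\phi-u)\,\theta_u^k\wedge\theta_v^{n-k}$ is controlled by $E_\chi(u,\phi)$ alone is false for $k=0$ — that integral \emph{is} the left-hand side of the lemma, and the $E_\chi(v,\phi)$ term in the statement is genuinely needed, since $\theta_v^n$ may concentrate where $u$ is very negative while $E_\chi(u,\phi)$ stays small. Finally, the partial comparison principle (Proposition \ref{prop: general CP}) only applies on sets of the form $\{u'<v'\}$ with $v'$ quasi-psh, whereas the sets your Fubini slicing produces, $\{u<v\}\cap\{u<\phi-\chi^{-1}(t)\}=\{u<\min(v,\phi-\chi^{-1}(t))\}$, are not of this form; and on $\{v\leq u\}$ there is no plurifine-locality identity relating $\theta_u^k\wedge\theta_v^{n-k}$ to $\theta_u^n$, since $u$ and $v$ do not coincide there.

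The repair keeps your decomposition but changes the auxiliary potential from $\tfrac{u+v}{2}$ to $\tfrac{v+\phi}{2}$. On $\{u<v\}$ one has $\{u<\phi-s\}\subseteq\{2u<v+\phi-s\}$, and $\tfrac{v+\phi-s}{2}$ belongs to $\mathcal{E}(X,\theta,\phi)$, so Corollary \ref{cor: comparison principle} applied to the pair $\bigl(u,\tfrac{v+\phi-s}{2}\bigr)$, together with $\theta_v^n\leq 2^n\theta_{(v+\phi)/2}^n$ and $v\leq\phi$, gives $\theta_v^n(\{2u<v+\phi-s\})\leq 2^n\theta_u^n(\{2u<2\phi-s\})$; integrating in $t=\chi(s)$ and using \eqref{eq: growth condition} with $\lambda=2$ yields the bound $2^{n+M}E_\chi(u,\phi)$ for this piece. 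This is essentially the paper's proof, which runs the dichotomy at the level of the sublevel sets, via $\{u<\phi-s\}\subseteq\{2u<v+\phi-s\}\cup\{v<\phi-s\}$, rather than as a pointwise split of the domain.
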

\begin{proof}
	By Lemma \ref{lem: model sup over X} we have that $u \leq \phi \leq 0$. By the comparison principle, Corollary \ref{cor: comparison principle},   we have 
	\begin{flalign*}
		\int_X \chi(\phi-u) \theta_v^n &= \int_0^{\infty} \theta_v^n(u <\phi-\chi^{-1}(t))dt\\
		& \leq \int_0^{\infty} \theta_v^n(2u<v+\phi- \chi^{-1}(t))dt + \int_0^{\infty} \theta_v^n(v< \phi-\chi^{-1}(t)) dt\\
			& \leq 2^n \int_0^{\infty} \left( \theta+dd^c {\frac{v+\phi}{2}}\right)^n\left(u<\frac{v+\phi- \chi^{-1}(t)}{2}\right)dt +E_{\chi}(v,\phi)\\
		& \leq 2^n \int_0^{\infty} \theta_u^n(2u<2\phi-\chi^{-1}(t))dt + E_{\chi}(v,\phi)\\
		&= 2^n \int_X \chi(2\phi-2u)\theta_u^n + E_{\chi}(v,\phi)\\
		&\leq 2^{n+M} E_{\chi}(u,\phi) + E_{\chi}(v,\phi),
	\end{flalign*}
	where in the second line we have used the trivial inclusion 
	\[
	\{2u\geq v+\phi- \chi^{-1}(t) \} \cap \{v\geq \phi- \chi^{-1}(t) \} \subseteq \{ u\geq \phi-\chi^{-1}(t)\}, 
	\] 
	in the third one we have used
	\[
	\theta_v^n \leq 2^n \left(\theta+ dd^c \frac{v+\phi}{2}\right)^n,
	\]
	and in the last line  we have used \eqref{eq: growth condition}  with $\lambda=2$.
\end{proof}

\begin{lemma}
\label{lem: mixed energy 2}
	For all $u,v \in \mathcal{E}(X,\theta,\phi)$   with $u\leq v\leq 0$ we have 
	\[
	\int_X \chi(\phi-v)\, \theta_v^n\leq  \int_X \chi(\phi-u)\, \theta_v^n \leq 2^{n+M} E_{\chi}(u,\phi). 
	\]
\end{lemma}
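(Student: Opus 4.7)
The first inequality is immediate: $u\le v\le\phi$ (the second using Lemma \ref{lem: model sup over X} applied with the model potential $\phi$) gives $0\le \phi-v\le \phi-u$, and monotonicity of $\chi$ then yields $\chi(\phi-v)\le \chi(\phi-u)$ pointwise; integrating against $\theta_v^n$ gives the first inequality.

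For the second inequality, my plan is to revisit the argument of Lemma \ref{lem: mixed energy 1}, which produced the bound $2^{n+M}E_\chi(u,\phi)+E_\chi(v,\phi)$ by splitting with the set inclusion
\[
\{u<\phi-s\}\subseteq\{2u<v+\phi-s\}\cup\{v<\phi-s\}.
\]
The key observation is that the extra hypothesis $u\le v$ renders the second set redundant: if $u(x)<\phi(x)-s$, then using $u(x)\le v(x)$ one has $2u(x)=u(x)+u(x)\le u(x)+v(x)<v(x)+\phi(x)-s$, so
\[
\{u<\phi-s\}\subseteq\{2u<v+\phi-s\}.
\]
This single pointwise upgrade eliminates the $E_\chi(v,\phi)$ contribution entirely.

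The rest of the estimate then follows the steps already present in the proof of Lemma \ref{lem: mixed energy 1}: set $w=(v+\phi)/2$, so that $\theta_v\le 2\theta_w$ as currents yields $\theta_v^n\le 2^n\theta_w^n$ on non-pluripolar products, and $\{2u<v+\phi-s\}=\{u<w-s/2\}$; apply the comparison principle (Corollary \ref{cor: comparison principle}) to the pair $u,\,w-s/2\in\mathcal E(X,\theta,\phi)$ (noting $\theta_{w-s/2}^n=\theta_w^n$) to push $\theta_w^n$ to $\theta_u^n$; then use $w\le\phi$ to enlarge $\{u<w-s/2\}$ to $\{u<\phi-s/2\}$. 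Integrating the resulting bound $\theta_v^n(\{u<\phi-s\})\le 2^n\theta_u^n(\{u<\phi-s/2\})$ against the distribution-function representation
\[
\int_X\chi(\phi-u)\theta_v^n=\int_0^\infty \theta_v^n(\{u<\phi-\chi^{-1}(t)\})\,dt,
\]
changing variable $r=\chi^{-1}(t)/2$, and applying the growth condition \eqref{eq: growth condition} with $\lambda=2$ converts the leftover $\chi(2(\phi-u))$ into $\le 2^M\chi(\phi-u)$, producing the final factor $2^{n+M}$. The only real step is the pointwise verification of the refined set inclusion; there is no substantive obstacle, as the remaining manipulations are already explicit in the proof of Lemma \ref{lem: mixed energy 1}.
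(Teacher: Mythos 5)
Your proof is correct and follows essentially the same route as the paper's: the same refined inclusion $\{u<\phi-s\}\subseteq\{2u<v+\phi-s\}$ (exploiting $u\leq v$), the same passage $\theta_v^n\leq 2^n\theta_{(v+\phi)/2}^n$, the same application of the comparison principle to land on $\theta_u^n$, and the same use of \eqref{eq: growth condition} with $\lambda=2$ to produce the factor $2^{n+M}$. Nothing to add.
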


\begin{proof}
	By the comparison principle, Corollary \ref{cor: comparison principle},  and the fact that $u\leq v\leq \phi$, we have 
	\begin{flalign*}
		\int_X \chi(\phi-u)\, \theta_v^n &= \int_0^{\infty} \theta_v^n(u <\phi-\chi^{-1}(t)) dt \\
		& \leq  \int_0^{\infty} \theta_v^n (2u<v+\phi -\chi^{-1}(t))  dt\\
		&\leq 2^n  \int_0^{\infty} \theta_\frac{v+\phi}{2} ^n \left(u< \frac{v +\phi-\chi^{-1}(t)}{2}\right) dt\\
		&\leq 2^n\int_0^{\infty} \theta_u^n (2u <2\phi-\chi^{-1}(t))  dt\\
		& = 2^n \int_X \chi(2\phi-2u) \theta_u^n \leq 2^{n+M} E_{\chi}(u,\phi).   
	\end{flalign*}
\end{proof}

\begin{prop}\label{prop: convexity of E chi}
	If $u \in \mathcal{E}_{\chi}(X,\theta,\phi)$ and $u\leq v$ then $v \in \mathcal{E}_{\chi}(X,\theta,\phi)$. Moreover, the class $\mathcal{E}_{\chi}(X,\theta,\phi)$ is convex.
\end{prop}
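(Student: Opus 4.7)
The plan is to establish the two claims separately, reducing the convexity statement to the monotonicity one via a rooftop envelope argument.

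\emph{First part (monotonicity).} I would first handle the case $u \leq v \leq \phi \leq 0$. Since $\mathcal E_\chi(X,\theta,\phi)$ is stable under adding constants by Lemma \ref{lem: E chi stable adding constant}, and the hypothesis $u \leq v$ with $v$ (implicitly) more singular than $\phi$ allows me to translate both $u$ and $v$ down by the same constant, I arrange this normalization without loss of generality. Theorem \ref{thm: BEGZ_monotonicity_full} applied along $u \leq v \leq \phi$ forces
$$\int_X \theta_u^n \leq \int_X \theta_v^n \leq \int_X \theta_\phi^n = \int_X \theta_u^n,$$
so equality holds throughout and $v \in \mathcal E(X,\theta,\phi)$. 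Since $\chi$ is increasing and $\phi - v \leq \phi - u$, Lemma \ref{lem: mixed energy 2} then gives
$$E_\chi(v,\phi) = \int_X \chi(\phi-v)\,\theta_v^n \leq \int_X \chi(\phi-u)\,\theta_v^n \leq 2^{n+M} E_\chi(u,\phi) < \infty,$$
which closes this part.

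\emph{Second part (convexity).} Given $u, w \in \mathcal E_\chi(X,\theta,\phi)$ and $t \in (0,1)$, I set $v := tu + (1-t)w$. After shifting $u, w$ down simultaneously, assume $u, w \leq \phi \leq 0$, so that $v \leq \phi \leq 0$ as well. By Corollary \ref{cor: P(u,v) in E} the rooftop envelope $P := P_\theta(u,w)$ lies in $\mathcal E(X,\theta,\phi)$, with $P \leq \min(u,w) \leq v$. The key step is to show $P \in \mathcal E_\chi$: Theorem \ref{thm: MA of env sing type} yields
$$\theta_P^n \leq \mathbf 1_{\{P=u\}}\theta_u^n + \mathbf 1_{\{P=w\}}\theta_w^n,$$
and on the contact set $\{P=u\}$ one has $\chi(\phi-P) = \chi(\phi-u)$ (similarly on $\{P=w\}$), so
$$E_\chi(P,\phi) \leq \int_{\{P=u\}}\chi(\phi-u)\,\theta_u^n + \int_{\{P=w\}}\chi(\phi-w)\,\theta_w^n \leq E_\chi(u,\phi) + E_\chi(w,\phi) < \infty.$$
Hence $P \in \mathcal E_\chi(X,\theta,\phi)$, and applying the first part to the pair $P \leq v$ gives $v \in \mathcal E_\chi(X,\theta,\phi)$.

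\emph{Main obstacle.} The real technical content is the control of the $\chi$-energy of $P_\theta(u,w)$: the envelope is typically much more singular than either $u$ or $w$, so the pointwise bound $\chi(\phi-P) \geq \chi(\phi-u), \chi(\phi-w)$ is the wrong direction. The contact-set identity from Theorem \ref{thm: MA of env sing type} circumvents this, trading pointwise inequalities for integration against a measure supported where $P$ actually coincides with $u$ or $w$. Once $P$ is shown to be a common lower bound in $\mathcal E_\chi$, part one applies uniformly to every convex combination $tu+(1-t)w$, giving convexity without any direct estimation of mixed non-pluripolar products $\theta_u^k \wedge \theta_w^{n-k}$.
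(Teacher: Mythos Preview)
Your proof is correct and follows essentially the same route as the paper: the monotonicity step via Lemma \ref{lem: mixed energy 2}, and the convexity step via the rooftop envelope $P_\theta(u,w)$, Corollary \ref{cor: P(u,v) in E}, and the measure decomposition of Theorem \ref{thm: MA of env sing type}. If anything, your argument is slightly more careful in the first part, since you explicitly verify $v\in\mathcal E(X,\theta,\phi)$ via Theorem \ref{thm: BEGZ_monotonicity_full} before invoking Lemma \ref{lem: mixed energy 2} (which already assumes both potentials lie in $\mathcal E(X,\theta,\phi)$), whereas the paper leaves this implicit.
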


\begin{proof}
In view of Lemma \ref{lem: E chi stable adding constant} we can assume that $v\leq 0$. Lemma \ref{lem: mixed energy 2} then yields
$$ 	E_{\chi}(v,\phi)=  \int_X \chi(\phi-v)\, \theta_v^n \leq 2^{n+M} E_{\chi}(u,\phi).$$
We next prove that the class $\mathcal{E}_{\chi}(X,\theta,\phi)$ is convex.
Assume $u$ and $v$ are in $\mathcal{E}_{\chi}(X,\theta,\phi)$. It follows from Corollary \ref{cor: P(u,v) in E} that $w:= P_{\theta}(u,v)$ belongs to $\mathcal{E}(X,\theta,\phi)$.  From Theorem \ref{thm: MA of env sing type} we also have
	\[
	\int_X \chi(\phi-w)\theta_w^n \leq \int_X \chi(\phi-u)\theta_u^n + \int_X \chi(\phi-v)\theta_v^n  <\infty,
	\]
	hence $w\in \mathcal{E}_{\chi}(X,\theta,\phi)$. For $t\in [0,1]$, since $w \leq tu+(1-t)v$, it follows from Lemma \ref{lem: mixed energy 2} that $tu+(1-t)v\in \mathcal{E}_{\chi}(X,\theta,\phi)$. 
\end{proof}

\begin{lemma}
\label{lem: stable under L1 conv}
	Assume $(u_j)$ is a sequence in $\mathcal{E}_{\chi}(X,\theta,\phi)$  converging in $L^1$ to $u\in {\rm PSH}(X,\theta,\phi)$. If 
	$
	\sup_{j} E_{\chi}(u_j,\phi) <\infty
	$, then $u\in \mathcal{E}_{\chi}(X,\theta,\phi)$.  
\end{lemma}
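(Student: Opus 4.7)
The strategy is to construct a decreasing sequence in $\mathcal{E}_\chi(X,\theta,\phi)$ converging to $u$ with uniformly bounded energy, then transfer this bound to $u$ itself via Theorem \ref{thm: lsc of MA measures}.

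First, $L^1$-convergence of $\theta$-psh functions yields a uniform bound on $\sup_X u_j$, and after subtracting these sup values---which by Lemma \ref{lem: E chi stable adding constant} only modifies the uniform bound on $E_\chi(u_j, \phi)$ up to a multiplicative constant---I may assume $\sup_X u_j = 0$ for every $j$. By Lemma \ref{lem: model sup over X} this forces $u_j \leq \phi$. Passing to a subsequence if needed, set $v_k := \textup{usc}\bigl(\sup_{j \geq k} u_j\bigr)$. Then $v_k \searrow u$ and $u_k \leq v_k \leq \phi$. By Theorem \ref{thm: BEGZ_monotonicity_full}, $v_k \in \mathcal{E}(X,\theta,\phi)$, and Lemma \ref{lem: mixed energy 2} gives $E_\chi(v_k, \phi) \leq 2^{n+M} E_\chi(u_k, \phi) \leq C_0$ uniformly in $k$.

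Next, introduce the canonical truncations $u^C := \max(u, \phi - C)$ and $v_k^C := \max(v_k, \phi - C)$, all sharing the singularity type of $\phi$ and hence having total mass $\int_X \theta_\phi^n$. A second application of Lemma \ref{lem: mixed energy 2} gives $E_\chi(v_k^C, \phi) \leq 2^{n+M} E_\chi(v_k, \phi) \leq C_1$ uniformly in $k, C$. Fixing $C$ and letting $k \to \infty$, $v_k^C \searrow u^C$ yields convergence in capacity, and since all total masses coincide, the weak convergence conclusion of Theorem \ref{thm: lsc of MA measures} applies to the uniformly bounded quasi-continuous integrands $\chi(\phi - v_k^C) \leq \chi(C)$. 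Taking total masses yields $E_\chi(u^C, \phi) \leq C_1$ uniformly in $C$.

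From here I deduce $u \in \mathcal{E}(X,\theta,\phi)$ as in Lemma \ref{charcterization class E^1}: on $\{u \leq \phi - C\}$ we have $u^C \equiv \phi - C$, so $\chi(C)\int_{\{u \leq \phi - C\}}\theta_{u^C}^n \leq E_\chi(u^C, \phi) \leq C_1$, forcing this integral to go to $0$ as $C \to \infty$. Plurifine locality then gives $\int_X \theta_u^n \geq \int_{\{u > \phi - C\}} \theta_{u^C}^n = \int_X \theta_\phi^n - o(1)$; the reverse inequality comes from Theorem \ref{thm: BEGZ_monotonicity_full}. For the energy bound, $u^C \searrow u$ with matching masses, so Theorem \ref{thm: lsc of MA measures} applied to the bounded integrands $\min(\chi, N)(\phi - u^C) \leq N$ gives $\int_X \min(\chi, N)(\phi - u)\,\theta_u^n \leq C_1$ for each $N$; monotone convergence in $N$ yields $E_\chi(u, \phi) \leq C_1 < \infty$.

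The main technical point is the two-stage limit: since $\chi$ is unbounded, one must truncate it so that Theorem \ref{thm: lsc of MA measures} applies to bounded quasi-continuous integrands at each stage. Fortunately the truncations are natural, first using $\chi(\phi - v_k^C) \leq \chi(C)$ with $C$ fixed, and then $\min(\chi, N)(\phi - u^C) \leq N$ with $N$ fixed.
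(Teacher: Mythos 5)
Your proof is correct and follows essentially the same route as the paper's: normalize, pass to the decreasing envelopes $v_k=\textup{usc}(\sup_{j\ge k}u_j)$, control $E_\chi(v_k,\phi)$ and then $E_\chi(v_k^C,\phi)$ via Lemma \ref{lem: mixed energy 2}, pass to the limit in $k$ with Theorem \ref{thm: lsc of MA measures} to bound $E_\chi(u^C,\phi)$, and use $\chi(C)\int_{\{u\le\phi-C\}}\theta_{u^C}^n\le C_1$ to get full mass. The only (harmless) deviations are cosmetic: you invoke the weak-convergence form of Theorem \ref{thm: lsc of MA measures} where the liminf inequality suffices, and in the last step you truncate $\chi$ at level $N$ rather than using plurifine locality on $\{u>\phi-C\}$ as the paper does.
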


\begin{proof}
We can assume  $u_j\leq 0$ for all $j$. 
	Define $v_k:= (\sup_{j\geq k} u_j)^* \in {\rm PSH}(X,\theta,\phi)$. Then $v_k \searrow u$ and $u$ is more singular than $\phi$. By Theorem \ref{thm: BEGZ_monotonicity_full} we then have $ \int_X \theta_{u}^n \leq \int_X \theta_\phi^n$.\\
 Since $0\geq v_k\geq u_k$, we have $\chi(\phi-v_k)\leq \chi(\phi-u_k)$. Lemma \ref{lem: mixed energy 2} then ensures that $E_{\chi}(v_k,\phi) \leq 2^{n+M} E_{\chi}(u_k,\phi)$ and the latter quantity is uniformly bounded by assumption. 
	
	Fix $t>0$ and consider $v_{k,t}:= \max(v_k,\phi-t)$.  Note that $v_{k,t} \searrow \max(u,\phi-t)$ as $k\to \infty$ and that by Lemma \ref{lem: mixed energy 2} we have $E_{\chi}(v_{k,t},\phi)\leq 2^{n+M} E_{\chi}(v_k,\phi)$. This means that $E_{\chi}(v_{k,t},\phi)$ has a uniform upper bound. Moreover, 
	the functions $\chi(\phi-v_{k,t})$ are quasi-continuous and uniformly bounded on $X$. It thus follows from Theorem \ref{thm: lsc of MA measures} that  
	\[
	\liminf_{k\to \infty}\int_X \chi(\phi-v_{k,t})(\theta +dd^c v_{k,t})^n \geq \int_X \chi(\phi-\max(u,\phi-t))(\theta +dd^c \max(u,\phi
	-t))^n.
	\]
	The above estimate then gives a uniform upper bound for $E_{\chi}(\max(u,\phi-t), \phi)$. We can thus infer that there exists a constant $C>0$ such that
 \[
 C\geq \int_X \chi(\phi-\max(u,\phi-t))\theta_{\max(u,\phi-t)}^n \geq \chi(t) \int_{\{u\leq \phi-t\}} \theta_{\max(u,\phi-t)}^n.
 \]
 Letting $t\to \infty$ yields that $\int_{\{u\leq \phi-t\}} \theta_{\max(u,\phi-t)}^n\rightarrow 0$ (since $\chi(t)\rightarrow \infty$). Hence, using the plurifine property of the Monge--Amp\`ere operator and the fact that $u$ is more singular than $\phi$, we get
 $$\int_X \theta_\phi^n = \lim_{t \to \infty} \int_{X} \theta_{\max(u,\phi-t)}^n= \lim_{t \to \infty}\bigg(\int_{\{u\leq \phi-t\}} \theta_{\max(u,\phi-t)}^n+ \int_{\{u> \phi-t\}} \theta_{u}^n\bigg) \leq \int_X \theta_{u}^n \leq \int_X \theta_\phi^n.$$
This means that $u\in \mathcal{E}(X,\theta,\phi)$. Using plurifine locality, we also obtain 
    \[
    \int_{\{u>\phi-t\}} \chi(\phi-u) \theta_u^n\leq C,
    \]
    and letting $t\to \infty$, we see that $\int_X \chi(\phi-u) \theta_u^n \leq C$. Hence $u\in \mathcal{E}_{\chi}(X,\theta,\phi)$. 
\end{proof}

\begin{lemma}
	\label{lem: integrability implies uniform}
Let $\mu$ be a positive Borel measure on $X$. 
Assume that $\mu(\{\phi=-\infty\})=0$ and $\chi(\phi-u)  \in L^1(\mu)$ for all $u\in  \mathcal{E}_{\chi}(X,\theta,\phi)$ and fix a constant $A>0$. Then there exists a constant $C=C(A)>0$ such that, for all $u \in \mathcal{E}_{\chi}(X,\theta,\phi)$ with $\sup_X u =0$ and $E_{\chi}(u,\phi) \leq A$ we have
	\[
	\int_X \chi(\phi-u) d\mu \leq C. 
	\]
\end{lemma}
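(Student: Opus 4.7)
The strategy I would use is proof by contradiction. Assume the conclusion fails: then there exists a sequence $(u_j) \subset \mathcal{E}_\chi(X,\theta,\phi)$ with $\sup_X u_j = 0$ and $E_\chi(u_j,\phi) \leq A$, along which $\int_X \chi(\phi - u_j)\, d\mu \to \infty$. After passing to a subsequence, I may assume $\int_X \chi(\phi - u_j)\, d\mu \geq 2^{(M+1)j}$ for every $j$. The aim is to construct $v \in \mathcal{E}_\chi(X,\theta,\phi)$ such that $\chi(\phi - v) \notin L^1(\mu)$, contradicting the integrability hypothesis.

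My candidate is $v := \sum_{j \geq 1} 2^{-j} u_j$. Since $u_j \leq 0$ and $\sum 2^{-j} = 1$, this defines $v \in \textup{PSH}(X,\theta)$; moreover, since $\sup_X u_j = 0$ forces $u_j \leq \phi$ by Lemma \ref{lem: model sup over X}, one also has $v \leq \phi$. The failure of integrability is easy: from $\phi - v = \sum_k 2^{-k}(\phi - u_k) \geq 2^{-j}(\phi - u_j)$ and the lower estimate \eqref{eq: growth condition1} one gets $\chi(\phi - v) \geq 2^{-jM}\chi(\phi - u_j)$ pointwise, hence $\int_X \chi(\phi - v)\, d\mu \geq 2^{-jM}\cdot 2^{(M+1)j} = 2^j \to \infty$.

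The crux is to verify that $v \in \mathcal{E}_\chi(X,\theta,\phi)$. I would approach this via the approximants $v_N := \sum_{k=1}^N 2^{-k} u_k + 2^{-N}\phi$, which are finite convex combinations of elements of $\mathcal{E}_\chi(X,\theta,\phi)$ (including the model potential $\phi$), hence lie in $\mathcal{E}_\chi$ by Proposition \ref{prop: convexity of E chi}, and decrease pointwise to $v$ (so $v_N \to v$ in $L^1$). By Lemma \ref{lem: stable under L1 conv}, it would then suffice to produce a uniform bound $\sup_N E_\chi(v_N, \phi) \leq C(n, M, A)$. The plan for this step is to expand $\theta_{v_N}^n$ multinomially in the currents $\theta_{u_k}$ and $\theta_\phi$, to bound $\chi(\phi - v_N)$ via the growth-induced inequality $\chi\bigl(\sum a_k t_k\bigr) \leq \chi(\max_k t_k) \leq \sum_k \chi(t_k)$ (valid because $\sum_k 2^{-k} \leq 1$), and then to control each mixed integral using the partial comparison principle (Proposition \ref{prop: general CP}), together with the mixed-energy estimates of Lemmas \ref{lem: bound_mixed} and \ref{lem: mixed energy 1}.

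The hardest point will be the combinatorial bookkeeping that keeps the constant independent of $N$: the key will be that the weights $2^{-k}$ sum to $1$, and the doubling factor $2^M$ from \eqref{eq: growth condition} must be absorbed at each step rather than accumulating exponentially. Once this uniform bound is established, Lemma \ref{lem: stable under L1 conv} yields $v \in \mathcal{E}_\chi(X,\theta,\phi)$, and together with $\chi(\phi - v) \notin L^1(\mu)$ this contradicts the hypothesis, completing the argument.
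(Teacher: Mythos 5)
Your overall skeleton (argue by contradiction, form a weighted combination of the bad sequence, prove a uniform energy bound on decreasing approximants, invoke Lemma \ref{lem: stable under L1 conv}, then derive divergence of $\int_X\chi(\phi-\cdot)\,d\mu$ via \eqref{eq: growth condition1}) is exactly the paper's, and your final divergence computation is correct. The genuine gap is the step you defer to ``combinatorial bookkeeping'': the uniform bound on $E_\chi(v_N,\phi)$ for $v_N=\sum_{k=1}^N 2^{-k}u_k+2^{-N}\phi$ does not follow from the tools you cite. The pointwise bound $\chi(\phi-v_N)\le\sum_{k=1}^N\chi(\phi-u_k)$ (or even the sharper $\max_k\chi(\phi-u_k)$) produces $N$ terms, and each term $\int_X\chi(\phi-u_k)\,\theta_{v_N}^n$ is controlled by Lemma \ref{lem: mixed energy 1} only up to $2^{n+M}E_\chi(u_k,\phi)+E_\chi(v_N,\phi)$; summing over $k$ yields $E_\chi(v_N,\phi)\le N\big(2^{n+M}A+E_\chi(v_N,\phi)\big)$, which is vacuous. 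Iterating the two-potential estimate down the telescoping $v_N=\tfrac12 u_1+\tfrac12(\cdots)$ instead multiplies the constant $2^{n+M}+1>1$ at each stage and blows up exponentially. For a weight satisfying only \eqref{eq: growth condition} there is no weighted subadditivity $\chi(\sum_k 2^{-k}t_k)\le\sum_k 2^{-k}\chi(t_k)$ (that would require convexity of $\chi$), so the geometric weights never reappear on the $\chi$ side; and the mixed terms $\theta_{u_{k_1}}\wedge\cdots\wedge\theta_{u_{k_n}}$ in the multinomial expansion are not covered by Lemma \ref{lem: bound_mixed}, which only treats wedges of a single $\theta_u$ with $\theta_\phi$.

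The paper closes exactly this gap by replacing the sum with the envelope of the minimum, $v_k:=P_\theta\big(\min_{1\le j\le k}(2^{-j}u_j+(1-2^{-j})\phi)\big)$. By Theorem \ref{thm: envelope contact} and Lemma \ref{lem: concentration max}, $\theta_{v_k}^n$ is carried by pairwise disjoint contact sets $D_l$ on which $v_k=2^{-l}u_l+(1-2^{-l})\phi$, so both the weight and the measure involve a single $u_l$ there, and the measure is dominated by $\theta_\phi^n+2^{-l}\sum_p\binom{n}{p}\theta_{u_l}^p\wedge\theta_\phi^{n-p}$. The explicit factor $2^{-l}$ makes the dangerous part summable in $l$ (via Lemma \ref{lem: bound_mixed}), while the $\theta_\phi^n$ contributions are summed using the disjointness of the $D_l$ and Lemma \ref{lem: int_bound}. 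Since $\sum_{j\ge1}2^{-j}u_j\le v:=\lim_k v_k\le 2^{-k}u_k+(1-2^{-k})\phi$, the final contradiction runs with $v$ exactly as in your last step. Note also that your target, $\sum_j 2^{-j}u_j\in\mathcal{E}_\chi(X,\theta,\phi)$, is formally stronger than the paper's: by Lemma \ref{lem: mixed energy 2} it would imply $v\in\mathcal{E}_\chi(X,\theta,\phi)$, not conversely. So either assume $\chi$ convex (where your weighted subadditivity does hold) or switch to the min-envelope.
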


 Observe that the condition $\mu(\{\phi = -\infty\})=0$ is necessary. Without it,  $\mu$-integrability of $\chi(\phi-u)$ can not be discussed, as the values of this function are not defined on the set $\{\phi = -\infty\}$.
 
\begin{proof}
	Assume by contradiction that there exists a sequence $(u_j) \subset \mathcal{E}_{\chi}(X,\theta,\phi)$ satisfying $\sup_X u_j =0$ and $E_{\chi}(u_j,\phi) \leq A$ such that
	\[
	\int_X \chi(\phi-u_j) \,d\mu \geq 4^{jM},
	\]
 where $M$ is the exponent appearing in \eqref{eq: growth condition}.\\
Define $v_k: = P_{\theta}(\min_{1\leq j\leq k}( 2^{-j} u_j + (1-2^{-j})\phi))\leq \phi$. Observe that $v_k$ is decreasing and setting
	 \[
	 v:= \lim_k v_k,\quad \psi:= \sum_{j\geq 1} 2^{-j}u_j,
	 \] 
	 then, since $u_j\leq \phi$, we can check that  $\psi \leq 2^{-j} u_j + (1-2^{-j}) \phi\leq \phi$. It then follows that $v \geq \psi \in {\rm PSH}(X,\theta,\phi)$.  
	 We fix $k\geq 2$ and for $1\leq l\leq k$ we set
  \[
	D_1 := \left\{v_k= 2^{-1}u_1 +2^{-1} \phi\right\} 
	\]
 and
	\[
	D_l := \left\{v_k= 2^{-l}u_l +(1-2^{-l}) \phi< \min_{1\leq p\leq l-1}(2^{-p}u_p + (1-2^{-p}) \phi) \right\} \quad {\rm for} \; \ell\geq 2 
	\]
	Note that the sets $D_l$ are pairwise disjoint and \[
	\bigcup_{ k \geq l\geq 1} D_l= \{v_k=\min_{1\leq j\leq k}( 2^{-j} u_j + (1-2^{-j})\phi)\}.
	\]
It follows from Theorem \ref{thm: envelope contact} and Lemma \ref{lem: concentration max} that 
	\[
		\int_X \chi(\phi-v_k) \theta_{v_k}^n \leq \sum_{l=1}^k \int_{D_l} \chi(2^{-l}(\phi-u_l)) (2^{-l}\theta_{u_l} + (1-2^{-l})\theta_{\phi})^n. 
	\]
	We note that 
	\begin{equation}\label{eq: MA_convex_comb}
	(2^{-l}\theta_{u_l}+ (1-2^{-l})\theta_{\phi})^n \leq \theta_{\phi}^n + 2^{-l} \sum_{p=1}^n \binom{n}{p} \theta_{u_l}^p \wedge \theta_\phi^{n-p}.  
	\end{equation}
	Since the sets $D_l$ are pairwise disjoint and $\chi$ is increasing, it then follows that 	
	\begin{eqnarray*}
		\int_X \chi(\phi-v_k) \theta_{v_k}^n  &\leq & \sum_{l=1}^k \int_{D_l} \chi(2^{-l}(\phi-u_l)) (2^{-l}\theta_{u_l} + (1-2^{-l})\theta_{\phi})^n\\
  &\leq & \sum_{l=1}^k \int_{D_l} \chi(2^{-l}(\phi-u_l)) \theta_{\phi}^n + 2^n \sum_{l=1}^k 2^{-l} \int_X \chi(\phi-u_l)  \theta_{u_l}^n\\
		&\leq & \int_{X} \chi(\phi-\psi) \theta_{\phi}^n + 2^n \sum_{l=1}^k 2^{-l} \int_X \chi(\phi-u_l)  \theta_{u_l}^n\\
		&\leq &  C_2 + 2^n A.
	\end{eqnarray*}
 The second inequality follows from \eqref{eq: MA_convex_comb} together with Lemma \ref{lem: bound_mixed}. The third inequality follows from the fact $0\leq 2^{-l}(\phi-u_l) \leq \sum_{l\geq 1} 2^{-l}(\phi-u_l)= \phi-\psi$.
 The last line follows from Lemma \ref{lem: int_bound}.\\
It then follows from Lemma \ref{lem: stable under L1 conv} that $v$, which is the decreasing limit of $v_k$, belongs to $\mathcal{E}_{\chi}(X,\theta,\phi)$. On the other hand, since $v\leq 2^{-k} u_k+ (1-2^{-k})\phi$ for any $k\geq 1$, and thanks to \eqref{eq: growth condition1} we have
	\[
	\int_X \chi(\phi-v)d\mu \geq \int_X \chi(2^{-k} (\phi-u_k)) d\mu \geq 2^{-kM} \int_X \chi(\phi-u_k) d\mu \geq 2^{kM} \to \infty,
	\]
	as $k\to \infty$, contradicting the assumption. 
\end{proof}

\begin{lemma}
	\label{lem: linear growth}
	Assume $\mu$ is a positive Borel measure satisfying  $\mu(\{\phi=-\infty\})=0$ and $\chi(\phi-u)  \in L^1(\mu)$ for all $u\in  \mathcal{E}_{\chi}(X,\theta,\phi)$.
	Then there exists a constant $C>0$ such that, for all $u \in \mathcal{E}_{\chi}(X,\theta,\phi)$ with $\sup_X u =0$,  we have
	\[
	\int_X \chi(\phi-u) d\mu \leq C( E_{\chi}(u,\phi)+1). 
	\]
\end{lemma}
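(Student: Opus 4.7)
I will argue by contradiction, following the strategy of Lemma~\ref{lem: integrability implies uniform}, but now with weights adapted to the growth of the energy. Suppose the conclusion fails. Then for each $j \in \mathbb{N}$ there exists $u_j \in \mathcal{E}_\chi(X,\theta,\phi)$ with $\sup_X u_j=0$ such that
$$
\int_X \chi(\phi-u_j)\,d\mu \;>\; j\bigl(E_\chi(u_j,\phi)+1\bigr).
$$
First I observe that $E_\chi(u_j,\phi)$ must tend to $+\infty$: otherwise, a subsequence would satisfy $E_\chi(u_j,\phi)\le A$ for some fixed $A$, and Lemma~\ref{lem: integrability implies uniform} would give $\int_X \chi(\phi-u_j)\,d\mu \le C(A)$, contradicting the lower bound for large $j$.

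Next I mimic the envelope construction from the proof of Lemma~\ref{lem: integrability implies uniform}, but with weights scaled by $E_\chi(u_j,\phi)+1$ so that the limiting potential still lies in $\mathcal{E}_\chi(X,\theta,\phi)$. Specifically, set
$$
a_j := \frac{1}{2^j\bigl(E_\chi(u_j,\phi)+1\bigr)}, \qquad v_k := P_\theta\Bigl(\min_{1\le l\le k}\bigl(a_l u_l+(1-a_l)\phi\bigr)\Bigr).
$$
Then $\sum_j a_j\le 1$ and, crucially, $\sum_j a_j E_\chi(u_j,\phi)\le 1$. Since $v_k\ge \sum_l a_l u_l + (1-\sum_l a_l)\phi$ and the latter is a convex combination of $\theta$-psh functions (whose $L^1$-norm is uniformly controlled by compactness of the set $\{h\in\mathrm{PSH}(X,\theta): \sup_X h =0\}$), the $v_k$ decrease to some $v\not\equiv-\infty$. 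The same disjoint-set computation as in Lemma~\ref{lem: integrability implies uniform}, together with Lemma~\ref{lem: int_bound} and Lemma~\ref{lem: bound_mixed}, yields
$$
E_\chi(v_k,\phi)\;\le\; C_2 \,+\, 2^n \sum_l a_l\, E_\chi(u_l,\phi)\;\le\; C_2+2^n.
$$
By Lemma~\ref{lem: stable under L1 conv}, $v\in \mathcal{E}_\chi(X,\theta,\phi)$, so the hypothesis forces $\int_X \chi(\phi-v)\,d\mu<\infty$.

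The contradiction will come from a lower bound on $\int_X \chi(\phi-v)\,d\mu$. From $v\le a_j u_j+(1-a_j)\phi$ one has $\phi-v\ge a_j(\phi-u_j)$, and \eqref{eq: growth condition1} gives $\chi(\phi-v)\ge a_j^M\,\chi(\phi-u_j)$. Integrating yields
$$
\int_X \chi(\phi-v)\,d\mu \;\ge\; a_j^M \int_X \chi(\phi-u_j)\,d\mu \;\ge\; \frac{j}{2^{jM}(E_\chi(u_j,\phi)+1)^{M-1}}.
$$
The main obstacle is that this right-hand side need not tend to $+\infty$ with the naive dyadic choice above (for instance it equals $j/2^j\to 0$ when $M=1$). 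This is fixed by a more careful, two-step choice of weights: (i) pass to a subsequence $(j_k)$ along which $E_\chi(u_{j_k},\phi)$ grows at a prescribed (and if necessary slow) rate — if $E_\chi(u_j,\phi)$ grows too fast, one first rescales $u_j\leadsto \tilde u_j := t_j u_j+(1-t_j)\phi$ with $t_j := 1/E_\chi(u_j,\phi)$, producing a sequence with $E_\chi(\tilde u_j,\phi)$ bounded and $\int_X\chi(\phi-\tilde u_j)\,d\mu$ still divergent by a factor that overpowers the $t_j^M$ loss; (ii) replace the prefactor $2^{-j}$ by a sparse weight $c_j$ vanishing outside $(j_k)$, chosen so that $c_{j_k}^M j_k (E_\chi(u_{j_k},\phi)+1)\to\infty$ while $\sum_j c_j(E_\chi(u_j,\phi)+1)$ remains summable. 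Running the envelope construction with these refined weights produces $v\in \mathcal{E}_\chi(X,\theta,\phi)$ with $\int_X \chi(\phi-v)\,d\mu=+\infty$, contradicting the standing assumption and completing the proof. The delicate point is exactly this balancing between keeping $E_\chi(v,\phi)$ finite and forcing $\int_X\chi(\phi-v)\,d\mu$ to diverge.
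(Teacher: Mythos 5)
Your proposal has a genuine gap for $M>1$, and you have in fact located it yourself: the linear rescaling $\tilde u_j:=t_ju_j+(1-t_j)\phi$ loses a full power $M$ through \eqref{eq: growth condition1}, so the surviving lower bound is $\int_X\chi(\phi-\tilde u_j)\,d\mu\geq t_j^M\,j\,(E_\chi(u_j,\phi)+1)\approx j\,(E_\chi(u_j,\phi)+1)^{1-M}$, which need not diverge (take $E_\chi(u_j,\phi)\geq j^{2/(M-1)}$). The proposed fix does not close this: any admissible sparse weights must satisfy $\sum_k c_{j_k}(E_{j_k}+1)<\infty$, hence $c_{j_k}=o\bigl((E_{j_k}+1)^{-1}\bigr)$, and then $c_{j_k}^M\,j_k(E_{j_k}+1)=o\bigl(j_k(E_{j_k}+1)^{1-M}\bigr)$, which again can tend to $0$. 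More structurally, any argument based on the linear substitution $u\mapsto tu+(1-t)\phi$ can only yield the weaker bound $\int_X\chi(\phi-u)\,d\mu\leq C(E_\chi(u,\phi)+1)^M$, because the comparison between $\chi(\phi-u)$ and $\chi(t(\phi-u))$ is lossy by the factor $t^M$ in one direction and $t$ in the other. (Your argument is fine when $M=1$, where step (i) alone already finishes via Lemma \ref{lem: integrability implies uniform}; the infinite envelope construction is then unnecessary.)

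The missing idea is to rescale \emph{nonlinearly in the range of $\chi$} rather than linearly in the potential: set $\varepsilon_j:=(E_\chi(u_j,\phi)+1)^{-1}$ and
\[
\psi_j:=P_\theta\bigl(-\chi^{-1}(\varepsilon_j\,\chi(\phi-u_j))+\phi\bigr),
\]
which is a genuine $\theta$-psh function since $u_j\leq-\chi^{-1}(\varepsilon_j\chi(\phi-u_j))+\phi$. Then $\chi(\phi-\psi_j)\geq\varepsilon_j\,\chi(\phi-u_j)$ \emph{exactly}, with no power-$M$ loss, so $\int_X\chi(\phi-\psi_j)\,d\mu\geq\varepsilon_j\cdot j(E_\chi(u_j,\phi)+1)=j$; while Theorem \ref{thm: envelope contact} (the measure $\theta_{\psi_j}^n$ lives on the contact set) combined with $u_j\leq\psi_j$ and Lemma \ref{lem: mixed energy 2} gives $E_\chi(\psi_j,\phi)\leq\varepsilon_j\int_X\chi(\phi-u_j)\,\theta_{\psi_j}^n\leq 2^{n+M}\varepsilon_j E_\chi(u_j,\phi)\leq 2^{n+M}$, uniformly. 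One then applies Lemma \ref{lem: integrability implies uniform} to each $\psi_j$ separately and gets an immediate contradiction; no infinite envelope of the whole sequence is needed at this stage.
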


\begin{proof}
	 We prove the lemma by contradiction, assuming there exists a sequence $(u_j) \subset \mathcal{E}_{\chi}(X,\theta,\phi)$ with $\sup_X u_j =0$, but 
	\[
	\int_X \chi(\phi-u_j) d\mu \geq j (E_{\chi}(u_j,\phi)+1). 
	\]
Set $\varepsilon_j:=  (E_{\chi}(u_j,\phi)+1)^{-1}<1$ and 
\[
\psi_j := P_{\theta}(-\chi^{-1}(\varepsilon_j \chi(\phi-u_j)) + \phi).
\]
Since $\theta_{\psi_j}^n$ is supported on $\{\psi_j= -\chi^{-1}(\varepsilon_j \chi(\phi-u_j)) + \phi\}$ (Theorem \ref{thm: envelope contact}), we have 
\[
\int_X \chi(\phi-\psi_j) \theta_{\psi_j}^n \leq \varepsilon_j \int_X \chi(\phi-u_j)  \theta_{\psi_j}^n \leq C_1. 
\]
In the last inequality we have used the fact that $u_j \leq \psi_j$ and that $\chi$ is increasing and Lemma \ref{lem: mixed energy 2}. By Lemma \ref{lem: integrability implies uniform} we thus have 
\[
\int_X \chi(\phi-\psi_j) d\mu \leq C_2. 
\]
But since $\phi-\psi_j \geq \chi^{-1}(\varepsilon_j \chi(\phi-u_j))$, we have
\[
\int_X \chi(\phi-\psi_j) d\mu \geq  \int_X \varepsilon_j\chi(\phi-u_j) d\mu \geq j,
\]
which is a contradiction. 
\end{proof}

\begin{lemma}
	\label{lem: DV21 relative} 
	There exists a constant $C>0$ such that, for all $u, v \in \mathcal{E}_{\chi}(X,\theta,\phi)$ with $\sup_X v=0$ and $u\leq 0$, we have 
\begin{equation}\label{eq: DV21 relative}
	\int_X \chi(\phi-v) \theta_u^n \leq C(1 +E_{\chi}(u,\phi)) E_{\chi}(v,\phi)^{M/(M+1)} +C.  
\end{equation}
\end{lemma}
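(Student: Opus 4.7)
Plan.
The natural starting point is the auxiliary envelope
\[
\psi_\varepsilon := P_\theta\bigl(-\chi^{-1}(\varepsilon\chi(\phi-v))+\phi\bigr)\qquad(\varepsilon\in(0,1]),
\]
already used in the proofs of Lemmas \ref{lem: integrability implies uniform} and \ref{lem: linear growth}. The potential $\psi_\varepsilon$ satisfies $v\le\psi_\varepsilon\le\phi$, the pointwise inequality $\chi(\phi-\psi_\varepsilon)\ge \varepsilon\chi(\phi-v)$ (with equality on the contact set, which carries $\theta_{\psi_\varepsilon}^n$ by Theorem \ref{thm: envelope contact}), and—combining the contact-set equality with Lemma \ref{lem: mixed energy 2} applied to $v\le\psi_\varepsilon$—the energy bound $E_\chi(\psi_\varepsilon,\phi)\le 2^{n+M}\varepsilon E_\chi(v,\phi)$.

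The plan is to apply Lemma \ref{lem: mixed energy 1} to $(\psi_\varepsilon,u)$, combine with the pointwise bound $\chi(\phi-v)\le \varepsilon^{-1}\chi(\phi-\psi_\varepsilon)$, and then optimize $\varepsilon$. A naive chaining only recovers Lemma \ref{lem: mixed energy 1}, so a second scaling has to be inserted. My idea is to work with the convex combination $w_\alpha:=(1-\alpha)\phi+\alpha\psi_\varepsilon$, for which $\phi-w_\alpha=\alpha(\phi-\psi_\varepsilon)$ and hence \eqref{eq: growth condition1} gives $\chi(\phi-\psi_\varepsilon)\le\alpha^{-M}\chi(\phi-w_\alpha)$. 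Expanding $\theta_{w_\alpha}^n$ by the multinomial formula and using Lemmas \ref{lem: int_bound} and \ref{lem: bound_mixed} to control the mixed terms $\theta_\phi^{n-k}\wedge\theta_{\psi_\varepsilon}^k$ for $k=0,1,\dots,n$ should yield an estimate on $E_\chi(w_\alpha,\phi)$ with an explicit dependence on $\alpha\varepsilon$, after which Lemma \ref{lem: mixed energy 1} applied to $(w_\alpha,u)$ translates into a two-parameter bound
\[
\int_X \chi(\phi-v)\theta_u^n\;\le\; C_1\,\varepsilon\,E_\chi(v,\phi) \;+\; C_2\,\varepsilon^{-M}\bigl(1+E_\chi(u,\phi)\bigr),\qquad \varepsilon\in(0,1].
\]

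Given this two-parameter estimate, the choice $\varepsilon^{M+1}\asymp(1+E_\chi(u,\phi))/E_\chi(v,\phi)$ — which is the optimization dictated by Young's inequality with conjugate exponents $M+1$ and $(M+1)/M$ — delivers
\[
\int_X\chi(\phi-v)\theta_u^n\;\le\; C\bigl(1+E_\chi(u,\phi)\bigr)^{1/(M+1)}\,E_\chi(v,\phi)^{M/(M+1)}+C,
\]
which upgrades to the claimed inequality via the trivial $(1+E_\chi(u,\phi))^{1/(M+1)}\le 1+E_\chi(u,\phi)$. In the regime where the optimal $\varepsilon$ would exceed $1$ — equivalently $E_\chi(v,\phi)\le 1+E_\chi(u,\phi)$ — Lemma \ref{lem: mixed energy 1} applied directly already gives the desired bound and is absorbed into the additive constant $C$. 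The main obstacle is precisely producing the $\varepsilon^{-M}$ scaling of the $u$-contribution: the envelope construction alone only yields an $\varepsilon^{-1}$ penalty, and recovering the missing factor $\varepsilon^{-(M-1)}$ for a general $\chi$ satisfying only the one-sided growth condition \eqref{eq: growth condition} is what requires the additional convex-combination parameter $\alpha$ together with the multinomial expansion of $\theta_{w_\alpha}^n$; the bookkeeping of the various powers of $\alpha$ and $\varepsilon$ against Lemmas \ref{lem: int_bound} and \ref{lem: bound_mixed} is the most delicate part of the argument.
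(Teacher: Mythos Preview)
Your bookkeeping does not close. Following your outline exactly—apply Lemma~\ref{lem: mixed energy 1} to $(w_\alpha,u)$, bound $E_\chi(w_\alpha,\phi)\le C_0+C_1\alpha\varepsilon E_\chi(v,\phi)$ via the multinomial expansion and Lemmas~\ref{lem: int_bound}, \ref{lem: bound_mixed}, then use $\chi(\phi-w_\alpha)\ge\alpha^M\varepsilon\,\chi(\phi-v)$—yields
\[
\int_X \chi(\phi-v)\,\theta_u^n \;\le\; C_0'\,\alpha^{-M}\varepsilon^{-1} \;+\; C_1'\,\alpha^{1-M}E_\chi(v,\phi) \;+\; \alpha^{-M}\varepsilon^{-1}E_\chi(u,\phi).
\]
Since $M\ge 1$ and $\alpha\in(0,1]$, the coefficient $\alpha^{1-M}$ in front of $E_\chi(v,\phi)$ is $\ge 1$; no choice of $\alpha$ makes it small. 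Optimizing, the right-hand side is decreasing in $\alpha$ on $(0,1]$, so the best you can do is $\alpha=1$, i.e.\ $w_\alpha=\psi_\varepsilon$, and you are back to the envelope-only bound $CE_\chi(v,\phi)+\varepsilon^{-1}(C+E_\chi(u,\phi))$, which is useless. The claimed two-parameter estimate with $\varepsilon E_\chi(v,\phi)$ and $\varepsilon^{-M}(1+E_\chi(u,\phi))$ therefore does not follow.

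The paper's argument achieves the needed separation by scaling the \emph{form}: regard $v+(t-1)\phi$ and $tu$ as $t\theta$-psh functions and apply Lemma~\ref{lem: mixed energy 1} in $\mathcal{E}(X,t\theta,t\phi)$ with $t>1$. The left side becomes $t^n\int_X\chi(\phi-v)\theta_u^n$, the $u$-energy scales as $t^{n+M}E_\chi(u,\phi)$, while the multinomial expansion of $((t-1)\theta_\phi+\theta_v)^n$ gives at most $t^{n-1}$ on the $E_\chi(v,\phi)$ part (and $t^n$ on the harmless $\theta_\phi^n$ piece). Dividing by $t^n$ produces the crucial $t^{-1}E_\chi(v,\phi)$ versus $t^M E_\chi(u,\phi)$, and then $t=(1+E_\chi(v,\phi))^{1/(M+1)}$ concludes. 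The point is that $t>1$ allows the coefficients of $\theta_\phi$ and $\theta_v$ to sum to more than $1$; a convex combination in the fixed class $\theta$ cannot reproduce this asymmetry.
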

The above result is due to Duc-Thai Do and Duc-Viet Vu \cite{DV21}. Their proof uses generalized non-pluripolar products. We give below a less technical argument that relies only on the comparison principle. 
\begin{proof}
	
 We observe that for all $t>1$, $v+ (t-1)\phi, tu \in \mathcal{E}_{\chi}(X,t\theta,t\phi)$ and $t\phi-(v+ (t-1)\phi)=\phi-v$ (where equality holds outside a puripolar set). Fixing $t>1$, by Lemma \ref{lem: mixed energy 1} (regarding $v+ (t-1)\phi$ and $tu$ as  elements of $\mathcal{E}(X,t\theta,t\phi)$) we have 
\begin{eqnarray*}
	\int_X \chi(\phi-v) (t \theta+ dd^c   tu)^n  &\leq &  2^{n+M}\int_X \chi(\phi-v) (t\theta+dd^c (v + (t-1)  \phi))^n\\
 &&+\int_X \chi(t\phi-tu) (t\theta+dd^c tu)^n\\
	&\leq & 2^{n+M}t^n \int_X \chi(\phi-v)\theta_\phi^n + t^{n-1}2^{2n+M} E_{\chi}(v,\phi)+ t^{n+M} E_{\chi}(u,\phi),  
\end{eqnarray*}
where the last inequality follows from Lemma \ref{lem: bound_mixed} after observing that $$(t\theta+dd^c (v + (t-1)  \phi))^n= ((t-1)\theta_\phi+\theta_v)^n \leq t^n \theta_\phi^n + t^{n-1} 2^n \sum_{j=1}^n \theta_v^j \wedge \theta_\phi^{n-j}.$$
 Dividing by $t^n$ and using Lemma \ref{lem: int_bound} we thus get 
 \[
 \int_X \chi(\phi-v) \theta_u^n  \leq C_1+ 2^{2n+M} t^{-1} E_{\chi}(v,\phi) + t^M E_{\chi}(u,\phi). 
 \]
Choosing $t= (1+ E_{\chi}(v,\phi))^{{1}/{(M+1)}}$, we finish the proof. 
\end{proof}

\begin{prop}\label{prop: exponential}
	Assume $\varphi \in \mathcal{E}(X,\theta,\phi)$ with $\sup_X \varphi =0$   satisfies $\theta_\varphi^n \leq A \theta_u^n$ and $[u]=[\phi]$ for some $A>0$ and $u \in {\rm PSH}(X,\theta)$. Then there exists $\alpha >0$ such that
	\[
	\int_X e^{\alpha (\phi-\varphi)} \theta_\varphi^n  <\infty. 
	\]
\end{prop}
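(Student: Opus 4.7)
The plan is to prove exponential decay of the tail
$$\sigma(t) := \theta_\varphi^n(\{\varphi < \phi - t\}), \qquad t \geq 0,$$
from which the statement follows by the layer-cake identity
$$\int_X e^{\alpha(\phi-\varphi)}\,\theta_\varphi^n = \theta_\varphi^n(X) + \alpha\int_0^\infty e^{\alpha t}\sigma(t)\,dt.$$
The approach rests on two steps: dominating $\theta_\varphi^n$ by the Monge--Amp\`ere capacity, then running a Ko{\l}odziej-style iteration.

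For the first step, after shifting $u$ by a constant I may assume $\phi - C_0 \leq u \leq \phi$ for some $C_0\geq 1$ (using $[u]=[\phi]$), and, since $\sup_X \varphi = 0$, we have $\varphi \leq \phi$ by Theorem \ref{thm: ceiling coincide envelope non collapsing}. The auxiliary potential $v := C_0^{-1}(u-\phi) + \phi$ is $\theta$-psh and satisfies $\phi - 1 \leq v \leq \phi$, so it is a candidate in the relative capacity $\mathrm{Cap}_{\theta,\phi}$. By multilinearity of the non-pluripolar product, $\theta_v^n = \sum_{k=0}^n \binom{n}{k}(1-C_0^{-1})^k C_0^{-(n-k)}\theta_\phi^k\wedge\theta_u^{n-k} \geq C_0^{-n}\theta_u^n$. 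Hence $\theta_u^n \leq C_0^n \mathrm{Cap}_{\theta,\phi}$ as Borel measures, and combining with the hypothesis $\theta_\varphi^n \leq A\theta_u^n$ together with Theorem \ref{thm: Cap comparison} yields
$$\theta_\varphi^n(E) \leq A C_0^n\, f\bigl(\mathrm{Cap}_\omega(E)\bigr)$$
for every Borel $E\subset X$, where $f:[0,\infty)\to[0,\infty)$ is continuous and satisfies $f(0)=0$.

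For the second step, I plan to derive a self-improving recursion of the form $\sigma(t+s) \leq K_s\,\sigma(t)^{1+\eta}$ by applying the comparison principle (Corollary \ref{cor: comparison principle}) to $\varphi$ and a rescaled bounded test potential built from $\max(\varphi, \phi - t) - s \in \mathcal{E}(X,\theta,\phi)$, and then plugging in the capacity bound from the first step. Once $\sigma(t_0)$ is small enough --- which holds for large $t_0$ because $\{\varphi = -\infty\}$ is pluripolar and $\theta_\varphi^n$ does not charge pluripolar sets --- this recursion iterates into (double-)exponential decay of $\sigma(t_0+ks)$, which is more than enough for the layer-cake integral above to converge for some $\alpha > 0$.

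The main obstacle is extracting the self-improving recursion from the capacity bound of Step 1, since the function $f$ produced by Theorem \ref{thm: Cap comparison} is only continuous at the origin, without a Dini-type modulus of the form $s\mapsto s^{1+\eta}$. One must run the Ko{\l}odziej iteration in a form that first boot-straps through finitely many compositions in order to convert the weak modulus of $f$ into a genuine power gain, after which the standard contraction mechanism applies, exactly as in the classical proof of the Ko{\l}odziej $L^\infty$-estimate -- but only to the extent needed to secure exponential, rather than $L^\infty$, control of $\phi-\varphi$.
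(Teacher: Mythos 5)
Your Step 1 is fine: the potential $v=C_0^{-1}u+(1-C_0^{-1})\phi$ is a legitimate candidate for ${\rm Cap}_{\theta,\phi}$, and multilinearity does give $\theta_\varphi^n(E)\leq A C_0^{n}\,{\rm Cap}_{\theta,\phi}(E)$ for all Borel $E$. The gap is in Step 2, and it is the one you yourself flag at the end. The self-improving recursion $\sigma(t+s)\leq K_s\,\sigma(t)^{1+\eta}$ requires a volume--capacity inequality with a genuine power gain, $\theta_\varphi^n(E)\lesssim {\rm Cap}(E)^{1+\eta}$, and no such bound is available here: your Step 1 gives only \emph{linear} domination by ${\rm Cap}_{\theta,\phi}$, and Theorem \ref{thm: Cap comparison} gives only a qualitative modulus $f$ with $f(0)=0$ (in fact the $f$ constructed there behaves roughly like $x^{1/(2n)}$ plus a tail term with no rate, i.e.\ it is \emph{sublinear}). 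Composing such a modulus with itself finitely many times still yields a modulus with no quantitative decay; there is no bootstrapping mechanism that converts continuity at $0$ into a power gain. Plugging $f$ into the comparison-principle estimate would give something like $\sigma(t+s)\leq C f(Cs^{-n}\sigma(t))$, which does not contract when $\sigma(t)$ is small. So the iteration as proposed cannot be run.

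What does close the argument along your lines is a \emph{linear} contraction, not a Ko{\l}odziej superlinear one: combine your Step 1 bound $\theta_\varphi^n\leq AC_0^n\,{\rm Cap}_{\theta,\phi}$ with the comparison-principle estimate ${\rm Cap}_{\theta,\phi}(\{\varphi<\phi-t-s\})\leq C s^{-n}\,\theta_\varphi^n(\{\varphi<\phi-t\})$ to get $\sigma(t+s)\leq AC_0^nCs^{-n}\sigma(t)$; choosing $s$ large makes the factor $<1/2$ and yields geometric, hence exponential, decay of $\sigma$, which is all the layer-cake identity needs. This is essentially the scheme of \cite[Lemma 4.3]{DnL17}, which the paper cites. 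The paper's own proof is a variant that avoids capacities altogether: it applies the partial comparison principle (Proposition \ref{prop: general CP}) to $s^{-1}\varphi+(1-s^{-1})\phi$ and $u$, expands $(s^{-1}\theta_\varphi+(1-s^{-1})\theta_\phi)^n$, controls the $\theta_\phi^n$-term by Skoda's uniform integrability (giving a $Ce^{-at}$ contribution), absorbs the remaining terms with the small factor $2^nAs^{-1}$, and derives the affine recursion $F(t+s)\leq C+\tfrac12 F(t)$ for $F(t)=e^{at}\sigma(t)$. Either way, the mechanism is a fixed multiplicative gain per step of size $s$, not a power-type improvement; as written, your proof does not go through.
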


\begin{proof}
We can assume that $\sup_X \varphi=0$ and $\phi-1\leq u\leq \phi$. We argue as in \cite[Lemma 4.3]{DnL17}.  Fix $t>0$ and $s>1$. Observe that away from the pluripolar set $\{\phi = -\infty\}$ we have that
$\{\varphi < \phi -t -s \}\subseteq \{\varphi < \phi -t +s(u-\phi) \}\subseteq \{\varphi < \phi -t \}$. By the assumption and the partial comparison principle (Proposition \ref{prop: general CP})  we have 
	\begin{flalign*}
	 \int_{\{\varphi < \phi -t -s\}} \theta_\varphi^n  & \leq A 	\int_{\{s^{-1}\varphi + (1- s^{-1})\phi <u -s^{-1}t\}}  \theta_u^n  \\
	&\leq A 	\int_{\{s^{-1}\varphi +(1-s^{-1})\phi <u -s^{-1}t \}} (s^{-1}\theta_{\varphi} + (1-s^{-1}) \theta_{\phi})^n \\
	&\leq A\int_{\{\varphi  <\phi -t \}} (s^{-1}\theta_{\varphi} + (1-s^{-1}) \theta_{\phi})^n \\
&\leq  A \int_{\{\varphi  <\phi -t \}}  \theta_{\phi}^n + A \sum_{k=1}^n s^{-k} \binom{n}{k}  \int_{\{\varphi<\phi-t\}}\theta_{\varphi}^k \wedge \theta_{\phi}^{n-k}\\
&\leq A C_0 \int_{\{\varphi <\phi-t\} } \omega^n + 2^n A s^{-1} \int_{\{\varphi<\phi-t\}} \theta_{\varphi}^n\\
&\leq C_2 e^{- at} \int_{\{\varphi <-t\} } e^{a |\varphi|} \omega^n + 2^n As^{-1} \int_{\{\varphi<\phi-t\}} \theta_{\varphi}^n\\
&\leq C_3 e^{- at} + 2^n As^{-1} \int_{\{\varphi<\phi-t\}} \theta_{\varphi}^n.
	\end{flalign*}
 In the fifth inequality we used Theorem \ref{thm: MA of env sing type}. The last inequality holds for a choice of $a>0$ very small, so that the uniform Skoda integrability theorem (see \cite{Sko72}, \cite{GZbook}) ensures 
	\[
	\int_X e^{-a \varphi} \omega^n \leq C_1
	\]
	  is uniformly bounded. 
  
  We fix $s$ so large that $2^nA s^{-1}< (2e)^{-1}$. Then, for   $a=1/s$ we have  $2^nA s^{-1} e^{a s} <1/2$.  Setting
	 \[
	 F(t):= e^{a t}\int_{\{\varphi<\phi-t\}} \theta_\varphi^n, \; t>0,
	 \]
  we then have 
	 \[ F(t+s) \leq C_4 + \frac{ F(t)}{2},\]
from which we obtain, by induction on $k\in \mathbb{N}$,  
  \[
  F(t+ks) \leq C_4 \sum_{j=1}^k 2^{1-j} + 2^{-k} F(t). 
  \]
Any $t\geq 0$ can be written as $t=t_0+ks$ for some $t_0\in[0,s]$ and $k\in \mathbb{N}$. The above estimate thus gives a uniform bound  $ F(t)\leq 2C_4+ \sup_{[0,s]} F\leq C_5$. Now, for $\alpha=a/2$, we get
\begin{flalign*}
    \int_X e^{\alpha (\phi-\varphi)} \theta_{\varphi}^n &= \int_0^{\infty} \alpha e^{\alpha t} \theta_{\varphi}^n(\{\varphi < \phi -t\})dt= \int_0^{\infty} \frac{a}{2}  e^{-{\frac{a}{2}}t}e^{a t} \theta_{\varphi}^n(\{\varphi < \phi -t\})dt,\\
    &= \int_0^{\infty} \frac{a}{2}  e^{-\frac{a}{2} t} F(t)dt \leq\frac{aC_5}{2}   \int_0^{\infty}  e^{-\frac{a}{2} t} dt < \infty.
\end{flalign*}
\end{proof}

\begin{lemma}
	\label{lem: pluripolar set}
If $E\subset X$ is a pluripolar set then there exists $u \in \mathcal{E}_{\chi}(X,\theta,\phi)$ such that $E\subset \{u=-\infty\}$. 
\end{lemma}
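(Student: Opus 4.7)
The strategy is to combine a Josefson-type theorem adapted to the big class $\{\theta\}$ with an iterative envelope construction inspired by the proof of Lemma~\ref{lem: integrability implies uniform}.

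\emph{Step 1 (generalized Josefson).} Since $\{\theta\}$ is big, fix a K\"ahler current representative: there exist $\rho \in \mathrm{PSH}(X,\theta)$ and $\varepsilon_0>0$ with $\theta + dd^c\rho \geq \varepsilon_0 \omega$. By Josefson's classical theorem, one finds $\zeta \in \mathrm{PSH}(X,\omega)$ with $E \subset \{\zeta = -\infty\}$. Then $\psi := \varepsilon_0 \zeta + \rho - \sup_X(\varepsilon_0 \zeta + \rho)$ belongs to $\mathrm{PSH}(X,\theta)$, satisfies $\sup_X\psi = 0$, and $E \subset \{\psi = -\infty\}$. Truncate by $\psi_j := \max(\psi, -b_j)$ for an increasing sequence $b_j \nearrow \infty$, so that $\psi_j \equiv -b_j$ on $E$.

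\emph{Step 2 (envelope construction).} For weights $\varepsilon_j \in (0,1)$ to be chosen, set
\[
v_k := P_\theta\Bigl(\phi,\; \min_{1 \leq j \leq k}\bigl(\varepsilon_j\psi_j + (1-\varepsilon_j)\phi\bigr)\Bigr).
\]
Each candidate $\varepsilon_j\psi_j+(1-\varepsilon_j)\phi$ is $\theta$-psh (convex combination), and the outer envelope with $\phi$ enforces $v_k \leq \phi$. Well-definedness of the envelopes ($v_k \not\equiv -\infty$) follows inductively from Lemma~\ref{lem: P(u,v)_mass_exist}, exploiting $\int_X \theta_\phi^n > 0$. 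The sequence $v_k$ is decreasing; its limit $v := \lim_k v_k$ satisfies $v \leq \phi$, and for every $x \in E$ and every $j$,
\[
v(x) \leq \varepsilon_j\psi_j(x) + (1-\varepsilon_j)\phi(x) = -\varepsilon_j b_j + (1-\varepsilon_j)\phi(x).
\]
If we arrange $\varepsilon_j b_j \to \infty$, then $v = -\infty$ on $E \cap \{\phi > -\infty\}$; combined with the automatic vanishing of any $v \leq \phi$ on $\{\phi = -\infty\}$, we get $v = -\infty$ on all of $E$. Membership $v \in \mathcal{E}(X,\theta,\phi)$ is obtained via Proposition~\ref{prop: envelope mixed} applied iteratively (with the two model potentials $\phi$ and $P_\theta[\psi_j]$) together with bookkeeping of masses.

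\emph{Step 3 (uniform energy bound).} To upgrade to $v \in \mathcal{E}_\chi(X,\theta,\phi)$ we must bound $E_\chi(v_k, \phi)$ uniformly in $k$. By Theorem~\ref{thm: MA of env sing type}, the measure $\theta_{v_k}^n$ is concentrated on the union of the contact sets $\{v_k = \phi\}$ and $D_l := \{v_k = \varepsilon_l\psi_l + (1-\varepsilon_l)\phi\}$, for $1 \leq l \leq k$. The piece on $\{v_k = \phi\}$ contributes nothing to $E_\chi$ since $\phi - v_k = 0$ there. On $D_l$ one has $|\phi - v_k| = \varepsilon_l|\phi - \psi_l| \leq \varepsilon_l(b_l + |\phi|)$, so expanding $(\varepsilon_l\theta_{\psi_l} + (1-\varepsilon_l)\theta_\phi)^n$ multilinearly, and invoking the growth hypothesis \eqref{eq: growth condition} together with Lemmas~\ref{lem: int_bound} and \ref{lem: bound_mixed}, produces an estimate of the schematic form
\[
E_\chi(v_k,\phi) \leq C \sum_{l \geq 1} \varepsilon_l^{a} \chi(b_l) + C,
\]
for some exponent $a \geq 1$ depending only on $M$ and $n$.

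\emph{Main obstacle.} The heart of the argument is the simultaneous calibration of $(\varepsilon_j, b_j)$: one needs both $\varepsilon_j b_j \to \infty$ (to force $v = -\infty$ on $E$) and $\sum_j \varepsilon_j^a \chi(b_j) < \infty$ (for the uniform energy bound). Condition \eqref{eq: growth condition} gives $\chi(b_j) \leq \chi(1) b_j^M$, so the summability reduces to $\sum_j \varepsilon_j^a b_j^M < \infty$. These constraints are compatible: for instance, with $b_j = 2^j$ and $\varepsilon_j = j^{-2}\, 2^{-jM/a}$ and any $a > M$, one gets $\varepsilon_j b_j = j^{-2}\, 2^{j(1 - M/a)} \to \infty$ while $\sum_j \varepsilon_j^a b_j^M = \sum_j j^{-2a} < \infty$. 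Once the parameters are tuned this way, Lemma~\ref{lem: stable under L1 conv} identifies the limit $v$ as an element of $\mathcal{E}_\chi(X,\theta,\phi)$, finishing the proof.
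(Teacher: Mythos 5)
Your route --- iterating the envelope construction of Lemma~\ref{lem: integrability implies uniform} with calibrated truncation levels $b_j$ and weights $\varepsilon_j$ --- is genuinely different from the paper's, and the calibration itself is sound, but as written the argument has two gaps. First, you never verify that the decreasing limit $v=\lim_k v_k$ is not identically $-\infty$; Lemma~\ref{lem: stable under L1 conv}, which you invoke to conclude, presupposes that the limit already lies in ${\rm PSH}(X,\theta,\phi)$. The minorant trick that handles this point in Lemma~\ref{lem: integrability implies uniform} (namely $\sum_j 2^{-j}u_j\leq 2^{-l}u_l+(1-2^{-l})\phi$, valid because there $u_j\leq\phi$) is unavailable here: your $\psi_j$ are truncations with minimal singularity type, hence do not lie below $\phi$, so $\sum_j\varepsilon_j\psi_j+(1-\sum_j\varepsilon_j)\phi$ does not minorize the candidates. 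This is repairable a posteriori --- the uniform bound $E_\chi(v_k,\phi)\leq C$ together with $\int_X\theta_{v_k}^n=\int_X\theta_\phi^n$ and Lemma~\ref{lem: model sup over X} forces $\chi(-\sup_X v_k)\leq C/\int_X\theta_\phi^n$, so $\sup_X v_k$ stays bounded below --- but it must be said. (Also truncate by $\max(\psi,V_\theta-b_j)$ rather than $\max(\psi,-b_j)$: constants need not be $\theta$-psh when $\theta$ is not semipositive.) Second, the mechanism producing the factor $\varepsilon_l^a$ in Step 3 is misattributed: condition \eqref{eq: growth condition} only yields the \emph{lower} bound \eqref{eq: growth condition1} for contractions, and no estimate of the form $\chi(\varepsilon_l t)\leq\varepsilon_l^a\chi(t)$ holds for general admissible $\chi$ (consider $\chi(t)=\log(1+t)$). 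The decay must come from the measure side: in the expansion $(\varepsilon_l\theta_{\psi_l}+(1-\varepsilon_l)\theta_\phi)^n\leq\theta_\phi^n+\varepsilon_l\sum_{p\geq 1}\binom{n}{p}\theta_{\psi_l}^p\wedge\theta_\phi^{n-p}$, the mixed terms carry one factor of $\varepsilon_l$ and are paired with $\chi(\varepsilon_l b_l)\leq\chi(1)(\varepsilon_l b_l)^M$, giving $\varepsilon_l^{M+1}b_l^M$ (so $a=M+1$); the $\theta_\phi^n$-part must instead be summed using disjointness of the $D_l$, the domination $\varepsilon_l(\phi-\psi_l)\leq(\sum_j\varepsilon_j)(\phi-\psi)$, and $\theta_\phi^n\leq A\omega^n$ plus Skoda, as in Lemma~\ref{lem: int_bound}.

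Once these points are repaired your construction goes through, but note how much shorter the paper's argument is: one reduces to $\chi(t)=t^M$, takes $\psi\in\mathcal{E}^1(X,\theta)$ with $E\subset\{\psi=-\infty\}$ from \cite{BBGZ13}, sets $h:=P_\theta(\psi,\phi)\in\mathcal{E}^1(X,\theta,\phi)$ using Proposition~\ref{prop: envelope mixed} and Theorem~\ref{thm: MA of env sing type}, and finally defines $u:=-\chi^{-1}(\phi-h)+\phi$ via Lemma~\ref{ineq: forms_MA}. The identity $\chi(\phi-u)=\phi-h$ converts the $\chi$-energy of $u$ directly into the finite $\mathcal{E}^1$-energy of $h$ (via Lemma~\ref{lem: mixed energy 2}), so no truncation or calibration is needed at all.
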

\begin{proof}
If  $\eta(t)= t^M, t\geq 0$ then the inclusion $\mathcal{E}_{\eta}(X,\theta,\phi)\subset \mathcal{E}_{\chi}(X,\theta,\phi)$ holds because $\chi(t)\leq \chi(1)t^M$ for all $t\geq 1$. We can thus assume that $\chi=\eta$ is convex. 
We claim that there exists $h\in \mathcal{E}^1(X,\theta,\phi)$ such that $E\subset \{h=-\infty\}$. Indeed, it follows from \cite[Corollary 2.11]{BBGZ13} that $E\subset \{\psi=-\infty\}$ for some $\psi\in \mathcal{E}^1(X,\theta)$. We can assume $\sup_X \psi =-1$, which implies $\psi\leq V_\theta$. Since $ P_{\theta}(V_\theta,\phi)=\phi$, Proposition \ref{prop: envelope mixed} ensures that the function $h:= P_{\theta}(\psi,\phi)$ belongs to $\mathcal{E}(X,\theta,\phi)$. By Theorem \ref{thm: MA of env sing type}  we have 
\[
\int_X (\phi-h) \theta_h^n \leq \int_{\{h=\psi\}} (\phi-\psi) \theta_\psi^n \leq \int_X (V_{\theta}-\psi) \theta_\psi^n<\infty. 
\]
We thus have that $h\in \mathcal{E}^1(X,\theta,\phi)$ and $E\subset \{\psi=-\infty\} \subset \{h=-\infty\}$, proving the claim. 

Now, let $ \gamma:=\chi^{-1}$ be the inverse of $\chi$ (so $\gamma(t)=t^{1/M}$ and $\gamma'(t)\leq 1$ if $t\geq 1$), which is a concave weight.  By adding a constant, we can assume that $h\leq -1$. Consider $u:=-\gamma(\phi-h) + \phi \in {\rm PSH}(X,\theta,\phi)$, as in Lemma \ref{ineq: forms_MA} (see also the discussion following the statement). 
We observe also that  $E\subset \{u=-\infty\}$ and $u\geq h$, hence $u\in \mathcal{E}(X,\theta,\phi)$.  By Lemma \ref{lem: mixed energy 2},  
\[
\int_X \chi(\phi-u) \theta_u^n =\int_X (\phi-h) \theta_u^n \leq  2^{n} E_1(h, \phi) <\infty. 
\] 
We thus have that $u\in \mathcal{E}_{\chi}(X,\theta,\phi)$, finishing the proof. 
\end{proof}

\begin{prop}
	\label{lem: DDL4 relative}
	Assume $\mu$ is a positive Radon measure  satisfying $\int_X \theta_{\phi}^n = \mu(X)>0$. Assume also that $\mu(\{\phi=-\infty\})=0$ and
\begin{equation}
	\label{eq: DDL4 relative}
	\int_X \chi(\phi-\varphi) d\mu \leq a E_{\chi}(\varphi,\phi) +C, \quad \varphi \in \mathcal{E}_{\chi}(X,\theta,\phi), \; \sup_X \varphi=0,
\end{equation}
for some constants $a\in (0,1)$, $C>0$. Then $\mu = (\theta+dd^c u)^n$ for some $u\in \mathcal{E}_{\chi}(X,\theta,\phi)$. 
\end{prop}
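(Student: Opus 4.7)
The plan is to apply Theorem~\ref{thm: existence lambda =0} to produce a candidate $u\in\mathcal{E}(X,\theta,\phi)$ with $\theta_u^n=\mu$, and then use the growth hypothesis \eqref{eq: DDL4 relative} as a bootstrap to show that $u$ lies in the smaller class $\mathcal{E}_\chi(X,\theta,\phi)$.

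First I would verify that $\mu$ does not charge pluripolar sets. Let $E\subset X$ be pluripolar. By Lemma~\ref{lem: pluripolar set} there exists $\varphi\in\mathcal{E}_\chi(X,\theta,\phi)$ with $E\subset\{\varphi=-\infty\}$; using Lemma~\ref{lem: E chi stable adding constant} we may normalize $\sup_X\varphi=0$, and Lemma~\ref{lem: model sup over X} then forces $\varphi\leq\phi$. Applying \eqref{eq: DDL4 relative} to $\varphi$ gives $\chi(\phi-\varphi)\in L^1(\mu)$; since this integrand equals $+\infty$ on $E\setminus\{\phi=-\infty\}$ and $\mu(\{\phi=-\infty\})=0$ by hypothesis, we conclude $\mu(E)=0$. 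Theorem~\ref{thm: existence lambda =0} then produces $u\in\mathcal{E}(X,\theta,\phi)$ with $\theta_u^n=\mu$ and $\sup_X u=0$.

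It remains to upgrade $u$ to the class $\mathcal{E}_\chi(X,\theta,\phi)$. Set $u_k:=\max(u,\phi-k)$ for $k$ large. Since $[u_k]=[\phi]$, we have $u_k\in\mathcal{E}_\chi(X,\theta,\phi)$ with $\sup_X u_k=0$ and $E_\chi(u_k,\phi)\leq\chi(k)\mu(X)<\infty$. The key observation is the identity
\[
E_\chi(u_k,\phi)=\int_X\chi(\phi-u_k)\,d\mu.
\]
Indeed, on the plurifine-open set $E_k:=\{u>\phi-k\}$ we have $u_k=u$, so by plurifine locality $\theta_{u_k}^n=\theta_u^n=\mu$ and $\chi(\phi-u_k)=\chi(\phi-u)$ there. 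On the complement $E_k^c=\{u\leq\phi-k\}$ we have $\phi-u_k\equiv k$, hence $\chi(\phi-u_k)\equiv\chi(k)$; and comparing total masses via Lemma~\ref{lem: monotonicity new} gives $\int_X\theta_{u_k}^n=\int_X\theta_\phi^n=\mu(X)$, whence $\int_{E_k^c}\theta_{u_k}^n=\mu(X)-\mu(E_k)=\mu(E_k^c)$. Thus both sides of the displayed identity equal $\int_{E_k}\chi(\phi-u)\,d\mu+\chi(k)\mu(E_k^c)$.

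With this identity in hand, \eqref{eq: DDL4 relative} applied to $\varphi=u_k$ reads
\[
E_\chi(u_k,\phi)\leq a\,E_\chi(u_k,\phi)+C,
\]
and since $a<1$ we obtain the uniform bound $E_\chi(u_k,\phi)\leq C/(1-a)$. Because $u_k\searrow u$ in $L^1$, Lemma~\ref{lem: stable under L1 conv} then delivers $u\in\mathcal{E}_\chi(X,\theta,\phi)$, completing the proof. The main obstacle is the key identity above; it hinges on simultaneously using plurifine locality on the ``good'' set $\{u>\phi-k\}$ (to convert integration against $\theta_{u_k}^n$ into integration against $\mu$) and the total-mass equality $\int_X\theta_{u_k}^n=\mu(X)$ on the ``bad'' set $\{u\leq\phi-k\}$ where $\phi-u_k$ is constant. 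The strict inequality $a<1$ is indispensable: it is precisely what enables absorbing $E_\chi(u_k,\phi)$ from the right-hand side of \eqref{eq: DDL4 relative} into the left, which would otherwise be impossible as $\chi(k)\mu(E_k^c)$ need not decay fast enough.
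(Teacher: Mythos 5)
Your proof is correct, and the main part takes a genuinely different and shorter route than the paper's. The paper follows a Cegrell-style scheme: it first writes $\mu=f\nu$ with $\nu=\theta_{\phi_0}^n$ for a potential $\phi_0$ with $[\phi_0]=[\phi]$ (constructed as $e^{\psi-\phi}+\phi$ from the full-mass solution $\psi$ of $\theta_\psi^n=\mu$), then solves the truncated equations $\theta_{\varphi_j}^n=c_j\min(f,j)\,\nu$, invokes Proposition \ref{prop: exponential} to get the \emph{a priori} qualitative finiteness of $E_\chi(\varphi_j,\phi)$, feeds this into the hypothesis \eqref{eq: DDL4 relative} to obtain a uniform bound, and passes to the limit using Lemmas \ref{lem: stable under L1 conv} and \ref{lem: stability of subsolutions} plus a mass comparison. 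You instead truncate the solution itself, $u_k=\max(u,\phi-k)$, and exploit the exact identity $E_\chi(u_k,\phi)=\int_X\chi(\phi-u_k)\,d\mu$ (which holds by plurifine locality on $\{u>\phi-k\}$ together with $\int_X\theta_{u_k}^n=\mu(X)$ on the complement, where $\phi-u_k\equiv k$), so that \eqref{eq: DDL4 relative} becomes self-improving and $a<1$ immediately yields $E_\chi(u_k,\phi)\le C/(1-a)$. This bypasses the decomposition $\mu=f\nu$, the truncation of densities, Proposition \ref{prop: exponential}, and Lemma \ref{lem: stability of subsolutions} entirely; its only cost is that it leans on Theorem \ref{thm: existence lambda =0} up front (which the paper's proof also uses, so nothing circular is introduced). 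All the supporting details check out: $\sup_X u_k=0$ and $u_k\le\phi$ follow from Lemma \ref{lem: model sup over X}, $u_k\in\mathcal E_\chi(X,\theta,\phi)$ since $E_\chi(u_k,\phi)\le\chi(k)\mu(X)<\infty$, and the first step (that $\mu$ charges no pluripolar set, so Theorem \ref{thm: existence lambda =0} applies) is the same as the paper's.
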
 

\begin{proof}
	We prove the proposition by an argument going back to Cegrell \cite{Ceg98}. We first claim that $\mu$ vanishes on pluripolar sets. Indeed, fix such a Borel set $E$. It follows from Lemma \ref{lem: pluripolar set} that $E\subset \{h=-\infty\}$ for some $h\in \mathcal{E}_{\chi}(X,\theta,\phi)$. We can assume that $h \leq \phi$.
 
Since $\int_E \chi (\phi-h) d\mu \leq \int_X \chi (\phi-h) d\mu < \infty$, we get that $\mu(E \cap \{\phi \neq -\infty\}) =0$. Since $\mu(\{\phi = -\infty\})=0$, it follows that $\mu(E) =0$.
 
We next claim that we can write 
	\[
	\mu = f\nu, \; \nu = (\theta+dd^c \phi_0)^n, 
	\]
for some $\phi_0\in {\rm PSH}(X,\theta,\phi)$ such that $\phi -1\leq \phi_0\leq \phi$, and some $0\leq f\in  L^1(X,\nu)$. To see this, we first find $\psi\in \mathcal{E}(X,\theta,\phi)$ such that  $(\theta +dd^c \psi)^n = \mu$ and $\sup_X \psi=0$ (in particular $\psi \leq \phi$). The existence of $\psi$ follows from Theorem \ref{thm: existence lambda =0}.

We set $\phi_0:= e^{\psi-\phi}+\phi$ as in Lemma \ref{ineq: forms_MA} (see also the discussion following the statement). Then $\phi_0\in {\rm PSH}(X,\theta, \phi)$, $[\phi_0]=[\phi]$, and
\begin{eqnarray}\label{eq: MA_ineq}
	 (\theta +dd^c \phi_0)^n \geq  e^{n(\psi-\phi)}(\theta+dd^c \psi)^n = e^{n(\psi-\phi)} \mu. 
\end{eqnarray}

From \eqref{eq: MA_ineq} it follows that $\mu$ is absolutely continuous with respect to $(\theta +dd^c \phi_0)^n$, proving the second claim. 
	 
	 Now, for each $j>1$, let $\varphi_j \in \mathcal{E}(X,\theta,\phi)$ be the unique solution of 
\[
(\theta +dd^c \varphi_j)^n = c_j \min(f,j) (\theta+dd^c \phi_0)^n, \; \sup_X \varphi_j=0,
\] 
where $c_j$ is a normalization constant to have equality between the total masses of the left and right-hand side.
For $j$ large enough we have $c_j a <1$, since $c_j \to 1$ and by assumption $a<1$. We can thus assume that $c_ja <\lambda<1$. It follows from Proposition \ref{prop: exponential} that 
\[
\int_X e^{\alpha_j (\phi-\varphi_j)} (\theta+dd^c \varphi_j)^n  <\infty,
\] 
for some $\alpha_j >0$. In particular, since for any $t>0$ $\chi(t)\leq C e^{\alpha_j t}$ for some $C>0$, we infer that $E_{\chi}(\varphi_j, \phi)$ is finite. We claim that this bound is uniform in $j$.  Indeed, since $\theta_{\varphi_j}^n \leq c_j f \theta_{\phi_0}^n$, we have
\[
E_{\chi}(\varphi_j,\phi) \leq \int_X \chi(\phi-\varphi_j) c_j d \mu \leq ac_j E_{\chi}(\varphi_j,\phi) +C
\]
gives $E_{\chi}(\varphi_j,\phi) \leq C(1-\lambda)^{-1}$. Extracting a subsequence we can assume that $\varphi_j \to \varphi$ in $L^1$. It follows from Lemma \ref{lem: stable under L1 conv} that $\varphi \in \mathcal{E}_{\chi}(X,\theta,\phi)$. It then follows from Lemma \ref{lem: stability of subsolutions} that  $(\theta+dd^c \varphi)^n \geq \mu$. Comparing the total mass, we obtain the equality, finishing the proof.     
\end{proof}
We are now ready to prove our main result of this section.

\begin{theorem}\label{thm: E chi GZ07}
	Assume $\mu(\{\phi=-\infty\})=0$ and $\chi(|\phi-u|) \in L^1(\mu)$, for all $u\in \mathcal{E}_{\chi}(X,\theta,\phi)$. Then $\mu= (\theta+dd^c \varphi)^n$ for some $\varphi\in \mathcal{E}_{\chi}(X,\theta,\phi)$. 
\end{theorem}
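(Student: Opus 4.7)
The target is to invoke Proposition~\ref{lem: DDL4 relative}: it produces the required $\varphi\in\mathcal{E}_\chi(X,\theta,\phi)$ solving $\theta_\varphi^n=\mu$ as soon as one establishes, for some $a\in(0,1)$ and $C>0$,
\[
\int_X \chi(\phi-\varphi)\,d\mu \;\leq\; a\,E_\chi(\varphi,\phi) + C
\]
for every $\varphi\in\mathcal{E}_\chi(X,\theta,\phi)$ with $\sup_X\varphi=0$. Lemma~\ref{lem: linear growth} already provides such a linear-growth estimate, but only with coefficient $C_0$ which may well be $\geq 1$, so the whole difficulty lies in upgrading the linear estimate into one with coefficient strictly below one. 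I would achieve this by first establishing the sublinear bound
\[
\int_X \chi(\phi-\varphi)\,d\mu \;\leq\; C\,E_\chi(\varphi,\phi)^{M/(M+1)} + C,
\]
which is the content of condition~(i) in Theorem~\ref{thm: E chi characterization_intro}. Once sublinear growth is in hand, the elementary Young-type inequality $Ct^{M/(M+1)}\leq at+C_a$ (valid for every $a>0$) allows one to pick any prescribed $a<1$ in the sub-unit hypothesis of Proposition~\ref{lem: DDL4 relative}.

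To prove the sublinear bound, I would follow the envelope technique already used in the proof of Lemma~\ref{lem: linear growth}. Fix $\varphi$ as above, let $E := E_\chi(\varphi,\phi)$, and for a free parameter $\eta\in(0,1)$, set
\[
\varepsilon \;:=\; \eta\,(E+1)^{-1/(M+1)},\qquad \psi_\varepsilon \;:=\; P_\theta\bigl(-\chi^{-1}(\varepsilon\,\chi(\phi-\varphi)) + \phi\bigr).
\]
The envelope definition yields $\chi(\phi-\psi_\varepsilon)\geq \varepsilon\,\chi(\phi-\varphi)$ globally, so that
\[
\int_X \chi(\phi-\varphi)\,d\mu \;\leq\; \varepsilon^{-1} \int_X \chi(\phi-\psi_\varepsilon)\,d\mu,
\]
and Theorem~\ref{thm: envelope contact} combined with Lemma~\ref{lem: mixed energy 2} delivers $E_\chi(\psi_\varepsilon,\phi)\leq 2^{n+M}\varepsilon E$, which with the above choice of $\varepsilon$ scales like $\eta\,E^{M/(M+1)}$. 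The whole estimate is thus reduced to controlling $\int_X \chi(\phi-\psi_\varepsilon)\,d\mu$ sublinearly in $E_\chi(\psi_\varepsilon,\phi)$.

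The main obstacle lies precisely in this last sublinear control. Naively feeding $\psi_\varepsilon$ back into Lemma~\ref{lem: linear growth} reinstates a linear factor $C_0$ that exactly absorbs the gain from the envelope rescaling and drags us back to linear growth in $E$. To break this circularity, I would couple the envelope scaling with a Cegrell-type approximation analogous to the proof of Proposition~\ref{lem: DDL4 relative}: writing $\mu = f\,\theta_{\phi_0}^n$ for a model $\phi_0$ with $[\phi_0]=[\phi]$ (as constructed there) and solving the truncated equations $\theta_{\psi_k}^n = c_k\min(f,k)\,\theta_{\phi_0}^n$ via Theorem~\ref{thm: existence lambda =0} yields potentials $\psi_k$ with $[\psi_k]=[\phi]$, hence trivially in $\mathcal{E}_\chi(X,\theta,\phi)$. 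The sharper DV21-style estimate of Lemma~\ref{lem: DV21 relative}, applied with $u=\psi_k$, then transfers sublinear growth from $\theta_{\psi_k}^n$-integration to $\mu$-integration in the limit, \emph{provided} the energies $E_\chi(\psi_k,\phi)$ stay uniformly bounded. That uniform bound is extracted from the master inequality $E_\chi(\psi_k,\phi)\leq c_k\int_X\chi(\phi-\psi_k)\,d\mu$ combined with the envelope-scaling argument above and Lemma~\ref{lem: integrability implies uniform}, choosing $\eta$ small enough to make the resulting coefficient of $E_\chi(\psi_k,\phi)$ strictly below $c_k^{-1}$ and thereby close the bootstrap. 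Finally, Lemma~\ref{lem: stable under L1 conv} ensures that the limit $\varphi=\lim_k \psi_k$ lies in $\mathcal{E}_\chi(X,\theta,\phi)$ and satisfies $\theta_\varphi^n=\mu$.
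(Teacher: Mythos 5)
Your reduction is the right one: establish the sublinear bound (condition (i)) and then feed it into Proposition~\ref{lem: DDL4 relative} via Young's inequality. You also correctly diagnose the central difficulty — that rescaling by the envelope $\psi_\varepsilon=P_\theta(-\chi^{-1}(\varepsilon\chi(\phi-\varphi))+\phi)$ and then re-applying Lemma~\ref{lem: linear growth} merely reproduces the linear estimate. The problem is that your proposed way out does not actually break this circularity. Your bootstrap reads $E_\chi(\psi_k,\phi)\leq c_k\int_X\chi(\phi-\psi_k)\,d\mu$, and to close it you need the right-hand side to be bounded by (something strictly less than $c_k^{-1}$)$\cdot E_\chi(\psi_k,\phi)$ plus a constant. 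But the only tools available for bounding $\int_X\chi(\phi-\psi_k)\,d\mu$ in terms of $E_\chi(\psi_k,\phi)$ are Lemma~\ref{lem: linear growth} (coefficient $C$, with no reason to be $<1$) and the envelope scaling combined with Lemma~\ref{lem: integrability implies uniform}. The latter gives $\int_X\chi(\phi-\psi_k)\,d\mu\leq \eta^{-1}(E_\chi(\psi_k,\phi)+1)\,C(2^{n+M}\eta)$ when you take $\varepsilon=\eta(E_\chi(\psi_k,\phi)+1)^{-1}$, and since the constant $C(A)$ of Lemma~\ref{lem: integrability implies uniform} is produced by a compactness/contradiction argument, there is no control on $\eta^{-1}C(2^{n+M}\eta)$ as $\eta\to 0$; ``choosing $\eta$ small enough'' is exactly the quantitative step you cannot perform. (With $\varepsilon\sim(E+1)^{-1/(M+1)}$ instead, the energy of $(\psi_k)_\varepsilon$ is not uniformly bounded, so Lemma~\ref{lem: integrability implies uniform} does not apply at all.) A secondary issue: your claim that $[\psi_k]=[\phi]$ is unjustified, since $\min(f,k)\,\theta_{\phi_0}^n$ need not have $L^p(\omega^n)$ density; membership of $\psi_k$ in $\mathcal{E}_\chi(X,\theta,\phi)$ has to come from Proposition~\ref{prop: exponential} instead.

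The missing idea is a rescaling of the \emph{measure}, not of the potential. Let $C$ be the constant of Lemma~\ref{lem: linear growth} and solve $\theta_v^n=(4C)^{-1}\mu+b\,\omega^n$ with $b$ chosen to match total masses (Theorem~\ref{thm: existence lambda =0}). Because $\mu$ has been divided by $4C$, the measure $\theta_v^n$ satisfies the linear estimate with coefficient $\tfrac14<1$ (the $b\,\omega^n$ part contributes only a uniform constant, as in Lemma~\ref{lem: int_bound}), so Proposition~\ref{lem: DDL4 relative} applies to $\theta_v^n$ and, together with uniqueness, shows $v\in\mathcal{E}_\chi(X,\theta,\phi)$. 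Now $v$ is a \emph{fixed} finite-energy anchor, and Lemma~\ref{lem: DV21 relative} with $u=v$ gives $\int_X\chi(\phi-\varphi)\,\theta_v^n\leq C'(1+E_\chi(v,\phi))E_\chi(\varphi,\phi)^{M/(M+1)}+C'$ for all normalized $\varphi$; since $\mu\leq 4C\,\theta_v^n$, this is precisely the sublinear bound you were after, and Proposition~\ref{lem: DDL4 relative} finishes the proof. Your outline needs this auxiliary equation with the $(4C)^{-1}$ factor; without it the argument does not close.
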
 
\begin{proof}
	Let $C$ be the constant in Lemma \ref{lem: linear growth}.  Let $v\in \mathcal{E}(X,\theta,\phi)$ be the unique solution to  
\[
(\theta+dd^c v)^n = (4C)^{-1} \mu + b \omega^n,\; \sup_X v=-1.   
\]
Here $b>0$ is a constant so that $\int_X (4C)^{-1} \mu + b \omega^n = \int_X \theta_\phi^n$. 
The existence of $v$ follows from Theorem \ref{thm: existence lambda =0}.  By Lemma \ref{lem: linear growth} there exists a constant $C_1>0$ such that, for all $\varphi \in \mathcal{E}_{\chi}(X,\theta,\phi)$ with $\sup_X \varphi=0$, we have 
\[
\int_X \chi(\phi-\varphi) \theta_v^n \leq \frac{1}{4} E_{\chi}(\varphi,\phi) +C_1.
\]
By Proposition \ref{lem: DDL4 relative}, $v\in \mathcal{E}_{\chi}(X,\theta,\phi)$. Since $\mu \leq 4C (\theta+dd^c v)^n$, it follows from Lemma \ref{lem: DV21 relative} that  $\mu$ satisfies 
$$
\int_X \chi(\phi-u) \mu \leq  4C \int_X \chi(\phi-u) \theta_v^n\leq C'(1 +E_{\chi}(v,\phi)) E_{\chi}(u,\phi)^{M/(M+1)} +C'.  
$$
As a result, $\mu$ also satisfies \eqref{eq: DDL4 relative}. We can thus use Lemma \ref{lem: DDL4 relative} to complete the proof.     
\end{proof}

We summarize the findings of this chapter in the theorem below.

\begin{theorem}
	\label{thm: E chi characterization}
	 Fix a Radon measure $\mu$ with $\mu(\{\phi=-\infty\})=0$ and  $\int_X \theta_{\phi}^n =\mu(X)>0$. Then the following are equivalent. 
	\begin{enumerate}
		\item[(i)] There exists a constant $C>0$ such that, for all $u \in \mathcal{E}_{\chi}(X,\theta,\phi)$ with $\sup_X u=0$, we have 
		\[
		\int_X \chi(\phi-u) d\mu \leq C E_{\chi}(u,\phi)^{M/(M+1)} +C.  
		\]
	\item[(ii)] $\chi(|\phi-u|) \in L^1(\mu)$, for all $u\in \mathcal{E}_{\chi}(X,\theta,\phi)$.
	\item[(iii)] $\mu= (\theta+dd^c \varphi)^n$ for some $\varphi \in \mathcal{E}_{\chi}(X,\theta,\phi)$, with $\sup_X \varphi=0$. 
	\end{enumerate} 
\end{theorem}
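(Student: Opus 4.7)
\medskip

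\noindent\textbf{Proof plan for Theorem \ref{thm: E chi characterization}.}
The plan is to establish the cyclic chain of implications (iii) $\Rightarrow$ (i) $\Rightarrow$ (ii) $\Rightarrow$ (iii). The implication (ii) $\Rightarrow$ (iii) is already covered by Theorem \ref{thm: E chi GZ07}, so we only need to handle the other two.

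For (iii) $\Rightarrow$ (i): Assume $\mu = \theta_{\varphi}^n$ with $\varphi \in \mathcal{E}_{\chi}(X,\theta,\phi)$ and $\sup_X \varphi = 0$. Since $\phi$ is a model potential and $\varphi$ is more singular than $\phi$, Lemma \ref{lem: model sup over X} gives $\varphi \leq \phi$, so $\varphi$ is a legitimate choice for ``$u$'' in Lemma \ref{lem: DV21 relative}. Apply Lemma \ref{lem: DV21 relative} with $u := \varphi$ and $v := $ an arbitrary test potential in $\mathcal{E}_{\chi}(X,\theta,\phi)$ with $\sup_X v = 0$. This yields
\[
\int_X \chi(\phi - v)\, d\mu = \int_X \chi(\phi-v)\, \theta_\varphi^n \leq C(1+E_\chi(\varphi,\phi))\, E_\chi(v,\phi)^{M/(M+1)} + C,
\]
and since $E_\chi(\varphi,\phi)$ is a fixed finite quantity depending only on $\mu$, we absorb $C(1+E_\chi(\varphi,\phi))$ into a larger constant and obtain (i).

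For (i) $\Rightarrow$ (ii): Given $u \in \mathcal{E}_{\chi}(X,\theta,\phi)$, by Lemma \ref{lem: E chi stable adding constant} the normalization $u_0 := u - \sup_X u$ still lies in $\mathcal{E}_{\chi}(X,\theta,\phi)$. Since $\phi = P_\theta[\phi]$, Lemma \ref{lem: model sup over X} yields $u_0 \leq \phi$, so $\phi - u_0 \geq 0$. By (i), $\int_X \chi(\phi - u_0)\, d\mu$ is finite. Writing $u = u_0 + c$ with $c = \sup_X u$ and using the elementary consequence of the doubling condition \eqref{eq: growth condition} that $\chi(a+b) \leq 2^M(\chi(a) + \chi(b))$ for $a,b \geq 0$, we get
\[
\int_X \chi(|\phi - u|)\, d\mu \leq 2^M \int_X \chi(\phi - u_0)\, d\mu + 2^M \chi(|c|)\, \mu(X) < \infty,
\]
establishing (ii).

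The main conceptual content of the theorem is, of course, (ii) $\Rightarrow$ (iii), which is the work already carried out in the chapter culminating in Theorem \ref{thm: E chi GZ07}: the key step there is Proposition \ref{lem: DDL4 relative}, showing that measures satisfying the sublinear energy estimate \eqref{eq: DDL4 relative} come from the Monge--Amp\`ere operator on $\mathcal{E}_{\chi}(X,\theta,\phi)$, bootstrapped from the $L^1$-integrability hypothesis via Lemmas \ref{lem: integrability implies uniform}, \ref{lem: linear growth}, and \ref{lem: DV21 relative}. No additional work is required here; the remaining two implications (iii) $\Rightarrow$ (i) and (i) $\Rightarrow$ (ii) are short and formal given the tools already assembled.
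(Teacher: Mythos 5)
Your proposal is correct and follows essentially the same route as the paper: the cycle is closed by the same three implications, with (ii) $\Rightarrow$ (iii) given by Theorem \ref{thm: E chi GZ07} and (iii) $\Rightarrow$ (i) by Lemma \ref{lem: DV21 relative} applied exactly as you describe. The only difference is that you spell out the normalization and quasi-subadditivity argument for (i) $\Rightarrow$ (ii), which the paper dismisses as immediate; your details are accurate.
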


\begin{proof}
	(i) $ \Longrightarrow$ (ii) is immediate. The implication (ii) $\Longrightarrow$ (iii) is Theorem \ref{thm: E chi GZ07}.  The implication  (iii) $\Longrightarrow$ (i)  is Lemma \ref{lem: DV21 relative}. 
 \end{proof}

\begingroup
\setlength\bibitemsep{0pt}
\setlength\biblabelsep{0pt}
\printbibliography
\endgroup
\end{document}